\pgfplotsset{compat=1.15}
\DeclareMathOperator{\id}{id}
\DeclareMathOperator{\Stab}{Stab}
\DeclareMathOperator{\Bij}{Bij}
\DeclareMathOperator{\Sh}{Sh}
\DeclareMathOperator{\sgn}{sgn}
\DeclareMathOperator{\Sq}{Sq}
\DeclareMathOperator{\tr}{tr}
\DeclareMathOperator{\Gl}{Gl}
\DeclareMathOperator{\Sym}{Sym}
\DeclareMathOperator{\Bgroup}{B}
\DeclareMathOperator{\res}{res}
\DeclareMathOperator{\Ann}{Ann}
\DeclareMathOperator{\lm}{lm}
\DeclareMathOperator{\Quot}{Quot}
\DeclareMathOperator{\Gal}{Gal}
\DeclareMathOperator{\FN}{FN}
\newtheorem{theorem}{Theorem}[section]
\newtheorem{proposition}[theorem]{Proposition}
\newtheorem{corollary}[theorem]{Corollary}
\newtheorem{lemma}[theorem]{Lemma}
\newtheorem*{theorem*}{Theorem}
\theoremstyle{definition}
\newtheorem{definition}[theorem]{Definition}
\theoremstyle{remark}
\newtheorem{remark}[theorem]{Remark}
\newcommand{\Bgrouppos}[1]{\ensuremath \Bgroup_{#1}^{+}}
\newcommand{\dip}[1]{^{ [{#1}] }}
\newcommand{\AB}{A'_{\Bgrouppos{}}}
\begin{document}
	
\author[L. Guerra]{Lorenzo Guerra }
\address{Universit\`a di Roma Tor Vergata}
\email{guerra@mat.uniroma2.it}
\author[S. Jana]{Santanil Jana}
\address{Mathematics Department, 
	University of British Columbia}
\email{santanil@math.ubc.ca}

\begin{abstract}
We show that the direct sum of the cohomology groups of the alternating subgroups of the family of Coxeter groups of Type $ B $ exhibits an almost-Hopf ring structure.
We apply techniques developed by Giusti and Sinha to fully compute a presentation of this structure for mod $ 2 $ coefficient.
\end{abstract}
\thanks{The first author acknowledges the 
	MIUR Excellence Department Project awarded to the Department of Mathematics, University of Rome Tor Vergata, CUP E83C18000100006. 
} 

\subjclass[2020]{Primary 20J06; Secondary 20F55,55N10,55N91}
\title[Mod $ 2 $ cohomology of $ B_n^+ $]{The mod $ 2 $ cohomology rings of the alternating subgroups of the Coxeter groups of Type $ \Bgroup $}
\maketitle

\section{Introduction}

Let $ \Bgroup_n $ be the Coxeter group associated to the following Dinkin diagram:
\begin{center}
\begin{tikzpicture}[line cap=round,line join=round,>=triangle 45,x=1cm,y=1cm]
	\clip(-3.197777777777781,-1) rectangle (7.575555555555557,1);
	\draw [line width=1pt,color=black] (0,0)-- (1,0);
	\draw [line width=1pt,color=black] (1,0)-- (2,0);
	\draw [line width=1pt,color=black] (2,0)-- (2.62,0);
	\draw [line width=1pt,color=black] (3.24,0)-- (4,0);
	\begin{scriptsize}
		\draw [fill=black] (0,0) circle (2.5pt);
		\draw[color=black] (0.07333333333333134,0.19) node {$s_0$};
		\draw [fill=black] (1,0) circle (2.5pt);
		\draw[color=black] (1.0688888888888872,0.19) node {$s_1$};
		\draw [fill=black] (2,0) circle (2.5pt);
		\draw[color=black] (2.0733333333333324,0.19) node {$s_1$};
		\draw [fill=black] (4,0) circle (2.5pt);
		\draw[color=black] (4.073333333333333,0.19) node {$s_{n-1}$};
		\draw[color=black] (0.5088888888888871,0.18111111111111095) node {$4$};
		\draw[color=black] (2.917777777777777,0.06555555555555548) node {$\dots$};
	\end{scriptsize}
\end{tikzpicture}
\end{center}
For the correspondence between Coxeter groups and Dinkin diagrams and, more generally, for the algebra and combinatorics of these objects, we refer to the classical book by Humphreys \cite{Humphreys}.

$ \Bgroup_n $ can be realized as a reflection group in $ \mathbb{R}^n $, by letting $ s_0 $ be the reflection across the coordinate plane $ H_0 = \{ (x_1,\dots,x_n) \in \mathbb{R}^n: x_1 \not= 0 \} $ and, for $ 1 \leq i < n $, $ s_i $ be the reflection across $ H_i = \{(x_1,\dots,x_n) \in \mathbb{R}^n: x_i = x_{i+1}\} $. This makes $ \Bgroup_n $ the group of isometries of the hypercube $ [-1,1]^n $. Alternatively, $ \Bgroup_n $ can be described as the group of signed permutation
\[
\{ \sigma \in \Bij(\{-n,\dots,-1,1,\dots,n\}): \forall 1 \leq i \leq n: \sigma(-i) = -\sigma(i)\},
\]
or as a wreath product $ C_2 \int \Sigma_n = C_2^n \rtimes \Sigma_n $ of a cyclic group of order $ 2 $ and the symmetric group $ \Sigma_n $ on $ n $ objects. $ \Sigma_n $ acts on $ [-1,1]^n $ by permuting the coordinates of the ambient space $ \mathbb{R}^n $, while $ C_2^n $ is the elementary abelian $ 2 $-subgroup generated by reflection across the coordinate hyperplanes $ \{ (x_1,\dots,x_n): x_i = 0 \} $ for $ 1 \leq i \leq n $.

There is a sign homomorphism $ \sgn_{\Bgroup_n} \colon \Bgroup_n \to C_2 = \{-1,1\} $ whose value on every reflection is $ -1 $. The alternating subgroup of $ \Bgroup_n $ is $ \Bgrouppos{n} = \ker(\sgn_{\Bgroup_n}) \subseteq \Bgroup_n $. This construction of the alternating subgroup $ W^+ $ can be performed for every Coxeter group $ W $.
The combinatorics of these groups is relatively well-understood; in this regard, we refer to an article by Brenti--Reiner--Roichman \cite{Brenti-Reiner-Roichman} and the references therein.
In contrast, the cohomology of such groups has not received much attention. Although some general properties are known (for instance, \cite{Akita:17}), a complete calculation of the cohomology ring has been performed only for the classical alternating groups $ \mathcal{A}_n $ (Dinkin diagram of Type $ A_{n-1} $) by Giusti--Sinha \cite{Sinha:17}.

The goal of this paper is to determine the mod $ 2 $ cohomology ring of $ \Bgrouppos{n} $ based on the aforementioned result. Our main motivation is its key role in the description of the cohomology of complete unordered flag manifolds over $ \mathbb{R} $ obtained by the authors in \cite{Guerra-Santanil}. Our main result is Theorem \ref{thm:presentation AB}.

To achieve our goal, we adopt a method introduced in \cite{Sinha:17}, that relies on the interplay between an involution on $ H^*(\Bgrouppos{n}; \mathbb{F}_2) $, a structure similar to a Hopf ring with a coproduct and two products, and a Gysin sequence linking $ H^*(\Bgrouppos{n}; \mathbb{F}_2) $ and $ H^*(\Bgroup_n; \mathbb{F}_2) $. We heavily rely on the arguments in the work cited above throughout the whole paper.

To define our generating classes, we also rely on a geometric and combinatorial tool. There is a resolution of $ \Bgrouppos{n} $ which is a variant of a classical construction by Fox--Neuwirth \cite{Fox-Neuwirth} and we explicitly construct representing cocycles similarly to \cite{Sinha:17}. For the Coxeter groups of Type $ B $, chain level lifting of the coproduct and the second product has been determined by Guerra \cite{Guerra:21}. Building on his calculations, we are able to obtain relations involving these morphisms on our generators.

Another key ingredient is the analysis of the elementary abelian $ 2 $-subgroups of $ \Bgrouppos{n} $ and the invariant subalgebras of their cohomology with respect to their normalizer. A similar study has been performed by Swenson in his thesis \cite{Swenson} for finite Coxeter groups. In his case, the Quillen map is injective and this allows the author to compute the mod $ 2 $ cohomology rings of these reflection groups.
As far as the alternating subgroups $ \Bgrouppos{n} $ are concerned, to the authors' knowledge a similar result was not known. Building on Swenson's work we classify all the maximal elementary abelian $ 2 $-subgroups of $ \Bgrouppos{n} $ up to conjugation. Moreover, inspired by Giusti and Sinha's ideas, we obtain a cohomological detection theorem for this family of subgroups.

Building on these two technical tools, we are able to find a complete set of relations and provide a presentation of our main object. $ \bigoplus_n H^*(\Bgrouppos{n}; \mathbb{F}_2) $ contains a relevant piece of $ \bigoplus_n H^*(\Bgroup_n; \mathbb{F}_2) $ as a sub-algebra with components (with cup product). Since the multiplication in this subalgebra is involved, we decided to only provide a presentation with generators and relations of $ \bigoplus_n H^*(\Bgrouppos{n}; \mathbb{F}_2) $ as an object over a Hopf ring. With this trick, we can unload most of the combinatorial complexity onto $ \bigoplus_n H^*(\Bgroup_n; \mathbb{F}_2) $, which has a complete description explained in Guerra's paper mentioned above.

Apart from this introduction, the organization of the paper is the following:
\begin{itemize}
\item Section 2 is devoted to recalling known facts about the mod $ 2 $ cohomology rings of $ \Bgroup_n $. Their direct sum over $ n $ is not only an algebra with components but is also endowed with a richer Hopf ring structure, that extends and incorporates the cup product. The additional structure can be effectively used as a tool to organize the cup product and obtain ring presentations.
\item In Section 3, following Giusti and Sinha, we describe the algebraic structure of $ \bigoplus_n H^*(\Bgrouppos{n}; \mathbb{F}_2) $. We first define a standard involution on this bigraded object (Subsection 3.1) and establish restriction and transfer maps that link it to the cohomology of $ \Bgroup_n $ (Subsection 3.2). We then proceed to compute a bihomogeneous additive basis $ \mathcal{M}_{charged} $ for $ \bigoplus_n H^*(\Bgrouppos{n}; \mathbb{F}_2) $ via a Gysin sequence, that depends on a choice of certain charged classes. Subsections 3.4 and 3.5 are devoted to defining the almost-Hopf ring structure (with coproduct, cup product, and transfer product) and extending it to a slightly larger object, respectively.
\item In section 4, we develop our geometro-combinatorial tool. We recall Fox-Neuwirth complexes (Subsection 4.1) and explicitly link those for $ \mathcal{A}_n $ and $ \Bgrouppos{n} $ (Subsection 4.2). With this basic setup, we can then construct cocycles representing our generating classes (Subsection 4.3), and subsequently compute their transfer product (Subsection 4.4) and coproduct (Subsection 4.5). The calculations of the coproduct are particularly involved because of the combinatorial complexity of its cochain lifting. We conclude this section with the description of a standard choice of charges that fixes the basis $ \mathcal{M}_{charged} $ once and for all.
\item The contents of Section 5 are the combinatorial description of elementary abelian $ 2 $-subgroups of $ \Bgrouppos{n} $ and the algebraic computation of their cohomology. Precisely, in Subsection 5.1 we recall known results about elementary abelian $ 2 $-subgroups of $ \Bgroup_n $ from Swenson's thesis. In Subsection 5.2, we build on these to completely classify maximal elementary abelian $ 2 $-subgroups in $ \Bgrouppos{n} $ up to conjugation. The goal of Subsection 5.3 is the calculation of the invariant subalgebra of these subgroups with respect to the action of their normalizer. We achieve it by applying Galois theory and other standard techniques in invariant theory. Then we relate these algebras to the almost-Hopf ring structure (Subsection 5.4) by computing restriction to elementary abelian $ 2 $-subgroup of coproducts, transfer products, and cup products. In Subsection 5.5 we determine the restriction of our generators to the cohomology of these subgroups, and we finally prove the Detection Theorem in Subsection 5.6.
\item Section 6 is devoted to the proof of Theorem \ref{thm:presentation AB}. In Subsection 6.1 we first deduce the remaining relations by comparing restrictions to elementary abelian $ 2 $-subgroups and applying our newly proved Detection Theorem, while in Subsection 6.2 we deduce our main result.
\item Finally, the short Section 7 contains a description of the action of the Steenrod algebra on the mod $ 2 $ cohomology of $ \Bgrouppos{n} $.
\end{itemize}

\section{Review of the mod 
\texorpdfstring{$ 2 $}{2} cohomology of 
\texorpdfstring{$ \Bgroup_n $}{Bn}}
From the description of $ \Bgroup_n $ as a wreath product, its mod $ 2 $ cohomology ring can be abstractly deduced from Nakaoka's theorem \cite[Theorem 3.3]{Nakaoka}. However, in practice, it is difficult to describe it in a combinatorially explicit way.

Guerra in \cite{Guerra:21} considers the following linear maps on the bigraded $ \mathbb{F}_2 $-vector space $ A_B = \bigoplus_{n, d} H^d(\Bgroup_n; \mathbb{F}_2) $:
\begin{itemize}
\item the usual cup product, component-by-component $ \cdot A_B \otimes A_B \to A_B $
\item a coproduct $ \Delta \colon A_B \to A_B \otimes A_B $ whose components are induced by the obvious group inclusions $ i_{n,m} \colon B_n \times B_m \to B_{n+m} $
\item a transfer product $ \odot \colon A_B \otimes A_B \to A_B $ whose components are the cohomological transfer maps associated with $ i_{n,m} $
\end{itemize}
Guerra observed that these form a component Hopf ring. However, we observe that this is a particular case of \cite[Theorem 2.37]{Guerra-Salvatore-Sinha} with $ X = \mathbb{P}^\infty(\mathbb{R}) $. We recall here the definition of these objects for convenience.
\begin{definition} \label{def:Hopf ring}
A commutative bigraded component Hopf ring over $ \mathbb{F}_2 $ is a bigraded vector space $ A = \bigoplus_{n,d}A^{n,d} $ with two morphisms $ \odot \colon A \otimes A \to A $ and $ \Delta \colon A \to A \otimes A $ preserving both degrees, a morphism $ \cdot \colon A \otimes A \to A $ preserving the dimension, an augmentation $ \varepsilon \colon A^{0,0} \to \mathbb{F}_2 $ and a unit $ \eta \colon \mathbb{F}_2 \to A^{0,0} $ satisfying the following properties:
\begin{itemize}
\item $ (A,\odot,\Delta,\varepsilon,\eta) $ form a bicommutative bigraded Hopf algebra (with a suitable antipode $ S \colon A \to A $)
\item $ (A,\cdot,\Delta) $ is a bicommutative bialgebra
\item $ A^{n,d} \cdot A^{n',d'} = 0 $ if $ n \not= n' $
\item $ \forall x,y,z \in A $ bihomogeneous, the following distributivity axiom holds:
\[
x \cdot (y \odot z) = \sum (x_{(1)} \cdot y) \odot (x_{(2)} \cdot z)
\]
\end{itemize}
$ A $ is called connected if both $ \varepsilon $ and $ \eta $ are isomorphisms between $ A^{0,*} $ and $ \mathbb{F}_2 $.

Given a bihomogeneous element $ x \in A $, its first degree is denoted $ n(x) $ and called the component of $ x $, its second degree is denoted $ d(x) $ and called the dimension of $ x $.
\end{definition}
In the connected case, a unique antipode always exists and it can be safely neglected.

The publications cited above also provide a complete presentation of this component Hopf ring. It is generated by bihomogeneous classes $ \gamma_{k,m} \in A_B^{m2^k,m(2^k-1)} $ ($k \geq 0, m \geq 1$) and $ w\dip{r} \in A_B^{r,r} $ ($r \geq 1 $). The elements $ \gamma_{k,m} $ arise by restriction from those in the cohomology of the symmetric groups with the same name (see \cite{Sinha:12}). In general $ w\dip{r} $ is obtained from the $ 1 $-dimensional universal Stiefel-Whitney class $ w\dip{1} \in A^{1,1} = H^1(\mathbb{P}^\infty(\mathbb{R}); \mathbb{F}_2) $ by applying certain divided powers operations.

A complete set of relations is described in \cite[Theorem 5.9]{Guerra:21} (where the class $ w\dip{r} $ is denoted $ \delta_r $). For our purpose, we will not use the full presentation and we only recall a bihomogeneous additive basis for $ A_B $. To construct it, we first consider arbitrary cup products of generators with the same component. These are called decorated gathered blocks and will be denoted with the letter $ b $.
The profile of a decorated gathered block is the multiset of the first indices $ k $ of all the factors of the form $ \gamma_{k,\frac{n(b)}{k}} $ with $ k \geq 1 $ appearing in $ b $. The decoration of $ b $ is the cohomology class $ w^a \in H^*(\mathbb{P}(\mathbb{R}); \mathbb{F}_2) $, where $ a $ is the multiplicity of $ w\dip{n(b)} $ as a factor of $ b $.

The coproduct of a decorated gathered block $ b $ is
\[
\Delta(b) = 1 \otimes b + b \otimes 1 + \sum_{(b',b'')} b' \otimes b'', \tag{*}
\]
where the sum is indexed by couples $ (b',b'') $ of decorated gathered blocks having the same profile of $ b $ and such that $ n(b') + n(b'') = n(b) $.
Moreover, if two decorated gathered blocks $ b $ and $ b' $ have the same profile and the same decoration, they satisfy the identity
\[
b \odot b' = \left( \begin{array}{c} n(b)+n(b') \\ n(b) \end{array} \right) b'', \tag{**}
\]
where $ b'' $ is the (necessarily unique) decorated gathered block having the same profile of $ b $ and $ b' $ and component $ n(b'') = n(b) + n(b') $.

A transfer product of decorated gathered blocks $ b_1 \odot \dots \odot b_r $ is called a decorated Hopf monomial if $ b_i $ and $ b_j $ do not have the same profile and the same decoration concurrently for all $ i \not= j $.

\begin{theorem}[from \cite{Guerra:21} or \cite{Guerra-Salvatore-Sinha}] \label{thm:basis B}
The set of decorated Hopf monomials $ \mathcal{M} $ is a bihomogeneous basis of $ A_B $ as a $ \mathbb{F}_2 $-vector space.
\end{theorem}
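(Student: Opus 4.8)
\emph{Strategy and spanning.} The plan is to prove the two halves — spanning and linear independence — separately, the independence coming from a bidegree-wise dimension count against the known Poincar\'e series of $H^*(\Bgroup_n;\Ftwo)$. For spanning I would run a straightening (normal-form) procedure. As $A_B$ is generated as a component Hopf ring by the classes $\gamma_{k,m}$ and $w\dip{r}$, every element is an $\Ftwo$-combination of Hopf-ring words in these generators. First, apply the distributivity axiom $x\cdot(y\odot z)=\sum(x_{(1)}\cdot y)\odot(x_{(2)}\cdot z)$ repeatedly to move every cup product inside the transfer products, so that each word becomes a transfer product of cup-monomials in the generators. A cup-monomial whose factors do not all share the same component vanishes, because $A_B^{n,d}\cdot A_B^{n',d'}=0$ for $n\ne n'$; the remaining cup-monomials are cup products of generators of a fixed component, i.e.\ decorated gathered blocks once the degree-zero identity factors $\gamma_{0,m}$ are dropped. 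Thus an arbitrary element already lies in the span of transfer products of decorated gathered blocks. Finally, whenever two factors of such a product have the same profile and the same decoration, relation~(**) rewrites their transfer product as a scalar multiple of a single decorated gathered block; iterating removes every repeated $(\text{profile},\text{decoration})$ pair and yields a decorated Hopf monomial. Making this rigorous uses the full relation set of \cite[Theorem~5.9]{Guerra:21}, both to license the rewritings and to guarantee that distinct decorated gathered blocks of a fixed component satisfy no further relations.

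\emph{Independence.} Given spanning, it suffices to show that for every bidegree $(n,d)$ the number of decorated Hopf monomials in $A_B^{n,d}$ equals $\dim_{\Ftwo}H^d(\Bgroup_n;\Ftwo)$. The latter is computed from the wreath product $\Bgroup_n=C_2\wr\Sigma_n$ via Nakaoka's theorem \cite{Nakaoka}, and the bookkeeping of finite multisets of decorated gathered blocks of total component $n$ — sorted by profile and decoration — reproduces the resulting generating function term by term. A more conceptual route, which is \cite[Theorem~2.37]{Guerra-Salvatore-Sinha} specialized to $X=\RP^\infty$, is to note that the $\Ftwo$-homology of $\coprod_n B\Bgroup_n$, with the Pontryagin-type product coming from the inclusions $\Bgroup_n\times\Bgroup_m\hookrightarrow\Bgroup_{n+m}$ and the transfer product, is the homology of the free $E_\infty$-space on $\RP^\infty=BC_2$; by Araki--Kudo/Dyer--Lashof theory this is a free object, polynomial on admissible Dyer--Lashof words applied to a basis of $\widetilde H_*(\RP^\infty;\Ftwo)$, and dualizing this freeness yields a cohomology basis which, under the dictionary $\gamma_{k,m}\leftrightarrow$ Dyer--Lashof monomials and $w\dip{r}\leftrightarrow$ divided powers of $w\dip{1}$, is exactly $\mathcal{M}$. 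Independence could also be verified by detection, since the Quillen map of $\Bgroup_n$ is injective by Swenson's work: it would then suffice to see that the restrictions of the decorated Hopf monomials to the elementary abelian $2$-subgroups are linearly independent.

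\emph{Main obstacle.} The delicate point is the straightening step for spanning: one must check that the reductions terminate and, above all, that they do not overshoot — that the output genuinely spans $\mathcal{M}$ and not a strictly smaller set. This is exactly where the combinatorial identification of $\mathcal{M}$ with the Nakaoka basis (or with the Dyer--Lashof basis on the homology side) has to be brought in, certifying that no extra relations among decorated gathered blocks are available.
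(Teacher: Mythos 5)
The first thing to note is that the paper offers no proof of this statement at all: Theorem \ref{thm:basis B} is imported verbatim from \cite{Guerra:21} and \cite{Guerra-Salvatore-Sinha}, so there is no internal argument to compare yours against. Judged on its own terms, your outline is sound and in fact reconstructs the routes taken in those references. The spanning half (straightening via Hopf ring distributivity, killing cross-component cup products, and merging equal-profile, equal-decoration blocks by the identity $(**)$, where a vanishing binomial coefficient just produces $0$ and is harmless for spanning) is the standard reduction and terminates since each merge decreases the number of transfer factors. For independence, your ``conceptual route'' is essentially \cite[Theorem 2.37]{Guerra-Salvatore-Sinha} specialized to $X=\RP^\infty$ --- exactly the specialization this paper itself points to in Section 2 --- namely that $\coprod_n B\Bgroup_n$ is the free $E_\infty$-space on $\RP^\infty$, so its mod $2$ homology is the free allowable Dyer--Lashof algebra on $H_*(\RP^\infty;\Ftwo)$, and dualizing that freeness produces $\mathcal{M}$; the dimension count against Nakaoka \cite{Nakaoka} and the detection route through Swenson's injectivity of the Quillen map \cite{Swenson} are the other lines along which \cite{Guerra:21} argues.

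The caveat is that, as written, the two steps carrying all the content are asserted rather than executed: the term-by-term match between the generating function of multisets of decorated gathered blocks and the Nakaoka Poincar\'e series, and (in the alternative routes) the freeness/duality bookkeeping or the linear independence of restrictions to the subgroups $A_\pi$. You flag this yourself in your ``main obstacle'' paragraph, and that is the honest assessment: your proposal is a correct strategy whose substance is delegated back to the cited results --- which is, in effect, precisely what the paper does by citing them.
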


The cup product of two decorated Hopf monomials can written as a sum of elements of $ \mathcal{M} $ by combining Hopf ring distributivity with the identities $ (*) $ and $ (**) $. A clarifying example can be found in Section 2 of \cite{Guerra-Santanil}. The reader might also be more comfortable with the graphical depiction of the cup product in terms of skyline diagrams discusses in \cite[page 3264]{Guerra:21}.
Although these computations are often lengthy to write, they are elementary and straightforward. For this reason, in the remainder of this article we will often omit their details.

\section{The algebraic structure of 
\texorpdfstring{$ \bigoplus_n H^*(\Bgrouppos{n}; \mathbb{F}_2) $}{the cohomology of Bn} and the additive description}

\subsection{The standard involution and 
\texorpdfstring{$ C_2 $}{Z/2Z}-action}

The main goal of this subsection is to define an involution $ \iota_n \colon H^*(\Bgrouppos{n}; \mathbb{F}_2) \to H^*(\Bgrouppos{n}; \mathbb{F}_2) $ for all $ n $.

\begin{proposition} \label{prop:involution}
	The conjugation $ c_{s} \colon \Bgrouppos{n} \to \Bgrouppos{n} $ by a reflection $ s \in \Bgroup_n $ induces an automorphism $ \iota_n \colon H^*(\Bgrouppos{n}; \mathbb{F}_2) \to H^*(\Bgrouppos{n}; \mathbb{F}_2) $ that does not depend on $ s $.
\end{proposition}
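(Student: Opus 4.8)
The plan is to exploit the fact that all reflections in $\Bgroup_n$ are conjugate *inside* $\Bgroup_n$ — or, more precisely, that the coset $\Bgroup_n \setminus \Bgrouppos{n}$ acts on $\Bgrouppos{n}$ by conjugation, and any two reflections differ by an element of $\Bgrouppos{n}$ combined with this action. First I would fix one reflection $s_0$ and let $c_{s_0}$ denote conjugation by it; since $s_0 \notin \Bgrouppos{n}$, this is an honest (outer) automorphism of $\Bgrouppos{n}$, and it induces $\iota_n := c_{s_0}^*$ on cohomology. The key classical fact is that inner automorphisms act trivially on group cohomology: for any $g \in \Bgrouppos{n}$, the map $c_g^* \colon H^*(\Bgrouppos{n};\F_2) \to H^*(\Bgrouppos{n};\F_2)$ is the identity. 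This is standard (e.g.\ Brown, \emph{Cohomology of Groups}, III.8).

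Next I would show independence of the choice of reflection. Given two reflections $s, s' \in \Bgroup_n$, consider the automorphism $c_{s}^{-1} \circ c_{s'} = c_{s^{-1}s'}$ of $\Bgrouppos{n}$. Since both $s$ and $s'$ have sign $-1$, the element $s^{-1}s' = s s'$ lies in $\ker(\sgn_{\Bgroup_n}) = \Bgrouppos{n}$. Hence $c_{s^{-1}s'}$ is an \emph{inner} automorphism of $\Bgrouppos{n}$, so it induces the identity on $H^*(\Bgrouppos{n};\F_2)$ by the fact just quoted. Taking induced maps on cohomology (and being careful with the contravariance: $(c_s)^* = (c_{s'})^* \circ (c_{s^{-1}s'})^* = (c_{s'})^*$), we conclude $c_s^* = c_{s'}^*$ as endomorphisms of $H^*(\Bgrouppos{n};\F_2)$. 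This gives a well-defined map $\iota_n$.

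Finally, to see $\iota_n$ is an involution: $\iota_n^2 = (c_s^*)^2 = (c_s \circ c_s)^* = (c_{s^2})^*$, and $s^2 = 1$ since $s$ is a reflection, so $c_{s^2} = \id$ and $\iota_n^2 = \id$. In particular $\iota_n$ is an automorphism of the graded ring $H^*(\Bgrouppos{n};\F_2)$ — it is a ring map because it is induced by a group automorphism, and it is invertible because it squares to the identity.

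I do not expect any serious obstacle here; the only point requiring a little care is bookkeeping the contravariance of $H^*(-)$ when composing conjugations, and making sure the ``inner automorphisms act trivially'' lemma is invoked for the correct group ($\Bgrouppos{n}$, not $\Bgroup_n$). The substantive input — that inner automorphisms induce the identity on group cohomology — is entirely standard, so the proof is essentially a short diagram-chase once the sign computation $\sgn(ss') = 1$ is noted.
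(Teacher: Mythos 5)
Your proposal is correct and follows essentially the same route as the paper: both use that $\Bgrouppos{n}$ is normal in $\Bgroup_n$, that the product of two reflections has positive sign and hence lies in $\Bgrouppos{n}$, and that inner automorphisms of $\Bgrouppos{n}$ act trivially on $H^*(\Bgrouppos{n};\mathbb{F}_2)$, so the induced maps of any two reflections agree. The parenthetical claim that $c_{s_0}$ is \emph{outer} is neither needed nor justified by $s_0\notin\Bgrouppos{n}$ alone, but it plays no role in the argument.
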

\begin{proof}
	$ \Bgrouppos{n} $ is a normal subgroup of $ \Bgroup_n $, hence $ c_s $ is a well-defined group homomorphism. If $ s_1, s_2 \in \Bgroup_n $ are reflections, then $ c_{s_1} = c_{s_2} c_{s_2s_1} $. Since $ s_2s_1 \in \Bgrouppos{n} $, $ c_{s_2s_1}^* \colon H^*(\Bgrouppos{n}; \mathbb{F}_2) \to H^*(\Bgrouppos{n}; \mathbb{F}_2) $ is the identity. By functoriality, $ c_{s_1} $ and $ c_{s_2} $ induce the same automorphism in cohomology.
\end{proof}
Since every reflection in $ \Bgroup_n $ has order $ 2 $, $ \iota_n^2 = \id_{H^*(\Bgrouppos{n}; \mathbb{F}_2)} $. Note that $ \iota_n $ preserves cohomological dimensions.

We can unify it for all $ n $ by considering all the groups $ \Bgrouppos{n} $ together. This yields the following definition.

\begin{definition} \label{def:AB}
We define $ \AB $ as the bigraded $ \mathbb{F}_2 $-vector space $ \bigoplus_{n,d} H^d(\Bgrouppos{n}; \mathbb{F}_2) $. The standard involution on $ \AB $ is the bigraded linear map $ \iota = \bigoplus_n \iota_n \colon \AB \to \AB $.
\end{definition}

This datum can be rephrased in terms of a $ C_2 $-action on $ \AB $ given by the rule $ (-1).x = \iota(x) $. This makes $ \AB $ a bigraded $ \mathbb{F}_2[C_2] $-module.

\subsection{The connection between 
\texorpdfstring{$ A_B $}{the cohomology of Bn} and \texorpdfstring{$ \AB $}{its alternating subgroup}: restriction and transfer maps}

The restriction $ \res_n \colon H^*(\Bgroup_n; \mathbb{F}_2) \to H^*(\Bgrouppos{n}; \mathbb{F}_2) $ and the transfer homomorphisms $ \tr_n \colon H^*(\Bgrouppos{n}; \mathbb{F}_2) \to H^*(\Bgroup_n; \mathbb{F}_2) $ associated to the inclusions $ \Bgrouppos{n} \hookrightarrow \Bgroup_n $ combine to yield morphisms $ \res = \bigoplus_n \res_n \colon A_B \to \AB $ and $ \tr = \bigoplus_n \tr_n \colon \AB \to A_B $ that relate $ \AB $ to $ A_B $.

\begin{proposition} \label{prop:restriction transfer involution}
The relation of $ \res $ and $ \tr $ with the $ C_2 $-action is the following:
\begin{enumerate}
	\item $ \res $ is $ C_2 $-invariant, i.e., $ \forall x \in A_{B}: \iota(\res(x)) = \res(x) $.
	\item $ \tr $ is $ C_2 $-invariant, i.e., $ \forall x \in \AB: \tr(\iota(x)) = \tr(x) $.
\end{enumerate}	
\end{proposition}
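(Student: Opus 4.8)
The plan is to deduce both statements from the standard compatibility of restriction and transfer maps with conjugation automorphisms, together with the fact established in Proposition \ref{prop:involution} that $\iota_n$ is induced by conjugation $c_s$ by a reflection $s \in \Bgroup_n$. The key observation is that $s$ normalizes $\Bgrouppos{n}$ inside $\Bgroup_n$ but acts \emph{trivially} on $\Bgroup_n$ up to inner automorphism (conjugation by $s$ is an inner automorphism of $\Bgroup_n$, hence induces the identity on $H^*(\Bgroup_n;\F_2)$). So one should think of $\iota_n$ as the ``discrepancy'' between the conjugation action on the subgroup and on the ambient group, and the two claims say this discrepancy is invisible to $\res_n$ and $\tr_n$.

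For part (1), I would argue as follows. Fix a reflection $s \in \Bgroup_n$. The inclusion $j_n \colon \Bgrouppos{n} \hookrightarrow \Bgroup_n$ fits into a commutative square with the conjugation maps $c_s$ on both groups: $c_s \circ j_n = j_n \circ c_s$ (both sides send $g \mapsto sgs^{-1}$). Applying $H^*(-;\F_2)$ and using contravariance gives $c_s^* \circ \res_n = \res_n \circ c_s^*$ at the level of the ambient group, i.e. $\iota_n \circ \res_n = \res_n \circ (c_s^* \text{ on } H^*(\Bgroup_n))$. But $c_s$ is conjugation by an element of $\Bgroup_n$, so $c_s^*$ is the identity on $H^*(\Bgroup_n;\F_2)$ (a group acts trivially on its own cohomology by inner automorphisms). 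Hence $\iota_n \circ \res_n = \res_n$, which is exactly $\iota(\res(x)) = \res(x)$ after summing over $n$.

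For part (2), I would use the analogous naturality of the transfer map with respect to conjugation: for the conjugation isomorphism $c_s$ simultaneously on $\Bgrouppos{n}$ and $\Bgroup_n$, one has $\tr_n \circ (c_s^* \text{ on } H^*(\Bgrouppos{n})) = (c_s^* \text{ on } H^*(\Bgroup_n)) \circ \tr_n$. This is the standard fact that the cohomological transfer commutes with isomorphisms of pairs of groups; it can be seen either from the double-coset / Shapiro-lemma description of the transfer or from its chain-level definition, applied to the automorphism $c_s$ of the inclusion $\Bgrouppos{n} \hookrightarrow \Bgroup_n$. The left-hand composite is $\tr_n \circ \iota_n$, and, as in part (1), $c_s^*$ on $H^*(\Bgroup_n;\F_2)$ is the identity, so the right-hand side is just $\tr_n$. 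Therefore $\tr_n \circ \iota_n = \tr_n$, i.e. $\tr(\iota(x)) = \tr(x)$.

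The only point requiring genuine care — the main (minor) obstacle — is the naturality of the transfer in part (2): one must be precise that $c_s$ is an automorphism of the \emph{pair} $(\Bgrouppos{n} \hookrightarrow \Bgroup_n)$ (which it is, since $\Bgrouppos{n}$ is normal), and invoke the correct functoriality statement for transfers under such automorphisms rather than the more familiar naturality under restriction along subgroup inclusions. Everything else is formal contravariance plus the triviality of inner automorphisms on group cohomology, which I would cite as standard (e.g. from Brown's book on group cohomology).
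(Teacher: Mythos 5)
Your argument is correct and is essentially the paper's own proof: the paper likewise deduces both identities formally from the naturality of restriction and transfer applied to the commutative triangle relating the inclusion $\Bgrouppos{n} \hookrightarrow \Bgroup_n$ and conjugation $c_s$, together with the fact that $c_s$ is inner on $\Bgroup_n$ and hence acts trivially on $H^*(\Bgroup_n;\mathbb{F}_2)$. Your extra care about transfer commuting with an automorphism of the pair is exactly the naturality the paper invokes.
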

\begin{proof}
Both claims follow formally from the naturality of transfers and restrictions and the commutativity of the following diagram, where $ s \in B_n $ is a reflection:
\begin{center}
\begin{tikzcd}
\Bgrouppos{n} \arrow[hook]{r} \arrow[hook]{rd} & \Bgroup_n \arrow{d}{c_s} \\
& \Bgroup_n
\end{tikzcd}
\end{center}
\end{proof}


\begin{proposition}
$ \tr $ and $ \res $ are related as follows:
\begin{enumerate}
\item $ \forall x \in \AB: \res ( \tr(x) ) = x + \iota(x) $
\item $ \forall y \in A_B: \tr ( \res (y) ) = 0 $
\end{enumerate}
\end{proposition}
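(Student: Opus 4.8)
Since both sides are direct sums over $ n $ of maps that only involve the single inclusion $ \Bgrouppos{n} \hookrightarrow \Bgroup_n $, it suffices to prove each identity componentwise, i.e.\ for the index-$ 2 $ normal subgroup $ \Bgrouppos{n} \trianglelefteq \Bgroup_n $. Both statements are instances of standard formal properties of the transfer for a finite-index subgroup, so the plan is to invoke the Mackey double coset formula for (1) and the projection formula for (2).

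For (1), I would apply the double coset formula for $ \res_n \circ \tr_n $. Because $ \Bgrouppos{n} $ is normal in $ \Bgroup_n $, the double cosets $ \Bgrouppos{n} \backslash \Bgroup_n / \Bgrouppos{n} $ coincide with the two cosets in $ \Bgroup_n / \Bgrouppos{n} $, represented by $ 1 $ and by any reflection $ s \in \Bgroup_n $; moreover the intersections $ \Bgrouppos{n} \cap {}^{g}\Bgrouppos{n} $ are all equal to $ \Bgrouppos{n} $, so each summand of the Mackey formula reduces to a single conjugation map with no further restriction or transfer. Thus $ \res_n(\tr_n(x)) = c_1^*(x) + c_s^*(x) $. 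The first term is $ x $, and by Proposition \ref{prop:involution} the second term is exactly $ \iota_n(x) $, which gives $ \res(\tr(x)) = x + \iota(x) $ after summing over $ n $.

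For (2), I would use the projection (Frobenius reciprocity) formula $ \tr_n(\res_n(y) \cdot z) = y \cdot \tr_n(z) $ with $ z = 1 $, which yields $ \tr_n(\res_n(y)) = y \cdot \tr_n(1) $. Here $ \tr_n(1) \in H^0(\Bgroup_n; \mathbb{F}_2) = \mathbb{F}_2 $ equals the index $ [\Bgroup_n : \Bgrouppos{n}] = 2 = 0 $ in $ \mathbb{F}_2 $, so $ \tr_n(\res_n(y)) = 0 $ for all $ y $, and summing over $ n $ gives $ \tr \circ \res = 0 $.

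There is no serious obstacle here: the only point requiring a little care is the bookkeeping in the double coset formula for (1) — specifically checking that normality collapses every term to a bare conjugation and that the nontrivial coset contributes precisely the map $ \iota_n $ rather than some other automorphism — and this is settled by Proposition \ref{prop:involution}, which shows the induced conjugation map is independent of the choice of reflection $ s $.
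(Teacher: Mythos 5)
Your proposal is correct and matches the paper's approach: the paper simply cites the standard transfer identities (\cite[Lemma II.5.1]{Adem-Milgram}), which are exactly the double coset formula and Frobenius reciprocity computations you carry out, with the nontrivial coset contributing $c_s^* = \iota_n$ by Proposition \ref{prop:involution}. No gaps.
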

\begin{proof}
    Follows from well-known general facts \cite[Lemma II.5.1]{Adem-Milgram}.
\end{proof}

\subsection{The basis of charged decorated Hopf monomials}

In this subsection, we provide a full description of $ \AB $ as a bigraded $ C_2 $-representation using the Gysin sequence of the double covering $ B(\Bgrouppos{n}) \to B(\Bgroup_n) $. We precede the main result with a technical, but elementary, lemma.

\begin{definition} \label{def:Gysin decomposition}
Let $ \mathcal{M} $ be the associated decorated Hopf monomial basis of Theorem \ref{thm:basis B}.
We define $ \mathcal{G}_{ann} $ as the set of those $ x = b_1 \odot \dots \odot b_r \in \mathcal{M} $ whose constituent decorated gathered blocks $ b_i $ all contain at least one factor $ \gamma_{k,l} $ with $ k \geq 2 $.
We also define $ \mathcal{G}_{quot} $ as the set of those $ x = b_1 \odot \dots \odot b_r \in \mathcal{M} $ satisfying one of the following two conditions:
\begin{itemize}
	\item at least one constituent decorated gathered block $ b_i $ of $ x $ is of the form $ (w\dip{n})^k $, and the one with $ k $ maximal (necessarily unique) satisfies $ n \geq 2 $
	\item no $ b_i $ is of the form $ (w\dip{n})^k $ and among the constituent decorated gathered blocks of the form $ \gamma_{1,n}^l (w\dip{2n})^k $ (if there are any), the one with the couple $ (k,l) $ maximal with respect to the lexicographic order $ \leq_{lex} $ on $ \mathbb{N} \times \mathbb{N} $ satisfies $ n \geq 2 $ or $ l = 1 $
\end{itemize}
\end{definition}
For example, $ \gamma_{2,1} \odot (\gamma_{3,2} \cdot w\dip{16}) \in \mathcal{G}_{ann} $, $ \gamma_{2,1} \odot (\gamma_{1,1} \cdot w\dip{2}) \odot (w\dip{2})^5 \odot w^2 \in \mathcal{G}_{quot} \setminus \mathcal{G}_{ann}$, and $ \gamma_{2,1} \odot (\gamma_{1,1} \cdot w\dip{2}) \odot \gamma_{1,2}^2 \notin (\mathcal{G}_{quot} \cup \mathcal{G}_{ann}) $. Note that $ \mathcal{G}_{ann} \subseteq \mathcal{G}_{quot} $.

\begin{lemma} \label{lem:Gysin decomposition}
Let $ e_n = \gamma_{1,1} \odot 1_{n-2} + w\dip{1} \odot 1_{n-1} $. The following statements are true:
\begin{enumerate}
\item $ \mathcal{G}_{ann} $ is a basis of $ \bigoplus_n \Ann_{H^*(\Bgroup_n; \mathbb{F}_2)}(e_n) $
\item $ \mathcal{G}_{quot} $ projects to a basis of $ \bigoplus_n \frac{H^*(\Bgroup_n; \mathbb{F}_2)}{(e_n)} $
\item $ \Ann_{H^*(\Bgroup_n; \mathbb{F}_2)}(e_n) \cap (e_n) = 0 $
\end{enumerate}
\end{lemma}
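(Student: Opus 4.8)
The central object is the element $e_n = \gamma_{1,1} \odot 1_{n-2} + w\dip{1} \odot 1_{n-1}$ in the component Hopf ring $A_B$, and everything reduces to understanding multiplication by $e_n$. The first thing I would do is compute, once and for all, the cup product $e_n \cdot b$ where $b$ runs over an arbitrary decorated Hopf monomial. Since $e_n = (\gamma_{1,1} + w\dip{1}) \odot 1$ (with the convention that $\odot 1_k$ shifts components), I can use the Hopf-ring distributivity axiom $x \cdot (y \odot z) = \sum (x_{(1)} \cdot y) \odot (x_{(2)} \cdot z)$ together with the coproduct formula $(*)$ and the transfer-product formula $(**)$. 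Concretely, for a decorated Hopf monomial $x = b_1 \odot \dots \odot b_r$, multiplying by $e_n$ distributes over the $\odot$-factors via $\Delta$, and on each gathered block $b_i$ one computes $(\gamma_{1,1} + w\dip{1}) \cdot b_i'$ for the relevant sub-blocks $b_i'$ appearing in $\Delta(b_i)$. The key structural fact — which must be extracted from the presentation in \cite{Guerra:21}, Theorem 5.9 — is how $\gamma_{1,1}$ and $w\dip{1}$ cup-multiply a single generator: roughly, $w\dip{1} \cdot w\dip{n}$ and $\gamma_{1,1} \cdot \gamma_{1,n}$, $\gamma_{1,1} \cdot w\dip{2}$ behave in a controlled way, while $\gamma_{1,1}$ or $w\dip{1}$ cupped against any $\gamma_{k,l}$ with $k \geq 2$ lands in a predictable place. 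This is the "elementary but lengthy" skyline-diagram computation the authors keep deferring.

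**Parts (1) and (2).**
Once the action of $e_n$ on basis elements is in hand, I would argue as follows. The definitions of $\mathcal{G}_{ann}$ and $\mathcal{G}_{quot}$ are engineered precisely so that multiplication by $e_n$ acts triangularly with respect to a suitable ordering on $\mathcal{M}$ (lexicographic on the distinguished gathered block, as in Definition \ref{def:Gysin decomposition}). For part (2): I would show that $e_n \cdot (-)$ maps the span of $\mathcal{M} \setminus \mathcal{G}_{quot}$ isomorphically onto the ideal $(e_n)$, by checking that for $x \notin \mathcal{G}_{quot}$ the leading term of $e_n \cdot x$ (in the chosen order) is a nonzero multiple of a monomial that is again not in $\mathcal{G}_{quot}$, with distinct $x$ giving distinct leading terms; triangularity then gives that $\{e_n \cdot x : x \notin \mathcal{G}_{quot}\}$ is linearly independent and spans $(e_n)$, so $\mathcal{G}_{quot}$ descends to a basis of the quotient. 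The binomial coefficients from $(**)$ are all odd in the relevant cases — this needs a quick Kummer/Lucas check — so nothing degenerates mod $2$. For part (1): $x \in \mathcal{G}_{ann}$ means every constituent block contains a $\gamma_{k,l}$ with $k \geq 2$; using the computed action one sees $e_n \cdot x = 0$ because cupping $\gamma_{1,1}+w\dip{1}$ into a block already containing a high $\gamma_{k,l}$ produces terms that cancel in pairs (the $\gamma_{1,1}$ contribution matching the $w\dip{1}$ contribution), so $\mathcal{G}_{ann} \subseteq \operatorname{Ann}(e_n)$; conversely, any element of $\operatorname{Ann}(e_n)$ expanded in $\mathcal{M}$ must have all its $\mathcal{M} \setminus \mathcal{G}_{ann}$-components vanish, again by the triangularity established for (2) restricted to $\mathcal{G}_{quot} \setminus \mathcal{G}_{ann}$, on which $e_n$ acts injectively.

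**Part (3) and the main obstacle.**
Part (3) is then a formal consequence: if $z \in \operatorname{Ann}(e_n) \cap (e_n)$, write $z = e_n \cdot y$; decomposing $y$ along $\mathcal{M} = \mathcal{G}_{quot} \sqcup (\mathcal{M}\setminus\mathcal{G}_{quot})$ and noting $e_n$ kills exactly the $\mathcal{G}_{ann}$-part and more generally has kernel spanned by $\mathcal{G}_{ann}$, we may assume $y \in \operatorname{Span}(\mathcal{M} \setminus \mathcal{G}_{quot})$; but then $z = e_n \cdot y$ has, by the leading-term analysis, a nonzero component outside $\operatorname{Span}(\mathcal{G}_{ann})$ unless $y = 0$, contradicting $z \in \operatorname{Ann}(e_n) = \operatorname{Span}(\mathcal{G}_{ann})$. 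I expect the main obstacle to be establishing the precise triangularity of $e_n \cdot (-)$: the coproduct $(*)$ produces many cross-terms, and one must verify that after applying distributivity and re-expanding each resulting $\odot$-product of gathered blocks back into the Hopf-monomial basis (which itself invokes $(*)$ and $(**)$ recursively), the "distinguished block" governing membership in $\mathcal{G}_{quot}$ transforms in the claimed monotone way and that no unexpected cancellation or coefficient-$0$ phenomenon occurs. Bookkeeping the profiles and decorations through all of this — and confirming the parity of every binomial coefficient that appears — is the delicate combinatorial heart of the argument, even though each individual step is routine skyline-diagram manipulation.
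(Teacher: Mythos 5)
Your overall strategy is the paper's: expand $e_n\cdot x$ in the Hopf monomial basis $\mathcal{M}$ via Hopf ring distributivity, choose a total order making multiplication by $e_n$ echelon/triangular, and read off (1)--(3) from the leading terms. However, two of your specific steps do not hold as written. First, the inclusion $\mathcal{G}_{ann}\subseteq\Ann(e_n)$ is not a cancellation ``in pairs'' between the $\gamma_{1,1}$- and $w\dip{1}$-contributions: each contribution vanishes on its own. If every block of $x$ contains a factor $\gamma_{k,l}$ with $k\geq 2$, then by the coproduct formula $(*)$ every tensor factor appearing in $\Delta(x)$ has the same profile, hence component divisible by $2^k\geq 4$; so $\Delta(x)$ has no addends in components $1$ or $2$, and distributivity gives $(w\dip{1}\odot 1_{n-1})\cdot x=(\gamma_{1,1}\odot 1_{n-2})\cdot x=0$ separately. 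The mechanism you propose (two nonzero contributions cancelling) is not what happens and would not survive a careful computation.

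The more serious gap is that your leading-term analysis is run over the wrong index set. The ideal $(e_n)$ is spanned by $e_n\cdot x$ for all $x\in\mathcal{M}$, equivalently (once (1) is known) for all $x\in\mathcal{M}\setminus\mathcal{G}_{ann}$; the elements of $\mathcal{G}_{quot}\setminus\mathcal{G}_{ann}$ have nonzero image under $e_n\cdot(-)$, so checking leading terms only for $x\notin\mathcal{G}_{quot}$ cannot show that $\{e_n\cdot x:\,x\notin\mathcal{G}_{quot}\}$ spans $(e_n)$, and your reduction in (3) (``we may assume $y\in\Span(\mathcal{M}\setminus\mathcal{G}_{quot})$ because the kernel is spanned by $\mathcal{G}_{ann}$'') conflates $\mathcal{G}_{ann}$ with the strictly larger set $\mathcal{G}_{quot}$. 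What is actually required, and what the paper does, is to compute $\lm(e_n\cdot x)$ for \emph{every} $x\in\mathcal{M}\setminus\mathcal{G}_{ann}$ -- organized there by the unique factorization $x=x'\odot x''\odot x'''$ with the blocks of $x'$ powers of classes $w\dip{r}$, the blocks of $x''$ of the form $\gamma_{1,r}^l(w\dip{2r})^a$, and $x'''\in\mathcal{G}_{ann}$, the leading term being obtained by incrementing the largest block of $x'$ (via the $w\dip{1}$-summand) or, when $x'=1_0$, of $x''$ (via the $\gamma_{1,1}$-summand) -- then to check these leading monomials are pairwise distinct, and finally to verify combinatorially that their set is exactly $\mathcal{M}\setminus\mathcal{G}_{quot}$ for $\mathcal{G}_{quot}$ as in Definition \ref{def:Gysin decomposition}. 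That last identification is precisely what yields (2), the converse half of (1), and (3), and it is missing from your outline; without it the case analysis defining $\mathcal{G}_{quot}$ never gets used and the spanning claims are unsupported.
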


Before explaining the argument of the proof, we preliminarily need to construct some total orders on some sets of decorated Hopf monomials.

We define a function $ \varphi' \colon \mathcal{B}' \cap A_{\mathbb{P}^\infty(\mathbb{R})}^{n,*} \to \mathbb{N}^{n} $ as follows.
If $ x = b_1 \odot \dots \odot b_r \in \mathcal{B}' $, we order the $ b_i $s in such a way that $ \tilde{\varphi}'(b_1) > \dots > \tilde{\varphi}'(b_r) $.
We let
\[
\varphi'(x) = \left(\underbrace{\tilde{\varphi}'(b_1),\dots,\tilde{\varphi}'(b_1)}_{n(b_1) \mbox{ times}},\underbrace{\tilde{\varphi}'(b_2),\dots,\tilde{\varphi}'(b_2)}_{n(b_2) \mbox{ times}},\dots,\tilde{\varphi}'(b_r)\right).
\]
$ \varphi' $ is injective. Thus, we can define a total order on $ \mathcal{B}' \cap A_{\mathbb{P}^\infty(\mathbb{R})}^{n,*} $ by restricting the $ n $-fold lexicographic order on $ \mathbb{N}^n $ along $ \varphi' $. Provide $ \mathcal{B}' = \sqcup_n (\mathcal{B}' \cap A_{\mathbb{P}^\infty}^{n,*}) $ with the total order that restricts to the one defined above on each component, and such that for all $ x,x' \in \mathcal{B}' $, $ n(x) < n(x') \Rightarrow x < x' $.

We also consider the set $ \mathcal{B}'' $ of those $ x = b_1 \odot \dots \odot b_r \in \mathcal{M} $ such that for all $ 1 \leq i \leq r $: $ b_i = \gamma_{1,n_i}^{l_i}(w\dip{2n_i})^{k_i} $ for some $ n_i,l_i \geq 1 $ and $ k_i \geq 0 $.
We define a function $ \tilde{\varphi}'' $ from the set of decorated gathered blocks in $ \mathcal{B}'' $ to $ \mathbb{N} \times \mathbb{N} $ by letting $ \tilde{\varphi}''(\gamma_{1,n}^l(w\dip{2n})^k) = (k,l) $.
Note that $ \tilde{\varphi}''(b) = \tilde{\varphi}''(b') $ if and only if $ b $ and $ b' $ have the same profile and decoration.
Consequently, if $ x \in \mathcal{B}'' $ is a decorated Hopf monomial, $ \tilde{\varphi}'' $ assumes pairwise distinct values on its constituent gathered blocks.
We define a function $ \varphi'' \colon \mathcal{B}'' \cap A_{\mathbb{P}^\infty(\mathbb{R})}^{2n,*} \to (\mathbb{N} \times \mathbb{N})^n $ as follows. If $ x = b_1 \odot \dots \odot b_r \in \mathcal{B}'' $, we order the $ b_i $s in such a way that $ \tilde{\varphi}''(b_1) > \dots > \tilde{\varphi}''(b_r) $ with respect to the lexicographic order on $ \mathbb{N}\times \mathbb{N} $.
We let
\begin{gather*}
	\varphi''(x) = \left( \underbrace{\tilde{\varphi}''(b_1),\dots,\tilde{\varphi}''(b_1)}_{\frac{n(b_1)}{2} \mbox{ times}},\underbrace{\tilde{\varphi}''(b_2),\dots, \tilde{\varphi}''(b_2)}_{\frac{n(b_2)}{2} \mbox{ times}},\dots,\tilde{\varphi}''(b_r) \right).
\end{gather*}
$ \varphi'' $ is injective. Thus, we can define a total order on $ \mathcal{B}'' \cap A^{2n,*}_{\mathbb{P}^\infty(\mathbb{R})} $ by restricting the $ n $-fold lexicographic power of $ (\mathbb{N} \times \mathbb{N}, \leq_{lex}) $ along $ \varphi'' $.
Provide $ \mathcal{B}'' = \sqcup_n (\mathcal{B}'' \cap A_{\mathbb{P}^\infty}^{2n,*}) $ with the total order that is restricted to the one defined above on each component, and such that for all $ x,x' \in \mathbb{B}'' $, $ n(x) < n(x') \Rightarrow x < x' $.

 We observe that transfer product gives a bijection $ \mathcal{B}' \times \mathcal{B}'' \times \mathcal{G}_{ann} \to \mathcal{B}' \odot \mathcal{B}'' \odot \mathcal{G}_{ann} = \mathcal{M} $. Choose an arbitrary total order on $ \mathcal{G}_{ann} $ and provide $ \mathcal{M} $ with the lexicographic product of the total orders on $ \mathcal{B}' $, $ \mathcal{B}'' $ and $ \mathcal{G}_{ann} $.

\begin{proof}[Proof of Lemma \ref{lem:Gysin decomposition}]
We will prove the statements of the Lemma by expressing a cup product $ e_n \cdot x $ as a linear combination in $ \mathcal{M} $ and analyzing its leading non-zero term $ \lm(e_n \cdot x) $ of with respect to that order on $ \mathcal{M} $.
To simplify notation, given a Hopf monomial $ x $ in $ A_{\mathbb{P}^\infty(\mathbb{R})}^{n,d} $, we write $ x = x' \odot x'' \odot x''' $, with $ x' \in \mathcal{B}' $, $ x'' \in \mathcal{B}'' $ and $ x''' \in \mathcal{G}_{ann} $.

Since $ \Delta(x''') $ has no addends in component $ (1,n(x''')-1) $ or $ (2,n(x''')-2) $, by Hopf ring distributivity $ (w \odot 1_{n(x''')-1}) \cdot x''' = (\gamma_{1,1} \odot 1_{n(x''')-2}) \cdot x''' = 0 $. Therefore, by Hopf ring distributivity again, $ e_n \cdot x = \left((e_{n-n(x''')}) \cdot (x' \odot x'')\right) \cdot x''' $.

In particular, if $ x' = x'' = 1_0 $, then $ x \in \Ann(e_n) $. This proves that $ \mathcal{G}_{ann} \subseteq \Ann(e_n) $.

If $ x' = 1_0 $ and $ x'' \not= 1_0 $, then $ e_n \cdot x = (e_{n(x'')} \cdot x'') \odot x''' $. Moreover, $ \Delta(x'') $ has no addends in component $ (1,n(x'')-1) $, hence $ (w \odot 1_{n(x'')-1}) \cdot x'' = 0 $.
$ (\gamma_{1,1} \odot 1_{n(x'')-2}) \cdot x'' $ is the sum of all Hopf monomials $ y \in \mathcal{B}'' $ such that $ \varphi''(y) $ is obtained from $ \varphi''(x'') $ by adding $ (0,1) $ to one of its $ n $ factors and re-ordering them from the largest to the smallest according to the lexicographic order of $ \mathbb{N} \times \mathbb{N} $. We deduce that $ \lm(e_n \cdot x) = \lm((\gamma_{1,1} \odot 1_{n(x)-2}) \cdot x) $ is equal to
\[
{\varphi''}^{-1}(\varphi''(x'') + ((0,1),(0,0),\dots,(0,0))) \odot x'''.
\]

Similarly, if $ x' \not= 1_0 $, then
\[
\lm((w \odot 1_{n(x)-1}) \cdot x) = {\varphi'}^{-1}(\varphi'(x') + (1,0,\dots,0)) \odot x'' \odot x'''.
\] 
Since $ (\gamma_{1,1} \odot 1_{n(x)-2}) \cdot x $ is a linear combination of strictly smaller Hopf monomials, this is also the leading term of $ e_n \cdot x $.

The leading terms of $ e_n \cdot x $ for $ x \in \mathcal{M} \setminus \mathcal{G}_{ann} $ that we just determined are all distinct, so the matrix associated to $ e_n \colon A_{\mathbb{P}^\infty(\mathbb{R})} \to A_{\mathbb{P}^\infty(\mathbb{R})} $ with respect to the basis $ \mathcal{M} $ of $ A_{\mathbb{P}^\infty(\mathbb{R})} $ with our total order is in echelon form with exactly those leading entries. This implies that $ \mathcal{G}_{ann} $ generates $ \Ann(e_n) $ and proves $ (1) $.
By observing that
\[
\mathcal{G}_{quot} = \mathcal{M} \setminus \{ \lm(e_n \cdot x): x \in \mathcal{M} \setminus \mathcal{G}_{ann} \}
\]
we deduce $ (2) $. $ (3) $ also follows from this fact because none of those leading terms belong to $ \Ann(e_n) $.
\end{proof}

\begin{theorem} \label{thm:basis alternating subgroup}
Let $ \mathcal{M} $ be as in Definition \ref{def:Gysin decomposition}. Let $ \kappa = \mathbb{F}_2 $ be the trivial $ C_2 $-representation and let $ \xi = \mathbb{F}_2[C_2] $ be the regular $ C_2 $-representation. 
\begin{enumerate}
	\item If $ x \in \mathcal{G}_{ann} $, then there exists two classes $ x^+, x^- \in \AB $ with the same bidegree of $ x $ such that $ \iota(x^+) = x^- $, $ \tr(x^+) = \tr(x^-) = x $ and $ \res(x) = x^+ + x^- $.
	\item For $ x \in \mathcal{G}_{quot} \setminus \mathcal{G}_{ann} $, let $ x^0 = \res(x) $. Let $ \mathcal{M}_0 = \{ x^0: x \in \mathcal{G}_{quot} \setminus \mathcal{G}_{ann} \} $, $ \mathcal{M}_+ = \{ x^+: x \in \mathcal{G}_{ann} \} $ and $ \mathcal{M}_- = \{ x^-: x \in \mathcal{G}_{ann} \} $. Then $ \mathcal{M}_{charged} = \mathcal{M}_0 \sqcup \mathcal{M}_+ \sqcup \mathcal{M}_- $ is a basis of $ \AB $.
	\item As a $ C_2 $-representation, $ \AB \cong \bigoplus_{\mathcal{M}_0} \kappa \oplus \bigoplus_{\mathcal{M}_+} \xi $.
\end{enumerate}
\end{theorem}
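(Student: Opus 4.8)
The plan is to combine the Gysin sequence of the double covering $\pi\colon B(\Bgrouppos{n}) \to B(\Bgroup_n)$ with Lemma \ref{lem:Gysin decomposition} and the Propositions of Subsection 3.2. The mod $2$ Euler class of this covering is $e_n$ (equivalently, under $H^1(\Bgroup_n;\F_2) = \Hom(\Bgroup_n, C_2)$ the class $e_n$ corresponds to $\sgn_{\Bgroup_n}$; this is a direct restriction computation, verified by testing against the subgroups $C_2^n$ and $\Sigma_n$ of $\Bgroup_n$). Hence the Gysin sequence reads
\[
\cdots \to A_B^{n,d-1} \xrightarrow{\ \cup e_n\ } A_B^{n,d} \xrightarrow{\ \res\ } H^d(\Bgrouppos{n};\F_2) \xrightarrow{\ \tr\ } A_B^{n,d} \xrightarrow{\ \cup e_n\ } A_B^{n,d+1} \to \cdots,
\]
and, assembled over all $n$ and $d$, it yields $\ker \res = (e)$, $\im \res = \ker \tr$ and $\im \tr = \Ann(e)$, where $(e) = \bigoplus_n e_n \cdot H^*(\Bgroup_n;\F_2)$ and $\Ann(e) = \bigoplus_n \Ann_{H^*(\Bgroup_n;\F_2)}(e_n)$. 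I will use this alongside the already-established relations $\res \circ \tr = \id + \iota$, $\tr \circ \res = 0$ and the $C_2$-invariance of $\res$ and $\tr$ (Proposition \ref{prop:restriction transfer involution}).

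Part (1) is then immediate: for $x \in \mathcal{G}_{ann}$ we have $x \in \Ann(e) = \im \tr$ by Lemma \ref{lem:Gysin decomposition}(1), so I pick any $y \in \AB$ of the same bidegree with $\tr(y) = x$ and set $x^+ := y$, $x^- := \iota(y)$; the identities $\iota(x^+) = x^-$, $\tr(x^\pm) = x$ and $\res(x) = x^+ + x^-$ follow from $\tr\circ\iota = \tr$ and $\res\circ\tr = \id + \iota$, and only existence is asked for. For part (2) I would first show $\mathcal{M}_{charged}$ spans $\AB$ by a diagram chase: given $v \in H^*(\Bgrouppos{n};\F_2)$ write $\tr(v) = \sum_{x\in\mathcal{G}_{ann}} c_x x$ (legitimate by Lemma \ref{lem:Gysin decomposition}(1)), set $v' := v + \sum_x c_x x^+$ so that $\tr(v') = 0$ by part (1), hence $v' = \res(w)$ for some $w \in A_B$; reduce $w$ modulo $(e) = \ker\res$ using that $\mathcal{G}_{quot}$ projects to a basis of $A_B/(e)$ (Lemma \ref{lem:Gysin decomposition}(2)), and expand $\res(x)$ via part (1) when $x \in \mathcal{G}_{ann}\subseteq\mathcal{G}_{quot}$ and via $\res(x) = x^0$ otherwise; collecting terms expresses $v$ in $\Span\mathcal{M}_{charged}$. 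Next I count dimensions in each bidegree: the short exact sequence $0 \to A_B/(e) \xrightarrow{\res} \AB \xrightarrow{\tr} \Ann(e) \to 0$ extracted from the Gysin sequence, together with Lemma \ref{lem:Gysin decomposition}(1)--(2), gives $\dim_{\F_2} H^d(\Bgrouppos{n};\F_2) = |\mathcal{G}_{quot}\cap A_B^{n,d}| + |\mathcal{G}_{ann}\cap A_B^{n,d}|$, whereas (using $\mathcal{G}_{ann}\subseteq\mathcal{G}_{quot}$) the set $\mathcal{M}_{charged}$ has at most this many elements in that bidegree. A spanning set of a finite-dimensional vector space with at most $\dim$ elements is a basis, so $\mathcal{M}_{charged}$ is a basis; forcing equality throughout then shows that $x \mapsto x^0, x^+, x^-$ are injective and that $\mathcal{M}_0,\mathcal{M}_+,\mathcal{M}_-$ are pairwise disjoint, as the notation $\mathcal{M}_0\sqcup\mathcal{M}_+\sqcup\mathcal{M}_-$ asserts.

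Part (3) follows by inspecting the $\iota$-action on this basis. Each $x^0 = \res(x)$ is $\iota$-fixed since $\res$ is $C_2$-invariant, so $\Span\mathcal{M}_0 \cong \bigoplus_{\mathcal{M}_0}\kappa$; and $\iota$ interchanges $x^+$ with $x^- = \iota(x^+)$, which are distinct because $\mathcal{M}_{charged}$ is a basis, so $\Span\{x^+,x^-\}$ is a copy of the regular representation $\xi$ and these copies are linearly independent over $x \in \mathcal{G}_{ann}$; hence $\Span(\mathcal{M}_+\cup\mathcal{M}_-) \cong \bigoplus_{\mathcal{M}_+}\xi$, and summing yields the isomorphism of $C_2$-representations. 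The only steps requiring genuine care are the identification of $e_n$ with the Euler class of the cover — needed so that the Gysin sequence applies, and possibly already recorded earlier in the paper — and the bookkeeping that upgrades ``$\mathcal{M}_{charged}$ spans'' to ``$\mathcal{M}_{charged}$ is a basis''; everything else is a formal consequence of the Lemma and the Propositions.
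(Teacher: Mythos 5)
Your argument is correct and follows the same route as the paper: the Gysin sequence of the double cover $B(\Bgrouppos{n})\to B(\Bgroup_n)$ with Euler class $e_n$, combined with Lemma \ref{lem:Gysin decomposition}. The only difference is one of packaging: the paper cites \cite{Guerra:23} for the identification of the Euler class with $e_n$ and delegates the remaining bookkeeping to the notion of Gysin decomposition and the discussion in \cite[Subsection 3.3]{Sinha:17}, whereas you verify the Euler class directly and carry out the spanning argument, dimension count, and $C_2$-representation analysis by hand, all correctly.
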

\begin{proof}
Since the index of $ \Bgrouppos{n} $ in $ \Bgroup_n $ is $ 2 $, $ B(\Bgrouppos{n}) \to B(\Bgroup_n) $ is homotopy equivalent to a double covering. Hence, there is a Gysin exact sequence:
\[
\dots \to H^k(\Bgroup_n; \mathbb{F}_2) \stackrel{\res_n}{\rightarrow} H^k(\Bgrouppos{n}; \mathbb{F}_2) \stackrel{\tr_n}{\rightarrow} H^k(\Bgroup_n; \mathbb{F}_2) \stackrel{\partial}{\rightarrow} H^{k+1}(\Bgroup_n; \mathbb{F}_2) \to \dots 
\]
$ \partial $ is the multiplication by the Euler class of the covering, which is $ e_n $ by \cite[Lemma 3.4]{Guerra:23}.

Therefore, Lemma \ref{lem:Gysin decomposition} guarantees that $ \mathcal{G}_{quot} $ and $ \mathcal{G}_{ann} $ form a Gysin decomposition in the sense of \cite[Definition 3.6]{Sinha:17}.
The result follows from this remark and the discussion of \cite[Subsection 3.3]{Sinha:17}. 
\end{proof}

Since elements of $ \mathcal{G}_{ann} $ have component multiple of $ 4 $, we immediately deduce the following corollary.

\begin{corollary} \label{cor:quotient not mod 4}
If $ n \not\equiv 0 \mod 4 $, then $ \res_n \colon H^*(\Bgroup_{n}; \mathbb{F}_2) \to H^*(\Bgrouppos{n}; \mathbb{F}_2) $ is surjective.
\end{corollary}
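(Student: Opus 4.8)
The plan is to deduce this immediately from Theorem~\ref{thm:basis alternating subgroup}. The basis $\mathcal{M}_{charged} = \mathcal{M}_0 \sqcup \mathcal{M}_+ \sqcup \mathcal{M}_-$ of $\AB$ is built from two kinds of generators: the classes $x^0 = \res(x)$ for $x \in \mathcal{G}_{quot}\setminus\mathcal{G}_{ann}$, which are manifestly in the image of $\res_n$, and the pairs $x^\pm$ for $x \in \mathcal{G}_{ann}$, which may fail to be restrictions. So the first step is to observe that $\res_n$ is surjective in component $n$ as soon as no basis element of the second type lives in component $n$, i.e. as soon as $\mathcal{G}_{ann}$ has no elements of component $n$.

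The second step is to pin down which components $\mathcal{G}_{ann}$ occupies. By Definition~\ref{def:Gysin decomposition}, an element $x = b_1 \odot \dots \odot b_r \in \mathcal{G}_{ann}$ has the property that every constituent decorated gathered block $b_i$ contains a factor $\gamma_{k,l}$ with $k \ge 2$. Such a factor $\gamma_{k,l}$ lies in $A_B^{l2^k,\, l(2^k-1)}$, so its component $l2^k$ is divisible by $4$ (since $k\ge 2$); a decorated gathered block $b_i$ is a cup product of generators with a common component, hence $n(b_i)$ is itself that common value $l2^k$, which is divisible by $4$. Finally $n(x) = \sum_i n(b_i)$ is a sum of multiples of $4$, hence $n(x) \equiv 0 \pmod 4$. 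This is exactly the remark made just before the corollary ("elements of $\mathcal{G}_{ann}$ have component multiple of $4$"). Consequently, if $n \not\equiv 0 \pmod 4$, then $\mathcal{G}_{ann}$ contributes nothing in component $n$, so $\mathcal{M}_+$ and $\mathcal{M}_-$ have no elements of component $n$, and the component-$n$ part of the basis $\mathcal{M}_{charged}$ consists entirely of classes of the form $\res_n(x)$.

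The third and final step is to conclude: the component-$n$ subspace of $\AB$ is $H^*(\Bgrouppos{n};\mathbb{F}_2)$, and we have just shown it is spanned by elements in the image of $\res_n$, so $\res_n$ is surjective. I do not expect any genuine obstacle here — the content is entirely bookkeeping about which components the set $\mathcal{G}_{ann}$ can occupy, and that is forced by the bidegrees of the generators $\gamma_{k,l}$ with $k \ge 2$. The one point that deserves a sentence of care is the claim that $n(b_i)$ is divisible by $4$ for each constituent block $b_i$: this uses that within a single decorated gathered block all factors share the same component, so the presence of even one $\gamma_{k,l}$ with $k\ge 2$ forces that shared component to be $l2^k$, a multiple of $4$.
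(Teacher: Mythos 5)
Your argument is correct and is exactly the paper's: the corollary is deduced from Theorem \ref{thm:basis alternating subgroup} together with the observation (stated in the paper immediately before the corollary) that every element of $\mathcal{G}_{ann}$ has component divisible by $4$, so in components $n \not\equiv 0 \bmod 4$ the basis $\mathcal{M}_{charged}$ consists only of classes $x^0 = \res_n(x)$. Your extra sentence justifying why $n(b_i)$ is a multiple of $4$ (all factors of a gathered block share one component, forced to be $l2^k$ with $k\ge 2$) is the right bookkeeping and matches the intended reasoning.
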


\subsection{Almost-Hopf ring structure on \texorpdfstring{$ \AB $}{the cohomology of the alternating subgroup of Bn}}

On $ \AB $ we consider structural morphisms similar to those defined on $ A_B $:
\begin{itemize}
	\item the usual in-component cup product $ \cdot \colon \AB \otimes \AB \to \AB $,
	\item a coproduct $ \Delta \colon \AB \to \AB \otimes \AB $ whose components are restriction maps associated to the inclusions of groups $ \Bgrouppos{n} \times \Bgrouppos{m} \to \Bgrouppos{n+m} $,
	\item a transfer product $ \odot \colon \AB \otimes \AB \to \AB $ whose components are transfer maps associated to the same inclusions.
\end{itemize}

\begin{definition}[from \cite{Sinha:17}] \label{def:Hopf semiring}
	A commutative bigraded component almost-Hopf semiring is a bigraded vector space $ A = \bigoplus_{n,d}A^{n,d} $ with a coassociative and cocommutative coproduct $ \Delta $, two associative and commutative products $ \odot,\cdot $, a unit $ \eta \colon k \to A $, and a counit $ \varepsilon \colon A \to k $ satisfying all the properties of Definition \ref{def:Hopf ring}, except those involving the antipode and $ \Delta $ and $ \odot $ forming a bialgebra.
\end{definition}

\begin{theorem}[{\cite[Theorem 2.4]{Sinha:17}}, particular case] \label{thm:alternating subgroup Hopf semiring}
	Let $ \AB = \bigoplus_{n \geq 0} H^*(\Bgrouppos{n}; \mathbb{F}_2) $. With the maps $ \Delta, \odot, \cdot $ defined above, and the unit and the counit that identify $ {\AB}^{0,0} $ with $ \mathbb{F}_2 $, $ \AB $ is a commutative bigraded component almost-Hopf semiring over $ \mathbb{F}_2 $.
\end{theorem}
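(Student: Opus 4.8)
The plan is to obtain the statement from the general result \cite[Theorem 2.4]{Sinha:17}, whose hypotheses I would verify in the present situation; the input data are the component Hopf ring $A_B=\bigoplus_n H^*(\Bgroup_n;\mathbb{F}_2)$ of Theorem \ref{thm:basis B} and the family of normal subgroups $\Bgrouppos{n}\trianglelefteq\Bgroup_n$. First I would record the elementary compatibilities: since the sign homomorphism is multiplicative for the decomposition $\Bgroup_n\times\Bgroup_m\subseteq\Bgroup_{n+m}$, each inclusion $i_{n,m}$ restricts to a monomorphism $j_{n,m}\colon\Bgrouppos{n}\times\Bgrouppos{m}\hookrightarrow\Bgrouppos{n+m}$; these are associative, are compatible with the index-$2$ normal inclusions $\Bgrouppos{n}\hookrightarrow\Bgroup_n$, and are precisely the inclusions used to define $\Delta$, $\odot$ and $\cdot$ on $\AB$.

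Next I would check the structural axioms of Definition \ref{def:Hopf semiring}. Coassociativity of $\Delta$ and associativity of $\odot$ follow from associativity of the $j_{n,m}$ and naturality of restriction and transfer; commutativity of the in-component cup product is immediate; compatibility of $\Delta$ with $\cdot$ holds because restriction is a ring homomorphism and the K\"unneth isomorphism is multiplicative; and the Hopf-ring distributivity axiom follows from the projection formula for the transfers $\tr_{j_{n,m}}$ together with the double coset formula, by the same computation that proves the analogous identity for $A_B$. The two requirements deliberately omitted from an almost-Hopf semiring --- that $\Delta$ be a morphism for $\odot$, and the existence of an antipode --- are exactly those that fail, so nothing needs to be said about them.

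The point demanding real care --- and the one I expect to be the main obstacle --- is cocommutativity of $\Delta$ and commutativity of $\odot$. By naturality of restriction and transfer these reduce to showing that the interchange of the two factors $\Bgrouppos{n}$, $\Bgrouppos{m}$ inside $\Bgrouppos{n+m}$ is induced by conjugation by an element of $\Bgrouppos{n+m}$. The natural candidate is the block-transposition $\beta\in\Sigma_{n+m}\subseteq\Bgroup_{n+m}$, whose sign is $(-1)^{nm}$; when $nm$ is even it lies in $\Bgrouppos{n+m}$ and the argument is immediate. When $nm$ is odd --- forcing $n$ and $m$ both odd --- one must instead use $\beta s$ for a reflection $s\in\Bgroup_n\subseteq\Bgroup_{n+m}$, and conjugation by $s$ contributes a twist of the interchange by the standard involution $\iota_n$ (Proposition \ref{prop:involution}). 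However, for $n$ odd the restriction $\res_n\colon H^*(\Bgroup_n;\mathbb{F}_2)\to H^*(\Bgrouppos{n};\mathbb{F}_2)$ is surjective by Corollary \ref{cor:quotient not mod 4}, and its image is pointwise $\iota_n$-fixed by Proposition \ref{prop:restriction transfer involution}(1); hence $\iota_n=\id$ and the twist is invisible. Cocommutativity of $\Delta$ and commutativity of $\odot$ therefore hold in all cases, so all hypotheses of \cite[Theorem 2.4]{Sinha:17} are in force and the theorem follows.
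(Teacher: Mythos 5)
Your proposal is correct, but it takes a different route from the paper: the paper offers no argument at all here, simply invoking \cite[Theorem 2.4]{Sinha:17} as a particular case of a general result about such index-two (alternating-type) families, whereas you carry out a self-contained verification of the axioms of Definition \ref{def:Hopf semiring}. The substantive content of your check is exactly the point the citation hides, namely (co)commutativity when both components are odd, and your treatment of it is sound and non-circular: the block swap has sign $(-1)^{nm}$, in the odd case you correct by a reflection $s\in\Bgroup_n$, and the resulting twist by $\iota_n$ disappears because $\res_n$ is surjective for $n\not\equiv 0 \bmod 4$ (Corollary \ref{cor:quotient not mod 4}) and $\iota\circ\res=\res$ (Proposition \ref{prop:restriction transfer involution}), both of which precede the theorem in the paper. (Even more directly: for $n$ odd the longest element $-\mathbb{I}$ is central of negative sign, so $c_s=c_{s\cdot(-\mathbb{I})}$ is an inner automorphism of $\Bgrouppos{n}$ and $\iota_n=\id$ without appeal to the Gysin analysis.) Two small imprecisions worth noting: the distributivity axiom needs only the projection formula $x\cdot\tr(y\times z)=\tr(\res(x)\cdot(y\times z))$ together with the fact that cross-component cup products vanish --- the double coset formula is not needed there, and indeed the double-coset discrepancy between $\Bgrouppos{n}\times\Bgrouppos{m}$ and $\Bgroup_n\times\Bgroup_m$ is precisely what makes the $(\Delta,\odot)$ bialgebra axiom fail, which you rightly do not claim; and the cup product $\cdot$ is the in-component product, not one defined via the inclusions $j_{n,m}$. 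Neither affects the validity of the argument; what your approach buys is an explicit proof in place of the paper's black-box citation, at the cost of some routine axiom-checking.
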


\subsection{\texorpdfstring{The extension $ \widetilde{\AB} $}{The extended almost-Hopf ring}}

The standard involution interacts nicely with the structural morphisms of this Hopf semiring.
\begin{proposition} \label{prop:relations involution}
The following identities hold for all $ x,y \in \AB $:
\begin{enumerate}
\item $ \iota(x) \odot y = x \odot \iota(y) = \iota(x \odot y) $.
\item $ (\iota \otimes \id) \Delta(x) = (\id \otimes \iota) \Delta(x) = \Delta \iota(x) $.
\item $ (\iota \otimes \iota) \Delta(x) = \Delta(x) $.
\item $ \iota(x) \cdot \iota(y) = \iota(x \cdot y) $.
\end{enumerate}
\end{proposition}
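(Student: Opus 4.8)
The plan is to reduce all four identities to the naturality of restriction and transfer applied to a single commutative square of groups, combined with the (co)commutativity of $\odot$, $\cdot$ and $\Delta$ already recorded in Theorem \ref{thm:alternating subgroup Hopf semiring}. Identity (4) will be essentially free: each $\iota_n$ is induced by an honest self-map of the classifying space of $\Bgrouppos n$ (conjugation by a reflection), hence is a ring homomorphism for the cup product, and since $\cdot$ on $\AB$ is the componentwise cup product and $\iota$ preserves components, $\iota(x\cdot y)=\iota(x)\cdot\iota(y)$. All the remaining content is in (1)--(3).

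Next I would set up the square. Fix $n,m$ and let $j\colon\Bgrouppos n\times\Bgrouppos m\hookrightarrow\Bgrouppos{n+m}$ be the inclusion whose associated restriction and transfer give the $(n,m)$-components of $\Delta$ and $\odot$. Choose a reflection $s\in\Bgroup_n$ supported on the first $n$ coordinates, viewed inside $\Bgroup_{n+m}$ via the standard block embedding (which carries reflections to reflections). Since $s$ fixes the last $m$ coordinates, it commutes with the copy of $\Bgroup_m$ acting on them, so for $(a,b)\in\Bgrouppos n\times\Bgrouppos m$ one has $c_s(j(a,b))=s\,ab\,s^{-1}=(sas^{-1})\,b=j(c_s(a),b)$, i.e.
\[
c_s\circ j = j\circ\big((c_s|_{\Bgrouppos n})\times\id_{\Bgrouppos m}\big).
\]
On the left $c_s$ induces $\iota_{n+m}$ (conjugation by a reflection of $\Bgroup_{n+m}$); on the right $c_s|_{\Bgrouppos n}$ induces $\iota_n$, so by the K\"unneth isomorphism over $\F_2$ the product map induces $\iota_n\otimes\id$.

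Then the identities should come out as follows. Applying $H^*(-;\F_2)$ to the displayed relation gives $j^*\circ\iota_{n+m}=(\iota_n\otimes\id)\circ j^*$, while naturality of the cohomological transfer with respect to the automorphism $c_s$ (as in \cite[Chapter II]{Adem-Milgram}) gives $\tr_j\circ(\iota_n\otimes\id)=\iota_{n+m}\circ\tr_j$. Summing over $n,m$ and unwinding the definitions of $\Delta$ and $\odot$, the first yields $\Delta\iota(x)=(\iota\otimes\id)\Delta(x)$ and the second yields $\iota(x\odot y)=\iota(x)\odot y$. The remaining equality in (1) then follows from commutativity of $\odot$: $x\odot\iota(y)=\iota(y)\odot x=\iota(y\odot x)=\iota(x\odot y)$; the remaining equality in (2) follows from cocommutativity of $\Delta$, conjugating by the flip $\tau$: $(\id\otimes\iota)\Delta=(\id\otimes\iota)\tau\Delta=\tau(\iota\otimes\id)\Delta=\tau\Delta\iota=\Delta\iota$. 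Identity (3) is then purely formal: $(\iota\otimes\iota)\Delta=(\iota\otimes\id)(\id\otimes\iota)\Delta=(\iota\otimes\id)\Delta\iota=\Delta\iota^2=\Delta$, using $\iota^2=\id$.

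I do not expect a genuine obstacle. The one point that needs care is the combinatorial bookkeeping behind the square: one must take the reflection $s$ inside a single wreath factor, so that its conjugation action on $\Bgrouppos n\times\Bgrouppos m$ splits as $\iota_n\otimes\id$ and does not mix the two factors, and one must invoke transfer naturality in the correct variance. Taking $s$ in the second factor would instead produce the $(\id\otimes\iota)$ statements directly; I prefer to deduce those from (co)commutativity to keep the write-up short.
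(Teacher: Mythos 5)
Your argument is correct, and it is essentially the approach the paper takes: the paper's proof simply defers to a ``diagrammatic'' argument (citing \cite[Lemma 5.10]{Guerra:21}, or alternatively the chain-level argument of \cite[Proposition 3.17]{Sinha:17}), and your commutative square $c_s\circ j=j\circ(c_s|_{\Bgrouppos n}\times\id)$ together with naturality of restriction and transfer, plus (co)commutativity and $\iota^2=\id$, is exactly that diagrammatic argument written out explicitly. No gaps: the choice of the reflection $s$ inside the first block factor and the correct variance for transfer naturality are handled properly.
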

\begin{proof}
All the statements can be proved diagrammatically, with essentially the same argument used in the proof of \cite[Lemma 5.10]{Guerra:21}. Alternatively, one can apply the chain-level argument of \cite[Proposition 3.17]{Sinha:17}.
\end{proof}

Thanks to Proposition \ref{prop:relations involution}, we can extend our almost Hopf ring similarly to \cite[Definition 3.22, Proposition 3.23]{Sinha:17}.

\begin{definition} \label{def:extended Hopf ring}
Let $ \widetilde{\AB} $ be the bigraded vector space that agree with $ \AB $ in positive component, and such that $ \widetilde{\AB}^{0,*} = \mathbb{F}_2 \{ 1_0^+, 1_0^- \} $ concentrated in bidegree $ (0,0) $. The structural operations $ \cdot $, $ \odot $ and $ \Delta $ are extended to $ \widetilde{\AB} $ by the rules $ 1_0^+ \cdot 1_0^+ = 1_0^+ $, $ 1_0^- \cdot 1_0^- = 1_0^- $, $ 1_0^+ \cdot 1_0^- = 0 $, $ 1_0^+ \odot x = x $, $ 1_0^- \odot x = \iota(x) $, $ \Delta(x) = \overline{\Delta}(x) + 1_0^+ \otimes x + 1_0^- \otimes x + x \otimes 1_0^+ + x \otimes 1_0^- $, where $ \overline{\Delta} $ is the reduced coproduct in $ \AB $.
\end{definition}

\begin{proposition} \label{prop:extended Hopf ring}
$ \widetilde{\AB} $, with the extended morphisms $ \cdot $, $ \odot $ and $ \Delta $, is a bigraded component quasi-Hopf ring containing $ \AB $ as a subobject.
\end{proposition}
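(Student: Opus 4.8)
The plan is to verify the axioms of a commutative bigraded component quasi-Hopf ring (equivalently, the axioms of Definition \ref{def:Hopf ring} except those involving the antipode, together with coassociativity and cocommutativity of $\Delta$, and associativity/commutativity of $\cdot$ and $\odot$) for the extended structure on $\widetilde{\AB}$, knowing from Theorem \ref{thm:alternating subgroup Hopf semiring} that these hold for $\AB$ itself. Since $\widetilde{\AB}$ and $\AB$ agree in positive component and differ only by replacing the one-dimensional $\AB^{0,0}=\mathbb{F}_2\{1_0\}$ with the two-dimensional $\widetilde{\AB}^{0,*}=\mathbb{F}_2\{1_0^+,1_0^-\}$, every axiom that already holds in $\AB$ continues to hold whenever all the inputs lie in positive component; so the only thing to check is what happens when one or more inputs is $1_0^+$ or $1_0^-$. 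The inclusion $\AB\hookrightarrow\widetilde{\AB}$ is realized by $1_0\mapsto 1_0^+$, and one should first note that this is compatible with $\cdot$, $\odot$ and $\Delta$, so that $\AB$ sits inside $\widetilde{\AB}$ as a subobject (this is immediate from the defining formulas: $1_0^+\cdot 1_0^+=1_0^+$, $1_0^+\odot x=x$, and the $\Delta$ formula restricted to $1_0^+$ reproduces the coproduct of $\AB$ up to the identification).

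First I would dispose of the two products. For $\odot$: by definition $1_0^+$ is a two-sided unit and $1_0^-\odot x=\iota(x)$; associativity and commutativity of $\odot$ on mixed inputs then reduce, via Proposition \ref{prop:relations involution}(1) (which gives $\iota(x)\odot y=x\odot\iota(y)=\iota(x\odot y)$ and $\iota^2=\id$), to the already-known associativity and commutativity of $\odot$ on $\AB$; e.g.\ $(1_0^-\odot x)\odot y=\iota(x)\odot y=\iota(x\odot y)=x\odot(\iota(y))=x\odot(1_0^-\odot y)$ and $1_0^-\odot(1_0^-\odot x)=\iota^2(x)=x=1_0^+\odot x$. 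For $\cdot$: the component-grading axiom $A^{n,d}\cdot A^{n',d'}=0$ for $n\neq n'$ is built into the rules $1_0^+\cdot 1_0^-=0$ and $1_0^{\pm}\cdot x=0$ for $x$ of positive component, and associativity/commutativity on the two-dimensional $\widetilde{\AB}^{0,0}$ is the trivial check that $\mathbb{F}_2\{1_0^+,1_0^-\}$ with $1_0^\pm\cdot 1_0^\pm=1_0^\pm$, $1_0^+\cdot 1_0^-=0$ is a commutative ring (it is $\mathbb{F}_2\times\mathbb{F}_2$).

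Next I would handle $\Delta$: coassociativity and cocommutativity when a $1_0^\pm$ appears. Cocommutativity is visible from the symmetric shape of the formula $\Delta(x)=\overline{\Delta}(x)+1_0^+\otimes x+1_0^-\otimes x+x\otimes 1_0^+ + x\otimes 1_0^-$ together with cocommutativity of $\overline\Delta$ on $\AB$, and one also needs $\Delta(1_0^+)=1_0^+\otimes 1_0^+ + 1_0^-\otimes 1_0^-$ and $\Delta(1_0^-)=1_0^+\otimes 1_0^- + 1_0^-\otimes 1_0^+$ (these are forced by wanting $1_0^+$ to be grouplike-like and $\iota$ to intertwine; compatibility with Proposition \ref{prop:relations involution}(2),(3) should be recorded here). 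Coassociativity is the one genuine bookkeeping step: applying $(\Delta\otimes\id)$ and $(\id\otimes\Delta)$ to $\Delta(x)$ for $x$ of positive component produces, besides the term $(\overline\Delta\otimes\id)\overline\Delta(x)$ handled in $\AB$, a collection of terms with one tensor factor equal to $1_0^+$ or $1_0^-$; matching them uses exactly Proposition \ref{prop:relations involution}(2) (that $\iota$ can be slid across a tensor factor in $\Delta$) and (3), and the counit axiom with $\varepsilon(1_0^+)=\varepsilon(1_0^-)$ suitably normalized. This is the same computation carried out in \cite[Proposition 3.23]{Sinha:17}, and I would simply point to that argument after setting up the dictionary.

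The remaining axioms to verify are the two bialgebra compatibilities of $\Delta$ with each product and the distributivity axiom $x\cdot(y\odot z)=\sum (x_{(1)}\cdot y)\odot(x_{(2)}\cdot z)$, again only in the cases where a $1_0^\pm$ occurs among $x,y,z$. Here the key inputs are Proposition \ref{prop:relations involution} in full: part (1) for compatibility of $\odot$ with $\iota$, parts (2)--(3) for how $\Delta$ interacts with $\iota$, and part (4) for multiplicativity of $\iota$ with respect to $\cdot$. For instance, distributivity with $x=1_0^-$ reads $\iota(y\odot z)=\sum \iota(y_{(1)})\iota\text{-twisted}\ \dots$, which unwinds to $\iota(y)\odot \iota(z)=\iota(y\odot z)$ twisted appropriately, i.e.\ a consequence of (1); distributivity with $y$ or $z$ equal to $1_0^\pm$ is similar, and the cases where $x$ has positive component while $y$ or $z$ is $1_0^\pm$ use the $\Delta$-formula to peel off the $1_0^\pm\otimes x$ and $x\otimes 1_0^\pm$ terms. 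I expect the main obstacle to be purely organizational: there is a moderately large case-tree (which of $x,y,z$ is $1_0^+$, which is $1_0^-$, which is of positive component) and one must be careful that the $\mathbb{F}_2$-coefficients and the doubling of the unit do not produce spurious or missing terms — in particular that the cross terms $1_0^+\otimes x+1_0^-\otimes x$ in $\Delta$ are exactly what is needed to make distributivity and the $\Delta$--$\cdot$ bialgebra axiom close up. Since none of this is conceptually new beyond \cite{Sinha:17}, I would present the verification compactly, spelling out one representative case in each family and citing \cite[Definition 3.22, Proposition 3.23]{Sinha:17} for the rest, and conclude that $\AB\hookrightarrow\widetilde{\AB}$ exhibits $\AB$ as a subobject in the category of such structures.
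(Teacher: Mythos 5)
Your overall strategy is the same as the paper's implicit one (the paper gives no written argument beyond the appeal to Proposition \ref{prop:relations involution} and the analogy with \cite[Definition 3.22, Proposition 3.23]{Sinha:17}): all axioms hold in positive components by Theorem \ref{thm:alternating subgroup Hopf semiring}, so only inputs equal to $1_0^\pm$ need checking, and these should reduce to Proposition \ref{prop:relations involution}. The genuine problem is in the one case you defer as ``similar'', namely distributivity with $y$ (or $z$) equal to $1_0^-$ and $x$ of positive component. With the unit terms of the extended coproduct written as you (and Definition \ref{def:extended Hopf ring}) state them, $1_0^+\otimes x+1_0^-\otimes x+x\otimes 1_0^++x\otimes 1_0^-$, the right-hand side of $x\cdot(1_0^-\odot z)=\sum (x_{(1)}\cdot 1_0^-)\odot(x_{(2)}\cdot z)$ collapses to $(1_0^-\cdot 1_0^-)\odot(x\cdot z)=\iota(x\cdot z)=\iota(x)\cdot\iota(z)$, while the left-hand side is $x\cdot\iota(z)$; these differ in general (take $x=z=\gamma_{2,1}^+$ and compare $\gamma_{2,1}^+\cdot\gamma_{2,1}^-$, given by Proposition \ref{prop:cup relations}(2), with $(\gamma_{2,1}^-)^2$). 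So the cross terms you claim ``are exactly what is needed to make distributivity close up'' do not suffice: the check only closes if the unit terms attached to $1_0^-$ carry the involution, i.e.\ $1_0^-\otimes\iota(x)+\iota(x)\otimes 1_0^-$ (as in the construction you cite), in which case the right-hand side becomes $1_0^-\odot(\iota(x)\cdot z)=\iota(\iota(x)\cdot z)=x\cdot\iota(z)$ by Proposition \ref{prop:relations involution}(4). This is the only non-routine point of the whole proposition, and it must be spelled out rather than waved off; as sketched, your verification of this axiom would fail.

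Two smaller slips. In the case $x=1_0^-$ you write that distributivity ``reads $\iota(y\odot z)=\dots$'', but $1_0^-\cdot(y\odot z)$ vanishes whenever $n(y)+n(z)>0$ by the component axiom (you conflated $\cdot$ with $\odot$); that case is trivially fine. And the counit cannot satisfy $\varepsilon(1_0^+)=\varepsilon(1_0^-)$: applying $\varepsilon\otimes\id$ to the extended coproduct of a positive-component class gives $(\varepsilon(1_0^+)+\varepsilon(1_0^-))x$, so one needs exactly one of the two values to be $1$, namely $\varepsilon(1_0^+)=1$, $\varepsilon(1_0^-)=0$ — which is also what your (correct) formulas $\Delta(1_0^+)=1_0^+\otimes 1_0^++1_0^-\otimes 1_0^-$ and $\Delta(1_0^-)=1_0^+\otimes 1_0^-+1_0^-\otimes 1_0^+$ force.
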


The standard involution extends to $ \widetilde{\AB} $ by letting $ \iota(1_0^+) = 1_0^- $ and $ \iota(1_0^-) = 1_0^+ $. Proposition \ref{prop:relations involution} holds true in $ \widetilde{\AB} $.
Moreover, $ \res $ and $ \tr $ extend as well, by the assignment $ \res(1_0) = 1_0^+ + 1_0^- $ and $ \tr(1_0^+) = \tr(1_0^-) = 1_0 $. With a slight abuse of notation, we use the same notation for the extended and non-extended versions of these maps, because there is no risk of confusion.

The relation between $ \res $, $ \tr $ and the Hopf semiring structure of $ \widetilde{\AB} $ is stated explicitly below.
\begin{proposition} \label{prop:identities restriction transfer}
	The following statements are true:
	\begin{enumerate}
		\item $ \res $ preserves coproducts and cup products.
		\item $ \tr $ preserves transfer products.
		\item $ \forall x,y \in A_B: \res(x) \odot \res(y) = 0 $.
		\item $ \forall x \in \AB, y \in A_B: \res(\tr(x) \odot y) = x \odot \res(y) $.
		\item $ \forall x,x' \in \AB: \tr(x) \cdot \tr(x') = \tr(x \cdot x' + \iota(x) \cdot x') $.
		\item $ \forall x \in \AB, y \in A_B: \tr(x \cdot \res(y)) = \tr(x) \cdot y $.
	\end{enumerate}
\end{proposition}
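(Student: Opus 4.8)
The plan is to deduce all six identities from standard properties of restriction and transfer in group cohomology, combined with the facts already available: $\res\circ\tr=\id+\iota$, $\tr\circ\res=0$, the $C_2$-invariance of $\res$ and $\tr$ (Proposition~\ref{prop:restriction transfer involution}), and the compatibility of $\iota$ with $\odot$, $\Delta$ and $\cdot$ (Proposition~\ref{prop:relations involution}). The general facts I would invoke are: naturality of $\res$ and $\tr$ along commutative squares of group homomorphisms; transitivity of transfer $\tr^C_A=\tr^C_B\circ\tr^B_A$; multiplicativity of transfer for products of coverings, $\tr^{G\times H}_{G'\times H'}(a\times b)=\tr^G_{G'}(a)\times\tr^H_{H'}(b)$; the projection (Frobenius reciprocity) formula $\tr^G_H\bigl(a\cdot\res^G_H(c)\bigr)=\tr^G_H(a)\cdot c$; and the Mackey double coset formula. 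Throughout I would use silently that $\res$ commutes with the cross products attached to the inclusions of direct products of groups.

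I would handle $(1)$ and $(2)$ first, as they are direct. For $(1)$, the square of group inclusions $\Bgrouppos{n}\times\Bgrouppos{m}\hookrightarrow\Bgrouppos{n+m}$ and $\Bgroup_n\times\Bgroup_m\hookrightarrow\Bgroup_{n+m}$, together with $\Bgrouppos{\bullet}\hookrightarrow\Bgroup_\bullet$, commutes because an element of $\Bgrouppos{n}\times\Bgrouppos{m}$ has sign $+1$ inside $\Bgroup_{n+m}$; naturality of restriction then gives $\Delta\circ\res=(\res\otimes\res)\circ\Delta$, while the statement for cup products is just that $\res_n$ is a ring homomorphism (the degree-zero part being an immediate check against Definition~\ref{def:extended Hopf ring}). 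For $(2)$, writing $\odot$ as a transfer of a cross product, transitivity of transfer and multiplicativity over product coverings reduce both $\tr(x\odot y)$ and $\tr(x)\odot\tr(y)$ to $\tr^{\Bgrouppos{n+m}}_{\Bgrouppos{n}\times\Bgrouppos{m}}(x\times y)$ read inside $\Bgroup_{n+m}$ via transitivity once more.

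Items $(3)$ and $(4)$ carry the real content, and I expect the Mackey bookkeeping there to be the main obstacle. The geometric input I would isolate first is that $K:=(\Bgroup_n\times\Bgroup_m)\cap\Bgrouppos{n+m}=\{(g,h):\sgn(g)=\sgn(h)\}$ sits between $\Bgrouppos{n}\times\Bgrouppos{m}$ and $\Bgroup_n\times\Bgroup_m$ with index $2$ on each side, and that $\Bgrouppos{n+m}$ is normal of index $2$ in $\Bgroup_{n+m}$, so every Mackey sum that occurs collapses to a single term attached to $K$. For $(3)$, I would factor both $\res^{\Bgroup_n\times\Bgroup_m}_{\Bgrouppos{n}\times\Bgrouppos{m}}$ and $\tr^{\Bgrouppos{n+m}}_{\Bgrouppos{n}\times\Bgrouppos{m}}$ through $K$; the middle composite $\tr^K_{\Bgrouppos{n}\times\Bgrouppos{m}}\circ\res^K_{\Bgrouppos{n}\times\Bgrouppos{m}}$ is multiplication by $[K:\Bgrouppos{n}\times\Bgrouppos{m}]=2$ (projection formula applied to $1$), hence $0$ over $\mathbb{F}_2$, so $\res(x)\odot\res(y)=0$. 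For $(4)$, I would rewrite $\res(\tr(x)\odot y)$ using Mackey for $\res^{\Bgroup_{n+m}}_{\Bgrouppos{n+m}}\circ\tr^{\Bgroup_{n+m}}_{\Bgroup_n\times\Bgroup_m}$, then express $\tr(x)\times y$ as $\tr^{\Bgroup_n\times\Bgroup_m}_{\Bgrouppos{n}\times\Bgroup_m}(x\times y)$ and apply Mackey again for $\res^{\Bgroup_n\times\Bgroup_m}_K\circ\tr^{\Bgroup_n\times\Bgroup_m}_{\Bgrouppos{n}\times\Bgroup_m}$; the two collapses turn $\res(\tr(x)\odot y)$ into $\tr^{\Bgrouppos{n+m}}_{\Bgrouppos{n}\times\Bgrouppos{m}}\bigl(x\times\res(y)\bigr)=x\odot\res(y)$.

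Finally I would handle $(5)$ and $(6)$, which are formal. Identity $(6)$ is the projection formula for $\Bgrouppos{n}\hookrightarrow\Bgroup_n$ verbatim. For $(5)$, applying the projection formula with $c=\tr(x')$ gives $\tr(x)\cdot\tr(x')=\tr\bigl(x\cdot\res\tr(x')\bigr)=\tr\bigl(x\cdot(x'+\iota x')\bigr)=\tr(x\cdot x')+\tr(x\cdot\iota x')$, and $\tr(x\cdot\iota x')=\tr\bigl(\iota(\iota x\cdot x')\bigr)=\tr(\iota x\cdot x')$ by Proposition~\ref{prop:relations involution}$(4)$ together with $\tr\circ\iota=\tr$, which yields $(5)$; the degenerate cases involving $1_0$ and $1_0^{\pm}$ are checked directly from the definitions. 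The only genuinely delicate part of the whole argument is keeping the double coset computations in $(3)$ and $(4)$ straight; everything else is routine manipulation with the standard transfer identities.
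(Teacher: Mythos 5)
Your proposal is correct, and in substance it follows the same route the paper takes --- except that the paper's own ``proof'' is essentially a pointer to \cite[Propositions 3.16 and 3.18]{Sinha:17} and \cite[Proposition 5.29]{Guerra:21}, where the analogous identities are established by diagrammatic/covering-space (and chain-level) arguments, whereas you derive everything self-containedly from the standard transfer calculus: naturality and transitivity, Frobenius reciprocity for (5) and (6), and the Cartan--Eilenberg double coset formula (which the paper itself only quotes later, as Theorem \ref{thm:Cartan-Eilenberg}) for (3) and (4). Your Mackey bookkeeping is right: with $K=(\Bgroup_n\times\Bgroup_m)\cap\Bgrouppos{n+m}$ each relevant restriction-of-transfer collapses to a single double coset because the two index-$2$ subgroups involved are distinct, and the composite $\tr^K_{\Bgrouppos{n}\times\Bgrouppos{m}}\circ\res^K_{\Bgrouppos{n}\times\Bgrouppos{m}}$ is multiplication by $2=0$; this is exactly the algebraic shadow of the covering-space arguments in the cited references, so nothing new is needed beyond what the paper assumes. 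The only point to make explicit is that the identities live in the extended object $\widetilde{\AB}$, so the cases where one component is zero (where $K$ no longer has index $2$ over $\Bgrouppos{n}\times\Bgrouppos{m}$, i.e.\ $n=0$ or $m=0$) are not covered by the double-coset collapse and must be checked by hand using $\res(1_0)=1_0^++1_0^-$, $1_0^-\odot x=\iota(x)$ and the $C_2$-invariance of $\res$ and $\tr$; you acknowledge such degenerate checks only in passing for (5), but the same remark is needed for (3) and (4). With that small caveat spelled out, your argument is complete.
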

\begin{proof}
    All of these statements can be verified diagrammatically. The first two statements follow from the functoriality of transfer and restriction maps. Statements (3) and (4) are proved similarly as \cite[Proposition 3.16]{Sinha:17}. Statement (5) follows from \cite[Proposition 3.18]{Sinha:17} and the remaining identities can be proved with the same argument used for \cite[Proposition 5.29]{Guerra:21}.
\end{proof}

\section{The geometro-combinatorial picture: Fox-Neuwirth representatives and  \texorpdfstring{generators of $ \AB $}{almost-Hopf ring generators}}

We devote this section to the definition of the generators of $ \AB $ as an almost-Hopf ring. We will construct them by exhibiting cochain representatives in the Fox-Neuwirth complex.

\subsection{Recollection of Fox-Neuwirth cochain complexes}

We briefly recall below specific combinatorial models for the cohomology of the symmetric and hyperoctahedral groups. We refer to \cite{Sinha:13,Guerra:21} for a detailed exposition.

We fix cartesian coordinates $ x $ and $ y $ on a plane and an integer $ m $.
We also consider planar trees $ T $ with the following properties:
\begin{itemize}
\item $ T $ has a root in the origin of the plane
\item the difference between the $ y $ coordinate of two vertices connected by an edge is $ 1 $
\item the leaves all lie on the line described by the equation $ y = m $
\item the $ y $ coordinate is always increasing along every minimal path from the root to a leaf of $ T $
\item $ T $ has $ n $ leaves that are labeled bijectively with the set $ \{1,\dots,n\} $
\end{itemize}
We call such trees planar level trees.

We introduce an equivalence relation to remove the dependence on $ m $. Given a planar level tree $ T $ with $ n $ leaves $ l_1,\dots,l_n $ of height $ m $, we can construct another tree $ T(1) $, with $ n $ leaves $ l'_1, \dots, l'_n $ such that $ x(l'_i) = x(l_i) $ and $ y(l'_i) = y(l_i) + 1 $ by adding the straight edges between $ l_i $ and $ l_i' $. We consider the equivalence relation $ \sim $ generated by the isotopy relation in the plane and $ T \sim T(1) $ for all $ T $.

There is a cochain complex of $ \mathbb{F}_2[\Sigma_n] $-modules $ \widetilde{\FN}^{*}_{\Sigma_n} $ defined as followed. As a graded $ \mathbb{F}_2 $-vector space, a graded basis is given by all equivalence classes of planar level trees with $ n $ leaves. The degree of a tree is $ \deg(T) = \sum_v y(v)(r_v-1) $, where the sum is over internal vertices of $ T $ and $ r_v $ is the number of outgoing edges of $ v $.

The symmetric group acts on this basis by permuting the labels of the leaves of trees. Moreover, the differential is computed by identifying couples of adjacent edges and performing a vertex shuffle at the quotient vertex (see \cite[Definition 2.5]{Sinha:13} for the precise definition).

There is an alternative description of $ \widetilde{\FN}^*_{\Sigma_n} $. It is the free $ \mathbb{F}_2[\Sigma_n] $-module with basis given by $ (n-1) $-tuples of non-negative integers $ [a_1,\dots,a_{n-1}] $. A tree $ T $ as above determines such an $ (n-1) $-tuple  $ [a_1,\dots, a_{n-1}] $ by ordering its leaves from left to right according to the $ x $ coordinate, considering the unique couple of adjacent vertices $ (e_i,f_i) $ that separates the $ i^{th} $ and the $ (i+1)^{th} $ leaves, and recording the $ y $ coordinate $ a_i $ of their common vertex.
This is a bijection after forgetting the labels of the leaves, which determines a permutation in $ \Sigma_n $.
The differential can be described directly in terms of the corresponding $ (n-1) $-tuple and the combinatorics of the symmetric groups (see \cite{Salvetti:00}).
With this description, the degree of $ [a_1,\dots,a_{n-1}] $ is simply $ \sum_{i=1}^{n-1} a_i $.

Let $ \mathcal{r} $ be the reflection across the $ y $ axis.
There is also a symmetric version of this complex, whose basic defining objects are planar trees $ T $ satisfying the first four conditions above and having the two additional symmetry properties:
\begin{itemize}
\item $ T $ is symmetric with respect to $ \mathcal{r} $
\item $ T $ has $ (2n+1) $ leaves that are labeled bijectively with the set $ \{-n,\dots,-1,0, $ $ 1,\dots,n \} $, and this labeling is antisymmetric with respect to $ \mathcal{r} $
\end{itemize}
We call such trees antisymmetric planar level trees, and we consider the equivalence relation $ \sim $ defined as in the non-symmetric case.

There is a cochain complex of $ \mathbb{F}_2[\Bgroup_n] $-modules $ \widetilde{\FN}^*_{\Bgroup_n} $ defined as followed. As a graded $ \mathbb{F}_2 $-vector space, a graded basis is given by all equivalence classes of antisymmetric planar trees with $ (2n+1) $ leaves. The degree of a tree is $ \deg(T) = \sum_v y(v)(r_v-1) $, where the sum is over internal vertices of $ T $ that belong to the positive half-plane $ \{(x,y): x \geq 0\} $, and $ r_v $ is the number of outgoing edges of $ v $ contained in that half-plane.

$ \Bgroup_n $ acts on this basis by realizing its elements as signed permutations, and permuting the labels of the leaves of trees. The differential is defined in terms of tree quotients and symmetric vertex shuffles (see \cite[Proposition 3.5]{Guerra:21} for the precise definition).

There is an alternative description of $ \widetilde{\FN}^*_{\Bgroup_n} $. It is the free $ \mathbb{F}_2[\Bgroup_n] $-module with basis given by $ n $-tuples of non-negative integers $ [a_0,\dots,a_{n-1}] $. An antisymmetric planar tree $ T $ as above determines such an $ n $-tuple  $ [a_0,\dots, a_{n-1}] $ similarly to $ \widetilde{\FN}^*_{\Sigma_n} $, but considering only couples of adjacent vertices contained in the positive half-plane.
The differential can be described directly in terms of the corresponding $ n $-tuple and the combinatorics of $ \Bgroup_n $ (see \cite{Salvetti:00}).
With this description, the degree of $ [a_0,\dots,a_{n-1}] $ is simply $ \sum_{i=0}^{n-1} a_i $.

The complex $ \widetilde{\FN}^*_{\Sigma_n} $ (respectively $ \widetilde{\FN}^*_{\Bgroup_n} $) is known to be dual to a free $ \Sigma_n $- (respectively $ \Bgroup_n $-) resolution of $ \mathbb{F}_2 $. Consequently, for every subgroup $ G \leq \Sigma_n $ (respectively $ G \leq \Bgroup_n $), the cohomology of the quotient $ \widetilde{\FN}^*_{\Sigma_n}/G $ (respectively $ \widetilde{\FN}^*_{\Bgroup_n}/G $) is isomorphic to $ H^*(G; \mathbb{F}_2) $.
These complexes have a special name.
\begin{definition} \label{def:FN}
Let $ G \leq \Sigma_n $ (respectively $ G \leq \Bgroup_n $) be a subgroup. The Fox-Neuwirth cochain complex of $ G $, denoted $ \FN^*_G $, is the quotient $ \widetilde{\FN}^*_{\Sigma_n}/G $ (respectively $ \widetilde{\FN}^*_{\Bgroup_n}/G $).
\end{definition}

\subsection{Linking \texorpdfstring{$ \FN^*_{\mathcal{A}_n} $}{the Fox-Neuwirth resolutions for the alternating subgroups of the symmetric groups} and \texorpdfstring{$ \FN^*_{\Bgrouppos{n}} $}{Bn}}

In this subsection, we analyze the relation between Fox-Neuwirth complexes of the classical alternating group $ \mathcal{A}_n \leq \Sigma_n $ and $ \Bgrouppos{n} $. We preliminarily provide a more explicit description of $ \FN^*_{\Bgrouppos{n}} $ and $ \FN^*_{\mathcal{A}_n} $.
\begin{definition} \label{def:FN+}
Fix a reflection $ s \in \Bgroup_n $. Given an $ n $-tuple $ [a_0,\dots,a_{n-1}] $, we define $ [a_0,\dots,a_{n-1}]^+ $ and $ [a_0,\dots, a_{n-1}]^- $ as the elements in $ \FN_{\Bgrouppos{n}}^* $ represented by $ [a_0,\dots,a_{n-1}] \in \widetilde{\FN}^*_{\Bgroup_{n}} $ and $ s.[a_0,\dots,a_{n-1}] \in \widetilde{\FN}^*_{\Bgroup_{n}} $, respectively.
With tree notation, if $ T $ is a planar level tree with $ 2n+1 $ leaves and symmetric with respect to $ \mathcal{r} $, then we define $ T^+ $ as the element in $ \FN_{\Bgrouppos{n}}^* $ represented by $ T $ with the antisymmetric labeling of its leaves given by placing the labels $ -n,\dots,n $ from left to right in order, and we define $ T^- = s.T^+ $.
\end{definition}

A direct argument, similar to that of the proof of Proposition \ref{prop:involution}, shows that $ [a_0,\dots, a_{n-1}]^+ $ and $ [a_0,\dots, a_{n-1}]^- $ do not depend on the choice of the reflection $ s $. Moreover, since $ \{1,s\} $ is a set of representatives for the cosets $ \Bgroup_n/\Bgrouppos{n} $, these signed $ n $-tuples $ [a_0,\dots,a_{n-1}]^\pm $ form a graded basis of $ \FN_{\Bgrouppos{n}}^* $ over $ \mathbb{F}_2 $.
The differential is computed via symmetric vertex shuffles, by keeping track of the signs of the elements of $ \Bgroup_n $ involved.

A similar description in terms of signed $ (n-1) $-tuples exists for $ \FN^*_{\mathcal{A}_n} $ (see \cite[Section 4]{Sinha:17}).

To link the Fox-Neuwirth complexes for $ \mathcal{A}_n $ and $ \Bgrouppos{n} $, we need to recall that there is a $ k $-symmetrization map $ \tilde{S}_k \colon \widetilde{\FN}^*_{\Sigma_n} \to \widetilde{\FN}^*_{\Bgroup_n} $ that attach a planar level tree $ T $ to the $ y $ axis from the right up to height $ k $ and then symmetrizes (see \cite[Definition 3.2]{Guerra:21}).
Although $ \tilde{S}_k $ does not commute with the differential, the following statement is proved from the definition by direct inspection.

\begin{proposition} \label{prop:symmetrization}
Let $ d_0 \colon \FN^*_{\Bgrouppos{n}} \to \FN^{*+1}_{\Bgrouppos{n}} $ be the map defined as follows. If $ T $ is an antisymmetric planar level tree, let $ (e,f) $ be the couple of adjacent edges in $ T $ separating the central leaf with the leaf immediately to its right. $ d_0(T) = \sum_{\sigma} \sigma T/(\mathcal{r}(f)=e=f) $, where the sum is over all symmetric vertex shuffles at the quotient vertex.

Let $ \mathcal{A}_n \leq \Sigma_n $ be the alternating subgroup.
$ \tilde{S}_k $ induces a morphism of graded $ \mathbb{F}_2 $-vector spaces $ S_k \colon \FN^*_{\mathcal{A}_n} \to \FN_{\Bgrouppos{n}}^* $.
Moreover, $ S_0 $ satisfies the following identity:
\[
\forall x \in \FN^*_{\mathcal{A}_n}: \quad dS_0(x) = S_0(dx) + d_0(S_0(x))
\]
\end{proposition}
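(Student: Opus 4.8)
The claim splits into two parts: (a) that the $k$-symmetrization map $\tilde{S}_k$ descends to the quotients $\widetilde{\FN}^*_{\Sigma_n}/\mathcal{A}_n \to \widetilde{\FN}^*_{\Bgroup_n}/\Bgrouppos{n}$, yielding a graded vector space map $S_k$; and (b) the explicit chain-homotopy-type identity $dS_0(x) = S_0(dx) + d_0(S_0(x))$. Both should follow by direct inspection of the definitions, but a careful bookkeeping of signed permutations is required.

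\textbf{Part (a): descent to quotients.} First I would check that $\tilde{S}_k$ is equivariant in the appropriate sense: for $\sigma \in \mathcal{A}_n \leq \Sigma_n$, one has $\tilde{S}_k(\sigma \cdot T) = \hat{\sigma} \cdot \tilde{S}_k(T)$, where $\hat{\sigma} \in \Bgrouppos{n} \leq \Bgroup_n$ is the image of $\sigma$ under the standard inclusion $\Sigma_n \hookrightarrow \Bgroup_n$ (a permutation fixing $0$, hence even, hence in the alternating subgroup). This is immediate from the construction: $\tilde{S}_k$ acts on the combinatorial data (attach to the $y$-axis up to height $k$, then reflect), and relabeling the leaves of $T$ by $\sigma$ before symmetrizing gives the same tree as symmetrizing and then applying the induced signed permutation $\hat\sigma$. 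Since $\hat\sigma \in \Bgrouppos{n}$, passing to the quotient $\widetilde{\FN}^*_{\Bgroup_n}/\Bgrouppos{n}$ kills the $\hat\sigma$-action, so $\tilde{S}_k$ factors through $\widetilde{\FN}^*_{\Sigma_n}/\mathcal{A}_n = \FN^*_{\mathcal{A}_n}$ to give a well-defined $S_k$. Gradedness is clear since $\tilde{S}_k$ preserves the degree formula $\deg(T) = \sum_v y(v)(r_v - 1)$ restricted to the positive half-plane.

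\textbf{Part (b): the homotopy identity.} The differential $d$ on $\FN^*_{\Bgrouppos{n}}$ sums, over all couples $(e,f)$ of adjacent edges \emph{in the positive half-plane}, the symmetric vertex shuffles at the quotient vertex. When we compute $dS_0(x)$ for $x$ coming from $\FN^*_{\mathcal{A}_n}$, the tree $S_0(x)$ is obtained by symmetrizing with $k=0$, i.e. by reflecting across the $y$-axis and inserting the central leaf labeled $0$. The adjacent-edge couples of $S_0(x)$ in the positive half-plane come in two flavors: those entirely "inherited" from the original tree $x$ (i.e. separating two leaves both among $1,\dots,n$), and the one special couple separating the central leaf $0$ from the leaf immediately to its right. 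The first type, after symmetric shuffling, reassembles precisely into $S_0(dx)$ — this is the content of comparing the non-symmetric shuffle defining $d$ on $\FN^*_{\mathcal{A}_n}$ with the symmetric shuffle, using that $k=0$ means nothing was attached to the axis so the two computations agree leaf-for-leaf on the positive side. The second type is by definition exactly $d_0(S_0(x))$. Summing gives $dS_0(x) = S_0(dx) + d_0(S_0(x))$ over $\mathbb{F}_2$.

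\textbf{Main obstacle.} The delicate point is the comparison in the first flavor of Part (b): one must verify that the symmetric vertex shuffles occurring in $d$ on $\FN^*_{\Bgrouppos{n}}$, when restricted to edge-couples not involving the central leaf and with $k=0$, produce \emph{exactly} the terms of $S_0$ applied to the ordinary (non-symmetric) vertex shuffles defining $d$ on $\FN^*_{\mathcal{A}_n}$ — with no extra terms and no missing ones, and with the signs of the signed permutations working out so that the representatives in the quotient $\FN^*_{\Bgrouppos{n}}$ match. Because $S_0$ lands only in the positive half-plane data and the reflection $\mathcal{r}$ pairs things up antisymmetrically, the shuffle on the positive side is "free" of interference from the mirror side precisely when $k=0$; for $k \geq 1$ there would be correction terms coming from edges attached to the axis, which is why the clean identity is stated only for $S_0$. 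Tracking that the parity bookkeeping (even permutations in $\Sigma_n$ mapping to $\Bgrouppos{n}$, and the sign contributions in symmetric shuffles) is consistent throughout is the real work; everything else is a routine unwinding of \cite[Definition 3.2]{Guerra:21} and \cite[Proposition 3.5]{Guerra:21}.
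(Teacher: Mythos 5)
Your route is the same as the paper's: the paper offers no written argument beyond ``proved from the definition by direct inspection,'' and your decomposition --- equivariance of $\tilde{S}_k$ to get the descent, then splitting the adjacent-edge pairs of $S_0(x)$ in the positive half-plane into those inherited from $x$ (which reassemble into $S_0(dx)$) and the single pair at the central leaf (which gives $d_0(S_0(x))$ by definition) --- is exactly that inspection, including the correct observation that only $k=0$ avoids correction terms at the axis.

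One justification is wrong as written, and it touches precisely the ``parity bookkeeping'' you yourself flag as the main obstacle. Your reason that $\hat\sigma \in \Bgrouppos{n}$, namely ``a permutation fixing $0$, hence even, hence in the alternating subgroup,'' is not valid: the image of \emph{every} $\sigma \in \Sigma_n$, even or odd, is a signed permutation fixing $0$ that is even as a permutation of the $2n+1$ labels (it is $\sigma$ doubled), yet the image of a transposition is the reflection across $x_i = x_j$ and lies outside $\Bgrouppos{n}$. If your reasoning were correct, $\tilde{S}_k$ would descend modulo all of $\Sigma_n$ and the two charges $T^+$, $T^-$ of Definition \ref{def:FN+} would collapse. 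The correct statement is that $\Bgrouppos{n} = \ker(\sgn_{\Bgroup_n})$ where $\sgn_{\Bgroup_n}$ assigns $-1$ to every reflection; since $\Sigma_n \hookrightarrow \Bgroup_n$ sends transpositions to reflections, it intertwines the sign characters, so $\hat\sigma \in \Bgrouppos{n}$ exactly when $\sigma \in \mathcal{A}_n$. The same identification --- the $\Bgroup_n$-sign of a doubled, unsigned vertex shuffle equals its $\Sigma_n$-sign, not its parity on $2n+1$ letters --- is what makes the inherited differential terms match $S_0(dx)$ charge-for-charge in your part (b); with that correction your sketch is fine.
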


By Proposition \ref{prop:symmetrization}, understanding the differential of a $ 0 $-symmetrization in $ \FN_{\Bgrouppos{n}}^* $ boils down to the computation of $ d_0 $.
In general the combinatorics of $ d_0 $ in $ \FN_{\Bgrouppos{n}}^* $ is complicated.
Luckily, we will only use a special case, that we analyze below.
\begin{lemma} \label{lem:d0}
Let $ r \geq 2 $. If $ \underline{a} = [a_0,\dots,a_{n-1}] $ be an $ n $-tuple such that $ a_0 = 0 $, $ a_1 = \dots = a_{r-1} = 1 $ and $ a_{r} = 0 $, then
$ d_0(\underline{a}^+) = d_0(\underline{a}^-) = 0 $.
\end{lemma}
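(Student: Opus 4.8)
The plan is to compute $d_0(\underline{a}^{\pm})$ directly from the combinatorial definition of $d_0$ given in Proposition \ref{prop:symmetrization}, using the tree picture. Write $\underline{a} = [a_0,\dots,a_{n-1}]$ with $a_0=0$, $a_1=\dots=a_{r-1}=1$, $a_r=0$. First I would translate this $n$-tuple into the corresponding antisymmetric planar level tree $T$: because $a_0=0$ the central leaf and the leaf immediately to its right are separated by a vertex of height $0$, i.e.\ at the root; because $a_1=\dots=a_{r-1}=1$, the next $r-1$ leaves to the right are all attached by vertices of height $1$; and $a_r=0$ brings us back to the root before continuing. So near the center, $T$ looks like a ``comb'' of $r$ leaves emanating (in the positive half-plane) from height-$1$ vertices that all feed into the root, and by antisymmetry the mirror comb of $r$ leaves on the left, together with the central leaf at the root. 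The key local feature is that the edge $f$ separating the central leaf from its right neighbour is a single edge running from the root straight up to height $m$ (the central leaf sits directly above the root), while its mirror $\mathcal{r}(f)$ is the analogous edge on the left, and $e$ is the edge immediately to the right of $f$.

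Next I would carry out the collapse $T/(\mathcal{r}(f)=e=f)$ prescribed by $d_0$ and enumerate the symmetric vertex shuffles at the resulting quotient vertex. The point of the hypothesis $a_0 = 0$ is that $e$, $f$, and $\mathcal{r}(f)$ all meet at the root vertex already, so identifying them does not merge any two distinct internal vertices of positive height; the quotient vertex is still the root. Consequently the set of ``outgoing edges to be shuffled'' at the quotient vertex, restricted to the positive half-plane as in the degree/differential convention for $\widetilde{\FN}^*_{\Bgroup_n}$, consists of edges all based at height $0$. A vertex shuffle at a height-$0$ vertex contributes $0$ to the degree bookkeeping (the factor $y(v)(r_v-1)$ vanishes at $y=0$), and more to the point, the sum $d_0(T) = \sum_\sigma \sigma\, T/(\mathcal{r}(f)=e=f)$ over symmetric vertex shuffles $\sigma$ telescopes: the shuffles come in pairs related by swapping the two edges being interleaved at the root, and since we are over $\mathbb{F}_2$ these cancel. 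I expect the cleanest way to phrase this is: the collapsed tree has a repeated pair of identical adjacent subtrees hanging off the root (the edge that was $e$ and the edge that was $f$ now coincide after the identification, up to relabeling), so every shuffle has a partner giving the same tree, and the sum vanishes mod $2$. The argument for $\underline{a}^-$ is identical since $d_0$ is defined intrinsically on $\FN^*_{\Bgrouppos{n}}$ and $\underline{a}^- = s.\underline{a}^+$ only changes leaf labels, not the shuffle combinatorics.

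The main obstacle will be making the cancellation argument precise: one has to be careful that ``collapsing $\mathcal{r}(f)=e=f$'' in the antisymmetric setting really does leave the quotient vertex at height $0$ under the stated hypotheses (this is exactly where $a_0=0$ and $a_r=0$ enter — they guarantee $e$ and $f$ genuinely originate at the root rather than at some positive-height vertex), and that the involution on the indexing set of symmetric vertex shuffles is genuinely fixed-point-free, so that the mod $2$ sum is zero rather than equal to a single fixed term. I would handle this by appealing to the explicit description of symmetric vertex shuffles in \cite[Proposition 3.5]{Guerra:21}: at a vertex all of whose relevant outgoing edges lie at height $0$, a shuffle is just a choice of interleaving of the linear orders of two groups of edges, and the ``swap the two groups'' involution on such interleavings has no fixed points as soon as both groups are nonempty — which holds here because after the identification there are at least two edges (the former $e$ and the former $f$, now equal, contributing a doubled subtree, plus the rest of the comb). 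Once that is set up, the vanishing is immediate. Conditions $a_1=\dots=a_{r-1}=1$ with $r \geq 2$ are used only to know that the trees $\underline{a}^{\pm}$ are well-defined representatives in the situation of interest (i.e.\ that there is at least one height-$1$ vertex to the right, so the ``comb'' is nonempty) and play no essential role in the cancellation itself.
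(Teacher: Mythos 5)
There is a genuine gap, and it is located exactly where you declare the hypotheses harmless. Your cancellation mechanism is a fixed-point-free involution on the set of symmetric vertex shuffles (``swap the two interleaved groups''), which you claim pairs off shuffles producing the same tree, so that the sum vanishes mod $2$ independently of $r$; you state explicitly that $a_1=\dots=a_{r-1}=1$ and $r\geq 2$ ``play no essential role in the cancellation itself.'' This cannot be right, because for $r=1$ the conclusion of the lemma is false: the same computation there produces $d_0(\underline{a}^{\pm}) = \underline{b}^{+}+\underline{b}^{-} \neq 0$, where $\underline{b}$ is $\underline{a}$ with $a_0$ replaced by $1$. The paper's proof makes the dependence on $r$ transparent: after the collapse, every symmetric shuffle of the blocks $\{-r,\dots,-1\}$, $\{0\}$, $\{1,\dots,r\}$ yields the \emph{same} underlying tuple $\underline{b}$, but in $\FN^*_{\Bgrouppos{n}}$ the differential records the sign of the signed permutation carrying out the shuffle, so a shuffle contributes to $\underline{b}^{\pm}$ or to $\underline{b}^{\mp}$ according to its sign. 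Writing $S^+$ and $S^-$ for the positive and negative shuffles, one has $d_0(\underline{a}^{\pm}) = |S^+|\,\underline{b}^{\pm} + |S^-|\,\underline{b}^{\mp}$; precomposition with the transposition $(-1,1)$ gives a bijection $S^+\cong S^-$, and since there are $2^r$ symmetric shuffles in total, $|S^+|=|S^-|=2^{r-1}$, which is even precisely when $r\geq 2$. That parity count, not a tree symmetry, is the cancellation.

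Two further points where your picture goes wrong. First, you treat the charge as irrelevant to the shuffle combinatorics (``$\underline{a}^-=s.\underline{a}^+$ only changes leaf labels''), but the whole subtlety of $d_0$ on the quotient complex $\FN^*_{\Bgrouppos{n}}$ is that shuffles of different sign land on differently charged generators; an involution that pairs two shuffles giving the same unlabeled tree may pair a $\underline{b}^{+}$ contribution with a $\underline{b}^{-}$ contribution, which does not cancel anything. Second, the claim that the collapsed tree has ``a repeated pair of identical adjacent subtrees'' is not accurate: the two subtrees merged by $\mathcal{r}(f)=e=f$ are the single central leaf and the comb of $r$ leaves, which are not isomorphic, so there is no such symmetry to exploit. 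To repair the argument you would have to carry out the signed bookkeeping and the enumeration $|S^{\pm}|=2^{r-1}$ as above, at which point you recover the paper's proof.
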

\begin{proof}
Let $ x = \underline{a}^\pm $, and let $ \underline{b} $ be the $ n $-tuple obtained from $ \underline{a} $ by replacing $ a_0 $ with $ 1 $. Then, by unraveling the definition,
\[
d_0(x) = \sum_{\sigma \in S^+} \underline{b}^\pm + \sum_{\sigma \in S^-} \underline{b}^\mp,
\]
where $ S^+ $ (respectively $ S^- $) is the set of shuffles of the sets $ \{-r,\dots,-1\} $, $ \{0\} $ and $ \{ 1,\dots,r \} $ that are signed permutations of $ \{-r,\dots,r\} $ and have positive (respectively negative) sign.

We consider the function $ f \colon S^+ \to S^- $ given by precomposition with the transposition $ (-1,1) $.
We observe that $ f $ is well-defined and bijective. Therefore $ |S^+| = |S^-| $.
There are exactly $ 2^r $ symmetric shuffles of $ \{-r,\dots,-1\} $, $ \{0\} $ and $ \{ 1,\dots,r \} $, hence $ |S^+| = |S^-| = 2^{r-1} $. If $ r \geq 2 $, this is an even number. Consequently, $ d_0(x) = 0 $.
\end{proof}

\subsection{Construction of almost-Hopf semiring generators}

After having recalled the definition and some properties of Fox-Neuwirth complexes, we construct generators for the almost-Hopf ring $ \widetilde{\AB} $ by defining some representatives in $ \FN^*_{\Bgrouppos{n}} $.

\begin{definition} \label{def:generators}
For all $ l \geq 2 $ and $ m \geq 1 $, let $ \Gamma_{l,m}^+ $ and $ \Gamma_{l,m}^- $ be the cochains in $ \widetilde{\FN}_{\Sigma_{m2^l}}/\mathcal{A}_{m2^l} $ described in Definition 6.1 of \cite{Sinha:17}.
Explicitly, let $ * $ denote concatenation of tuples, and let $ B_{k,i,j} = [1]^{* (2^l-1)} $ if $ k \notin \{i,j\} $, $ B_{i,i,j} = [2] $, and $ B_{j,i,j} = [1]^{* (2^l-3)} $.
Also let $ \Gamma_{l,m}^\pm = \alpha_{l,m}^\pm + \sum_{1 \leq i < j \leq (m+1)} \beta_{l,m}^0(i,j) $, where:
\begin{gather*}
\alpha_{l,m}^{\pm} = \left(\underbrace{[1]^{* (2^l-1)} * [0] * \dots * [1]^{*(2^l-1)}}_{m \mbox{ times}} \right)^\pm \\
\beta_{l,m}^0(i,j) = \left( B_{1,i,j} * [0] * B_{2,i,j} * \dots * B_{m+1,i,j} \right)^+ + \left( B_{1,i,j} * [0] * B_{2,i,j} * \dots * B_{m+1,i,j} \right)^-
\end{gather*}
Define $ \widehat{\Gamma}^\pm_{l,m} = S_0(\Gamma^\pm_{l,m}) \in \FN_{\Bgrouppos{m2^l}}^* $.
\end{definition}

As a consequence of our calculations of $ d_0 $ in $ \FN_{\Bgrouppos{n}}^* $ we obtain that $ \widehat{\Gamma}^\pm_{l,m} $ is a cocycle.
\begin{corollary} \label{cor:cocycle}
For all $ l \geq 2 $ and $ m \geq 1 $, $ d(\widehat{\Gamma}^+_{l,m}) = d(\widehat{\Gamma}^-_{l,m}) = 0 $.
\end{corollary}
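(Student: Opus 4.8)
The plan is to reduce the claim, via Proposition \ref{prop:symmetrization}, to a statement about $d_0$ and then invoke Lemma \ref{lem:d0}. Recall that $\widehat{\Gamma}^\pm_{l,m} = S_0(\Gamma^\pm_{l,m})$ and that $\Gamma^\pm_{l,m}$ is a cocycle in $\FN^*_{\mathcal{A}_{m2^l}}$ by \cite{Sinha:17} (these are the classes $\Gamma_{l,m}$ constructed there, whose cocycle property is established in that paper). Proposition \ref{prop:symmetrization} gives the identity
\[
d\,\widehat{\Gamma}^\pm_{l,m} = d\,S_0(\Gamma^\pm_{l,m}) = S_0\bigl(d\,\Gamma^\pm_{l,m}\bigr) + d_0\bigl(S_0(\Gamma^\pm_{l,m})\bigr) = d_0\bigl(\widehat{\Gamma}^\pm_{l,m}\bigr),
\]
since $d\,\Gamma^\pm_{l,m} = 0$ and $S_0$ is linear. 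So it suffices to prove $d_0(\widehat{\Gamma}^\pm_{l,m}) = 0$.

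Next I would analyze $\widehat{\Gamma}^\pm_{l,m}$ summand by summand, using linearity of $d_0$. The cochain $\Gamma^\pm_{l,m} = \alpha^\pm_{l,m} + \sum_{i<j}\beta^0_{l,m}(i,j)$ is a sum of (signed) $n$-tuples, each of which, after applying $S_0$, becomes a $0$-symmetrized $n$-tuple of the form $\underline{c}^\pm$ where $\underline{c} = [0, c_1, \dots, c_{n-1}]$ — the $0$-symmetrization prepends a $0$ in the first slot. The key observation is that for each such tuple, the relevant initial segment has exactly the shape required by Lemma \ref{lem:d0}: for $\alpha^\pm_{l,m}$, after $S_0$ the tuple begins $[0, 1, 1, \dots, 1, 0, \dots]$ with $2^l - 1 \geq 3$ ones (here $l \geq 2$, so $2^l - 1 \geq 3 \geq 2$), so with $r = 2^l - 1 \geq 2$ the hypothesis $a_0 = 0$, $a_1 = \dots = a_{r-1} = 1$, $a_r = 0$ of Lemma \ref{lem:d0} is met, giving $d_0 = 0$ on that summand. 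For $\beta^0_{l,m}(i,j)$: its first block $B_{1,i,j}$ is either $[1]^{*(2^l-1)}$, or $[2]$ (if $i = 1$), or $[1]^{*(2^l-3)}$ (if $j = 1$); after $S_0$ one prepends a $0$, and in the first case this again fits Lemma \ref{lem:d0} with $r = 2^l - 1$. The subtle cases are when $i = 1$ or $j = 1$, where the first block is $[2]$ or $[1]^{*(2^l-3)}$ — one must check these separately, since $[0,2,\dots]$ does not literally match the lemma's hypothesis.

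To handle those edge cases I expect either (a) a direct recomputation of $d_0$ in the spirit of the proof of Lemma \ref{lem:d0} — counting symmetric vertex shuffles and observing the count is even (for $l \geq 2$ the relevant exponent $2^{r-1}$ is still even whenever $r \geq 2$, and when $j = 1$ we have $2^l - 3 \geq 1$, so $r = 2^l - 3 \geq 1$; when $r = 1$ i.e. $l = 2$, $j=1$, one needs a slightly closer look), or (b) a cancellation argument: the full cochain $\beta^0_{l,m}(i,j) = (\cdots)^+ + (\cdots)^-$ is a symmetrized sum, and $d_0$ of a $+$/$-$ symmetrized pair may cancel in pairs even when each half has a nonzero $d_0$. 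I would first try to arrange the indexing so that Lemma \ref{lem:d0} applies uniformly — possibly by reindexing the blocks, noting that the ``interesting'' block of each $\beta^0$ summand can be placed adjacent to a $0$ — and only fall back on a hands-on shuffle count for the finitely many genuinely exceptional configurations.

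The main obstacle will be precisely the bookkeeping in the boundary cases $i = 1$ and $j = 1$ of $\beta^0_{l,m}(i,j)$, where the leading block is not a string of ones of length $\geq 2$ flanked by a zero, so Lemma \ref{lem:d0} does not apply verbatim; one must either produce an auxiliary lemma computing $d_0$ for these specific initial segments, or exhibit a sign-pairing cancellation. Everything else — the reduction through Proposition \ref{prop:symmetrization}, the use of the cocycle property of $\Gamma^\pm_{l,m}$ from \cite{Sinha:17}, and the application of Lemma \ref{lem:d0} to the generic summands — is routine.
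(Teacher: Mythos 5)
Your proposal follows the paper's proof essentially verbatim: reduce via Proposition \ref{prop:symmetrization} and the cocycle property of $\Gamma^\pm_{l,m}$ from \cite{Sinha:17} to showing $d_0(\widehat{\Gamma}^\pm_{l,m})=0$, apply Lemma \ref{lem:d0} to $S_0(\alpha^\pm_{l,m})$ and to $S_0(\beta^0_{l,m}(i,j))$ for $i\neq 1$, and settle the remaining case by a direct count --- the paper disposes of $i=1$ by observing that $d_0(S_0(\beta^0_{l,m}(1,j)))$ is a sum of four addends in which two distinct terms each appear twice, exactly the kind of pairing cancellation you anticipate. The only correction: your worry about a ``$j=1$'' exceptional case is vacuous, since $1\le i<j$ forces $j\ge 2$, so the block $[1]^{*(2^l-3)}$ is never adjacent to the center and the sole configuration not covered by Lemma \ref{lem:d0} is $i=1$.
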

\begin{proof}
By Proposition \ref{prop:symmetrization} and \cite[Proposition 6.2]{Sinha:17}, it is enough to prove that $ d_0(d(\widehat{\Gamma}^+_{l,m})) = d_0(d(\widehat{\Gamma}^-_{l,m})) = 0 $.
By Lemma \ref{lem:d0}, $ d_0(S_0(\alpha_{l,m}^\pm)) = 0 $ and, if $ i \not= 1 $, $ d_0(S_0(\beta_{l,m}^0(i,j))) = 0 $. If $ i = 1 $, we still have that $ d_0(S_0(\beta_{l,m}^0(1,j))) = 0 $ by a direct calculation (in such case, $ d_0(S_0(\beta_{l,m}^0(1,j))) $ is a sum of four addends, with two distinct terms each appearing twice).
\end{proof}

Thanks to Corollary \ref{cor:cocycle}, the following definition is well-posed.
\begin{definition} \label{def:gamma charged}
For $ l \geq 2 $ and $ m \geq 1 $, we define $ \gamma_{l,m}^+ $ and $ \gamma_{l,m}^- $ as the cohomology classes in $ H^{m(2^l-1)}(\Bgrouppos{m2^l}; \mathbb{F}_2) = H^{m(2^l-1)}(\FN_{\Bgrouppos{m2^l}}^*) $ represented by the cocycles $ \widehat{\Gamma}_{l,m}^+ $ and $ \widehat{\Gamma}_{l,m}^- $, respectively.
\end{definition}

A priori, the notation of Definition \ref{def:gamma charged} could conflict with that of Theorem \ref{thm:basis alternating subgroup}. The following observation, proved by direct inspection, guarantees that this conflict does not occur.

\begin{corollary} \label{cor:transfer gamma charged}
$ \tr(\gamma_{l,m}^+) = \tr(\gamma_{l,m}^-) = \gamma_{l,m} $ and $ \res(\gamma_{l,m}) = \gamma_{l,m}^+ + \gamma_{l,m}^- $.
\end{corollary}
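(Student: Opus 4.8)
The plan is to prove Corollary \ref{cor:transfer gamma charged} by chain-level computations in the Fox--Neuwirth complexes, exploiting the explicit cocycle representatives $ \widehat{\Gamma}_{l,m}^{\pm} = S_0(\Gamma_{l,m}^{\pm}) $ together with the comparison of $ \FN^*_{\mathcal{A}_{m2^l}} $ and $ \FN^*_{\Bgrouppos{m2^l}} $ set up in Subsection 4.2. First, I would recall from \cite[Section 4, Section 6]{Sinha:17} the analogous chain-level formula for the classical alternating groups: the transfer map $ \tr^{\Sigma}_n \colon \FN^*_{\mathcal{A}_n} \to \FN^*_{\Sigma_n} $ is induced by the norm (sum over the two cosets), and the class $ \gamma_{l,m} \in H^*(\Sigma_{m2^l}; \F_2) $ is represented by the unsigned cochain $ \Gamma_{l,m} $ whose signed lift is $ \Gamma_{l,m}^{+} $ (equivalently $ \Gamma_{l,m}^{-} $, since they differ by the $ \mathcal{A}_{m2^l} $-action). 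Then the key observation is that the $ 0 $-symmetrization maps are compatible with restriction and transfer in the appropriate sense: $ \tilde{S}_k $ intertwines the norm maps for the pairs $ (\mathcal{A}_n \leq \Sigma_n) $ and $ (\Bgrouppos{n} \leq \Bgroup_n) $, because an element of $ \Bgroup_n $ is in $ \Bgrouppos{n} $ iff its underlying permutation together with its sign pattern is even, and the coset representative $ s $ can be chosen so that $ \tilde{S}_k(s'.T) = s.\tilde{S}_k(T) $ for the corresponding reflection $ s' \in \Sigma_n $ — more precisely, by Definition \ref{def:FN+} the cochains $ T^{+} $ and $ T^{-} $ are exactly the images of the two $ \mathcal{A}_n $-cosets of an unsymmetrized tree under $ S_0 $.

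The concrete steps I would carry out: (i) Unwind Definition \ref{def:generators}: $ \widehat{\Gamma}_{l,m}^{+} = S_0(\Gamma_{l,m}^{+}) $ and $ \widehat{\Gamma}_{l,m}^{-} = S_0(\Gamma_{l,m}^{-}) $, and note $ \Gamma_{l,m}^{-} = s'.\Gamma_{l,m}^{+} $ for a reflection $ s' \in \Sigma_{m2^l} $, because $ \alpha^{-}_{l,m} = s.\alpha^{+}_{l,m} $ by definition of the $ \pm $ decoration and the $ \beta^0 $ terms are manifestly $ \iota $-invariant (they are written as a sum of a $ {}^+ $ and a $ {}^- $ term). (ii) Apply the standard involution: since $ S_0 $ sends the $ \mathcal{A}_n $-action to the $ \iota_n $-action on $ \FN^*_{\Bgrouppos{n}} $ (this is essentially Definition \ref{def:FN+} read through $ S_0 $), conclude $ \iota(\gamma_{l,m}^{+}) = \gamma_{l,m}^{-} $. (iii) Compute $ \tr_n(\gamma_{l,m}^{+}) $ at the chain level: the transfer is the norm $ x \mapsto x + s.x $, so $ \tr_n(\widehat{\Gamma}_{l,m}^{+}) = S_0(\Gamma_{l,m}^{+}) + s.S_0(\Gamma_{l,m}^{+}) = S_0(\Gamma_{l,m}^{+} + s'.\Gamma_{l,m}^{+}) = S_0(\mathrm{Nm}(\Gamma_{l,m}^{+})) $, which is exactly the $ 0 $-symmetrization of the chain-level transfer of the $ \mathcal{A}_n $-cocycle representing $ \gamma_{l,m} $; by \cite[Proposition 6.2]{Sinha:17} and the definition of $ \gamma_{l,m} \in H^*(\Bgroup_{m2^l}) $ via $ S_k $-symmetrization (Section 2, restriction from $ \Sigma_{m2^l} $), this represents $ \gamma_{l,m} $. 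The same argument with $ \Gamma_{l,m}^{-} $ in place of $ \Gamma_{l,m}^{+} $ gives $ \tr(\gamma_{l,m}^{-}) = \gamma_{l,m} $, since $ \mathrm{Nm}(\Gamma_{l,m}^{+}) = \mathrm{Nm}(\Gamma_{l,m}^{-}) $. (iv) Finally, $ \res(\gamma_{l,m}) = \gamma_{l,m}^{+} + \gamma_{l,m}^{-} $ follows either from Proposition \ref{prop:identities restriction transfer}(1) plus the chain-level fact that restricting an unsymmetrized $ \Bgroup_n $-cochain to $ \FN^*_{\Bgrouppos{n}} $ splits it as the sum of its two cosets (Definition \ref{def:FN+}), or directly from $ \res(\tr(x)) = x + \iota(x) $ applied to $ x = \gamma_{l,m}^{+} $, which gives $ \res(\gamma_{l,m}) = \res(\tr(\gamma_{l,m}^{+})) = \gamma_{l,m}^{+} + \iota(\gamma_{l,m}^{+}) = \gamma_{l,m}^{+} + \gamma_{l,m}^{-} $.

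The main obstacle I anticipate is step (iii): making precise the claim that $ S_0 $ (hence $ \tilde{S}_0 $) is compatible with the norm/transfer maps, i.e. that $ \tilde{S}_0 \circ \mathrm{Nm}_{\mathcal{A}_n \leq \Sigma_n} = \mathrm{Nm}_{\Bgrouppos{n} \leq \Bgroup_n} \circ \tilde{S}_0 $ at the cochain level, and that the resulting cocycle in $ \FN^*_{\Bgroup_n} $ genuinely represents the class called $ \gamma_{l,m} \in A_B $ (which is defined by restriction from $ H^*(\Sigma_{m2^l}) $, not a priori by Fox--Neuwirth $ 0 $-symmetrization). This requires matching two different descriptions of $ \gamma_{l,m} $ — via $ S_k $-symmetrization of the symmetric-group class, and via the Gysin-theoretic transfer — and checking that the $ d_0 $-correction terms of Proposition \ref{prop:symmetrization} do not interfere, which is exactly where Corollary \ref{cor:cocycle} and Lemma \ref{lem:d0} are used. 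Once this bookkeeping is in place, everything else is the routine chain-level manipulation of Fox--Neuwirth tuples, and in the paper I would (consistently with the stated convention) carry it out "by direct inspection" without reproducing all the tuple combinatorics.
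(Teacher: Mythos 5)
Your overall plan---verify the identities by direct cochain computation in the Fox--Neuwirth complexes---is exactly the ``direct inspection'' the paper has in mind, and steps (i), (ii) and (iv) are sound. The problem is step (iii). In the model used here, $ \FN^*_G $ is the \emph{quotient} complex $ \widetilde{\FN}^*/G $, and for an index-two inclusion the cochain-level transfer is the charge-forgetting projection $ T^{\pm} \mapsto T $, while it is the \emph{restriction} that is given by the coset sum $ \underline{a} \mapsto \underline{a}^0 = \underline{a}^+ + \underline{a}^- $ (this is the convention stated in the proof of Proposition \ref{prop:coproduct gamma+-}). Your formula ``transfer is the norm $ x \mapsto x + s.x $'' is therefore the wrong map in this model: taken literally with values in $ \FN^*_{\Bgroup_{m2^l}} $ it gives $ T + T = 0 $, and what you actually write down, $ \widehat{\Gamma}^+_{l,m} + \widehat{\Gamma}^-_{l,m} $, is a cochain in $ \FN^*_{\Bgrouppos{m2^l}} $ representing $ \res(\tr(\gamma^+_{l,m})) $ rather than $ \tr(\gamma^+_{l,m}) $. (One can salvage it by passing to the invariant subcomplex and the norm isomorphism, but as written you conflate the two models, and it clashes with your own, correct, description of restriction in step (iv).) Using the correct formula actually simplifies matters: the summands $ \beta^0_{l,m}(i,j) $ are charge-symmetric, so they cancel in pairs and $ \tr(\widehat{\Gamma}^{\pm}_{l,m}) = S_0(\alpha_{l,m}) $ on the nose.

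The second issue is the one you flag yourself and then leave open: identifying the resulting cocycle $ S_0(\alpha_{l,m}) \in \FN^*_{\Bgroup_{m2^l}} $ with the Hopf ring generator $ \gamma_{l,m} \in A_B $. Corollary \ref{cor:cocycle} and Lemma \ref{lem:d0} only give cocycleness; the identification requires the chain-level description of $ \gamma_{l,m} $ in the Fox--Neuwirth complex of $ \Bgroup_n $ from \cite{Guerra:21} (its representative is precisely such a symmetrized tuple; this is the input the paper also uses via \cite[Lemma 4.5, Proposition 6.5]{Guerra:21}). Since this is exactly the content of the ``direct inspection'', leaving it as an anticipated obstacle means the proof is incomplete at its only substantive point. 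Once the transfer is computed as the projection and the representative of $ \gamma_{l,m} $ is quoted from \cite{Guerra:21}, the restriction identity follows either by your route $ \res\circ\tr = \id + \iota $ combined with your step (ii), or directly from $ \res(S_0(\alpha_{l,m})) = S_0(\alpha_{l,m})^+ + S_0(\alpha_{l,m})^- = \widehat{\Gamma}^+_{l,m} + \widehat{\Gamma}^-_{l,m} $, again because the $ \beta^0 $ terms cancel modulo $ 2 $.
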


Another potential notational conflict is with the generators of the cohomology of alternating groups defined in \cite{Sinha:17}. This issue is non-existent thanks to the following.

\begin{corollary} \label{cor:restriction alternating groups}
For all $ k,l \geq 1 $, the restriction of $ \gamma_{k,l}^\pm \in H^*(\Bgrouppos{l2^k}; \mathbb{F}_2) $ to the cohomology ring $ H^*(\mathcal{A}_{l2^k}; \mathbb{F}_2) $ is the class $ \gamma_{k,l}^\pm $ of Giusti--Sinha \cite{Sinha:17}.
\end{corollary}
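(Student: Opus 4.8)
The plan is to compare the two constructions at the chain level, using the explicit Fox--Neuwirth cochain representatives on both sides. Giusti--Sinha's class $\gamma_{k,l}^\pm$ for $\mathcal{A}_{l2^k}$ is represented by a cocycle built from the cochain $\Gamma_{k,l}^\pm \in \widetilde{\FN}_{\Sigma_{l2^k}}/\mathcal{A}_{l2^k}$ (plus the $\beta$-correction terms), exactly the same data appearing in Definition \ref{def:generators}; our class $\gamma_{k,l}^\pm \in H^*(\Bgrouppos{l2^k}; \mathbb{F}_2)$ is represented by $\widehat{\Gamma}_{k,l}^\pm = S_0(\Gamma_{k,l}^\pm)$. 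So the statement amounts to the assertion that restricting the cohomology class of $\widehat{\Gamma}_{k,l}^\pm$ along the group inclusion $\mathcal{A}_{l2^k} \hookrightarrow \Bgrouppos{l2^k}$ recovers the cohomology class of $\Gamma_{k,l}^\pm$.

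First I would identify the cochain map realizing this restriction. The inclusion $\Sigma_n \hookrightarrow \Bgroup_n$ (as the subgroup of signed permutations with all signs positive) is covered by a map of resolutions, dual to which is a cochain map $\FN^*_{\Bgroup_n} \to \FN^*_{\Sigma_n}$, and this restricts compatibly to the alternating subgroups giving $\rho \colon \FN^*_{\Bgrouppos{n}} \to \FN^*_{\mathcal{A}_n}$ inducing the restriction in cohomology. The key combinatorial fact I would extract from the definitions in Subsection 4.1--4.2 is that $\rho$ is, up to the bookkeeping of signs and the $T \sim T(1)$ equivalence, a one-sided inverse of the $0$-symmetrization $\tilde S_0$: an antisymmetric planar level tree restricted to its positive half-plane (i.e.\ forgetting the mirror copy and the central leaf) is exactly the underlying planar level tree of its image under $\rho$, and the tuple $[a_0,\dots,a_{n-1}]^\pm$ maps to $[a_1,\dots,a_{n-1}]^\pm$ (the entry $a_0$ recording the position relative to the $y$-axis is discarded). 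Applying this to $\widehat{\Gamma}_{k,l}^\pm = S_0(\Gamma_{k,l}^\pm)$: the tuples appearing in $\alpha_{k,l}^\pm$ and $\beta_{k,l}^0(i,j)$ after $0$-symmetrization all have $a_0 = 0$ by construction (they are obtained by prepending $[0]$ to the $\Sigma$-side tuples and symmetrizing), so $\rho$ sends $\widehat{\Gamma}_{k,l}^\pm$ back to $\Gamma_{k,l}^\pm$ on the nose.

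The main obstacle I anticipate is the sign bookkeeping and, relatedly, checking that $\rho$ really is a cochain map compatible with the transfer/restriction framework already set up — that is, that the abstract restriction map $\res \colon H^*(\Bgrouppos{n}) \to H^*(\mathcal{A}_n)$ induced by group inclusion agrees with the explicit $\rho$ on Fox--Neuwirth cochains, including that $\rho$ intertwines the two differentials (which involve symmetric vertex shuffles on the $B$-side versus ordinary vertex shuffles on the $A$-side). This is where one must be careful: the positive-half-plane differential on $\widetilde{\FN}^*_{\Bgroup_n}$ was designed (Proposition \ref{prop:symmetrization}, \cite[Proposition 3.5]{Guerra:21}) precisely so that it restricts to the $\Sigma_n$-differential on the non-central part, so the compatibility should hold, but it needs to be stated. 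Once $\rho$ is in hand as an honest cochain map inducing restriction, and once one observes $\rho(\widehat{\Gamma}_{k,l}^\pm) = \Gamma_{k,l}^\pm$ as cochains, the corollary follows immediately by passing to cohomology. I expect the write-up to be short, essentially "by direct inspection" as the statement preceding the corollary already advertises, with the only real content being the explicit description of $\rho$ and the remark that the chosen representatives are $0$-symmetrizations of the $\Sigma$-side representatives.
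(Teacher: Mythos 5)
Your argument is essentially the paper's: its proof is a one-line appeal to Definition \ref{def:generators}, Definition 6.1 of Giusti--Sinha and Lemma 4.5 of \cite{Guerra:21}, the latter being precisely the chain-level description of restriction along $\Sigma_n \subseteq \Bgroup_n$ (undoing $S_0$ on Fox--Neuwirth tuples with $a_0 = 0$, and vanishing otherwise) that you re-derive before observing that the chosen representatives are $0$-symmetrizations. So the proposal is correct and follows the same route, merely spelling out the cited lemma instead of quoting it.
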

\begin{proof}
It follows immediately from Definition \ref{def:generators}, Definition 6.1 of \cite{Sinha:17} and Lemma 4.5 of \cite{Guerra:21}.
\end{proof}

\subsection{The transfer product of the \texorpdfstring{classes $ \gamma_{l,m}^\pm $}{generators}}

The chain-level transfer product formula in $ \FN^*_{\Bgroup_n} $ stated in Proposition 4.11 of \cite{Guerra:21} lifts to  $ \FN^*_{\Bgrouppos{n}} $ by keeping track of charges.
This immediately provides the following formulas.
\begin{proposition} \label{prop:transfer gamma+-}
	In $ \widetilde{\AB} $, for all $ k \geq 2 $ and $ l,m \geq 1 $,
	\begin{gather*}
		\gamma_{k,l}^+ \odot \gamma_{k,m}^+ = \gamma_{k,l}^- \odot \gamma_{k,m}^- = \left( \begin{array}{c} l+m \\ m \end{array} \right) \gamma_{k,l+m}^+ \\
		\tag*{and} \gamma_{k,l}^+ \odot \gamma_{k,m}^- = \gamma_{k,l}^- \odot \gamma_{k,m}^+ = \left( \begin{array}{c} l+m \\ m \end{array} \right) \gamma_{k,l+m}^-,
	\end{gather*}
\end{proposition}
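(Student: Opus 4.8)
The strategy is to lift the chain-level transfer product formula for $\FN^*_{\Bgroup_n}$ from \cite[Proposition 4.11]{Guerra:21} to the charged complex $\FN^*_{\Bgrouppos{n}}$, evaluate it on the explicit cocycles $\widehat{\Gamma}^\pm_{k,l}$ and $\widehat{\Gamma}^\pm_{k,m}$, and read off the answer. First I would recall that $\FN^*_{\Bgrouppos{n}}$ has $\mathbb{F}_2$-basis given by the signed tuples $\underline{a}^\pm$, and that under $\res$ and $\tr$ these interact with the uncharged basis $\underline{a}$ of $\FN^*_{\Bgroup_n}$ in the standard way (cf.\ Corollary \ref{cor:transfer gamma charged} and Proposition \ref{prop:identities restriction transfer}). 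The transfer product $i_{l,m}^* $-type formula of \cite{Guerra:21} expresses $\tr$ along $\Bgroup_l\times\Bgroup_m\hookrightarrow\Bgroup_{l+m}$ as a signed sum over shuffles of the two tuples; the only new bookkeeping in the alternating setting is that each shuffle permutation $\sigma\in\Bgroup_{l+m}$ carries a sign $\sgn_{\Bgroup_{l+m}}(\sigma)$, and a shuffle of charged tuples $\underline{a}^{\epsilon_1}\odot\underline{b}^{\epsilon_2}$ produces the term $\sigma.(\underline{a}*\underline{b})$ with charge $\epsilon_1\cdot\epsilon_2\cdot\sgn(\sigma)$.

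Next I would carry out the evaluation on the generating cocycles. By Definition \ref{def:generators}, $\widehat{\Gamma}^\pm_{k,l}=S_0(\Gamma^\pm_{k,l})$ where $\Gamma^\pm_{k,l}=\alpha^\pm_{k,l}+\sum_{i<j}\beta^0_{k,l}(i,j)$. The key structural observation is that the $\beta^0$-terms are \emph{sums of both charges} (each $\beta^0_{k,l}(i,j)$ has the form $(\cdots)^+ + (\cdots)^-$), so they are $\iota$-invariant; meanwhile the main term $\alpha^\pm$ is the charged part. When one forms the transfer product at the chain level and tracks signs, the $\beta^0$ contributions and the cross terms $\alpha^\pm\odot\beta^0$ either vanish mod $2$ or reassemble into the $\beta^0$-terms of $\widehat{\Gamma}^?_{k,l+m}$, exactly as in the uncharged computation in \cite{Guerra:21} that yields $\gamma_{k,l}\odot\gamma_{k,m}=\binom{l+m}{m}\gamma_{k,l+m}$ (Theorem \ref{thm:basis B}, identity $(**)$). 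The charged part $\alpha^+_{k,l}\odot\alpha^+_{k,m}$ is a sum over $\binom{l+m}{m}$ shuffles interleaving $m$ copies of the block pattern from one factor with $l$ copies from the other, and for $k\ge 2$ each such shuffle can be realized by an \emph{even} signed permutation (because the blocks $[1]^{*(2^k-1)}*[0]$ have even total length $2^k$, so swapping two adjacent blocks is an even permutation of $\{-n,\dots,n\}$) — hence all terms keep charge $+$, giving $\binom{l+m}{m}\gamma^+_{k,l+m}$; similarly $\alpha^-\odot\alpha^-$ gives $+$ and the mixed $\alpha^+\odot\alpha^-$ gives charge $-$.

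The main obstacle is the sign accounting in the last step: one must show that every shuffle permutation arising in $\alpha^{\epsilon_1}_{k,l}\odot\alpha^{\epsilon_2}_{k,m}$ has sign $+1$ in $\Bgroup_{l+m}$, so that the charge of each resulting term is uniformly $\epsilon_1\epsilon_2$ and the sum does not collapse. This is where the hypothesis $k\ge 2$ is used: the relevant shuffles permute \emph{blocks} of $2^k$ leaves (plus their mirror images), and since $2^k$ is even for $k\ge 1$ — and one needs a little extra care with the central leaf $0$ and the separating $[0]$-entries, which is where $k\ge 2$ rather than $k\ge 1$ enters, as in Lemma \ref{lem:d0} — every block transposition is a product of an even number of transpositions in $\{-n,\dots,-1,1,\dots,n\}$, hence $\sgn_{\Bgroup_{l+m}}=+1$. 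Granting that, and invoking that the uncharged identity $(**)$ already gives the binomial coefficient and the $\beta^0$-reassembly, the four formulas follow, and applying $\iota$ (Proposition \ref{prop:relations involution}(1)) exchanges $+\leftrightarrow-$ consistently, confirming $\gamma^+_{k,l}\odot\gamma^-_{k,m}=\gamma^-_{k,l}\odot\gamma^+_{k,m}$.
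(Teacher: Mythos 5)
Your proposal follows essentially the same route as the paper's own (very terse) proof, which simply observes that the chain-level transfer product formula of \cite[Proposition 4.11]{Guerra:21} lifts to $\FN^*_{\Bgrouppos{n}}$ by keeping track of charges --- exactly the lift-and-track-signs computation on the cocycles $\widehat{\Gamma}^{\pm}_{k,l}$ that you spell out, including the observation that the $\beta^0$-terms are charge-neutral and that the block shuffles acting on $\alpha^{\epsilon_1}\odot\alpha^{\epsilon_2}$ have trivial sign. One small caution: the sign in $\Bgroup_{l+m}$ of such a shuffle should be computed as the parity of its reflection length (equivalently, the sign of the underlying permutation of $\{1,\dots,l2^k+m2^k\}$), not as the parity of the induced permutation of $\{-n,\dots,n\}$; since the shuffled blocks have even size $2^k$, the conclusion that every such shuffle has positive sign is correct in any case.
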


\subsection{The coproduct of the classes \texorpdfstring{$ \gamma_{l,m}^\pm $}{generators}}

In this sub-section, we obtain formulas for the coproduct of $ \gamma_{l,m}^\pm $. The proof is sensibly more complicated than the transfer product identities of Proposition \ref{prop:transfer gamma+-}, because the chain-level lift of the coproduct is combinatorially involved.

By \cite[Proposition 4.9]{Guerra:21}, there is a cochain-level lifting of the coproduct map $ \Delta \colon \FN^*_{\Bgroup_{n+m}} \to \FN^*_{\Bgroup_n} \otimes \FN^*_{\Bgroup_m} $ defined in terms of the symmetric pruning maps.
Symmetric $ k $-prunings are defined in \cite[Definition 4.7]{Guerra:21}. Shortly, a symmetric $ k $-pruning of an antisymmetric planar level tree $ T $ is a couple $ (T',T'') $, where $ T' $ is obtained by removing from the internal vertices of $ T $ of height $ k $ some of their leftmost or rightmost child subtrees in a symmetric way, and $ T'' $ is a vertex shuffle of the removed subtrees joined together at a single vertex $ v $ with $ y(v) = k $.

We construct a cochain representative of the coproduct of $ \AB $ by defining a sign-extension of the pruning maps.
\begin{definition}
	Let $ k \in \mathbb{N} $. Let $ (T',T'') $ be a $ k $-pruning of an antisymmetric planar level tree $ T $ with $ 2n + 1 $ leaves. By keeping track of labels, there are induced antisymmetric labelings on the leaves of the trees $ T' $ and $ S_k(T'') $, with labels in sets $ (-I') \cup \{0\} \cup I' $ and $ (-I'') \cup \{0\} \cup I'' $, respectively, such that $ \{1,\dots,n\} = I' \sqcup I'' $.
	
	Let $ (T',T'') $ be a $ k $-pruning of $ T $. We say that a choice of signs $ \varepsilon', \varepsilon'' \in \{-,+\} $ are compatible with the positive (respectively negative) sign on $ T $ if $ T^+ $ (respectively $ T^- $) is represented by an antisymmetric labeling of the leaves of $ T $ with the following properties:
	\begin{itemize}
	\item $ I' = \{1,\dots,r\} $ and $ I'' = \{r+1,\dots,n\} $ for some $ r $,
	\item and after relabeling $ S_k(T'') $ via the unique order-preserving bijection $ I'' \cong \{1,\dots,n-r\} $, the induced labelings on $ T' $ and $ S_k(T'') $ represent $ (T')^{\varepsilon'} $ and $ (T'')^{\varepsilon''} $.
	\end{itemize}
	
	The signed $ k $-pruning map is the linear map $ P^s_k \colon \FN^*_{\Bgrouppos{n}} \to \bigoplus_{r=0}^n \FN^*_{\Bgrouppos{r}} \otimes \FN^*_{\Bgrouppos{n-r}} $ defined by
	\[
	P^s_k(T^\pm) = \sum_{(T',T''),(\varepsilon',\varepsilon'')} (T')^{\varepsilon'} \otimes (S_k(T''))^{\varepsilon''},
	\]
	where the sum runs over $ k $-prunings $ (T',T'') $ and choice of signs $ \varepsilon',\varepsilon'' $ compatible with the sign of $ T^\pm $.
	
	The signed concatenation map $ C^s \colon \FN^*_{\Bgrouppos{n}} \otimes \FN^*_{\Bgrouppos{m}} \to \FN^*_{\Bgrouppos{n+m}} $ is defined as the oriented concatenation map of \cite[Definition 4.7]{Guerra:21}.
\end{definition}
We observe that every non-trivial pruning of $ T^\pm $ admits exactly two choices of signs compatible with the sign of $ T^{\pm} $.

The following proposition is proved with a variation of the argument used for \cite[Proposition 4.14]{Guerra:21}.
\begin{proposition} \label{prop:coproduct FN}
	Define $ \Delta_k \colon \FN^*_{\Bgrouppos{n}} \to \bigoplus_{r=0}^n \FN^*_{\Bgrouppos{r}} \otimes \FN^*_{\Bgrouppos{n-r}} $ by the recursive formulas:
	\begin{itemize}
		\item $ \Delta_0 = P_0^s $
		\item $ \Delta_k = (\id \otimes C^s) (P^s_k \otimes \id) \Delta_{k-1} $ if $ k \geq 1 $
	\end{itemize}
	$ \Delta_k $ stabilizes and the limit map $ \Delta $ is a cochain map inducing the coproduct of $ \AB $ in cohomology.
\end{proposition}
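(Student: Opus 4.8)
The plan is to mirror the structure of the argument for \cite[Proposition 4.14]{Guerra:21}, keeping track of the sign decorations introduced by the map $ S_0 $ and by the passage from $ \widetilde{\FN}^*_{\Bgroup_n} $ to $ \FN^*_{\Bgrouppos{n}} = \widetilde{\FN}^*_{\Bgroup_n}/\Bgrouppos{n} $. Recall that $ \FN^*_{\Bgrouppos{n}} $ is obtained from $ \widetilde{\FN}^*_{\Bgroup_n} $ by quotienting by the index-$2$ subgroup, so that a class $ x \in \widetilde{\FN}^*_{\Bgroup_n} $ and $ s.x $ (for $ s $ a reflection) are exactly the two lifts $ x^+ $ and $ x^- $. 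The signed pruning map $ P^s_k $ and the signed concatenation $ C^s $ are, by construction, the descents to this quotient of the unsigned symmetric pruning and concatenation maps of \cite[Definition 4.7, Proposition 4.9]{Guerra:21}; the content of the present proposition is that the recursively assembled $ \Delta_k $ descend compatibly and converge.

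First I would show that $ \Delta_k $ is well-defined on the quotient, i.e.\ independent of the chosen lift — equivalently, that for every reflection $ s $ one has $ \Delta_k(s.T) = (s \otimes s).\Delta_k(T) $ at the level of $ \widetilde{\FN}^*_{\Bgroup_\bullet} $. This is immediate for $ \Delta_0 = P^s_0 $ from the definition of compatibility of signs (a pruning of $ T^- $ has its compatible sign-pairs obtained from those of $ T^+ $ by globally flipping), and it propagates through the recursion $ \Delta_k = (\id \otimes C^s)(P^s_k \otimes \id)\Delta_{k-1} $ because $ P^s_k $ and $ C^s $ are themselves equivariant in the same sense. Next I would establish stabilization: for a fixed antisymmetric planar level tree $ T $ of bounded height, all internal vertices have $ y $-coordinate below some $ N $, so $ P^s_k $ acts trivially (the only pruning is the trivial one, contributing $ T' = T $, $ T'' = $ empty) for $ k \geq N $, whence $ \Delta_k(T^\pm) = \Delta_N(T^\pm) $ for all $ k \geq N $; this is the verbatim analogue of the stabilization in \cite{Guerra:21} and defines $ \Delta = \lim_k \Delta_k $. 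Then I would verify that $ \Delta $ is a cochain map, $ d\Delta = (\id \otimes d + d \otimes \id)\Delta $: since the differential on $ \FN^*_{\Bgrouppos{n}} $ is computed by symmetric vertex shuffles with signs tracked (as recalled after Definition \ref{def:FN+}), and since forgetting the charges recovers the unsigned statement already proved in \cite[Proposition 4.14]{Guerra:21}, it suffices to check that the signs on both sides match. Here I would use the observation, stated just before the proposition, that every non-trivial pruning of $ T^\pm $ admits exactly two compatible sign choices $ (\varepsilon',\varepsilon'') $, namely a pair and its global flip — so the signed formula is a genuine lift of the unsigned one and the Leibniz identity is inherited degree-by-degree. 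Finally, that $ \Delta $ induces the coproduct of $ \AB $ in cohomology follows because, modulo charges, it is Guerra's cochain lift of the coproduct on $ A_B $, and the coproduct on $ \AB $ is by Definition \ref{def:Hopf semiring} / Theorem \ref{thm:alternating subgroup Hopf semiring} the one induced by the group inclusions $ \Bgrouppos{r} \times \Bgrouppos{n-r} \hookrightarrow \Bgrouppos{n} $, which at the Fox–Neuwirth chain level is precisely realized by pruning-and-concatenating while respecting the antisymmetric labelings.

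The main obstacle I expect is the sign bookkeeping in the cochain-map verification: one must check that the two compatible sign choices attached to each non-trivial pruning interact correctly with the signs produced by the symmetric vertex shuffles appearing in $ d $, so that no spurious cancellation or doubling occurs when one reassembles $ d\Delta_k $ from the recursion. Concretely, one has to confront the sign of a shuffle occurring \emph{before} a pruning with the sign of the same combinatorial shuffle occurring \emph{after} pruning-and-concatenating, across the equality $ \Delta_k = (\id \otimes C^s)(P^s_k \otimes \id)\Delta_{k-1} $; this is exactly the step where \cite{Guerra:21} does the delicate combinatorics in the unsigned case, and the charged version requires re-examining each term of that argument with the reflection $ s $ inserted. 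Everything else — equivariance, stabilization, and the identification of the induced map in cohomology — is a routine adaptation of the cited results, with the charges carried along formally.
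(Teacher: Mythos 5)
Your proposal is correct and follows essentially the same route as the paper, which proves this proposition simply by adapting the argument of \cite[Proposition 4.14]{Guerra:21} to the charged setting, with the sign bookkeeping for compatible sign choices being the only new ingredient — exactly the point your plan identifies and addresses.
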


In order to compute the coproduct of $ \gamma_{l,m}^\pm $, we require the following elementary computational lemma.
\begin{definition}[from \cite{Sinha:13}]
	Let $ \underline{a} = [a_1,\dots,a_{n-1}] $ be an $ (n-1) $-tuple of non-negative integers. A $ k $-block in $ \underline{a} $ is a maximal interval $ [a_r,\dots,a_s] $ such that $ a_i > k $ for all $ r \leq i \leq s $.
\end{definition}

\begin{lemma} \label{lem:higher prunings}
	Let $ \underline{a} = [a_1,\dots,a_{n-1}] $ be an $ (n-1) $-tuple of non-negative integers. Let $ k \in \mathbb{N} $. Assume that the $ k $-blocks of $ \underline{a} $ are either sequences of repeated $ (k+1) $, or intervals without entries equal to $ (k+1) $.
	Then $ P^s_{k+1}(S_0(\underline{a}^+)) = P^s_{k+1}(S_0(\underline{a}^-)) = 0 $.
\end{lemma}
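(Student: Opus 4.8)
The plan is to reduce the vanishing of $P^s_{k+1}(S_0(\underline{a}^\pm))$ to a counting argument on symmetric vertex shuffles, mirroring the mechanism of Lemma \ref{lem:d0}. First I would unravel the definition of $\Delta_{k+1}$ from Proposition \ref{prop:coproduct FN}: since the only prunings contributing at height $k+1$ come from internal vertices of $S_0(\underline{a}^\pm)$ of $y$-coordinate exactly $k+1$, and such vertices sit precisely at the $k$-blocks of $\underline{a}$ (after $0$-symmetrization, these are the sequences of consecutive entries $>k$, plus the block at the $y$-axis), the structure of $P^s_{k+1}$ is governed blockwise. By the hypothesis, each $k$-block is either a run of repeated $(k+1)$'s — in which case the corresponding vertex has all outgoing subtrees being single edges reaching up to height $k+1$ and the symmetric pruning/shuffle data at that vertex is exactly the combinatorics of shuffling two sets of equal size together with the central $\{0\}$, as in Lemma \ref{lem:d0} — or it is an interval with no entry equal to $k+1$, in which case the internal vertex at height $k+1$ has no genuine pruning to perform (the relevant child subtrees are trivial) and contributes nothing nontrivial.

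The key step is then the sign-cancellation count at a block of type ``repeated $(k+1)$''. I would set up, exactly as in the proof of Lemma \ref{lem:d0}, the bijection $f$ given by precomposition with the transposition $(-1,1)$ between the positive-sign and negative-sign symmetric shuffles involved in the pruning at that vertex, deducing $|S^+| = |S^-|$. Because the pruning at a run of $r \geq 1$ repeated $(k+1)$'s involves $2^r$ symmetric shuffles total (half positive, half negative, so $2^{r-1}$ each), and because for a $k$-block of repeated $(k+1)$'s arising from $S_0$ the relevant length is $r \geq 2$ — here I must check carefully that the $0$-symmetrization forces the effective block length to be at least $2$, using that $S_0$ attaches the tree to the $y$-axis and the central leaf contributes an extra adjacency — the two sign choices $(\varepsilon',\varepsilon'')$ compatible with a fixed sign of $T^\pm$ (noted just before Proposition \ref{prop:coproduct FN} to come in pairs) get weighted by an even number of shuffles and hence cancel in characteristic $2$. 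Summing over all blocks and all prunings, every term in $P^s_{k+1}(S_0(\underline{a}^\pm))$ appears an even number of times, so the total is zero.

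The main obstacle I anticipate is bookkeeping the interaction between the $0$-symmetrization $S_0$ and the pruning at height $k+1$: one must verify that applying $S_k$ to the pruned-off piece $T''$ and then re-symmetrizing does not create or destroy the parity I am relying on, and that the ``block at the $y$-axis'' (created by $a_0 = 0$ together with the central leaf of the antisymmetric tree) falls under the same dichotomy. Concretely I would argue that the symmetric $(k+1)$-pruning map factors through a product over the blocks of $\underline{a}$, that on each block of repeated $(k+1)$'s it reproduces verbatim the shuffle sets $S^+, S^-$ of Lemma \ref{lem:d0} (up to the central-leaf adjustment, which only enlarges $r$), and that on each $(k+1)$-free block it acts as the identity tensor a trivial factor. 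Once this blockwise factorization is established, the parity argument is immediate and the proof closes by the same even-cardinality observation used for $d_0$. Everything else — the explicit recursion for $\Delta_k$, the compatibility-of-signs pairing, the $2^r$ count — is already available from the cited results of \cite{Guerra:21} and the preceding lemmas, so I would keep those computations terse.
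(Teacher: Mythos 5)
Your proposal has the right flavor (a parity/cancellation argument on the pruning data), but its two load-bearing claims do not hold, and the actual cancellation is subtler than a rerun of Lemma \ref{lem:d0}. First, the symmetric $(k+1)$-pruning map does \emph{not} factor blockwise: a single $(k+1)$-pruning removes $i_m$ leaves from the left and $j_m$ leaves from the right of \emph{every} run $T_m$ of repeated $(k+1)$'s simultaneously, and all the removed leaves are shuffled together into one tree $T''$ (then $S_{k+1}$-symmetrized). The compatibility-of-signs condition is governed by the sign of the permutation that reorders all removed leaves globally, so it contains cross-block terms (in the paper's notation, terms like $i_aj_b+j_aj_b$ for $a<b$); hence the sum of charged terms is not a product over blocks, and a per-block cancellation cannot be assembled the way you describe. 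Second, the per-block combinatorics is not ``verbatim the shuffle sets $S^\pm$ of Lemma \ref{lem:d0}'': the data here is a choice of $(i_m,j_m)$ with $i_m+j_m<r_m$ for each run plus a shuffle of the removed leaves, and the charges attached to $(T')^{\varepsilon'}\otimes(T'')^{\varepsilon''}$ are not split half-and-half in a way that makes ``each term appears an even number of times'' automatic. Your auxiliary claim that the $0$-symmetrization forces effective block length at least $2$ is also neither true (a $k$-block $[(k+1)]$ of length one is allowed by the hypothesis) nor needed.

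What the paper actually does is define a single global involution $\Phi$ on the set $X$ of pruning data $(\underline{i},\underline{j},\sigma)$, namely $(\underline{i},\underline{j},\sigma)\mapsto(\underline{j},\underline{i},\sigma\overline{\sigma}_{(\underline{i},\underline{j})})$ (swap left- and right-removals in all runs at once, corrected by an explicit block-reversing permutation), and compute the ratio $\varepsilon(\Phi(\underline{i},\underline{j},\sigma))/\varepsilon(\underline{i},\underline{j},\sigma)=(-1)^{\sum_m\left(s_m^2+\frac{s_m^2+s_m}{2}\right)}$ with $s_m=i_m+j_m$. The conclusion then splits into two cases: when this exponent is even, $\Phi$ is a fixed-point-free pairing and terms cancel in pairs; when it is odd, the terms indexed by a fixed $(s_1,\dots,s_l)$ agree up to charge, $\Phi$ exchanges the charges, and one needs the count $|X_{(s_1,\dots,s_l)}|=\prod_m 2^{s_m}$ to be divisible by $4$ (which holds because the odd case forces some $s_m\ge 2$). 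Neither the global sign bookkeeping nor this second, multiplicity-counting case appears in your argument, so as written the proof does not go through; you would need to either establish the (false) factorization or redo the global involution-and-parity analysis.
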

\begin{proof}
	$ k $-blocks correspond to child subtrees at vertices of height $ k $ in the positive half-plane. We let $ T_1, \dots, T_l $ be those corresponding to $ k $-blocks of the form $ [(k+1),\dots,(k+1)] $. Thus, for all $ 1 \leq m \leq l $, $ T_m $ is a subtree with a number $ r_m $ of leaves and a single internal vertex of height $ k+1 $. A $ (k+1) $- pruning of $ T $ is a couple $ (T'_{(i_1,\dots,i_l,j_1,\dots,j_l)},T'') $, where $ T'_{(\underline{i},\underline{j})} $ is obtained by removing $ i_m $ leaves from the left side of $ T_m $, $ j_m $ leaves from its right side and symmetrizing, while $ T'' $ is obtained by shuffling the removed leaves. If we forget leaves labels, $ T'' $ corresponds to the sequence $ [k+1]^{* \sum_{m=1}^l (i_m+j_m)-1} $.
	We now keep track of sign compatibility. We let
	\begin{align*}
		X &= \{(i_1,\dots,i_l,j_1,\dots,j_l,\sigma): i_m \geq 0, j_m \geq 0, i_m+j_m < r_m, \\
		&\quad\quad\quad\quad\quad\quad\quad\sigma \in \Sh(i_m,\dots,i_1,j_1,\dots,j_m) \}, \\
		\varepsilon &\colon X \to \{-,+\}, \  \varepsilon(\underline{i},\underline{j}, \sigma) = (-)^{\sum_{1 \leq a < b \leq l} (i_a j_b + j_aj_b) + \sum_{m=1}^l (i_m^2 + i_mj_m + \frac{i_m^2+i_m}{2})} \sgn(\sigma)
	\end{align*}
	and we explicitly have that
	\begin{align*}
		P^s_{k+1}(S_0(\underline{a})^\pm) &= \sum_{(\underline{i},\underline{j},\sigma)\in X} \Bigg( {T'_{(\underline{i},\underline{j})}}^\pm \otimes S_{k+1}([k+1]^{* \sum_{m=1}^l (i_m+j_m) - 1})^{\varepsilon(\underline{i},\underline{j},\sigma)} \\
		&+ {T'_{(\underline{i},\underline{j})}}^\mp \otimes S_{k+1}([k+1]^{* \sum_{m=1}^l (i_m+j_m) - 1})^{(-) \cdot \varepsilon(\underline{i},\underline{j},\sigma)} \Bigg).
	\end{align*}
	
	We let $ s_m = i_m+j_m  $ for all $ 1 \leq m \leq l $.
	We consider $ 2l $ adjacent intervals $ I_1,\dots,I_{2l} $ in $ \{1,\dots,\sum_{m=1}^l (i_l+j_k)\} $ such that $ |I_m| = j_{l+1-m} $ if $ 1 \leq m \leq l $ and $ |I_m| = i_{m-l} $ if $ l+1 \leq m \leq 2l $. Hence, $ I_1 = \{1,\dots,j_m\} $, $ I_2 = \{j_m+1,\dots,j_m+j_{m-1}\} $, etcetera.
	Let $ \overline{\sigma}_{(\underline{i},\underline{j})} \in \Sigma_{\sum_{m=1}^l (i_m+j_m)} $ be the block permutation that reverses the order of the blocks $ I_1,\dots, I_{2l} $.
	A direct calculation shows that
	\[
	\sgn(\overline{\sigma}_{(\underline{i},\underline{j})}) = (-)^{\sum_{1 \leq a < b \leq l} s_as_b + \sum_{m=1}^l (i_mj_m)}.
	\]
	
	Let $ \Phi \colon X \to X $ given by $ \Phi(\underline{i}, \underline{j},\sigma) \mapsto (\underline{j},\underline{i},\sigma\overline{\sigma}_{(\underline{i},\underline{j})}) $. We have that
	\begin{align*}
		\frac{\varepsilon(\Phi(\underline{i},\underline{j},\sigma))}{\varepsilon(\underline{i},\underline{j},\sigma)} &= \frac{(-)^{\sum_{1 \leq a \leq b \leq l}(i_aj_b+j_aj_b) + \sum_{m=1}^l (i_m^2+i_mj_m + \frac{i_m^2+i_m}{2})}}{(-)^{\sum_{1 \leq a \leq b \leq l}(j_ai_b+i_ai_b) + \sum_{m=1}^l (j_m^2+j_mi_m + \frac{j_m^2+j_m}{2})}} \sgn(\overline{\sigma}_{(\underline{i},\underline{j})}) \\
		& = (-)^{\sum_{m=1}^l \left( s_m^2 + \frac{s_m^2 + s_m}{2} \right)}.
	\end{align*}
	In particular, this ratio only depends on $ s_1,\dots,s_m $.
	
	To conclude the proof, we consider, for every $ l $-tuple $ (s_1,\dots,s_l) $, the subset $ X_{(s_1,\dots,s_l)} \subseteq X $ containing those $ (\underline{i}',\underline{j}',\sigma') $ satisfying $ i'_m + j'_m = s_m $ for all $ m $. $ \Phi $ restricts to an involution of $ X_{(s_1,\dots,s_l)} $.
	
	We prove that all the terms in $ P^s_{k+1} $ indexed by elements of $ X_{(s_1,\dots,s_l)} $ cancel by distinguishing two cases:
	\begin{itemize}
		\item If $ \sum_{m=1}^l \left( s_m^2 + \frac{s_m^2 + s_m}{2} \right) $ is even, then $ \varepsilon \Phi = \varepsilon $ on $ X_{(s_1,\dots,s_l)} $.
		Since $ \Phi $ has no fixed points in $ X $, in the expression of $ P^s_{k+1}(S_0(\underline{a})^{\pm}) $, the addend corresponding to $ (\underline{i},\underline{j},\sigma) $ cancels itself out with the term corresponding to $ \Phi(\underline{i},\underline{j},\sigma) $.
		\item If $ (\underline{i},\underline{j},\sigma) \in X $ is such that $ \sum_{m=1}^l \left( s_m^2 + \frac{s_m^2 + s_m}{2} \right) $ is odd, then the addends of $ P^s_{k+1} $ corresponding to $ X_{(s_1,\dots,s_l)} $ are all equal up to their sign, and $ \Phi $ exchanges such signs. Therefore, each of such terms appears $ |X_{(s_1,\dots,s_l)}|/2 $ times.
		Since, in this case, $ |X_{(s_1,\dots,s_l)}| = \prod_{m=1}^l 2^{s_m} $ is always a multiple of $ 4 $, all these terms cancel each other out.
	\end{itemize}
\end{proof}

\begin{proposition} \label{prop:coproduct gamma+-}
	For all $ l \geq 2 $ and $ m \geq 1 $, the following formula holds in $ \widetilde{\AB} $, with the convention that $ \gamma_{l,0}^\pm = 1^{\pm} $:
	\[
	\Delta(\gamma_{l,m}^\pm) = \sum_{i=0}^m \left( \gamma_{l,i}^+ \otimes \gamma_{l,m-i}^\pm + \gamma_{l,i}^- \otimes \gamma_{l,m-i}^\mp \right)
	\]
\end{proposition}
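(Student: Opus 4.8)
The plan is to compute $\Delta(\gamma_{l,m}^{\pm})$ at the cochain level, using the explicit cocycle representatives $\widehat{\Gamma}_{l,m}^{\pm} = S_0(\Gamma_{l,m}^{\pm})$ of Definition~\ref{def:generators} and Definition~\ref{def:gamma charged} (cocycles by Corollary~\ref{cor:cocycle}) together with the chain-level lift $\Delta = \lim_k \Delta_k$ of Proposition~\ref{prop:coproduct FN}, where $\Gamma_{l,m}^{\pm} = \alpha_{l,m}^{\pm} + \sum_{i<j} \beta_{l,m}^0(i,j)$. The first point is that only the $0$-pruning survives. All entries of the tuples underlying $\alpha_{l,m}^{\pm}$ and each $\beta_{l,m}^0(i,j)$ lie in $\{0,1,2\}$, and any entry equal to $2$ occurs in isolation (it is the singleton block $B_{i,i,j} = [2]$, flanked by zeros); hence for every $k \geq 0$ each $k$-block of such a tuple is either a run of the constant value $k+1$ or an interval with no entry equal to $k+1$, and no entry exceeds $2$. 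Lemma~\ref{lem:higher prunings} then shows that all higher prunings $P_{k+1}^s$ ($k \geq 0$) contribute nothing beyond the trivial pruning when applied to $S_0$ of these tuples — and the same holds for the truncated tuples that reappear as left-hand factors after applying $P_0^s$ — so the recursion of Proposition~\ref{prop:coproduct FN} stabilizes already at $\Delta_0 = P_0^s$, and $\Delta(\gamma_{l,m}^{\pm})$ is represented by $P_0^s(\widehat{\Gamma}_{l,m}^{\pm})$.

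Next I would evaluate $P_0^s(\widehat{\Gamma}_{l,m}^{\pm})$. In the tree picture, $S_0(\alpha_{l,m}^{\pm})$ is the antisymmetric tree whose root carries, symmetrically on each side, $m$ identical ``$\gamma$-type'' child subtrees, the zeros of the tuple being exactly the separators between consecutive ones. A symmetric $0$-pruning removes the $m-i$ outermost such subtrees from each side, for some $0 \leq i \leq m$: the stump is $S_0(\alpha_{l,i}^{\varepsilon'}) = \widehat{\Gamma}_{l,i}^{\varepsilon'}$, and the shuffled pruned-off part symmetrizes, up to a coboundary, to $\widehat{\Gamma}_{l,m-i}^{\varepsilon''}$. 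Invoking the fact that each such pruning of $T = \alpha_{l,m}^{\pm}$ carries exactly the two sign pairs $(\varepsilon',\varepsilon'')$ compatible with the sign of $T$ — namely $(+,+)$ and $(-,-)$ when $T = \alpha_{l,m}^{+}$, and $(+,-),(-,+)$ when $T = \alpha_{l,m}^{-}$ — the $\alpha$-summand contributes to $P_0^s(\widehat{\Gamma}_{l,m}^{+})$, modulo coboundaries, the expression $\sum_{i=0}^{m}\big(\widehat{\Gamma}_{l,i}^{+} \otimes \widehat{\Gamma}_{l,m-i}^{+} + \widehat{\Gamma}_{l,i}^{-} \otimes \widehat{\Gamma}_{l,m-i}^{-}\big)$, and the analogue with $\pm$ swapped in the second tensor factor to $P_0^s(\widehat{\Gamma}_{l,m}^{-})$; in cohomology this is precisely the asserted formula, the convention $\gamma_{l,0}^{\pm} = 1^{\pm}$ absorbing the terms $i = 0$ and $i = m$.

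It then remains to check that the contribution of the correction terms, $\sum_{i<j} P_0^s(S_0(\beta_{l,m}^0(i,j)))$, is a coboundary in the tensor-product complex, hence vanishes in cohomology. Each $\beta$-tuple is obtained from $\alpha$ only by turning one $\gamma$-block into $[2]$ and shortening another by two leaves, so its $0$-prunings again split into tensor factors that are $0$-symmetrizations of shorter $\alpha$- or $\beta$-type tuples; organizing these by the position of the cut relative to the two distinguished blocks $i$ and $j$ and pairing them off, in the same spirit as the corresponding cancellations in \cite{Guerra:21} for $A_B$ and in \cite{Sinha:17} for the alternating groups, should exhibit this sum as $d$ of an explicit cochain. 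As an independent check on the final formula, recall that $\res$ preserves coproducts (Proposition~\ref{prop:identities restriction transfer}(1)); combined with Corollary~\ref{cor:transfer gamma charged} and the coproduct rule $(*)$ of Section~2 this yields
\[
\overline{\Delta}(\gamma_{l,m}^{+} + \gamma_{l,m}^{-}) = (\res \otimes \res)\,\overline{\Delta}(\gamma_{l,m}) = \sum_{0<i<m} (\gamma_{l,i}^{+} + \gamma_{l,i}^{-}) \otimes (\gamma_{l,m-i}^{+} + \gamma_{l,m-i}^{-}),
\]
while Proposition~\ref{prop:relations involution} together with $\iota(\gamma_{l,m}^{\pm}) = \gamma_{l,m}^{\mp}$ gives $\overline{\Delta}(\gamma_{l,m}^{-}) = (\iota \otimes \id)\,\overline{\Delta}(\gamma_{l,m}^{+})$ and the $(\iota \otimes \iota)$-invariance of $\overline{\Delta}(\gamma_{l,m}^{+})$. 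Granting the support statement that the chain-level computation makes transparent — that $\overline{\Delta}(\gamma_{l,m}^{+})$ lies in the span of the $\gamma_{l,i}^{\epsilon} \otimes \gamma_{l,m-i}^{\delta}$ with $0<i<m$ — these relations already determine both coproducts up to one sign choice per $i$, which the cochain computation of the $\alpha$-term fixes; alternatively one may restrict to $\mathcal{A}_{m2^l}$ via Corollary~\ref{cor:restriction alternating groups} and quote the formula of \cite{Sinha:17}.

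I expect the treatment of the $\beta$-corrections — tracking the sign-weighted symmetric vertex shuffles that $P_0^s$ produces on them and writing down the explicit null-homotopy — to be the main obstacle; everything else is bookkeeping of the same kind as, but lighter than, the proof of Lemma~\ref{lem:higher prunings}.
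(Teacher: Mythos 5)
Your overall strategy is the paper's: reduce to the $0$-pruning via Lemma \ref{lem:higher prunings}, compute $P^s_0$ on the pieces $\alpha$ and $\beta$, and pass to cohomology. But the bookkeeping has a genuine flaw. You split the computation as ``the $\alpha$-summand gives $\sum_i\bigl(\widehat{\Gamma}_{l,i}^+\otimes\widehat{\Gamma}_{l,m-i}^\pm+\widehat{\Gamma}_{l,i}^-\otimes\widehat{\Gamma}_{l,m-i}^\mp\bigr)$ modulo coboundaries'' and ``$\sum_{i<j}P^s_0(S_0(\beta^0_{l,m}(i,j)))$ is a coboundary.'' Neither statement is correct as written. $P^s_0(S_0(\alpha_{l,m})^\pm)$ produces only the $S_0(\alpha)\otimes S_0(\alpha)$ terms, and $S_0(\alpha_{l,i}^{\varepsilon})$ is not $\widehat{\Gamma}_{l,i}^{\varepsilon}$, nor cohomologous to it: the difference is the $\beta$-correction, which is not even a cocycle. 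The terms needed to promote $S_0(\alpha)\otimes S_0(\alpha)$ to $\widehat{\Gamma}\otimes\widehat{\Gamma}$ are exactly the $S_0(\alpha^0)\otimes S_0(\beta^0)$ and $S_0(\beta^0)\otimes S_0(\alpha^0)$ cross terms produced by the $0$-prunings of the $\beta$'s (the $\beta\otimes\beta$ terms cancel mod $2$), so the $\beta$-contribution is emphatically not negligible modulo coboundaries; discarding it while simultaneously upgrading the $\alpha$-contribution double-counts a quantity that is not a coboundary (it is not even closed, since $dS_0(\alpha^0)\neq 0$). The correct accounting, as in the paper, is an exact identity: $\alpha\otimes\alpha$ plus the cross terms equals $\sum_i\widehat{\Gamma}\otimes\widehat{\Gamma}$ on the nose, and the only leftover is the sum of terms $S_0(\delta_k(i))^0\otimes S_0(\delta'_{m+1-k}(j-k))^0$ coming from cuts between the two distinguished blocks of each $\beta$.

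For that leftover you only say that pairing the terms off ``should exhibit this sum as $d$ of an explicit cochain,'' and you flag it as the main obstacle; but no combinatorial pairing is the mechanism here. The paper's key observation is structural: the assignment $\underline{a}\mapsto\underline{a}^0$ is a cochain lift of $\res$, the cocycle $\sum_{i=1}^{k}S_0(\delta_k(i))$ represents $\gamma_{l,k-1}\odot\gamma_{1,1}^2\in H^*(\Bgroup_{(k-1)2^l+2};\mathbb{F}_2)$, and this class lies in the ideal generated by the Euler class $e_{(k-1)2^l+2}$, which is exactly $\ker(\res)$ by the Gysin sequence; hence $\sum_i S_0(\delta_k(i))^0$ is a coboundary, and tensoring with the cocycle $\sum_h S_0(\delta'_{m+1-k}(h))^0$ keeps the whole correction a coboundary. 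This is the missing idea, and your fallback (restrict to $\mathcal{A}_{m2^l}$ and quote Giusti--Sinha, using the involution and $\res$-constraints) does not close the gap either, because it presupposes the ``support statement'' that $\overline{\Delta}(\gamma^+_{l,m})$ lies in the span of the $\gamma^{\epsilon}_{l,i}\otimes\gamma^{\delta}_{l,m-i}$, which again requires controlling precisely these $\delta\otimes\delta'$ contributions.
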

\begin{proof}
	As a notational convention, for an $ n $-tuple $ \underline{a} = [a_0,\dots,a_{n-1}], $ we let $ \underline{a}^0 = \underline{a}^+ + \underline{a}^- $.
	In addition to the cochains $ \alpha_{l,m}^\pm $ and $ \beta_{l,m}^0(i,j) $, we also consider the following cochains in $ \bigoplus_n \FN^*_{\Sigma_n} $:
	\begin{gather*}
		\delta_k(i) = \underbrace{[1]^{*(2^l-1)} * [0] * \dots * [1]^{* (2^l-1)}}_{(i-1) \mbox{ blocks}} * [0] * [2] * [0] * \underbrace{[1]^{* (2^l-1)} * \dots * [0] * [1]^{* (2^l-1)}}_{(k-i) \mbox{ blocks}}\\
		\delta'_k(i) = \underbrace{[1]^{*(2^l-1)} * \dots * [0] * [1]^{* (2^l-1)}}_{(i-1) \mbox{ blocks}} * [0] * [1]^{*(2^l-3)} * [0] * \underbrace{[1]^{* (2^l-1)} * \dots * [1]^{* (2^l-1)}}_{(k-i) \mbox{ blocks}}
	\end{gather*}
	By Proposition \ref{prop:coproduct FN} and Lemma \ref{lem:higher prunings}, $ \Delta(\gamma_{l,m}^\pm) $ is represented by
	\begin{gather*}
		P_0^s(S_0(\alpha_{l,m})^\pm) + \sum_{1 \leq i < j \leq m+1} P_0^s(S_0(\beta^0_{l,m}(i,j))) = \sum_{i=0}^m \left( S_0(\alpha_{l,i}^+) \otimes S_0(\alpha_{l,m-i}^\pm) \right. \\
		\left. + S_0(\alpha_{l,i}^-) \otimes S_0(\alpha_{l,m-i}^{\mp}) \right) + \sum_{1 \leq i < j \leq m+1} \left( \sum_{k=0}^{i-1} (S_0(\alpha_{l,k}^0) \otimes S_0(\beta_{l,m-k}^0(i-k,j-k))) \right. \\
		\left. + \sum_{k=j}^{m+1} ( S_0(\beta_{l,k-1}^0(i,j)) \otimes S_0(\alpha^0_{l,m+1-k})) + \sum_{k=i}^{j-1} ( S_0(\delta_k(i)^0) \otimes S_0({\delta'_{m+1-k}(j-k)}^0)) \right) \\
		= \sum_{i=0}^m \left( \widehat{\Gamma}_{l,i}^+ \otimes \widehat{\Gamma}_{l,m-i}^\pm + \widehat{\Gamma}_{l,i}^- \otimes \widehat{\Gamma}_{l,m-i}^\mp \right) \\
		+ \sum_{k=1}^m \left[ \left( \sum_{i=1}^k S_0(\delta_k(i))^0 \right) \otimes \left( \sum_{h=1}^{m+1-k} S_0({\delta'_{m+1-k}(h)})^0 \right) \right].
	\end{gather*}
	
	We note that the linear map $ \underline{a} \in \FN^*_{\Bgroup_n} \mapsto \underline{a}^0 \in \FN^*_{\Bgrouppos{n}} $ is a cochain lifting of the restriction $ \res_n \colon H^*(\Bgroup_n; \mathbb{F}_2) \to H^*(\Bgrouppos{n}; \mathbb{F}_2) $.
	Moreover, for every $ 1 \leq k \leq m $, $ \sum_{h=1}^{m+1-k} S_0(\delta'_{m+1-k}(h))^0 $ is a cocycle in $ \FN^*_{\Bgroup_{(m+1-k)2^l - 2}} $ and $ \sum_{i=1}^k S_0(\delta_k(i)) $ represents the cohomology class $ \gamma_{l,k-1} \odot \gamma_{1,1}^2 \in H^*(\Bgroup_{(k-1)2^l+2}; \mathbb{F}_2) $. This class belongs to the ideal  generated by $ \gamma_{1,1} \odot 1_{(k-1)2^l} + w \odot 1_{(k-1)2^l+1} $, the Euler class of the double covering considered in the proof of Theorem \ref{thm:basis alternating subgroup}, which is equal to $ \ker(\res_{(k-1)2^l+2}) $.
	We deduce that $ \sum_{i=1}^k S_0(\delta_k(i))^0 $ is a coboundary.
	Consequently
	\[
	\sum_{k=1}^m \left[ \left( \sum_{i=1}^k S_0(\delta_k(i))^0 \right) \otimes \left( \sum_{h=1}^{m+1-k} S_0({\delta'_{m+1-k}(h)})^0 \right) \right]
	\]
	is a coboundary and the desired formula is proved by passing to cohomology.
\end{proof}

\subsection{An explicit charged Hopf monomial basis\texorpdfstring{ for $ \widetilde{\AB} $}{}} \label{subsec:basis}

Up to now, the basis $ \mathcal{M}_{charged} $ of Theorem \ref{thm:basis alternating subgroup} depends on choices of classes $ x^+ $ and $ x^- $ for $ x \in \mathcal{G}_{ann} $. We use what we proved so far on the elements $ \gamma_{l,m}^\pm $ to fix a standard choice for such classes. This way, we construct $ \mathcal{M}_{charged} $ explicitly. In the remainder of this article, we will always use the charged Hopf monomials constructed in this section without further notice.

\begin{definition} \label{def:explicit charged basis}
	Let $ b = (w\dip{m2^n})^{a_0} \prod_{i=1}^n (\gamma_{i,m2^{n-i}})^{a_i} $ be a decorated gathered block with $ n \geq 2 $ and $ a_n \not= 0 $.
	We define
	\[
	b^+ = \res_{m2^n}((w\dip{m2^n})^{a_0} \gamma_{1,m2^{n-1}}^{a_1}) \sum_{\substack{0 \leq k_0 \leq a_0 \\
			\dots \\
			0 \leq k_{n-1} \leq a_{n-1} \\
			0 \leq k_n \leq \lfloor \frac{a_n}{2} \rfloor}}^{a_{n-1}} \prod_{i=2}^n \left( \begin{array}{c}
		a_i \\
		k_i
	\end{array} \right) (\gamma_{i,m2^{n-i}}^-)^{k_i} (\gamma_{i,m2^{n-i}}^+)^{a_i-k_i}.
	\]
	We also define $ b^- = \iota(b^+) $.
	
	If $ x = b_1 \odot \dots \odot b_r \in \mathcal{G}_{ann} $ is a decorated Hopf monomial with constituent gathered blocks $ b_i $, we define
	\[
	x^+ = b_1^+ \odot \dots \odot b_r^+.
	\]
	We also define $ x^- = \iota(x^+) $.
\end{definition}

To prove that these choice of charged Hopf monomials provide a basis, we need to show that the conditions of Theorem \ref{thm:basis alternating subgroup} are satisfied.
\begin{corollary} \label{cor:no conflict}
	For all $ x \in \mathcal{G}_{ann} $, $ \tr(x^+) = \tr(x^-) = x $ and $ \res(x) = x^+ + x^- $.
\end{corollary}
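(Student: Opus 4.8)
The plan is to reduce everything to the already-established facts about the charged classes $\gamma_{l,m}^{\pm}$ together with the identities relating $\res$, $\tr$, $\iota$ and the almost-Hopf ring structure (Corollary \ref{cor:transfer gamma charged}, Proposition \ref{prop:transfer gamma+-}, Proposition \ref{prop:coproduct gamma+-}, Proposition \ref{prop:relations involution}, Proposition \ref{prop:identities restriction transfer}). First I would reduce the claim for an arbitrary $x = b_1 \odot \dots \odot b_r \in \mathcal{G}_{ann}$ to the case of a single decorated gathered block. Indeed, since $x^+ = b_1^+ \odot \dots \odot b_r^+$ and $\tr$ preserves transfer products (Proposition \ref{prop:identities restriction transfer}(2)), $\tr(x^+) = \tr(b_1^+) \odot \dots \odot \tr(b_r^+)$; if one knows $\tr(b_i^+) = b_i$ for each $i$ this is exactly $b_1 \odot \dots \odot b_r = x$. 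The identity $\tr(x^-) = \tr(\iota(x^+)) = \tr(x^+)$ is then immediate from Proposition \ref{prop:restriction transfer involution}(2). For the restriction formula, using that $\res$ is a ring map and $\res(x) = x^+ + x^-$ should be checked block-by-block too; but here a subtlety arises because $\res$ does \emph{not} take $\odot$-products to $\odot$-products. Instead I would use $\res(\tr(x^+) \odot y) = x^+ \odot \res(y)$ (Proposition \ref{prop:identities restriction transfer}(4)) inductively, or simply observe that once $\tr(x^+) = \tr(x^-) = x$ is known, $\res(x) = \res(\tr(x^+)) = x^+ + \iota(x^+) = x^+ + x^-$ by Proposition \ref{prop:identities restriction transfer} (the relation $\res\circ\tr = \id + \iota$). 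So in fact the restriction formula follows formally from the transfer formula, and the whole corollary reduces to showing $\tr(b^+) = b$ for a single decorated gathered block $b$ as in Definition \ref{def:explicit charged basis}.

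Next I would prove $\tr(b^+) = b$ for $b = (w\dip{m2^n})^{a_0}\prod_{i=1}^n (\gamma_{i,m2^{n-i}})^{a_i}$ with $n\geq 2$, $a_n\neq 0$. Write $b = c \cdot \prod_{i=2}^n \gamma_{i,m2^{n-i}}^{a_i}$ where $c = (w\dip{m2^n})^{a_0}\gamma_{1,m2^{n-1}}^{a_1}$ is the ``low'' part, which lives in the image of $\res$ (it is built from $w\dip{}$ and $\gamma_{1,\ast}$, which restrict compatibly). By Definition \ref{def:explicit charged basis}, $b^+ = \res(c)\cdot \sum_{\underline{k}} \prod_{i=2}^n \binom{a_i}{k_i}(\gamma_{i,m2^{n-i}}^-)^{k_i}(\gamma_{i,m2^{n-i}}^+)^{a_i - k_i}$, where the range of $k_n$ is $0 \le k_n \le \lfloor a_n/2\rfloor$ and $0 \le k_i \le a_i$ otherwise. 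Applying the projection formula $\tr(x\cdot\res(y)) = \tr(x)\cdot y$ (Proposition \ref{prop:identities restriction transfer}(6)), we get $\tr(b^+) = \tr(b_{high}^+)\cdot c$, so it remains to identify $\tr(b_{high}^+)$ with $\prod_{i=2}^n\gamma_{i,m2^{n-i}}^{a_i}$, where $b_{high}^+$ is that sum of cup-products of $\gamma_{i,\ast}^{\pm}$'s. Here I would use the transfer-of-cup-product formula $\tr(x)\cdot\tr(x') = \tr(x\cdot x' + \iota(x)\cdot x')$ (Proposition \ref{prop:identities restriction transfer}(5)), or more directly compute $\tr$ of a cup-power $\tr((\gamma_{i,j}^+)^{a})$ and of mixed products, using Corollary \ref{cor:transfer gamma charged} ($\tr(\gamma_{i,j}^{\pm}) = \gamma_{i,j}$). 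The combinatorial content is that the binomial-weighted sum over the $k_i$ of the mixed products $(\gamma^-)^{k_i}(\gamma^+)^{a_i-k_i}$ is precisely engineered so that its transfer collapses to $\gamma_{i,\ast}^{a_i}$ without multiplicity, i.e.\ the $\binom{a_i}{k_i}$'s and the halved range for $k_n$ exactly cancel the overcounting coming from $\res\tr = \id+\iota$; this is the analogue of the choice made by Giusti--Sinha in \cite{Sinha:17}.

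The main obstacle I anticipate is the bookkeeping of the $\iota$-symmetry in that last step: because $\iota(\gamma_{i,j}^+) = \gamma_{i,j}^-$ and $\iota$ is a ring map (Proposition \ref{prop:relations involution}(4)), the cup-product $\prod_i (\gamma^-)^{k_i}(\gamma^+)^{a_i-k_i}$ and its $\iota$-image $\prod_i(\gamma^+)^{k_i}(\gamma^-)^{a_i-k_i}$ are swapped by the substitution $k_i \mapsto a_i - k_i$, and one must verify that (i) $b^+ = \iota(b^-)$ is consistent (this is part of the definition, so automatic), and (ii) that when one applies $\tr$ and uses $\tr(z) = \tr(\iota z)$, each orbit of the $\iota$-action on the index set contributes the right multiplicity --- this is exactly why $k_n$ ranges only up to $\lfloor a_n/2\rfloor$ (picking one representative per $\iota$-orbit, with a self-paired middle term when $a_n$ is even handled correctly). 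I would isolate this as a short sub-lemma: for fixed $a\ge 1$ and a charged class $g^{\pm}$ with $\tr(g^+) = \tr(g^-) = g$, one has $\tr\!\big(\sum_{k=0}^{\lfloor a/2\rfloor}\binom{a}{k}(g^-)^k(g^+)^{a-k}\big) = g^a$ (when cup-multiplied appropriately by the other, freely-ranging factors), and then the full formula follows by iterating over $i = 2,\dots,n$ using Hopf-ring distributivity and Proposition \ref{prop:identities restriction transfer}(5)--(6). Once that sub-lemma is in place, $\tr(b^+) = b$ follows, hence $\tr(x^+) = \tr(x^-) = x$, and the restriction identity $\res(x) = x^+ + x^-$ drops out from $\res\circ\tr = \id + \iota$ as explained above, completing the proof.
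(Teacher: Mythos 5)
Your proposal is correct and takes essentially the same route as the paper: reduce with Proposition \ref{prop:identities restriction transfer}(2) and (6) to the charged powers of the top generator, handle those inductively via Proposition \ref{prop:relations involution}(4), Proposition \ref{prop:identities restriction transfer}(5) and Corollary \ref{cor:transfer gamma charged}, and recover the restriction identity formally from $\res\circ\tr=\id+\iota$. The one point to make explicit when writing up your sub-lemma is that the cancellation which makes the halved range for $k_n$ work (e.g.\ $\tr(\gamma_{k,l}^+\cdot\gamma_{k,l}^-)=0$) uses that $\iota$-invariant classes lie in $\ker(\tr)$, which rests on the Gysin decomposition (Lemma \ref{lem:Gysin decomposition}(3) and Theorem \ref{thm:basis alternating subgroup}) rather than on the identities you cite alone --- though the paper's own proof is equally terse on this point.
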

\begin{proof}
	By Proposition \ref{prop:identities restriction transfer} (6) we reduce to the special case where $ a_0 = a_1 = 0 $. The statement then follows by applying inductively Proposition \ref{prop:relations involution} (4), Proposition \ref{prop:identities restriction transfer} (5) and Corollary \ref{cor:transfer gamma charged}.
\end{proof}

\section{The algebraic and invariant-theoretic picture: detection by elementary abelian subgroups}

If $ A \leq \Bgrouppos{n} $ is a subgroup, there is a restriction map in cohomology $ H^*(\Bgrouppos{n}; \mathbb{F}_2) \to H^*(A; \mathbb{F}_2) $.
The final goal of this section is to prove the following detection result for the cohomology of $ \Bgrouppos{n} $.

\begin{theorem}[Detection theorem] \label{thm:detection}
The direct sum of the restriction maps
\[
\rho \colon H^*(\Bgrouppos{n}; \mathbb{F}_2) \to \bigoplus_A H^*(A; \mathbb{F}_2),
\]
where the direct sum runs over all elementary abelian $ 2 $-subgroups $ A \leq \Bgrouppos{n} $, is injective.
\end{theorem}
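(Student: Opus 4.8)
The plan is to reduce the Detection Theorem to a statement about the cohomology of the hyperoctahedral group $\Bgroup_n$ via the transfer--restriction machinery, and then invoke Swenson's injectivity result. First I would recall the key algebraic inputs assembled so far: the Gysin sequence gives $\res(\tr(x)) = x + \iota(x)$, and Theorem \ref{thm:basis alternating subgroup} together with the explicit charged basis $\mathcal{M}_{charged}$ of Subsection \ref{subsec:basis} decomposes $\AB$ as a $C_2$-representation into copies of the trivial module $\kappa$ (spanned by $\mathcal{M}_0$, the image under $\res$ of $\mathcal{G}_{quot} \setminus \mathcal{G}_{ann}$) and copies of the regular module $\xi$ (spanned by $\mathcal{M}_+ \sqcup \mathcal{M}_-$, built from $\mathcal{G}_{ann}$). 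The elements of $\mathcal{M}_0$ are restrictions of classes in $H^*(\Bgroup_n;\Ftwo)$, so Swenson's theorem (that the Quillen map for the finite Coxeter group $\Bgroup_n$ is injective) plus naturality immediately controls them: if $u = \res(y)$ with $y \in H^*(\Bgroup_n;\Ftwo)$ restricts to zero on every elementary abelian $2$-subgroup of $\Bgrouppos{n}$, one wants to conclude $u = 0$, but this needs comparing the elementary abelian $2$-subgroups of $\Bgrouppos{n}$ with those of $\Bgroup_n$, which is exactly the content of Subsections 5.1--5.2.

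The heart of the argument will be a careful split according to the $C_2$-decomposition. Suppose $\rho(z) = 0$ for some $z \in H^*(\Bgrouppos{n};\Ftwo)$; write $z$ in the basis $\mathcal{M}_{charged}$ and separate its $\mathcal{M}_0$-part $z_0$ from its $\mathcal{M}_\pm$-part $z_1$. Applying $\tr$ kills $z_0$ (since $\tr \circ \res = 0$ by the Gysin sequence) and sends $z_1$ into the span of $\mathcal{G}_{ann} \subseteq H^*(\Bgroup_n;\Ftwo)$; by naturality of the transfer along $A \hookrightarrow \Bgrouppos{n} \hookrightarrow \Bgroup_n$ and the fact that every elementary abelian $2$-subgroup of $\Bgroup_n$ either lies in $\Bgrouppos{n}$ or is detected after transfer (this is where the subgroup classification of Section 5 enters), $\rho(z) = 0$ forces $\tr(z)$ to restrict trivially to all elementary abelian $2$-subgroups of $\Bgroup_n$, hence $\tr(z) = 0$ by Swenson. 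Since $\tr$ is injective on the span of $\mathcal{G}_{ann}$ (as $\tr(x^+) = \tr(x^-) = x$ identifies that span with $\bigoplus_n \Ann(e_n)$, and the $x^+, x^-$ together span the $\xi$-summands faithfully modulo the kernel $\{x^+ + x^-\}$), the vanishing of $\tr(z)$ pins down $z_1$ up to the $\res$-image $\res(\mathcal{G}_{ann}) = \{x^+ + x^- : x \in \mathcal{G}_{ann}\}$. Thus $z$ is reduced to an element of $\res(H^*(\Bgroup_n;\Ftwo))$, namely $z \in \res(\Span \mathcal{G}_{quot})$, and we are back to the $\mathcal{M}_0$ case: $z = \res(y)$ restricting to zero everywhere means $y$ restricts to zero on all elementary abelian $2$-subgroups of $\Bgrouppos{n}$, in particular on a cofinal family inside $\Bgroup_n$ — again by the Section 5 classification, these detect $H^*(\Bgroup_n;\Ftwo)$ modulo $\ker(\res_n) = (e_n)$ — so $y \in (e_n)$ and $z = \res(y) = 0$.

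Concretely, I would organize the proof as: (1) state precisely, citing Subsection 5.2, how the elementary abelian $2$-subgroups of $\Bgrouppos{n}$ sit inside those of $\Bgroup_n$ and conversely, enough to transport Swenson's detection statement; (2) prove the auxiliary claim that $\tr \colon \Span(\mathcal{M}_+ \sqcup \mathcal{M}_-) \to H^*(\Bgroup_n;\Ftwo)$ has kernel exactly $\Span\{x^+ + x^- : x \in \mathcal{G}_{ann}\} = \res(\Span \mathcal{G}_{ann})$, using Corollary \ref{cor:no conflict} and part (3) of Proposition \ref{prop:identities restriction transfer}; (3) run the two-step reduction above, first applying $\tr$ and Swenson to kill the $\xi$-part modulo $\res$-images, then applying naturality and Swenson's injectivity modulo $(e_n)$ to kill the remaining $\res$-image part. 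The main obstacle will be step (1): one needs that the restriction of a class on $\Bgroup_n$ to every elementary abelian $2$-subgroup contained in $\Bgrouppos{n}$ already carries enough information, i.e.\ that one does not lose detection power by only seeing the elementary abelian $2$-subgroups of the index-$2$ subgroup; this is plausible because a maximal elementary abelian $2$-subgroup $V \leq \Bgroup_n$ meets $\Bgrouppos{n}$ in a subgroup of index at most $2$, and the restriction map $H^*(V;\Ftwo) \to H^*(V \cap \Bgrouppos{n};\Ftwo)$ is far from injective, so one must instead argue via transfers and the explicit conjugacy classification of Subsection 5.2 rather than a naive comparison — this is where the bulk of the genuinely new work lies, and it is exactly why Section 5 devotes three subsections to setting it up.
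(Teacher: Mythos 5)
Your first step is sound, and it is genuinely different from the paper's argument. If $\rho(z)=0$, then the Cartan--Eilenberg double-coset formula (Theorem \ref{thm:Cartan-Eilenberg}) applied to $\Bgrouppos{n}\le \Bgroup_n$ shows that $\tr(z)$ restricts trivially to every elementary abelian $2$-subgroup $A\le \Bgroup_n$: if $A\le \Bgrouppos{n}$ the formula gives $\rho_A(z)+\rho'_A(z)=0$, and if $A\not\le\Bgrouppos{n}$ it gives a transfer applied to $\res_{A\cap\Bgrouppos{n}}(z)=0$. Swenson's injectivity for $\Bgroup_n$ then yields $\tr(z)=0$, and exactness of the Gysin sequence (not an ad hoc injectivity statement for $\tr$ on the span of $\mathcal{M}_+\sqcup\mathcal{M}_-$, which you do not need) gives $z\in\im(\res)$. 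The paper argues in a completely different way: it fixes the explicit charged basis, computes the restriction of every basis element to every $\widehat{A}_\pi$ (Propositions \ref{prop:rho cup}--\ref{prop:generators elementary subgroups}, Remark \ref{rem:basis restriction}), filters by minimal partitions (Corollary \ref{cor:minimal partitions}), and concludes with the linear-independence Lemma \ref{lem:linear independence}; in particular the $\mathcal{G}_{ann}$-part is handled there via restriction to $\mathcal{A}_{4m}$ and Giusti--Sinha's detection theorem for alternating groups, not via Swenson. Your transfer trick would, if completed, replace only parts (1)--(2) of that lemma.

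The gap is your second step. After the reduction you must prove: if $y\in H^*(\Bgroup_n;\mathbb{F}_2)$ and $\res(y)$ restricts to zero on every elementary abelian $2$-subgroup of $\Bgrouppos{n}$, then $y\in(e_n)$. This does not follow from Swenson plus the Subsection 5.2 classification, contrary to your parenthetical "these detect $H^*(\Bgroup_n;\mathbb{F}_2)$ modulo $(e_n)$". The hypothesis only says that for each $\pi$ the class $\res_{A_\pi}(y)$ lies in the principal ideal generated by $\res_{A_\pi}(e_n)=\phi_{m_0,m_1}$ (the kernel of $H^*(A_\pi;\mathbb{F}_2)\to H^*(\widehat{A}_\pi;\mathbb{F}_2)$ when the index is $2$), and "locally in the ideal for every $\pi$" does not formally imply "globally in $(e_n)$": injectivity of the Quillen map controls classes, not ideal membership, and the restrictions to the index-$2$ subgroups $\widehat{A}_\pi$ discard a great deal of information. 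Closing this implication is exactly what Theorem \ref{thm:invariants normalizer}, Propositions \ref{prop:coproduct cup elementary subgroups}--\ref{prop:generators elementary subgroups} and parts (3)--(4) of Lemma \ref{lem:linear independence} are for, i.e.\ the bulk of Section 5; moreover, when $n\not\equiv 0\bmod 4$ the map $\res_n$ is surjective (Corollary \ref{cor:quotient not mod 4}), so in those components your reduction leaves the whole theorem still to be proved. Finally, be aware that no naturality-only argument can work here: already for $n=2$ one has $\Bgrouppos{2}\cong C_4$, whose only maximal elementary abelian subgroup is its centre, and the nonzero class $\res_2(\gamma_{1,1})\in H^1(\Bgrouppos{2};\mathbb{F}_2)$ restricts to zero there; so the statement you are assuming is delicate precisely in the low-multiplicity cases (where $m_0,m_1,m_2$ are small), which is where the case analysis of Lemma \ref{lem:linear independence} (and, one should check carefully, the theorem itself in small components) concentrates all the difficulty.
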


We recall that an elementary abelian $ 2 $-group is an abelian group where every non-trivial element has order $ 2 $.

\subsection{Recollections on the classification of maximal elementary abelian \texorpdfstring{$ 2 $}{2}-subgroups in \texorpdfstring{$ \Bgroup_n $}{Bn}}

We first recall a result on maximal elementary abelian $ 2 $-subgroups in $ \Bgroup_n $ from Swenson's thesis \cite{Swenson}. In this regard, we depart from it only by adopting the slightly different notation from Guerra \cite{Guerra:21}.

Let $ V_k = \mathbb{F}_2^k $. The regular action of $ V_k $ on itself yields a monomorphism $ V_k \hookrightarrow \Sigma_{2^k} $, defined up to conjugacy.
\begin{definition}[from \cite{Swenson,Guerra:21}]
We say that a partition $ \pi $ of $ n $ is admissible if its parts are all powers of $ 2 $.
Given an admissible partition $ \pi $ of $ n $, we let $ m_k(\pi) $, or simply $ m_k $ where there is no risk of confusion, be the multiplicity of $ 2^k $ in $ \pi $.
\end{definition}

For all $ k \geq 0 $, there is an elementary abelian $ 2 $-subgroup $ A_{(2^k)} = C_k \times V_k \leq \Bgroup_{2^k} $, where $ C_k \cong \mathbb{F}_2 $ is the center of $ \Bgroup_{2^k} $.
If $ \pi = \{ 2^{k_1}, \dots, 2^{k_i} \} $ is an admissible partition of $ n $, then we consider the subgroup
\[
A_{\pi} = \prod_{j=1}^i A_{(2^{k_j})} \leq \prod_{j=1}^i \Bgroup_{2^{k_j}} \leq \Bgroup_n.
\]

The subgroup $ A_{\pi} $ depends on an ordering of the parts of $ \pi $, but its conjugacy class doesn't.

\begin{theorem}[{\cite[Theorem 5.3.4]{Swenson}}] \label{thm:elementary abelian subgroups B}
	Every maximal elementary abelian $ 2 $-subgroup of $ \Bgroup_n $ is conjugate to $ A_{\pi} $ for one and only one admissible partition.
	Moreover, the invariant subalgebra with respect to the conjugacy action of the normalizer is given by
	\[
	\left[ H^*(A_{\pi}; \mathbb{F}_2) \right]^{N_{\Bgroup_n} (A_{\pi})} \cong \bigotimes_{k \in \mathbb{N}} \left[ \mathbb{F}_2[f_{2^k},d_{2^k-1},\dots,d_{2^k-2^{k-1}}]^{\otimes m_k} \right]^{\Sigma_{m_k}},
	\]
	where generators are indexed by their degree and $ m_k $ is the multiplicity of $ 2^k $ in $ \pi $.
\end{theorem}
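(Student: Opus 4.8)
Although this is quoted from Swenson's thesis, here is the proof strategy I would follow. It splits into two essentially independent parts: the classification of maximal elementary abelian $2$-subgroups of $\Bgroup_n$, and the computation of the normalizer-invariant subalgebra.

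\emph{Classification.} Since $\Bgroup_n$ acts on $\mathbb{R}^n$ permuting the coordinate lines, any elementary abelian $A\leq\Bgroup_n$ acts on $\{1,\dots,n\}$ through its image $\overline A\leq\Sigma_n$; decomposing into $\overline A$-orbits $O_1,\dots,O_r$ exhibits $A$ inside $\prod_i\Bgroup_{|O_i|}$, so the first step is to reduce to the case where $A$ is transitive. A transitive elementary abelian permutation group is regular, so a transitive $A$ forces $n=2^k$ and $\overline A\cong V_k$ regular; after an initial conjugation we may take $\overline A$ to be the standard $V_k$. Since $A$ is abelian, $\overline A$ acts trivially on $A\cap C_2^n$, so $A\cap C_2^n$ lies in $(C_2^n)^{\overline A}=\langle\text{all-ones vector}\rangle=C_k\cong\mathbb{F}_2$; thus $A\cap C_2^n$ is $0$ or $C_k$, and the first option is excluded by maximality because $\langle A,C_k\rangle$ is strictly larger. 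Choosing an $\mathbb{F}_2$-linear complement $A'$ to $C_k$ in $A$, one checks $A'\cap C_2^n=0$, so $A'$ maps isomorphically onto $\overline A=V_k$; two such lifts of $V_k$ to $\Bgroup_{2^k}$ differ by a $1$-cocycle $V_k\to\mathbb{F}_2[V_k]$, and $H^1(V_k;\mathbb{F}_2[V_k])=0$ (the module is free; or apply Shapiro), so $A'$ is conjugate to the standard $V_k$ by an element of $C_2^n$. That conjugation fixes the central $C_k$, hence carries $A=C_k\times A'$ to $A_{(2^k)}$; reassembling the orbits yields $A\sim A_\pi$ for $\pi=\{|O_1|,\dots,|O_r|\}$. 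Uniqueness is immediate since the orbit-size multiset of $A_\pi$ on $\{1,\dots,n\}$ recovers $\pi$ and is a conjugacy invariant, and maximality of $A_\pi$ follows once one computes, directly in the wreath product, that $C_{\Bgroup_{2^k}}(V_k)=C_k\times V_k=A_{(2^k)}$, so that $A_{(2^k)}$ (hence $A_\pi$) is self-centralizing.

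\emph{The invariant subalgebra.} Grouping the orbits of $A_\pi$ by size ($m_k$ orbits of size $2^k$), an element of $N_{\Bgroup_n}(A_\pi)$ has underlying permutation normalizing $\prod_i\overline V_{k_i}$ and therefore permutes only orbits of equal size; this identifies $N_{\Bgroup_n}(A_\pi)\cong\prod_k\big(N_{\Bgroup_{2^k}}(A_{(2^k)})\wr\Sigma_{m_k}\big)$ and reduces the claim to the formula $R_k:=\big[H^*(A_{(2^k)};\mathbb{F}_2)\big]^{N_{\Bgroup_{2^k}}(A_{(2^k)})}=\mathbb{F}_2[f_{2^k},d_{2^k-1},\dots,d_{2^k-2^{k-1}}]$ with the stated degrees, since then the invariants of $A_\pi$ are $\bigotimes_k\big[R_k^{\otimes m_k}\big]^{\Sigma_{m_k}}$. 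Writing $H^*(A_{(2^k)};\mathbb{F}_2)=\mathbb{F}_2[t,y_1,\dots,y_k]$ with $t\in H^1(C_k)$ and $y_i$ a basis of $H^1(V_k)$: because $C_k$ is the central $\mathbb{F}_2$ of $\Bgroup_{2^k}$, the normalizer fixes $t$ and preserves $\langle y_1,\dots,y_k\rangle$, acting on the latter through all of $GL_k(\mathbb{F}_2)$ (because $N_{\Sigma_{2^k}}(V_k)=V_k\rtimes GL_k(\mathbb{F}_2)$); exhibiting a single element of $C_2^{2^k}$ that normalizes $A_{(2^k)}$ and induces a nonzero translation $t\mapsto t+\ell$, and invoking the $GL_k(\mathbb{F}_2)$-irreducibility of $\langle y_1,\dots,y_k\rangle$, shows that all translations occur. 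Hence $N_{\Bgroup_{2^k}}(A_{(2^k)})$ acts faithfully on $\mathbb{F}_2[t,y_*]$ through the maximal parabolic $P\leq GL_{k+1}(\mathbb{F}_2)$ stabilizing the central line, with $|P|=2^k|GL_k(\mathbb{F}_2)|$. The class $f_{2^k}:=\prod_{\ell\in\langle y_*\rangle}(t+\ell)$ (degree $2^k$) and the Dickson invariants $d_{2^k-2^j}$ of $GL_k(\mathbb{F}_2)$ on $\mathbb{F}_2[y_*]$ (degrees $2^k-2^j$, $0\le j\le k-1$) are $P$-invariant, and they are algebraically independent, so $\mathbb{F}_2[t,y_*]$ is finite free over $\mathbb{F}_2[f_{2^k},d_*]$ of rank equal to the product of degrees, namely $2^k|GL_k(\mathbb{F}_2)|=|P|$. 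As the $P$-action is faithful, $[\mathbb{F}_2(t,y_*):\mathbb{F}_2(t,y_*)^P]=|P|$ as well, so $\mathbb{F}_2(f_{2^k},d_*)=\mathbb{F}_2(t,y_*)^P$; since $\mathbb{F}_2[f_{2^k},d_*]$ is integrally closed and $R_k$ is integral over it inside the same fraction field, $R_k=\mathbb{F}_2[f_{2^k},d_{2^k-1},\dots,d_{2^k-2^{k-1}}]$.

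\emph{Main obstacle.} I expect the delicate point to be the precise determination of the normalizer action: the full wreath product $C_2\wr(V_k\rtimes GL_k(\mathbb{F}_2))$ does \emph{not} normalize $A_{(2^k)}$, so the element of $C_2^{2^k}$ realizing a nontrivial translation must be chosen with care and its normalizing property checked by hand. By comparison, the wreath-product analysis of $N_{\Bgroup_n}(A_\pi)$, the vanishing $H^1(V_k;\mathbb{F}_2[V_k])=0$, and the closing Galois-theoretic argument are routine.
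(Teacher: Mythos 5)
The paper itself offers no argument for this statement: it is imported wholesale from Swenson's thesis, and the only ingredients the paper later extracts from that source are the description of $N_{\Bgroup_{2^k}}(A_{(2^k)})$ as $A_{(2^k)}$ extended by the stabilizer $G_k$ of the first coordinate vector in $\Gl_{k+1}(\mathbb{F}_2)$ (used in Proposition \ref{prop:elementary subgroups} and Lemma \ref{lem:A4 Galois}). Your reconstruction is sound and is consistent with exactly those facts: your parabolic $P$ of order $2^k|\Gl_k(\mathbb{F}_2)|$ is Swenson's $G_k$ acting on $H^1(A_{(2^k)};\mathbb{F}_2)$, and your orbit-by-orbit reduction, the regularity of a transitive elementary abelian permutation group, the vanishing $H^1(V_k;\mathbb{F}_2[V_k])=0$, the self-centralizing property of $A_{(2^k)}$, the identification $N_{\Bgroup_n}(A_\pi)\cong\prod_k\bigl(N_{\Bgroup_{2^k}}(A_{(2^k)})\wr\Sigma_{m_k}\bigr)$, and the concluding degree-count/integral-closure argument all go through. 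Two small points deserve explicit mention: in the reduction to the transitive case you should say that maximality forces $A$ to equal the product of its orbit-wise projections (otherwise conjugating the projections separately does not a priori normalize $A$ itself), and the phrase ``acts faithfully'' should be ``the action factors through its image $P$'' since $A_{(2^k)}$ itself lies in the kernel.

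One sentence, however, is false as written and contradicts your own next clause: the normalizer does \emph{not} fix $t$. Conjugation fixes the central element $c$, hence preserves $\Ann(c)=\langle y_1,\dots,y_k\rangle$ and acts trivially on the quotient $A_{(2^k)}^{\vee}/\langle y_1,\dots,y_k\rangle$, so it sends $t\mapsto t+\ell$ with $\ell\in\langle y_1,\dots,y_k\rangle$; it is precisely the occurrence of these nontrivial translations that makes the image the full parabolic of order $2^k|\Gl_k(\mathbb{F}_2)|$ rather than $\Gl_k(\mathbb{F}_2)$ alone (compare the elements of $H_k$ in the proof of Lemma \ref{lem:A4 Galois}, which fix the $y_i$ and translate $x$). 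Moreover the ``main obstacle'' you anticipate is not delicate: for any nonzero linear functional $\lambda\colon V_k\to\mathbb{F}_2$, the vector $\epsilon=(\lambda(v))_{v\in V_k}\in C_2^{2^k}$ commutes with $c$ and satisfies $\epsilon v\epsilon^{-1}=c^{\lambda(v)}v$ for all $v\in V_k$, so it normalizes $A_{(2^k)}$ and realizes the translation $t\mapsto t+\lambda$ on cohomology; with that correction your argument is complete in outline.
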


\begin{remark} \label{rem:invariants}
The classes $ f_{2^k} $ and $ d_{2^k-2^i} $ in the statement of Theorem \ref{thm:elementary abelian subgroups B} are described explicitly as follows.
It is known that the cohomology of an abelian $ 2 $-group $ A $ is isomorphic to the symmetric algebra freely generated by the dual $ \mathbb{F}_2 $-vector space $ A^\vee $ in degree $ 1 $ (see, for instance, \cite[Theorem II.4.4]{Adem-Milgram}).
Let $ \{y_1,\dots,y_k\} $ be a basis of the dual $ \mathbb{F}_2 $-vector space $ V_k^\vee $, and let $ x \in A_{(2^k)}^\vee $ be a linear functional restricting to zero on $ V_k $ and non-zero on $ C_k $. Then
\[
H^*(A_{(2^k)}; \mathbb{F}_2) \cong \mathbb{F}_2[x,y_1,\dots,y_k].
\]
With this notation, $ d_{2^k-1}, \dots, d_{2^k-2^{k-1}} $ are the invariants introduced by Dickson (see \cite{Wilkerson} for a nice treatment) and
\[
f_{2^k} = \prod_{v \in V_k^\vee} (x-v).
\]
For example, in the cohomology of $ A_{(4)} $, $ f_4 = x(x-y_1)(x-y_2)(x-y_1-y_2) $.
\end{remark}

\subsection{Classification of maximal elementary abelian \texorpdfstring{$ 2 $}{2}-subgroups in \texorpdfstring{$ \Bgrouppos{n} $}{the alternating subgroups of Bn}}

We now study the elementary abelian $ 2 $-subgroups of $ \Bgrouppos{n} $ and we provide a complete characterization up to conjugacy. We need a preliminary definition.

\begin{definition}
We say that an admissible partition $ \pi $ is irrelevant if the multiplicity of $ 1 $ in $ \pi $ is different from $ 2 $ or the multiplicity of $ 2 $ in $ \pi $ is different from $ 0 $.
We say that an admissible partition is relevant if it is not irrelevant.
\end{definition}

We build our analysis of $ \Bgrouppos{n} $ on Theorem \ref{thm:elementary abelian subgroups B}. To do this, we first record the following well-known elementary fact about groups.
\begin{proposition} \label{prop:conjugacy classes}
Let $ G $ be a group and $ H \leq G $ be a subgroup. There is a bijection between $ G/N_G(H) $ and the conjugacy class of $ H $, given by $ g \cdot N_G(H) \mapsto gHg^{-1} $.
\end{proposition}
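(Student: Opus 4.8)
The plan is to recognize this as a direct instance of the orbit--stabilizer correspondence. The idea is to consider the action of $G$ on the set of all its subgroups by conjugation, $g \cdot K = gKg^{-1}$; then the conjugacy class of $H$ is precisely the orbit of $H$ under this action, and the stabilizer of $H$ is $\{ g \in G : gHg^{-1} = H \}$, which by definition equals $N_G(H)$. The asserted bijection is then the standard identification of the orbit of a point with the coset space of its stabilizer.

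First I would check that the assignment $g \mapsto gHg^{-1}$ depends only on the left coset $gN_G(H)$: if $g' = gn$ with $n \in N_G(H)$, then $g'H(g')^{-1} = g(nHn^{-1})g^{-1} = gHg^{-1}$, so the rule $gN_G(H) \mapsto gHg^{-1}$ gives a well-defined map from $G/N_G(H)$ to the conjugacy class of $H$. Surjectivity is immediate, since by definition every subgroup conjugate to $H$ is of the form $gHg^{-1}$ for some $g \in G$. For injectivity, suppose $gHg^{-1} = g'H(g')^{-1}$; conjugating by $g^{-1}$ on both sides gives $(g^{-1}g')H(g^{-1}g')^{-1} = H$, hence $g^{-1}g' \in N_G(H)$, so $gN_G(H) = g'N_G(H)$.

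There is no genuine obstacle in this argument; it is entirely formal. The only point requiring a little care is keeping the conventions consistent --- conjugation on the left versus the right, and left cosets versus right cosets --- so that the well-definedness and injectivity computations match up; once a single convention is fixed throughout, the verification is routine.
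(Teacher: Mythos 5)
Your argument is correct and is exactly the standard orbit--stabilizer identification that the paper implicitly relies on; the paper states this proposition as a well-known fact without proof, so there is nothing to compare beyond noting that your well-definedness, surjectivity, and injectivity checks are complete and routine.
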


\begin{definition} \label{def:elementary B+}
For an admissible partition $ \pi $ of $ n $, we define $ \widehat{A}_{\pi} = A_{\pi} \cap \Bgrouppos{n} $.
\end{definition}

The subgroups $ \widehat{A}_{\pi} $ admit the following simple description.
\begin{proposition} \label{prop:description A^}
Let $ \pi $ be an admissible partition of $ n $. Let $ p \colon A_{(2)} = C_1 \times V_1 \to V_1 $ be the projection onto the second factor.
Let $ \phi_{m_0,m_1} $ be the homomorphism:
\[
\phi_{m_0,m_1} \colon A_{(1)}^{m_0} \times A_{(2)}^{m_1} = C_0^{m_0} \times (C_1 \times V_1)^{m_1} \stackrel{\id_{C_0}^{m_0} \times p^{m_1}}{\xrightarrow{\hspace{4em}}} C_0^{m_0} \times V_1^{m_1} \cong \mathbb{F}_2^{m_0+m_1} \stackrel{\Sigma}{\rightarrow} \mathbb{F}_2
\]
Then, as a subgroup of $ A_{\pi} $, $ \widehat{A}_{\pi} = \ker(\phi_{m_0,m_1}) \times A_{\pi'} $, where $ \pi' $ is obtained from $ \pi $ by removing all occurrences of $ 1 $ and $ 2 $.
\end{proposition}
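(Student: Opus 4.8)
The goal is to identify $\widehat{A}_{\pi} = A_{\pi} \cap \Bgrouppos{n}$ explicitly inside $A_{\pi}$. The plan is to compute the sign homomorphism $\sgn_{\Bgroup_n}$ restricted to $A_{\pi}$, since $\widehat{A}_{\pi}$ is by definition its kernel. First I would recall that $A_{\pi} = \prod_j A_{(2^{k_j})}$ with $A_{(2^k)} = C_k \times V_k \leq \Bgroup_{2^k}$, and that the sign of a product of commuting elements is the product of the signs, so it suffices to understand $\sgn_{\Bgroup_{2^k}}$ on $C_k$ and on $V_k$ separately for each $k$.

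The key computational step is the following case analysis. The generator of $C_k \cong \mathbb{F}_2$ is the central element of $\Bgroup_{2^k}$, which is the product of all $2^k$ coordinate reflections; as a signed permutation it is $-\id$, a product of $2^k$ transpositions of the form $(i,-i)$, hence has sign $(-1)^{2^k}$. This is $-1$ exactly when $k = 0$ and $+1$ for all $k \geq 1$. For $V_k \hookrightarrow \Sigma_{2^k} \leq \Bgroup_{2^k}$ given by the regular representation: a nonzero element $v \in V_k$ acts on $\mathbb{F}_2^k$ by translation, which is a fixed-point-free involution, hence a product of $2^{k-1}$ transpositions, so has sign $(-1)^{2^{k-1}}$. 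This equals $-1$ exactly when $k = 1$ and $+1$ for all $k \geq 2$ (and trivially $+1$ for $k = 0$ since $V_0$ is trivial). Therefore $\sgn_{\Bgroup_n}$ is trivial on every factor $A_{(2^{k_j})}$ with $k_j \geq 2$, it is the projection $A_{(1)} = C_0 \to C_0 \cong \mathbb{F}_2$ on each part equal to $1$, and it is $p \colon A_{(2)} = C_1 \times V_1 \to V_1 \cong \mathbb{F}_2$ on each part equal to $2$. Summing these contributions gives precisely the homomorphism $\phi_{m_0,m_1}$ composed with the projection $A_{\pi} \twoheadrightarrow A_{(1)}^{m_0} \times A_{(2)}^{m_1}$ that forgets the $A_{\pi'}$ factor, so its kernel is $\ker(\phi_{m_0,m_1}) \times A_{\pi'}$, as claimed.

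The step I expect to require the most care is the sign count for $V_k$: one must be careful that the embedding $V_k \hookrightarrow \Sigma_{2^k}$ is the left regular representation and that a nonzero translation on $\mathbb{F}_2^k$ genuinely has no fixed points, so that it decomposes into exactly $2^{k-1}$ disjoint transpositions — this is where the parity $(-1)^{2^{k-1}}$, and hence the special role of $k = 1$, comes from. Everything else is bookkeeping: tracking that $\sgn_{\Bgroup_n}$ of a product of elements supported on disjoint blocks of coordinates is the product of the block-wise signs, and reassembling the kernel as a product over the parts. The only subtlety worth a sentence in the write-up is that $A_{\pi}$ (and hence $\widehat{A}_{\pi}$) depends on an ordering of the parts of $\pi$, but the computation above is manifestly order-independent, consistent with Proposition \ref{prop:conjugacy classes} and the remark that the conjugacy class of $A_{\pi}$ does not depend on the ordering.
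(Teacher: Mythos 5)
Your proposal is correct and follows essentially the same route as the paper: both compute the sign homomorphism on each factor $A_{(2^k)} = C_k \times V_k$ (the central element is a product of $2^k$ reflections, a nonzero translation a product of $2^{k-1}$ disjoint inversions), conclude that $A_{(2^k)} \subseteq \Bgrouppos{2^k}$ precisely for $k \geq 2$, and identify $\phi_{m_0,m_1}$ with the restriction of $\sgn_{\Bgroup_n}$ so that $\widehat{A}_{\pi} = \ker(\phi_{m_0,m_1}) \times A_{\pi'}$. No gaps.
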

\begin{proof}
We first assume that $ \pi = {2^k} $ contains a single element. Then $ n = 2^k $ and $ A_{\pi} = C_k \times V_k $.
$ C_k $ is generated by the product of the $ n $ reflections across the coordinate hyperplanes, which has positive sign if and only if $ k > 0 $.
$ V_k $ is generated by translations in $ \mathbb{F}_2^k $ by coordinate vectors, which are the product of $ 2^{k-1} $ disjoint inversions. Therefore $ V_k \subseteq \Bgrouppos{n} $ if $ k \geq 2 $, and is generated by a single element of negative sign if $ k = 1 $.
In particular, $ A_{(2^k)} \subseteq \Bgrouppos{2^k} $ if and only if $ k \geq 2 $.
		
Consequently, for a general $ \pi $, if $ m_0 = m_1 = 0 $, then $ A_{\pi} = \widehat{A}_{\pi} $. If $ m_0 $ or $ m_1 $ are non-zero, then
\[
A_{\pi} = A_{(1)}^{m_0} \times A_{(2)}^{m_1} \times A_{\pi'} \subseteq \Bgroup_{m_0+2m_1} \times \Bgrouppos{n-m_0-2m_1}.
\]
Hence $ \widehat{A}_{\pi} \subseteq (\Bgroup_{m_0+2m_1} \times \Bgrouppos{n-m_0-2m_1}) \cap \Bgrouppos{n} = \Bgrouppos{m_0+2m_1} \times \Bgrouppos{n-m_0-2m_1} $.
We deduce that
\[
\widehat{A}_{\pi} = \left[(A_{(1)}^{m_0} \times A_{(1)}^{m_1}) \cap \Bgrouppos{m_0+2m_1} \right] \times A_{\pi'} = \ker(\sgn|_{A_{(1)}^{m_0} \times A_{(2)}^{m_1}}) \times A_{\pi'}.
\]
The statement follows by the immediate identification between $ \phi_{m_0,m_1} $ and the restriction of the sign homomorphism.
\end{proof}

We will also need the following result, regarding the action of the subgroups $ A_{\pi} $ on $ \{1,\dots,n\} $ induced by the projection $ \Bgroup_n \to \Sigma_n $.
\begin{lemma} \label{lem:orbits conjugates}
Let $ \pi $ and $ \mu $ be relevant admissible partitions of $ n $. Let $ x \in \Bgroup_n $. If $ x \widehat{A}_{\pi} x^{-1} \subseteq A_{\mu} $, then $ x $ induces a bijection between the orbit sets
\[
\{ 1,\dots, n \}/A_{\pi} \cong \{1,\dots, n\}/A_{\mu}.
\]
\end{lemma}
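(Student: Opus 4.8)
The plan is to analyze the structure of $A_\pi$ as a subgroup of $\Bgroup_n = C_2 \int \Sigma_n$ and determine exactly what the orbit partition $\{1,\dots,n\}/A_\pi$ looks like. First I would observe that, by construction, $A_\pi = \prod_j A_{(2^{k_j})}$ and each factor $A_{(2^k)} = C_k \times V_k \leq \Bgroup_{2^k}$ projects onto $\Sigma_{2^k}$ via the regular action of $V_k$ on itself. Hence the image of $A_\pi$ in $\Sigma_n$ is a product of regular subgroups $\prod_j V_{k_j}$, acting freely and transitively on each block of size $2^{k_j}$. Consequently the orbit partition $\{1,\dots,n\}/A_\pi$ is precisely the set partition of $\{1,\dots,n\}$ into the blocks underlying $\pi$, so its \emph{type} (the multiset of block sizes) is exactly $\pi$ itself. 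The same description holds for $\widehat{A}_\pi$ by Proposition~\ref{prop:description A^}: passing to the kernel of $\phi_{m_0,m_1}$ does not merge any blocks — the only blocks that could be affected are those of size $1$ and $2$, and on those $\ker(\phi_{m_0,m_1})$ still acts with the same orbits as $A_{(1)}^{m_0}\times A_{(2)}^{m_1}$ provided $m_0 + m_1 \geq 1$ is not too small; relevance of $\pi$ is exactly the hypothesis that rules out the degenerate exceptions. So $\{1,\dots,n\}/\widehat{A}_\pi$ also has type $\pi$.

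Next I would exploit the conjugation hypothesis. If $x \widehat{A}_\pi x^{-1} \subseteq A_\mu$, then applying the projection $\Bgroup_n \to \Sigma_n$ and writing $\bar{x} \in \Sigma_n$ for the image of $x$, we get $\bar{x}\,\overline{\widehat{A}_\pi}\,\bar{x}^{-1} \subseteq \overline{A_\mu}$. In particular every $\widehat{A}_\pi$-orbit $O$ in $\{1,\dots,n\}$ is carried by $\bar x$ into a set $\bar x(O)$ that is invariant under $\overline{\widehat{A}_\pi}$ conjugated, hence is a union of $\overline{A_\mu}$-orbits; but since $\bar x$ is a bijection and the $\overline{A_\mu}$-orbits all have sizes that are powers of $2$, and $|\bar x (O)| = |O| = 2^{k}$, I would argue that $\bar x(O)$ must be contained in a single $A_\mu$-orbit of size $\geq 2^k$. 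Combining this with a counting/cardinality argument over all orbits — the orbits on both sides partition the same set of size $n$ — forces $\bar x$ to send each $\widehat{A}_\pi$-orbit bijectively onto a single $A_\mu$-orbit of the \emph{same} size. This yields the desired induced bijection $\{1,\dots,n\}/\widehat{A}_\pi \cong \{1,\dots,n\}/A_\mu$.

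The main technical obstacle is the careful handling of the orbits of size $1$ and $2$, which is exactly why the hypothesis of relevance is imposed. On a block of size $2$, the subgroup $V_1 \cong \mathbb{F}_2$ acts by the nontrivial transposition, but inside $\widehat{A}_\pi = \ker(\phi_{m_0,m_1}) \times A_{\pi'}$ the contribution of $V_1^{m_1}$ to $\phi$ is via the ``sum'' map, so individual $V_1$-generators need not lie in $\widehat{A}_\pi$ — only certain products do. I would need to check that, when $\pi$ is relevant, $\ker(\phi_{m_0,m_1})$ still acts on $\{1,\dots,m_0+2m_1\}$ with the full orbit type $1^{m_0}2^{m_1}$; the one dangerous case is $m_0 = 2, m_1 = 0$ (two fixed points whose product swap is the only constraint — but $\phi$ being the sum on two coordinates with values in $C_0 = \mathbb{F}_2$ means $\ker$ consists only of the identity and the ``diagonal'' element, which is trivial on $\{1,2\}$, still fixing both points), versus the genuinely problematic $m_0 = 0, m_1 = 1$ situation, and these are precisely what ``irrelevant'' excludes. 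I would also need the analogous fact that the image $\overline{A_\mu}$ in $\Sigma_n$ really is a product of regular groups with orbit type $\mu$ (again from Theorem~\ref{thm:elementary abelian subgroups B}'s description, $A_{(2^k)}$ surjects onto $V_k \hookrightarrow \Sigma_{2^k}$ regularly). Once these local verifications are in place, the global bijection of orbit sets follows from the cardinality bookkeeping.
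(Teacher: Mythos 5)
The step that actually carries the lemma is missing. You correctly reduce to: each $\widehat{A}_{\pi}$-orbit $O$ is sent by $x$ into a single $A_{\mu}$-orbit $O'$ (your phrase that $\bar{x}(O)$ is ``a union of $\overline{A_\mu}$-orbits'' is backwards --- invariance under the conjugated subgroup only gives containment in one orbit --- but that is the conclusion you use). However, promoting this containment to the equality $x(O)=O'$ does not follow from the ``counting/cardinality argument over all orbits'' you invoke. The images $x(O)$ partition $\{1,\dots,n\}$ and every $A_{\mu}$-orbit is a disjoint union of such images, so the only thing orbit-size bookkeeping tells you is that merging decreases the number of orbits; since $\pi$ and $\mu$ are a priori unrelated partitions (their equality is precisely what Corollary \ref{cor:conjugates B+} is meant to extract from this lemma), nothing in the data you use rules out, say, several $\widehat{A}_{\pi}$-orbits of size $4$ landing inside one larger $A_{\mu}$-orbit. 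Excluding this is the heart of the paper's proof and requires the group structure: if $x(O)\subsetneq O'$ with $|O|=2^{k}\geq 4$ and $|O'|=2^{k'}$, the conjugate of the factor $A_{(2^{k})}$ of $\widehat{A}_{\pi}$ supported on $O$ lies in $A_{(2^{k'})}$ and fixes a point of $O'\setminus x(O)$; since $V_{k'}$ acts freely on $O'$ and $C_{k'}$ acts trivially on points, that conjugate lies in $C_{k'}\subseteq K=\ker(\Bgroup_n\to\Sigma_n)$, and normality of $K$ would force $A_{(2^{k})}\subseteq K$, a contradiction. (A comparison of group orders --- $|A_{(2^{k'})}|=2^{k'+1}$ is far too small to contain the product of the factors supported on the merged orbits --- could serve as a substitute, but some argument of this kind must appear, and your proposal contains none.)

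Your disposal of the small orbits via ``relevance'' is also not justified. In the way the notion is actually used in the paper (see the treatment of $m_0=0$, $m_1=1$ inside this very proof, and the ``exceptional case $(m_0(\pi),m_1(\pi))=(0,1)$'' in Lemma \ref{lem:double cosets}), partitions with exactly one part equal to $2$ and no parts equal to $1$ are \emph{relevant}: it is $(1,1)\sqcup\rho$ that is discarded in favour of $(2)\sqcup\rho$, because $\widehat{A}_{(1,1)}=\widehat{A}_{(2)}$. For such $\pi$ the subgroup $\widehat{A}_{\pi}$ fixes both points of the unique $2$-part while $A_{\pi}$ does not, so your claim that $\widehat{A}_{\pi}$ and $A_{\pi}$ have the same orbit type fails exactly there; and since the lemma asserts a bijection for the $A_{\pi}$-orbits, not the $\widehat{A}_{\pi}$-orbits, this case needs its own argument --- the paper first matches all the other orbits and then shows the two leftover points form a single $A_{\mu}$-orbit, using relevance of $\mu$ to exclude their being two $A_{\mu}$-fixed points. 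Your remark about the harmless case $m_0=2$, $m_1=0$ is fine, but the genuinely exceptional case cannot be defined away.
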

\begin{proof}
To fix notation, we let $ K $ be the kernel of the projection $ \Bgroup_n \to \Sigma_n $.

The explicit description of Proposition \ref{prop:description A^} implies that, unless $ m_0 = 0 $ and $ m_1 = 1 $, the actions of $ \widehat{A}_{\pi} $ and $ A_{\pi} $ on $ \{1,\dots,n\} $ have the same orbits. Therefore, in such cases $ x(O) $ is contained in some $ O' \in \{1,\dots,n\}/A_{\mu} $ for all $ O \in \{1,\dots,n\}/A_{\pi} $, and it is enough to prove that $ x(O) = O' $.

If $ m_0 = 0 $ and $ m_1 = 1 $, then the action of $ A_{\pi} $ has an orbit of order $ 2 $, but not $ \widehat{A}_{\pi} $. Extra care is needed in this exceptional situation.

We argue case by case, depending on $ m_0 $, $ m_1 $ and  the cardinality of $ O $:
\begin{itemize}
\item If $ |O| > 2 $, then $ O $ is also an orbit for $ \widehat{A}_{\pi} $, and $ |O| = 2^k $ for some $ k \geq 2 $ because all orbits have cardinality equal to a power of $ 2 $. Let $ \sigma \in \Sigma_n $ satisfy $ \sigma(O) = \{1,\dots,2^k\} $. Then, by Proposition \ref{prop:description A^}, $ \sigma \widehat{A}_{\pi} \sigma^{-1} $ contains $ G \times 1_{n-2^k} $, where $ G $ is conjugate to $ A_{(2^k)} $. We can assume, without loss of generality, that $ G = A_{(2^k)} $. Similarly, we can find $ \sigma' \in \Sigma_n $ such that $ \sigma'(O') = \{1,\dots,2^{k'}\} $ and $ A_{(2^{k'})} \times 1_{n-2^{k'}} \subseteq \sigma' A_{\mu} {\sigma'}^{-1} $, with $ |O'| = 2^{k'} $. Then
\[
(A_{(2^k)} \times 1_{n-2^k})^{\sigma x^{-1} {\sigma'}^{-1}} \subseteq (A_{\mu}^{{\sigma'}^{-1}}) \cap (\Bgroup_{2^{k'}} \times 1_{n-2^{k'}}) = A_{(2^{k'})} \times 1_{n-2^{k'}}.
\]

We assume by contradiction that $ O' \not= x(O) $, so there exists $ l \in O' \setminus x(O) $. Let $ y \in A_{(2^k)} $. Then $ y' = (y \times 1_{n-2^k})^{\sigma x^{-1} {\sigma'}^{-1}} $ fixes $ \sigma'(l) $. Since the action of $ V_{2^{k'}} \subseteq A_{(2^{k'})} $ is free on $ \{1,\dots,2^{k'}\} $ and $ C_{k'} \subseteq A_{(2^{k'})} $ acts trivially, $ y' \in C_{k'} \times 1_{n-2^{k'}} \subseteq K $. $ K $ is a normal subgroup, so this would imply that $ A_{(2^k)} \times 1_{n-2^k} \subseteq K $, which contradicts the definition of $ A_{(2^k)} $.
\item If $ m_0 = 0 $, $ m_1 > 2 $ and $ |O| \leq 2 $, then $ \widehat{A}_{\pi} $ has no orbit of cardinality $ 1 $ and $ |O| = 2 $.

Let $ O' \in \{1,\dots,n\}/A_{\mu} $ be the orbit containing $ x(O) $.  $ O' = \bigsqcup_{O''} x(O'') $ for some family of orbits $ O'' $ of $ \widehat{A}_{\pi} $.
Since all orbits have cardinality a power of $ 2 $, if $ O' $ were bigger that $ x(O) $, there would be another orbit $ O'' \not= O $ such that $ x(O'') \subseteq O' $ and $ |O''| = 2 $.
Under such condition we could find, with the same argument used in the previous case, permutations $ \sigma, \sigma' \in \Sigma_n $ such that
\[
(\widehat{A}_{(2,2)} \times 1_{n-4})^{\sigma x^{-1} \sigma'^{-1}} \subseteq A_{(|O'|)} \times 1_{n-|O'|}.
\]
Since $ K $ is a normal subgroup, this would imply that
\[
\left[ K \cap (\widehat{A}_{(2,2)} \times 1_{n-4}) \right]^{\sigma x^{-1} {\sigma'}^{-1}} \subseteq K \cap (A_{|O'|} \times 1_{n-|O'|}),
\]
by this would be contradictory because the former has $ 4 $ elements and the latter $ 2 $. We must conclude that $ x(O) = O' $ in this case.
\item If $ m_0 = 0 $, $ m_1 \leq 1 $ and $ |O| \leq 2 $, then $ m_1 = 1 $ and $ O $ is the unique orbit of cardinality $ 2 $. Even if $ \{1,\dots,n\}/\widehat{A}_{\pi} \not= \{1,\dots,n\}/A_{\pi} $ in this case, by what we proved so far, $ x $ induces a bijection on all other orbits. This implies $ x(O) = O' $.
\item If $ m_0 \not= 0 $, then the extra $ A_{(1)} $ factors in the description of Proposition \ref{prop:description A^} guarantees that $ \widehat{A}_{\pi} $ and $ A_{\pi} $ have the same image under the projection to $ \Sigma_n $. Under this condition, the same argument used in the case $ |O| > 2 $ works to prove that $ x(O) = O' $ without constraints on the cardinality of $ O $.
\end{itemize}
\end{proof}

We observe that for all admissible partition $ \pi $ of $ n $, $ \pi $ is the multiset of the cardinalities of the elements of the orbit set $ \{1,\dots,n\}/A_{\pi} $. This remark together with Lemma \ref{lem:orbits conjugates} yields the following.
\begin{corollary} \label{cor:conjugates B+}
Let $ \pi $ and $ \mu $ be relevant admissible partitions of $ n $.  Let $ x \in \Bgroup_n $. If $ x \widehat{A}_{\pi} x^{-1} \subseteq A_{\mu} $, then $ \pi = \mu $.
\end{corollary}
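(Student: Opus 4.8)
The plan is to obtain the statement as an immediate consequence of Lemma \ref{lem:orbits conjugates} together with the orbit-counting observation recorded just above its statement, so that essentially no new work is required. First I would apply Lemma \ref{lem:orbits conjugates} to the hypothesis $x \widehat{A}_{\pi} x^{-1} \subseteq A_{\mu}$: since both $\pi$ and $\mu$ are relevant, the lemma produces a bijection $\{1,\dots,n\}/A_{\pi} \cong \{1,\dots,n\}/A_{\mu}$ sending an orbit $O$ to $x(O)$. Because this bijection is the restriction to orbits of the genuine permutation of $\{1,\dots,n\}$ underlying $x \in \Bgroup_n$, it automatically carries each orbit to an orbit of the \emph{same} cardinality; this cardinality-preservation is the only point that needs to be spelled out, and it is free.

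Next I would invoke the elementary fact, stated right before the corollary, that for any admissible partition $\pi$ of $n$ the multiset of cardinalities of the orbits of $A_{\pi}$ acting on $\{1,\dots,n\}$ equals $\pi$. This follows at once from the construction $A_{\pi} = \prod_{j} A_{(2^{k_j})} \leq \prod_{j}\Bgroup_{2^{k_j}}$: the $j$-th factor $A_{(2^{k_j})}$ already contains $V_{k_j}$, which acts transitively (by its regular representation) on the corresponding block of $2^{k_j}$ coordinates, so the orbits of $A_{\pi}$ are exactly those blocks, of sizes $2^{k_1},\dots,2^{k_i}$.

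Combining the two steps, the bijection supplied by Lemma \ref{lem:orbits conjugates} matches up orbits of equal cardinality, hence identifies the multiset of orbit-sizes of $A_{\pi}$ with that of $A_{\mu}$; by the second step these multisets are precisely $\pi$ and $\mu$, so $\pi = \mu$. I do not expect any real obstacle in this argument: all of the combinatorial difficulty (the case analysis on $m_0$, $m_1$ and the orbit cardinalities, and the subtle exceptional case $m_0 = 0$, $m_1 = 1$) has already been absorbed into the proof of Lemma \ref{lem:orbits conjugates}, and what remains is just the observation that a permutation preserves cardinalities of the sets it permutes.
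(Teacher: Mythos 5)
Your proposal is correct and is essentially the paper's own argument: the corollary is deduced immediately from Lemma \ref{lem:orbits conjugates} together with the remark that $\pi$ is the multiset of cardinalities of the orbits of $A_{\pi}$ on $\{1,\dots,n\}$, the bijection of orbit sets being induced by the permutation underlying $x$ and hence cardinality-preserving. No gaps; the extra detail you give on transitivity of $V_{k_j}$ on its block is exactly the content of the paper's preceding observation.
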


We are now ready to prove that the $ \widehat{A}_{\pi} $s classify maximal elementary abelian $ 2 $-subgroups of $ \Bgrouppos{n} $ up to conjugation.

\begin{proposition} \label{prop:elementary subgroups}
The following statements are true:
\begin{enumerate}
\item For an admissible partition $ \pi $ of $ n $, $ |A_{\pi}: \widehat{A}_{\pi}| $ is $ 1 $ if $ m_0 = m_1 = 0 $, and it is $ 2 $ otherwise.
\item Every maximal elementary abelian $ 2 $-subgroup of $ \Bgrouppos{n} $ is conjugate in $ \Bgroup_n $ to $ \widehat{A}_{\pi} $ for a unique relevant admissible partition $ \pi $ of $ n $.
Viceversa, $ \widehat{A}_{\pi} $ is a maximal abelian subgroup if $ \pi $ is a relevant admissible partition.
\item If $ \pi $ is a relevant admissible partition of $ n $, then $ N_{\Bgroup_n}(\widehat{A}_{\pi}) = N_{\Bgroup_n}(A_{\pi}) $.
\item For all relevant admissible partitions $ \pi $ of $ n $, $ |N_{\Bgroup_n}(\widehat{A}_{\pi}): N_{\Bgrouppos{n}}(\widehat{A}_{\pi})| $ is equal to $ 1 $ if $ m_0 = m_1 = m_2 = 0 $, and to $ 2 $ otherwise.
\item If $ \pi $ satisfies $ m_0 = m_1 = m_2 = 0 $, then the $ \Bgroup_n $-conjugacy class of $ \widehat{A}_{\pi} $ contains two $ \Bgrouppos{n} $-conjugacy classes represented by $ \widehat{A}_{\pi} $ and $ \widehat{A}_{\pi}^{s_0} $.
Otherwise, the $ \Bgroup_n $- and $ \Bgrouppos{n} $-conjugacy classes of $ \widehat{A}_{\pi} $ coincide.
\end{enumerate}
\end{proposition}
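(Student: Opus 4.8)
The plan is to deduce the five claims in order from Theorem~\ref{thm:elementary abelian subgroups B}, Proposition~\ref{prop:description A^}, Lemma~\ref{lem:orbits conjugates}, and Corollary~\ref{cor:conjugates B+}, together with standard finite group theory. Claim (1) is read off from the computation of the signs of the generators of $A_\pi$ in the proof of Proposition~\ref{prop:description A^}: the restriction of $\sgn_{\Bgroup_n}$ to $A_\pi$ is trivial iff $A_\pi$ has no $A_{(1)}$ and no $A_{(2)}$ factor, i.e.\ iff $m_0=m_1=0$, and otherwise it is surjective onto $C_2$, so $\widehat{A}_\pi=\ker$ has index $2$. For (2), given a maximal elementary abelian $2$-subgroup $E\leq\Bgrouppos{n}$, I embed it into a maximal elementary abelian $2$-subgroup of $\Bgroup_n$; by Theorem~\ref{thm:elementary abelian subgroups B} this is $gA_\pi g^{-1}$ for some $\pi$, and since $\Bgrouppos{n}\trianglelefteq\Bgroup_n$ we get, after conjugating, $E\leq A_\pi\cap\Bgrouppos{n}=\widehat{A}_\pi$, whence $E=\widehat{A}_\pi$ by maximality. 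To make $\pi$ relevant I use the single exceptional coincidence, transparent from Proposition~\ref{prop:description A^}: replacing two parts equal to $1$ by one part equal to $2$ does not change $\widehat{A}_\pi$ (both equal $\langle s_is_j\rangle\times A_{\pi'}$), so every $\widehat{A}_\pi$ equals $\widehat{A}_{\pi'}$ for a relevant $\pi'$. Uniqueness of the relevant partition and the converse maximality statement both reduce to Corollary~\ref{cor:conjugates B+}: if $\widehat{A}_\pi\leq E'\leq\Bgrouppos{n}$ is elementary abelian, embedding $E'$ in a maximal $A_\mu$ of $\Bgroup_n$ (reduced to a relevant $\mu$) gives $\widehat{A}_\pi\leq A_\mu$, hence $\pi=\mu$ and $E'\leq\widehat{A}_\pi$.

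For statement (3), the inclusion $N_{\Bgroup_n}(A_\pi)\subseteq N_{\Bgroup_n}(\widehat{A}_\pi)$ is formal, since $gA_\pi g^{-1}=A_\pi$ forces $g\widehat{A}_\pi g^{-1}=gA_\pi g^{-1}\cap\Bgrouppos{n}=\widehat{A}_\pi$. For the reverse inclusion, take $g\in N_{\Bgroup_n}(\widehat{A}_\pi)$; then $g\widehat{A}_\pi g^{-1}=\widehat{A}_\pi\subseteq A_\pi$, so Lemma~\ref{lem:orbits conjugates} (applied with $\mu=\pi$) shows $g$ permutes the $A_\pi$-orbits on $\{1,\dots,n\}$, preserving cardinalities. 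After multiplying $g$ by a block permutation lying in $N_{\Bgroup_n}(A_\pi)$ and inducing the same permutation of orbits, I may assume $g$ stabilizes each block $B$ setwise, so $g=\prod_B g_B$ with $g_B$ in the hyperoctahedral group on $B$. It then suffices to check $g_B$ normalizes $A_{(|B|)}$: this is automatic for $|B|=1$ (where $A_{(1)}=\langle s_i\rangle$), automatic for $|B|=2$ (where $A_{(2)}$ is normal of index $2$ in the hyperoctahedral group on $B$), and for $|B|=2^k\geq 4$ it follows because the direct factor of $\widehat{A}_\pi$ supported on $B$ is exactly $A_{(2^k)}$ and $g_B$ preserves it.

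For (4), by (3) the index $[N_{\Bgroup_n}(\widehat{A}_\pi):N_{\Bgrouppos{n}}(\widehat{A}_\pi)]$ equals $[N_{\Bgroup_n}(A_\pi):N_{\Bgroup_n}(A_\pi)\cap\Bgrouppos{n}]$, which is $1$ if $N_{\Bgroup_n}(A_\pi)\subseteq\Bgrouppos{n}$ and $2$ otherwise. If $m_0>0$ or $m_1>0$, then $A_\pi\subseteq N_{\Bgroup_n}(A_\pi)$ already contains a negative-sign element (a reflection, resp.\ a transposition); if $m_2>0$, then $N_{\Bgroup_n}(A_\pi)$ contains the Weyl group $GL_2(\mathbb{F}_2)\cong\Sigma_3$ acting on a size-$4$ block, which contains a transposition of negative sign — so the index is $2$ in all these cases. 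If $m_0=m_1=m_2=0$, every block has size $2^k$ with $k\geq 3$; reducing blockwise as in (3), one checks that $N_{\Bgroup_n}(A_\pi)$ is generated by $A_\pi$, the linear Weyl groups $GL_k(\mathbb{F}_2)$, block permutations, and certain diagonal elements, all of which are products of an even number of transpositions once $k\geq 3$ (transvections being products of $2^{k-2}$ transpositions, block swaps of $2^k$), hence of positive sign, so the index is $1$. Finally, (5) is the standard dichotomy for conjugacy of a subgroup under an index-$2$ subgroup: the $\Bgroup_n$-conjugacy class of $\widehat{A}_\pi$, parametrized by $\Bgrouppos{n}\backslash\Bgroup_n/N_{\Bgroup_n}(\widehat{A}_\pi)$, is a single $\Bgrouppos{n}$-class when $N_{\Bgroup_n}(\widehat{A}_\pi)\not\subseteq\Bgrouppos{n}$ and splits into the two classes of $\widehat{A}_\pi$ and $\widehat{A}_{\pi}^{s_0}$ otherwise; by (4) this happens exactly when $m_0=m_1=m_2=0$.

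The main obstacle I anticipate is the block-reduction in (3) — in particular identifying the block-permuting subgroup of $N_{\Bgroup_n}(A_\pi)$ explicitly and confirming that Lemma~\ref{lem:orbits conjugates} really does apply with $\mu=\pi$ — together with the sign count in the last case of (4), which requires pinning down a sufficiently explicit generating set for $N_{\Bgroup_n}(A_\pi)$. Once (3) is in hand, statements (1), (2), and (4)--(5) are essentially formal.
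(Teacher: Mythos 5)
Your proposal is correct and follows essentially the same route as the paper: part (1) from Proposition \ref{prop:description A^}, part (2) via embedding into a conjugate of $A_\pi$, maximality, the coincidence $\widehat{A}_{(1,1)}=\widehat{A}_{(2)}$ and Corollary \ref{cor:conjugates B+}, part (3) via Lemma \ref{lem:orbits conjugates} together with normality of $A_{(1)}$, $A_{(2)}$ in $\Bgroup_1$, $\Bgroup_2$ and the identification of the large-block part of $\widehat{A}_\pi$, part (4) via the sign analysis of the Swenson-type generators of $N_{\Bgroup_n}(A_\pi)$, and part (5) from the index-two conjugacy dichotomy. Your minor variations (block-by-block reduction in (3) after factoring out a block permutation, and detecting negative elements inside $A_\pi$ itself when $m_0$ or $m_1$ is nonzero in (4)) are cosmetic and, if anything, slightly streamline the paper's argument, which cites \cite[Proposition 5.3.1]{Swenson} for the normalizer structure you would otherwise need to verify blockwise.
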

\begin{proof}
\begin{enumerate}
\item It is a direct consequence of Proposition \ref{prop:description A^}.
\item If $ A \leq \Bgrouppos{n} $ is a maximal $ 2 $-abelian subgroup, then it must be contained in a conjugate $ xA_{\pi}x^{-1} $ of $ A_{\pi} $ for some admissible partition $ \pi $ by Theorem \ref{thm:elementary abelian subgroups B}. By maximality $ A = (xA_{\pi}x^{-1}) \cap \Bgrouppos{n} = x \widehat{A}_{\pi}x^{-1} $.
We can exclude irrelevant partitions because $ \widehat{A}_{(1,1)} = \widehat{A}_{(2)} $.

Viceversa, if $ \pi $ is an admissible partition, then by what we just proved $ \widehat{A}_{\pi} $ must be contained in a conjugate of a maximal abelian $ 2 $-subgroup of the form $ A_{\mu} \cap \Bgrouppos{n} = \widehat{A}_{\mu} $ for some $ \mu $ relevant admissible. If $ \pi $ is also relevant, then Corollary \ref{cor:conjugates B+} guarantees that $ \mu = \pi $, so $ \widehat{A}_{\pi} $ is maximal.
This argument shows also that subgroups corresponding to different relevant admissible partitions are not conjugate to each other.
\item Since $ \Bgrouppos{n} $ is normal in $ \Bgroup_n $, then
\[
N_{\Bgroup_n}(A_\pi) = N_{\Bgroup_n}(A_{\pi}) \cap N_{\Bgroup_n}(\Bgrouppos{n}) \subseteq N_{\Bgroup_n}(A_{\pi} \cap \Bgrouppos{n}) = N_{\Bgroup_n}(\widehat{A}_{\pi}).
\]

Let $ x \in N_{\Bgroup_n}(\widehat{A}_{\pi}) $. Then $ x \widehat{A}_{\pi} x^{-1} \subseteq A_{\pi} $, hence by Lemma \ref{lem:orbits conjugates} $ x $ permutes the orbits of $ A_{\pi} $ on $ \{1,\dots,n\} $. This and Theorem \ref{thm:elementary abelian subgroups B} imply that $ x \in \prod_{k=0}^{\infty} (\Bgroup_{2^k} \wr \Sigma_{m_k}) $.
Since by Proposition \ref{prop:description A^} $ \widehat{A}_{\pi} = \widehat{A}_{(1)^{m_0} \sqcup (2)^{m_1}} \times A_{\pi'} $ for some $ \pi' $,
\[
x \in N_{\Bgroup_{m_0+2m_1}}(\widehat{A}_{(1)^{m_0} \sqcup (2)^{m_1}}) \times N_{\Bgroup_{n-m_0-2m_1}}(A_{\pi'}).
\]
By direct inspection, $ A_{(1)} $ and $ A_{(2)} $ are normal in $ \Bgroup_1 $ and $ \Bgroup_2 $, respectively, hence
\[
(\Bgroup_1 \wr \Sigma_{m_0}) \times (\Bgroup_2 \wr \Sigma_{m_1}) \times N_{\Bgroup_{n-m_0-2m_1}}(A_{\pi'}) \subseteq N_{\Bgroup_n}(A_{\pi}).
\]
We deduce that $ N_{\Bgroup_n}(\widehat{A}_{\pi}) \subseteq N_{\Bgroup_n}(A_{\pi}) $.
\item By $ (3) $, $ N_{\Bgrouppos{n}}(\widehat{A}_{\pi}) = N_{\Bgroup_n}(A_{\pi}) \cap \Bgrouppos{n} $, hence the index of the two normalizers is either $ 1 $ or $ 2 $. Thus, it is sufficient to prove that $ N_{\Bgroup_n}(\widehat{A}_{\pi}) \subseteq \Bgrouppos{n} $ if and only if $ m_0 = m_1 = m_2 = 0 $.

By \cite[Proposition 5.3.1]{Swenson}, after identifying $ A_{(2^k)} $ with $ \mathbb{F}_2^{k+1} $, $ N_{\Bgroup_{2^k}}(A_{(2^k)}) $ is the semi-direct product of $ A_{(2^k)} $ and the stabilizer $ G_k $ of the first coordinate vector of $ \mathbb{F}_2^{k+1} $ in $ \Gl_{k+1}(\mathbb{F}_2) $.

We first investigate whether $ G_k $ is contained in the subgroup $ \Bgrouppos{2^k} $.

$ G_k $ is the semi-direct product of the two subgroups
\begin{align*}
H_k &= \{ \left[ \begin{array}{cc} 1 & \underline{b}^T \\
\underline{0} & \mathbb{I}_{k} \end{array} \right]: \underline{b} \in \mathbb{F}_2^k
	\} \\
\tag*{and} K_k &= \{ \left[ \begin{array}{cc} 1 & \underline{0}^T \\
\underline{0} & A \end{array} \right]: A \in \Gl_k(\mathbb{F}_2) \}.
\end{align*}

Each non-trivial element of $ H_k $ acts on $ \mathbb{F}_2^{k+1} $ by fixing $ 2^k $ vectors of $ \mathbb{F}_2^{k+1} $ and flipping the first coordinate bit of the other $ 2^k $. These correspond to products of $ 2^{k-1} $ reflections in $ \Bgroup_{2^k} $. We deduce that $ H \subseteq \Bgrouppos{2^k} $ if and only if $ k \geq 2 $.

The subgroup $ K_k $ stabilizes $ V_k \subseteq A_{(2^k)} $ and is identified with its stabilizer in $ \Sigma_{2^k} $.
It is contained in the alternating group $ A_{2^k} $ if and only if $ k \geq 3 $ (compare \cite[page 21]{Sinha:17}).

We conclude that $ G_k \subseteq \Bgrouppos{2^k} $ if and only if $ k \geq 3 $.

In general, the normalizer $ N_{\Bgroup_n}(A_{\pi}) $ is generated by $ A_{\pi} $ and a product of wreath products of the groups $ G_k $. If $ m_0 = m_1 = m_2 = 0 $, then only $ G_k $ with $ k \geq 3 $ appear in this product, which is thus contained in $ \Bgrouppos{n} $. By $ (1) $, $ A_{\pi} \subseteq \Bgrouppos{n} $, so $ N_{\Bgroup_n}(A_{\pi}) \subseteq \Bgrouppos{n} $.
On the contrary, if at least one among $ m_0 $, $ m_1 $, $ m_2 $ is non-zero, then some $ G_k $ with $ k \leq 2 $ appear in the product, and $ N_{\Bgroup_n}(A_{\pi}) \not\subseteq \Bgrouppos{n} $.

\item It follows directly from $ (4) $ and Proposition \ref{prop:conjugacy classes}.
\end{enumerate}
\end{proof}

\subsection{The Weil invariant subalgebras of the cohomology of elementary abelian \texorpdfstring{$ 2 $}{2}-subgroups}

In this subsection we compute the subalgebra of the cohomology of maximal abelian $ 2 $-subgroups of $ \Bgrouppos{n} $ constisting of the invariant elements with respect to the conjugation action of their normalizer. This is a relatively straightforward, but lengthy, calculation that uses Galois theory. Therefore, we split it into separate lemmas.

As a preliminary step, we recall a known general property of invariant rings.

\begin{proposition}[Proposition 1.2.4 of \cite{Smith}] \label{prop:general Galois}
Suppose that $ V $ is a finite dimensional faithful representation of a finite group $ G $ over a field $ k $. Then $ k(V) $ is a Galois extension of $ k(V)^G $ with Galois group $ G $. The field $ k(V)^G $ is the field of fractions of $ k[V]^G $, and $ k[V]^G $ is integrally closed.
\end{proposition}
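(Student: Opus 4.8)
The plan is to reduce the statement to two classical facts: Artin's theorem on the fixed field of a finite automorphism group, and the normality (integral closedness) of a polynomial ring. Throughout, write $k[V]$ for the algebra of polynomial functions on $V$, i.e. the symmetric algebra on $V^*$; it is a polynomial ring on $\dim_k V$ generators, and $G$ acts on it by graded $k$-algebra automorphisms extending the (dual of the) given action on $V$. Since $V$ is faithful, so is $V^*$, and since a graded automorphism of $k[V]$ is determined by its restriction to the degree-one part $V^*$, the homomorphism from $G$ to the group of graded $k$-algebra automorphisms of $k[V]$ is injective. Passing to fraction fields, each $\sigma \in G$ extends uniquely to an element of $\Aut(k(V)/k)$, and distinct elements of $G$ remain distinct because the extension restricts back to the original automorphism of $k[V]$; thus $G \hookrightarrow \Aut(k(V)/k)$, and by definition $G$ fixes $k(V)^G$ pointwise.

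First I would invoke Artin's theorem: if a finite group $G$ acts faithfully by automorphisms on a field $L$, then $L/L^G$ is a finite Galois extension with $\Gal(L/L^G) = G$ and $[L:L^G] = |G|$. Taking $L = k(V)$ gives at once that $k(V)/k(V)^G$ is Galois with Galois group $G$. If one prefers a self-contained argument, the standard route is: for $x \in k(V)$ the polynomial $\prod_{y \in Gx}(T - y)$ lies in $k(V)^G[T]$, is separable, and annihilates $x$, which shows that $k(V)/k(V)^G$ is algebraic, separable and normal; the remaining point, that the degree equals $|G|$ (so that the embedding $G \hookrightarrow \Gal$ is onto), follows from Dedekind's linear independence of characters, equivalently from the bound that any $|G|+1$ elements of $L$ are linearly dependent over $L^G$. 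This degree count is the only genuinely non-formal ingredient, and it is where I expect the real work to sit.

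Next I would show $k(V)^G = \mathrm{Frac}(k[V]^G)$. The inclusion $\mathrm{Frac}(k[V]^G) \subseteq k(V)^G$ is immediate. Conversely, given a $G$-invariant element written as $f/g$ with $f, g \in k[V]$, multiply numerator and denominator by $\prod_{\sigma \neq \id}\sigma(g)$: the denominator becomes the norm $N(g) = \prod_{\sigma \in G}\sigma(g) \in k[V]^G$, while the numerator becomes $(f/g)\cdot N(g)$, which is a polynomial and is $G$-invariant because it is a product of two $G$-invariant elements of $k(V)$. Hence $f/g \in \mathrm{Frac}(k[V]^G)$.

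Finally, for integral closedness, observe that inside $k(V)$ one has $k[V]^G = k[V] \cap k(V)^G$. If $x \in k(V)^G$ is integral over $k[V]^G$, then it is integral over the larger ring $k[V]$; since $k[V]$ is a polynomial ring, hence a normal domain, and $x$ lies in its fraction field $k(V)$, we conclude $x \in k[V]$, whence $x \in k[V] \cap k(V)^G = k[V]^G$. So $k[V]^G$ is integrally closed in its field of fractions $k(V)^G$. Apart from Artin's theorem, every step here is a one-line verification valid over an arbitrary base field $k$; the main obstacle is simply to have the Galois-theoretic input available (equivalently, the degree bound $[k(V):k(V)^G] \le |G|$).
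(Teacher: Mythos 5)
Your proof is correct, and it is essentially the standard argument for this classical fact (Artin's lemma for the Galois statement, the norm trick $f/g = \bigl(f\prod_{\sigma\neq\id}\sigma(g)\bigr)/N(g)$ for $k(V)^G = \Quot(k[V]^G)$, and normality of the polynomial ring $k[V]$ together with $k[V]^G = k[V]\cap k(V)^G$ for integral closedness). The paper itself gives no proof here — it quotes the result from Smith's book — and your argument matches the one found in that reference, so there is nothing further to reconcile.
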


\begin{corollary} \label{cor:Galois}
Let $ R = k[x_1,\dots,x_m] $ be a polynomial ring over a field $ k $. The field of fraction $ \Quot(R^{\otimes n}) $ is a Galois extension of $ \Quot((R^{\otimes n})^{\Sigma_n}) $ with Galois group isomorphic to $ \Sigma_n $.
Moreover, $ (R^{\otimes n})^{\Sigma_n} $ is integrally closed.
\end{corollary}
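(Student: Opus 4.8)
The statement is an immediate specialization of Proposition \ref{prop:general Galois}, so the plan is simply to identify the right faithful representation and invoke that proposition. Take $ V $ to be the $ mn $-dimensional $ k $-vector space with basis $ \{ x_i^{(j)} : 1 \le i \le m,\ 1 \le j \le n \} $, thought of as $ n $ copies of the degree-one part $ V_0 = \Span_k(x_1,\dots,x_m) $ of $ R $, so that $ R^{\otimes n} = k[V] $ is its symmetric (polynomial) algebra. The symmetric group $ \Sigma_n $ acts on $ V $ by permuting the $ n $ blocks, i.e. $ \sigma \cdot x_i^{(j)} = x_i^{(\sigma(j))} $; this action is linear and, since it permutes a basis of a nonzero space faithfully whenever $ m \ge 1 $ (and trivially the statement is vacuous or degenerate otherwise), it is a finite-dimensional faithful representation of $ \Sigma_n $ over $ k $.

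With this identification, $ k[V] = R^{\otimes n} $, $ k(V) = \Quot(R^{\otimes n}) $, and $ k[V]^{\Sigma_n} = (R^{\otimes n})^{\Sigma_n} $. Proposition \ref{prop:general Galois} then yields directly that $ \Quot(R^{\otimes n}) $ is a Galois extension of $ k(V)^{\Sigma_n} $ with Galois group $ \Sigma_n $, that $ k(V)^{\Sigma_n} = \Quot\bigl((R^{\otimes n})^{\Sigma_n}\bigr) $, and that $ (R^{\otimes n})^{\Sigma_n} $ is integrally closed. The only point requiring a line of justification is the passage from $ k(V)^{\Sigma_n} $ to $ \Quot\bigl((R^{\otimes n})^{\Sigma_n}\bigr) $ as stated in the corollary — but this is exactly the second assertion of Proposition \ref{prop:general Galois}, namely that the invariant field is the field of fractions of the invariant ring.

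There is essentially no obstacle here; the statement is recorded only for convenient reference later. The one thing to be careful about is the implicit hypothesis that the action be faithful: if $ m = 0 $ then $ R = k $, $ R^{\otimes n} = k $, the action is trivial, and all assertions hold trivially; for $ m \ge 1 $ and $ n \ge 1 $ the block-permutation action is visibly faithful, so Proposition \ref{prop:general Galois} applies without further ado. Hence the corollary follows.
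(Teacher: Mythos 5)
Your proof is correct and is exactly the argument the paper intends: the corollary is stated as an immediate consequence of Proposition \ref{prop:general Galois}, applied to the faithful block-permutation representation of $\Sigma_n$ on the $mn$-dimensional span of the variables, so that $R^{\otimes n}=k[V]$ and all three assertions are read off directly. Your remark about the degenerate case $m=0$ (where faithfulness fails and the Galois-group claim would need $n\le 1$) is a harmless caveat the paper leaves implicit, since it only uses the corollary with $m\ge 1$.
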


We now compute the invariants in the cohomology of $ \widehat{A}_{(4)} $, that will be our basic building block to construct invariants in the general case. We recall that, by Proposition \ref{prop:conjugacy classes} $(1)$, $ \widehat{A}_{(4)} = A_{(4)} = C_2 \times V_2 $. We will use the identification $ H^*(\widehat{A}_{(4)}; \mathbb{F}_2) = \mathbb{F}_2[x,y_1,y_2] $ described in Remark \ref{rem:invariants}.

We also recall that if $ A \leq G $ is a subgroup, the action of $ N_G(A) $ on $ H^*(A) $ is known to factor through the Weyl group $ W_{G}(A) = N_{G}(A)/A $ (see \cite[Lemma II.3.1]{Adem-Milgram}).

We suggest that the reader compares our calculations below with Theorem III.1.3 of \cite{Adem-Milgram}.

\begin{lemma} \label{lem:A4 Galois}
Let $ h_3 = y_1^3 + y_1^2y_2 + y_2^3 \in H^*(\widehat{A}_{(4)}; \mathbb{F}_2) = k[x,y_1,y_2] $. 
The field extension $ k(x,y_1,y_2)/k(f_4,d_2,d_3)[h_3] $ is a Galois extension with Galois group $ W_{\Bgrouppos{4}}(\widehat{A}_{(4)}) $.
\end{lemma}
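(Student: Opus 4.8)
The plan is to obtain the statement from the fundamental theorem of Galois theory applied to the larger extension $k(x,y_1,y_2)/k(f_4,d_2,d_3)$, which is already under control, by identifying $k(f_4,d_2,d_3)[h_3]$ as the fixed field of $W_{\Bgrouppos{4}}(\widehat{A}_{(4)})$ inside it.

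First I would record that Theorem~\ref{thm:elementary abelian subgroups B}, applied to the partition $\pi=(4)$ (so that $m_2=1$ and all other multiplicities vanish), gives $H^*(\widehat{A}_{(4)};\mathbb{F}_2)^{W_{\Bgroup_4}(A_{(4)})}=k[f_4,d_2,d_3]$; since the conjugation action of $W:=W_{\Bgroup_4}(A_{(4)})$ on $H^*(\widehat{A}_{(4)};\mathbb{F}_2)=k[x,y_1,y_2]$ is faithful, Proposition~\ref{prop:general Galois} then gives that $k(x,y_1,y_2)$ is a Galois extension of $\Quot(k[f_4,d_2,d_3])=k(f_4,d_2,d_3)$ with group $W$. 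From the proof of Proposition~\ref{prop:elementary subgroups}, $W$ is the stabiliser $G_2=H_2\rtimes K_2\cong\Sigma_4$ of the first coordinate vector in $\Gl_3(\mathbb{F}_2)$: the subgroup $H_2\cong\mathbb{F}_2^{2}$ fixes $y_1,y_2$ and acts by $x\mapsto x+(\text{a linear form in }y_1,y_2)$, while $K_2\cong\Gl_2(\mathbb{F}_2)$ fixes $x$ and acts on $k[y_1,y_2]$ in the standard way, permuting the three nonzero linear forms $y_1,y_2,y_1+y_2$ as $\Sigma_3$. By Proposition~\ref{prop:elementary subgroups}(2)--(4) one has $\widehat{A}_{(4)}=A_{(4)}$, $N_{\Bgroup_4}(\widehat{A}_{(4)})=N_{\Bgroup_4}(A_{(4)})$, and $[N_{\Bgroup_4}(\widehat{A}_{(4)}):N_{\Bgrouppos{4}}(\widehat{A}_{(4)})]=2$; unwinding part (4), $W_{\Bgrouppos{4}}(\widehat{A}_{(4)})$ is exactly the ``positive sign'' index-$2$ subgroup $H_2\rtimes(K_2\cap\mathcal{A}_4)\cong A_4$ of $W$, and it is normal in $W$.

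The core of the argument is to show that $h_3$ generates $k(x,y_1,y_2)^{W_{\Bgrouppos{4}}(\widehat{A}_{(4)})}$ over $k(f_4,d_2,d_3)$. Because $h_3\in k[y_1,y_2]$ and $H_2$ fixes $k[y_1,y_2]$ pointwise, only the $\Sigma_3$-action matters, and a one-line computation shows that the $3$-cycle fixes $h_3$ while the transposition swapping $y_1$ and $y_2$ sends $h_3$ to $h_3+d_3$, with $d_3=y_1y_2(y_1+y_2)\neq0$. Hence $h_3$ is fixed by $W_{\Bgrouppos{4}}(\widehat{A}_{(4)})$ but not by $W$, so $h_3\notin k(x,y_1,y_2)^{W}=k(f_4,d_2,d_3)$; moreover $h_3$ has a two-element $\Sigma_3$-orbit, so it satisfies a monic quadratic over $k[d_2,d_3]\subseteq k[f_4,d_2,d_3]$, and a short polynomial identity evaluates its trace and norm to give
\[
h_3^2+d_3\,h_3+d_2^3+d_3^2=0 .
\]
In particular $k(f_4,d_2,d_3)[h_3]$ is a field and $[k(f_4,d_2,d_3)[h_3]:k(f_4,d_2,d_3)]=2$. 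The subgroup of $W$ fixing $k(f_4,d_2,d_3)[h_3]$ therefore has index $2$ in $W$ and contains $W_{\Bgrouppos{4}}(\widehat{A}_{(4)})$, hence equals it; by the Galois correspondence, $k(x,y_1,y_2)/k(f_4,d_2,d_3)[h_3]$ is Galois with group $W_{\Bgrouppos{4}}(\widehat{A}_{(4)})$, which is the claim.

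I expect the only real obstacle to be the bookkeeping in the middle paragraph: correctly reading off from Swenson's description (via Proposition~\ref{prop:elementary subgroups}) how $W$ and its positive-sign subgroup act on $k[x,y_1,y_2]$, and then checking the orbit behaviour of $h_3$ and the quadratic relation above by hand. There is nothing structurally subtle once the explicit action and the relation $h_3^2+d_3h_3+d_2^3+d_3^2=0$ are in place.
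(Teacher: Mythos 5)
Your proof is correct, but it takes a genuinely different route from the paper's. The paper argues bottom-up through the tower $\Quot(\mathbb{F}_2[f_4,d_2,d_3]) \subset \Quot(\mathbb{F}_2[f_4,d_2,d_3][h_3]) \subset \Quot(\mathbb{F}_2[f_4,y_1,y_2]) \subset \Quot(\mathbb{F}_2[x,y_1,y_2])$: the top step is exhibited as the splitting field of the explicit quartic $\prod_{v}(t-x-v)$ with group $H_2$, the middle step is handled by Wilkerson's argument (Galois with group $\Gl_2(\mathbb{F}_2)$), and the fact that $h_3$ generates a quadratic subextension is deduced from integrality of $h_3$ over the integrally closed ring $\mathbb{F}_2[f_4,d_2,d_3]$; the Galois group of the lemma is then assembled from these pieces. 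You instead work top-down: one application of Proposition \ref{prop:general Galois}, combined with Theorem \ref{thm:elementary abelian subgroups B} to get $k(x,y_1,y_2)^{W}=k(f_4,d_2,d_3)$ for $W=W_{\Bgroup_4}(A_{(4)})$ (the faithfulness hypothesis is indeed supplied by Swenson's description $N_{\Bgroup_4}(A_{(4)})=A_{(4)}\rtimes G_2$ used in Proposition \ref{prop:elementary subgroups}), and then the fundamental theorem of Galois theory pins down $k(f_4,d_2,d_3)[h_3]$: since $H_2$ fixes $y_1,y_2$, the $3$-cycle fixes $h_3$, a transposition sends $h_3$ to $h_3+d_3\neq h_3$, and $h_3^2+d_3h_3+d_2^3+d_3^2=0$ (which is the paper's relation $h_3(d_3-h_3)+d_2^3+d_3^2=0$ in characteristic $2$), the fixing subgroup has index $2$ and contains $W_{\Bgrouppos{4}}(\widehat{A}_{(4)})=H_2\rtimes(K_2\cap\Bgrouppos{4})$, hence equals it. Your version is shorter and arguably cleaner: it bypasses Wilkerson's theorem and the splitting-field computation, and replaces the paper's integral-closure argument for $h_3\notin k(f_4,d_2,d_3)$ by the direct observation that $h_3$ is not $W$-invariant; the only extra care it requires is the faithfulness check and the by-hand verification of the quadratic relation and of the action of $W$ on $x,y_1,y_2$, all of which follow from the same Swenson input the paper already uses.
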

\begin{proof}
To fix notation, we let $ S = H^*(A_{(4)}; \mathbb{F}_2) = \mathbb{F}[x,y_1,y_2] $, $ S' = \mathbb{F}_2[f_4,y_1,y_2] $, $ S''' = \mathbb{F}_2[f_4,d_2,d_3] $, and $ S'' $ the subring of $ S $ generated by $ S''' $ and $ h_3 $. There are subring inclusions $ S''' \leq S'' \leq S' \leq S $.

$ W_{\Bgrouppos{4}}(A_{(4)}) $ is a semi-direct product of the subgroups $ H_2 \cong (\mathbb{Z}/2\mathbb{Z})^2 $ and $ K_2 \cap \Bgrouppos{4} \cong\mathbb{Z}/3\mathbb{Z} $ introduced in the proof of point $ (4) $ of Proposition \ref{prop:conjugacy classes}.
The four elements of $ H_2 $ fix $ y_1, y_2 $ and map $ x $ to $ x, x+y_1, x+y_2, x+y_1+y_2 $ respectively.
$ \Quot(S) $ is generated over $ \Quot(S') $ by $ x $, which is a root of the polynomial
\[
p(t) = (t-x)(t-x-y_1)(t-x-y_2)(t-x-y_1-y_2) = t^4 + d_2t^2 + f_4 \in S'[t].
\]
Therefore, $ \Quot(S) $ is the splitting field of $ p(t) $ over $ \Quot(S') $ and the extension is Galois.
Since the difference between any two roots of $ p(t) $ is in $ S' $, the action of an element of $ \Gal(\Quot(S)/\Quot(S')) $ is determined by its value on $ x $. We deduce that $ |\Gal(\Quot(S)/\Quot(S'))| \leq \deg(p(t)) = 4 $. However, the Galois group contains $ H_2 $ as a subgroup, so they must be equal by reason of cardinality.

The action of $ K_2 \cap \Bgrouppos{4} \cong \mathbb{Z}/3\mathbb{Z} $ on $ S' $ fixes $ f_4 $ and act on $ \mathbb{F}_2[y_1,y_2] $ by cyclically permuting $ y_1 $, $ y_2 $ and $ y_1 + y_2 $.
$ h_3(d_3-h_3) + d_2^3 + d_3^2 = 0 $, hence $ h_3 $ is integral on $ S^{W_{\Bgroup_4}(A_{(4)})} \cong \mathbb{F}_2[f_4,d_2,d_3] $ which, as a polynomial algebra, is integrally close. 
Therefore $ h_3 \notin \Quot(S''') $ and $ \Quot(S'')/\Quot(S''') $ is a Galois field extension of degree $ 2 $.
By an argument of Wilkerson \cite[page 423]{Wilkerson} $ \Quot(S')/\Quot(S''') $ is a Galois extension with Galois group $ \Gal(\Quot(S')/\Quot(S''') = K \cong \Gl_2(\mathbb{F}_2) $.
Thus $ \Quot(S')/\Quot(S'') $ is Galois with $ \Gal(\Quot(S')/\Quot(S'') = K \cap \Bgrouppos{4} $.
We deduce that $ W_{\Bgrouppos{4}}(\widehat{A}_{(4)}) $ coincides with $ \Gal(\Quot(S)/\Quot(S'')) $.
\end{proof}

\begin{corollary} \label{cor:invariants A4}
There is an isomorphism
\[
\left[ H^*(A_{(4)}; \mathbb{F}_2)\right]^{N_{\Bgrouppos{4}}(A_{(4)})} \cong \frac{\mathbb{F}_2[f_4,d_2,d_3,h_3]}{(h_3(d_3-h_3) + d_2^3 + d_3^2)},
\]
where $ f_4 $, $ d_2 $ and $ d_3 $ are as in Theorem \ref{thm:elementary abelian subgroups B} and $ h_3 = y_1^3 + y_1^2y_2 + y_2^3 $.
\end{corollary}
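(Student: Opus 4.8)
The plan is to split the proof into two essentially independent steps: giving a presentation of the candidate ring, and identifying that candidate with the ring of invariants. Throughout write $S := H^*(A_{(4)};\mathbb{F}_2) = \mathbb{F}_2[x,y_1,y_2]$, let $S''' := \mathbb{F}_2[f_4,d_2,d_3]$ (a genuine polynomial ring, by Theorem~\ref{thm:elementary abelian subgroups B} applied to the partition $(4)$), and let $S'' := S'''[h_3]\subseteq S$, exactly as in the proof of Lemma~\ref{lem:A4 Galois}.

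\emph{Step 1: a presentation of $S''$.} The relation $h_3(d_3+h_3)+d_2^3+d_3^2 = 0$ was verified inside the proof of Lemma~\ref{lem:A4 Galois}, so the tautological assignment defines a surjective graded $\mathbb{F}_2$-algebra map $Q := \mathbb{F}_2[f_4,d_2,d_3,h_3]/(h_3(d_3+h_3)+d_2^3+d_3^2) \twoheadrightarrow S''$. This relation is monic and quadratic in $h_3$ over the polynomial ring $\mathbb{F}_2[f_4,d_2,d_3]$, so $Q$ is free of rank $2$ over $\mathbb{F}_2[f_4,d_2,d_3]$ with basis $\{1,h_3\}$. On the other side, $S'' = S'''[h_3]$ with $h_3$ a root of the very same monic quadratic over $S'''$, and $[\Quot(S''):\Quot(S''')]=2$ by the proof of Lemma~\ref{lem:A4 Galois}; hence $\{1,h_3\}$ is also a free $S'''$-basis of $S''$. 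The map $Q\to S''$ is $S'''$-linear and carries a basis to a basis, so it is an isomorphism.

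\emph{Step 2: $[H^*(A_{(4)};\mathbb{F}_2)]^{N_{\Bgrouppos 4}(A_{(4)})} = S''$.} The conjugation action of $N_{\Bgrouppos 4}(A_{(4)})$ on $S$ factors through the Weyl group $W := W_{\Bgrouppos 4}(\widehat A_{(4)})$, so the left-hand side equals $S^W$. By Lemma~\ref{lem:A4 Galois}, $\Quot(S)^W = \Quot(S'')$, hence $S^W = S\cap\Quot(S'')$; the inclusion $S''\subseteq S^W$ is clear. For the reverse inclusion, note first that $S$ is module-finite over $S'''$: indeed $y_1,y_2$ are integral over the Dickson subring $\mathbb{F}_2[d_2,d_3]$, and $x$ is a root of a monic quartic over $S' = \mathbb{F}_2[f_4,y_1,y_2]$ as recalled in the proof of Lemma~\ref{lem:A4 Galois}. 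Consequently every element of $S\cap\Quot(S'')$ is integral over $S'''\subseteq S''$ while lying in $\Quot(S'')$, so it is enough to show that $S''$ is integrally closed. By Step 1, $S''\cong Q$ is a hypersurface, hence Cohen--Macaulay; and over the perfect field $\mathbb{F}_2$ the Jacobian criterion identifies its singular locus with the common vanishing of the partial derivatives of $h_3(d_3+h_3)+d_2^3+d_3^2$, i.e. with $d_2 = d_3 = h_3 = 0$ (the derivative in $d_2$ being $d_2^2$), which is the $f_4$-axis and thus has codimension $2$ in $\operatorname{Spec}(S'')$. Hence $S''$ satisfies Serre's conditions $R_1$ and $S_2$; being a domain (as a subring of the domain $S$), it is normal. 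This yields $S\cap\Quot(S'') = S''$, and combined with Step 1 the claimed isomorphism follows.

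\emph{Main obstacle.} Everything above is routine Galois theory and bookkeeping with Dickson invariants, except for the final inclusion $S\cap\Quot(S'')\subseteq S''$: a priori $S^W$ might be strictly larger, and since $|W| = 12$ vanishes in $\mathbb{F}_2$ there is no Reynolds operator, so neither the usual averaging argument nor the Hochster--Eagon theorem is available to pin down $S^W$ directly. The decisive point is therefore to sidestep $S^W$ and instead verify, via the Jacobian and Serre criteria in characteristic $2$, that the explicit hypersurface $S''$ is already normal.
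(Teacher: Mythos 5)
Your proposal is correct and follows essentially the same route as the paper: identify $\Quot(S)^{W}$ via Lemma \ref{lem:A4 Galois} and Proposition \ref{prop:general Galois}, then recover the invariant ring as $S\cap\Quot(S'')$ using integral closedness of the explicit ring $\mathbb{F}_2[f_4,d_2,d_3,h_3]/(h_3(d_3-h_3)+d_2^3+d_3^2)$. The only real difference is that you verify this normality in detail (hypersurface is Cohen--Macaulay, Jacobian computation gives a codimension-$2$ singular locus, then Serre's criterion), whereas the paper simply asserts that both sides are integrally closed and points to Derksen--Kemper for a more general statement, so your write-up supplies the justification the paper leaves implicit.
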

\begin{proof}
The relation $ h_3(d_3-h_3) + d_2^3 + d_3^2 = 0 $ has been established in the proof above.
By Galois theory, Lemma \ref{lem:A4 Galois} and Proposition \ref{prop:general Galois}
\[
k[x,y_1,y_2]^{N_{\Bgrouppos{4}}(\widehat{A}_{(4)})} = \frac{k(f_4,d_2,d_3)[h_3]}{(h_3(d_3-h_3) + d_2^3 + d_3^2)}.
\]
Both sides are integrally closed, so by intersecting them with $ k[x,y_1,y_2] $ we obtain the desired identity.

We suggest comparison of our argument with the more general result \cite[Proposition 3.11.2]{Derksen-Kemper}.
\end{proof}

Next, we study the invariant subalgebra of the cohomology of $ \widehat{A}_{(4)^m} $, where $(4)^m $ is the partition of $ 4m $ containing $ 4 $ with arbitrary multiplicity $ m $.

To simplify notation, we provide the following definition.
\begin{definition} \label{def:h}
Let $ m \in \mathbb{N} $. Let $ f_4,d_2,d_3,h_3 $ be as in Theorem \ref{thm:elementary abelian subgroups B} and Corollary \ref{cor:invariants A4}. Let $ h_3^\perp = d_3 - h_3 $. For $ 1 \leq i \leq m $, we define the elements
\begin{align*}
	h_{3,i} &= 1^{\otimes (i-1)} \otimes h_3 \otimes 1^{\otimes (m-i)} \in \mathbb{F}_2[x,y_1,y_2]^{\otimes m} = H^*(\widehat{A}_{(4)^m}; \mathbb{F}_2), \\
	h_{3,i}^{\perp} &= 1^{\otimes (i-1)} \otimes h_3^{\perp} \otimes 1^{\otimes (m-i)} \in \mathbb{F}_2[x,y_1,y_2]^{\otimes m} = H^*(\widehat{A}_{(4)^m}; \mathbb{F}_2).
\end{align*}
\begin{align*}
\tag*{\textit{Let}} h_{3m} &= \sum_{\substack{S \subseteq \{1,\dots,m\}\\ |S| \mbox{ even}}} \prod_{i \notin S} h_{3,i} \prod_{i \in S} h_{3,i}^{\perp} \\
\tag*{\textit{and}} h_{3m}^{\perp} &= \sum_{\substack{S \subseteq \{1,\dots,m\}\\ |S| \mbox{ even}}} \prod_{i \notin S} h_{3,i}^{\perp} \prod_{i \in S} h_{3,i} \\
\end{align*}
\end{definition}

The following statement is proved by a direct induction argument.
\begin{lemma} \label{lem:cup h}
	In $ H^*(\widehat{A}_{(2)^m}; \mathbb{F}_2) $,
	\[
	h_{3m} \cdot (d_3^{\otimes m} - h_{3m}) = \left\{ \begin{array}{ll}
		{d_3^2}^{\otimes m} + \sum_{i=1}^m (d_3^2)^{\otimes i-1} \otimes d_2^3 \otimes (d_3^2)^{\otimes m-i} & \mbox{if } m \mbox{ is odd} \\
		\sum_{i=1}^m (d_3^2)^{\otimes i-1} \otimes d_2^3 \otimes (d_3^2)^{\otimes m-i} & \mbox{if } m \mbox{ is even}
	\end{array} \right.
	\]
\end{lemma}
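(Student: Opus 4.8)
The plan is to prove the identity by induction on $m$, reducing it to the single quadratic relation between $h_3$ and $h_3^{\perp}$ already extracted in the proof of Corollary~\ref{cor:invariants A4}: namely $h_3+h_3^{\perp}=d_3$ and $h_3\cdot h_3^{\perp}=d_2^3+d_3^2$ (the sign is immaterial mod~$2$). All computations take place in $\mathbb{F}_2[x,y_1,y_2]^{\otimes m}$, the ring of Definition~\ref{def:h}.

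First I would rewrite the left-hand side as a genuine product. Introduce the odd analogue $g_m=\sum_{|S|\ \mathrm{odd}}\prod_{i\notin S}h_{3,i}\prod_{i\in S}h_{3,i}^{\perp}$ of $h_{3m}$, the sum running over $S\subseteq\{1,\dots,m\}$. Then $h_{3m}+g_m=\prod_{i=1}^{m}(h_{3,i}+h_{3,i}^{\perp})=d_3^{\otimes m}$, so $d_3^{\otimes m}-h_{3m}=g_m$, and the assertion becomes a closed formula for $F_m:=h_{3m}\cdot g_m$.

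Next I would establish a recursion in $m$. Splitting the defining sums according to whether $m\in S$ yields $h_{3m}=h_{3,m}\,h_{3(m-1)}+h_{3,m}^{\perp}\,g_{m-1}$ and $g_m=h_{3,m}\,g_{m-1}+h_{3,m}^{\perp}\,h_{3(m-1)}$, the index-$(m-1)$ terms being formed in the first $m-1$ tensor slots. Multiplying and collecting, the characteristic-$2$ identity $\alpha^2+\beta^2=(\alpha+\beta)^2$ gives
\[
F_m=(h_{3,m}+h_{3,m}^{\perp})^2\,F_{m-1}+h_{3,m}h_{3,m}^{\perp}\,(h_{3(m-1)}+g_{m-1})^2.
\]
Since $h_{3,m}+h_{3,m}^{\perp}=1^{\otimes(m-1)}\otimes d_3$, $h_{3,m}h_{3,m}^{\perp}=1^{\otimes(m-1)}\otimes(d_2^3+d_3^2)$, and $h_{3(m-1)}+g_{m-1}=d_3^{\otimes(m-1)}$, this simplifies to
\[
F_m=F_{m-1}\otimes d_3^2+(d_3^2)^{\otimes(m-1)}\otimes d_2^3+(d_3^2)^{\otimes m}.
\]
Then I would run the induction: $F_1=h_3h_3^{\perp}=d_2^3+d_3^2$ matches the $m=1$ (odd) value, and feeding the claimed form of $F_{m-1}$ into the recursion, the copy of $(d_3^2)^{\otimes m}$ produced by $F_{m-1}\otimes d_3^2$ cancels the trailing $(d_3^2)^{\otimes m}$ exactly when $m-1$ is even (so for $m$ odd one copy survives), while $(d_3^2)^{\otimes(m-1)}\otimes d_2^3$ supplies the missing $i=m$ summand that promotes $\sum_{i=1}^{m-1}(d_3^2)^{\otimes i-1}\otimes d_2^3\otimes(d_3^2)^{\otimes m-i}$ to the full sum up to $i=m$. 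This reproduces the stated parity-dependent answer.

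The main obstacle is purely organizational: keeping straight which tensor factor carries which Dickson-type class and tracking the parity of $|S|$ through the recursion. The genuine input is minimal — only the two relations on $h_3,h_3^{\perp}$ and the collapse of cross terms afforded by squaring in characteristic~$2$.
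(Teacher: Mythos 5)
Your argument is correct and matches the paper's approach, which is simply asserted to be ``a direct induction argument'': the reduction $d_3^{\otimes m}-h_{3m}=g_m$, the recursion $F_m=F_{m-1}\otimes d_3^2+(d_3^2)^{\otimes(m-1)}\otimes d_2^3+(d_3^2)^{\otimes m}$, and the base case $h_3\cdot h_3^{\perp}=d_2^3+d_3^2$ (from the relation established in the proof of Lemma \ref{lem:A4 Galois}) all check out. One cosmetic slip: the copy of $(d_3^2)^{\otimes m}$ coming from $F_{m-1}\otimes d_3^2$ cancels the trailing one exactly when $m-1$ is \emph{odd} (i.e.\ $m$ even), not when $m-1$ is even, which is what your parenthetical conclusion and the final parity-dependent formula already reflect.
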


\begin{lemma} \label{lem:A4m Galois}
Let $ k' $ be the sub-field of $ k(x,y_1,y_2)^{\otimes m} $ generated by the invariants $ \Quot(k[f_4,d_2,d_3])^{\Sigma_m} $ and $ h_{3m} $. The field extension $ k(x,y_1,y_2)^{\otimes m}/k' $ is a Galois extension with Galois group  $ W_{\Bgrouppos{4m}}(\widehat{A}_{(4)^m}) $.
\end{lemma}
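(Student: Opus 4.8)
## Proof plan for Lemma~\ref{lem:A4m Galois}

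The plan is to bootstrap from the single-block case (Lemma~\ref{lem:A4 Galois}) by a ``tower of extensions'' argument, exactly mirroring the structure of the normalizer $N_{\Bgrouppos{4m}}(\widehat{A}_{(4)^m})$. First I would record the structure of the relevant Weyl group: by Proposition~\ref{prop:elementary subgroups} (3) we have $N_{\Bgroup_{4m}}(\widehat{A}_{(4)^m}) = N_{\Bgroup_{4m}}(A_{(4)^m})$, which by Theorem~\ref{thm:elementary abelian subgroups B} and the wreath-product description is $N_{\Bgroup_4}(A_{(4)}) \wr \Sigma_m$; passing to the Weyl group, $W_{\Bgroup_{4m}}(\widehat{A}_{(4)^m}) = W_{\Bgroup_4}(A_{(4)}) \wr \Sigma_m$. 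Since $m_0 = m_1 = m_2 = 0$ is \emph{not} assumed here (the partition is $(4)^m$, so indeed $m_2 = m$ may be nonzero), Proposition~\ref{prop:elementary subgroups} (4) tells us $N_{\Bgrouppos{4m}}(\widehat{A}_{(4)^m})$ has index $2$ in $N_{\Bgroup_{4m}}(\widehat{A}_{(4)^m})$ when $m \geq 1$; concretely $W_{\Bgrouppos{4m}}(\widehat{A}_{(4)^m})$ is the index-$2$ subgroup of $\bigl(W_{\Bgroup_4}(A_{(4)})\bigr) \wr \Sigma_m$ cut out by the sign homomorphism coming from the $K_2/K_2\cap\Bgrouppos 4 \cong \Z/2$ quotients on each factor (the $H_2$ factors and the $\Sigma_m$ already lie in $\Bgrouppos{4m}$, as computed in the proof of Proposition~\ref{prop:elementary subgroups} (4), since $H_2 \subseteq \Bgrouppos 4$ and $\Sigma_m$ permutes blocks of even size).

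Next I would set up the Galois tower. Let $L = k(x,y_1,y_2)^{\otimes m}$ be the total field, let $E = \Quot\bigl(k[f_4,d_2,d_3]^{\otimes m}\bigr)$ and $E^{\Sigma_m} = \Quot\bigl((k[f_4,d_2,d_3]^{\otimes m})^{\Sigma_m}\bigr)$. By Lemma~\ref{lem:A4 Galois} applied blockwise and the standard fact that Galois extensions are preserved under forming tensor products of fields (or: by the primitive-element / multiplicativity-of-Galois-groups argument), $L$ is Galois over $E' := \Quot\bigl(\bigotimes_{i=1}^m k(f_4,d_2,d_3)[h_3]\bigr)$ with Galois group $W_{\Bgrouppos4}(\widehat A_{(4)})^m$, and $L$ is Galois over $E^{\Sigma_m}$ with Galois group $W_{\Bgrouppos4}(\widehat A_{(4)})\wr\Sigma_m$. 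By Corollary~\ref{cor:Galois}, $E$ is Galois over $E^{\Sigma_m}$ with group $\Sigma_m$. Now $k'$ is by definition $E^{\Sigma_m}(h_{3m})$, so the task reduces to identifying $\Gal(L/k')$, equivalently showing that the subgroup of $W_{\Bgroup_4}(A_{(4)})\wr\Sigma_m$ fixing $h_{3m}$ is exactly $W_{\Bgrouppos{4m}}(\widehat A_{(4)^m})$. I would compute directly how a general element $(\tau_1,\dots,\tau_m;\sigma)$ of the wreath product acts on $h_{3m}$: each $\tau_i \in W_{\Bgroup_4}(A_{(4)})$ acts on $\{h_{3,i}, h_{3,i}^\perp\}$ either trivially (if $\tau_i \in W_{\Bgrouppos 4}$, i.e.\ sign $+1$) or by the swap $h_{3,i}\leftrightarrow h_{3,i}^\perp = d_3 - h_{3,i}$ (if sign $-1$), because $h_3$ and $h_3^\perp$ are precisely the two roots of the degree-$2$ extension $\Quot(S'')/\Quot(S''')$ in the proof of Lemma~\ref{lem:A4 Galois}; and $\sigma\in\Sigma_m$ permutes the indices. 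Then $h_{3m} = \sum_{|S|\text{ even}}\prod_{i\notin S}h_{3,i}\prod_{i\in S}h_{3,i}^\perp$ is manifestly symmetric under $\Sigma_m$ and under an \emph{even} number of the swaps, while a single swap sends $h_{3m}$ to $h_{3m}^\perp \neq h_{3m}$. Hence the stabilizer of $h_{3m}$ is exactly the ``even total sign'' subgroup, which I will have identified with $W_{\Bgrouppos{4m}}(\widehat A_{(4)^m})$ in the first paragraph. By the fundamental theorem of Galois theory $\Gal(L/k') = W_{\Bgrouppos{4m}}(\widehat A_{(4)^m})$, as claimed.

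The main obstacle, and the step I would write most carefully, is the bookkeeping in the first paragraph: matching the abstract index-$2$ subgroup produced by Proposition~\ref{prop:elementary subgroups} (4) with the concrete ``even number of negative-sign $\tau_i$'' subgroup of the wreath product, i.e.\ checking that the sign homomorphism $W_{\Bgroup_{4m}}(\widehat A_{(4)^m}) \to \Z/2$ whose kernel is $W_{\Bgrouppos{4m}}$ really is $(\tau_1,\dots,\tau_m;\sigma)\mapsto \prod_i \operatorname{sgn}_{\Bgroup_4}(\tilde\tau_i)$, using that $H_2\subseteq\Bgrouppos 4$, that the generator of $K_2/(K_2\cap\Bgrouppos4)$ is a transposition in $\Sigma_4$ and hence odd in $\Bgroup_4$, and that $\Sigma_m$ acting by permuting four-element blocks is even. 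A secondary routine point is justifying that tensor products of the blockwise Galois extensions of Lemma~\ref{lem:A4 Galois} are again fields and are Galois with the product Galois group; this is standard but I would cite it (e.g.\ it follows from linear disjointness over $k$, or can be seen via primitive elements since the base field is infinite). Everything else — the relation $h_{3m}(d_3^{\otimes m} - h_{3m}) = \cdots$ of Lemma~\ref{lem:cup h} guaranteeing $h_{3m}$ is quadratic and $h_{3m}\neq h_{3m}^\perp$ in $L$ — is already available and only needs to be invoked.
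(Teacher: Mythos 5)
Your proposal is correct and follows essentially the same route as the paper: the same tower built from Lemma~\ref{lem:A4 Galois} applied blockwise and Corollary~\ref{cor:Galois}, identification of the Galois group of the total extension over $\Quot\bigl((k[f_4,d_2,d_3]^{\otimes m})^{\Sigma_m}\bigr)$ with the full Weyl group $W_{\Bgroup_{4m}}(\widehat{A}_{(4)^m})$ (the wreath product), and then cutting out the index-$2$ subgroup by adjoining $h_{3m}$; the only real difference is that where the paper obtains the degree-$2$ step from integrality (Lemma~\ref{lem:cup h}) plus integral closedness and then invokes the $W_{\Bgrouppos{4m}}$-invariance of $h_{3m}$, you compute the stabilizer of $h_{3m}$ inside the wreath product explicitly, which is a harmless variation that in fact supplies the invariance statement the paper leaves implicit. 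Two small slips to correct in your write-up: the Galois group of $k(x,y_1,y_2)^{\otimes m}$ over $\Quot\bigl((k[f_4,d_2,d_3]^{\otimes m})^{\Sigma_m}\bigr)$ is $W_{\Bgroup_4}(A_{(4)})\wr\Sigma_m$, not $W_{\Bgrouppos{4}}(\widehat{A}_{(4)})\wr\Sigma_m$ (you use the correct group two lines later, so this is only a typo); and, reading Definition~\ref{def:h} literally, the image of $h_{3m}$ under a single swap is the sum over odd subsets, i.e.\ $d_3^{\otimes m}-h_{3m}$, which coincides with $h_{3m}^{\perp}$ only for odd $m$ (for even $m$ that definition gives $h_{3m}^{\perp}=h_{3m}$), but what your argument actually needs --- that this image differs from $h_{3m}$, because $d_3^{\otimes m}\neq 0$ --- holds in all cases, so the stabilizer computation and hence the proof stand.
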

\begin{proof}
To simplify notation, we write $ T = k[x,y_1,y_2]^{\otimes m} $, $ T' = k[f_4,d_2,d_3]^{\otimes m} $, $ T''' = (k[f_4,d_2,d_3]^{\otimes m})^{\Sigma_m} $ and $ T'' $ for the subring of $ T''' $ generated by $ T''' $ and $ h_{3m} $.

By Lemma \ref{lem:A4 Galois} and Corollary \ref{cor:Galois}, $ \Quot(T)/\Quot(T') $ and $ \Quot(T')/\Quot(T''') $ are Galois extensions with Galois groups $ W_{\Bgroup_{4}}(\widehat{A}_{(4)})^m $ and $ \Sigma_m $, respectively.
The Weil group $ W_{\Bgroup_{4m}}(\widehat{A}_{(4)^m}) $ is clearly a subgroup of $ \Gal(\Quot(T)/\Quot(T''')) $. By cardinality reasons, these two groups are equal.

By Lemma \ref{lem:cup h}, $ h_{3m} $ is integral over $ \Quot(T) $. 
By Corollary \ref{cor:Galois}, $ h_{3m} \notin \Quot(T''') $, hence the extension $ \Quot(T'')/\Quot(T''') $ has degree $ 2 $. In particular, it is Galois. Since $ h_{3m} $ is $ W_{\Bgrouppos{4m}}(A_{(4)^{m}}) $-invariant, $ \Quot(T)/\Quot(T'') $ is Galois with Galois group $ W_{\Bgrouppos{4m}}(A_{(4)^{m}}) $.
\end{proof}

We deduce the invariant subring with essentially the same argument used in the proof of Corollary \ref{cor:invariants A4}.
\begin{corollary} \label{cor:invariants A4m}
Let $ m \in \mathbb{N} $. There is an isomorphism
\[
\left[ H^*(\widehat{A}_{(4)^m}; \mathbb{F}_2) \right]^{N_{\Bgrouppos{4m}}(\widehat{A}_{(4)^m})} \cong \frac{(\mathbb{F}_2[f_4,d_2,d_3]^{\otimes m})^{\Sigma_m}[h_{3m}]}{I_m},
\]
where $ I_m $ is the ideal generated by the relation in the statement of Lemma \ref{lem:cup h}.
\end{corollary}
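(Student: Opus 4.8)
The plan is to transplant the proof of Corollary~\ref{cor:invariants A4} to the $m$-fold setting, using Lemma~\ref{lem:cup h} in place of the single relation $h_3(d_3-h_3)+d_2^3+d_3^2=0$ and Lemma~\ref{lem:A4m Galois} in place of Lemma~\ref{lem:A4 Galois}. Write $R=(\mathbb{F}_2[f_4,d_2,d_3]^{\otimes m})^{\Sigma_m}$ and let $r_m$ denote the right-hand side of the identity in Lemma~\ref{lem:cup h}; it lies in $R$ because it is manifestly $\Sigma_m$-invariant, and in characteristic $2$ the relation reads $h_{3m}^2+d_3^{\otimes m}h_{3m}+r_m=0$, so $I_m=(h_{3m}^2+d_3^{\otimes m}h_{3m}+r_m)$. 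The first step is to observe that $h_{3m}$ is integral over $R$, since it satisfies this monic quadratic, and that the quadratic is irreducible and separable over $\Quot(R)$: indeed $h_{3m}\notin\Quot(R)$ by Corollary~\ref{cor:Galois} together with Lemma~\ref{lem:A4m Galois}, and its formal derivative in $T$ is $d_3^{\otimes m}\neq 0$. Consequently $R[h_{3m}]\cong R[T]/I_m$ is a domain, free of rank $2$ over $R$, and $R$ itself is integrally closed by Corollary~\ref{cor:Galois}.

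Next I would identify fraction fields. Applying Proposition~\ref{prop:general Galois} to the faithful representation of $G=W_{\Bgrouppos{4m}}(\widehat{A}_{(4)^m})$ on the degree-one part $\mathbb{F}_2^{3m}$ of $H^*(\widehat{A}_{(4)^m};\mathbb{F}_2)=\mathbb{F}_2[x,y_1,y_2]^{\otimes m}$, the invariant ring $\left[H^*(\widehat{A}_{(4)^m};\mathbb{F}_2)\right]^{N_{\Bgrouppos{4m}}(\widehat{A}_{(4)^m})}=\left(\mathbb{F}_2[x,y_1,y_2]^{\otimes m}\right)^{G}$ is integrally closed, and its fraction field is the invariant field $\left(k(x,y_1,y_2)^{\otimes m}\right)^{G}$, which by Lemma~\ref{lem:A4m Galois} equals $\Quot(R)[h_{3m}]=\Quot(R[T]/I_m)$. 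Hence $\left[H^*(\widehat{A}_{(4)^m};\mathbb{F}_2)\right]^{N_{\Bgrouppos{4m}}(\widehat{A}_{(4)^m})}$ is the integral closure of $R$ in the field $\Quot(R[T]/I_m)$; on the other hand it visibly contains the subring $R[h_{3m}]\cong R[T]/I_m$, all of whose elements are genuine invariants sitting inside the cohomology ring, and which is integral over $R$.

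To conclude that the latter inclusion is an equality, it suffices to know that $R[T]/I_m$ is itself integrally closed, for then both rings coincide with the integral closure of $R$ in their common fraction field; equivalently, one intersects the invariant field $\Quot(R[T]/I_m)$ with $\mathbb{F}_2[x,y_1,y_2]^{\otimes m}$ exactly as in the proof of Corollary~\ref{cor:invariants A4}. I expect the normality of the hypersurface $R[T]/(T^2+d_3^{\otimes m}T+r_m)$ to be the only substantial point: in characteristic $2$ the trace form of this quadratic extension is nondegenerate but has discriminant $(d_3^{\otimes m})^2$, which is not a unit, so the naive ``integral closure divides the discriminant'' bound does not immediately close the gap. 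One should therefore either carry out a Serre--criterion / Jacobian computation showing the singular locus of the hypersurface has codimension at least two, or invoke the structural description of invariant rings of this shape in \cite[Proposition~3.11.2]{Derksen-Kemper}, just as was done in the case $m=1$. Everything else is a routine assembly of the Galois data already furnished by Lemmas~\ref{lem:A4 Galois}, \ref{lem:cup h}, and \ref{lem:A4m Galois}.
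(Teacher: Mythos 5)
Your proposal is correct and follows essentially the same route as the paper: Galois descent via Lemma \ref{lem:A4m Galois} identifies the invariant field with $\Quot(R)[h_{3m}] \cong \Quot\bigl(R[T]/I_m\bigr)$ for $R = (\mathbb{F}_2[f_4,d_2,d_3]^{\otimes m})^{\Sigma_m}$, and one then compares integral closures of $R$ inside this field (equivalently, intersects with the polynomial ring), exactly as in Corollary \ref{cor:invariants A4}. The normality of $R[T]/I_m$ that you single out as the one remaining substantial point is precisely the step the paper also dispatches by assertion together with the pointer to \cite[Proposition 3.11.2]{Derksen-Kemper}, so your write-up matches the published argument in both strategy and level of detail.
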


The general case is described below.
\begin{theorem} \label{thm:invariants normalizer}
Let $ \pi $ be a relevant admissible partition of $ n $.
With reference to the statement of Theorem \ref{thm:elementary abelian subgroups B}, if $ m_2 = 0 $ or $ (m_0,m_1) \not= (0,0) $, then there is an isomorphism
\begin{gather*}
	\left[ H^*(\widehat{A}_{\pi}; \mathbb{F}_2) \right] ^{N_{\Bgrouppos{n}}(\widehat{A}_{\pi})} \cong \frac{ \bigotimes_{k \in \mathbb{N}} \left[ \mathbb{F}_2[f_{2^k},d_{2^k-1},\dots,d_{2^k-2^{k-1}}]^{\otimes m_k} \right]^{\Sigma_{m_k}}}{(e_1(f_1) + e_1(d_1))},\\
	\tag*{where} e_1(f_1) = \sum_{i=1}^{m_0} 1^{\otimes i-1} \otimes f_1 \otimes 1^{\otimes m_0-i} \otimes 1_{\bigotimes_{k \geq 1} \mathbb{F}_2[f_{2^k},\dots,d_{2^{k-1}}]^{\otimes m_k}}\\ \tag*{and} e_1(d_1) = \sum_{j=1}^{m_1} 1_{\mathbb{F}_2[f_0]^{\otimes m_0}} \otimes 1^{\otimes j-1} \otimes d_1 \otimes 1^{\otimes m_1 - j} \otimes 1_{\bigotimes_{k \geq 2} \mathbb{F}_2[f_{2^k},\dots,d_{2^k-1}]^{\otimes m_k}}.
\end{gather*}
If $ m_0 = m_1 = 0 $ and $ m_2 \not= 0 $, then $ \left[ H^*(\widehat{A}_{\pi}; \mathbb{F}_2) \right] ^{N_{\Bgrouppos{n}}(\widehat{A}_{\pi})} $ is isomorphic to
\[
\frac{\bigotimes_{k \in \mathbb{N}} \left[ \mathbb{F}_2[f_{2^k},d_{2^k-1},\dots,d_{2^k-2^{k-1}}]^{\otimes m_k} \right]^{\Sigma_{m_k}}[h_{3m} \otimes 1_{\bigotimes_{k=3}^\infty \mathbb{F}_2[f_{2^k},\dots,d_{2^{k-1}}]^{\otimes m_k}}]}{I_{\pi}},
\]
where $ I_{\pi} $ is the ideal generated by $ I_m \otimes 1_{\bigotimes_{k=3}^{\infty} \mathbb{F}_2[f_{2^k},\dots,d_{2^{k-1}}]^{\otimes m_k}} $.
In particular, it is a free module of rank $ 2 $ over
\[
\bigotimes_{k \in \mathbb{N}} \left[ \mathbb{F}_2[f_{2^k},d_{2^k-1},\dots,d_{2^k-2^{k-1}}]^{\otimes m_k} \right]^{\Sigma_{m_k}}
\]
with basis given by the couple $ \{ 1, h_{3m_2} \otimes 1_{\bigotimes_{k=3}^{\infty} \mathbb{F}_2[f_{2^k},\dots,d_{2^{k-1}}]^{\otimes m_k}}\} $.

Moreover, via the isomorphisms above, the restriction map
\[
\left[ H^*(A_{\pi}; \mathbb{F}_2) \right]^{N_{\Bgroup_n(A_{\pi})}} \to \left[ H^*(\widehat{A}_{\pi}; \mathbb{F}_2) \right]^{N_{\Bgrouppos{n}(\widehat{A}_{\pi})}}
\]
is identified with the obvious quotient map (if $ m_2 = 0 $ or $ (m_0,m_1) \not= (0,0) $) or inclusion map (if $ (m_0,m_1) = (0,0) $ and $ m_2 \not= 0 $).
\end{theorem}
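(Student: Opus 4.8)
The plan is to realise $ H^*(\widehat{A}_\pi;\mathbb{F}_2) $ as a quotient of $ H^*(A_\pi;\mathbb{F}_2) $ and compare invariant subalgebras. By Proposition \ref{prop:description A^}, $ \widehat{A}_\pi $ is the kernel of a character of $ A_\pi $, and under the identification of Remark \ref{rem:invariants} its dual degree-$ 1 $ class is exactly $ \chi := e_1(f_1)+e_1(d_1) $, the restriction of $ \sgn_{\Bgroup_n} $, which is supported on the blocks of size $ 1 $ and $ 2 $. Hence $ H^*(\widehat{A}_\pi;\mathbb{F}_2)\cong H^*(A_\pi;\mathbb{F}_2)/(\chi) $, and since conjugation preserves $ \sgn_{\Bgroup_n} $ the group $ N:=N_{\Bgroup_n}(A_\pi)=N_{\Bgroup_n}(\widehat{A}_\pi) $ (Proposition \ref{prop:elementary subgroups}(3)) fixes $ \chi $ and acts on the quotient. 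The first step is to check that $ [H^*(\widehat{A}_\pi;\mathbb{F}_2)]^{N_{\Bgrouppos{n}}(\widehat{A}_\pi)}=[H^*(A_\pi;\mathbb{F}_2)/(\chi)]^{N} $: by Proposition \ref{prop:elementary subgroups}(4), $ N\cap\Bgrouppos{n} $ has index $ 1 $ or $ 2 $ in $ N $, and in the latter case a representative of the nontrivial coset can be taken inside $ A_\pi $ (an odd reflection in a factor $ \Bgroup_1 $ or $ \Bgroup_2 $), which centralises $ \widehat{A}_\pi $ and so acts trivially on $ H^*(\widehat{A}_\pi;\mathbb{F}_2) $.

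Next I would peel off the large blocks. Writing $ \pi=\pi_{\mathrm{s}}\sqcup\pi_{\mathrm{b}} $ with $ \pi_{\mathrm{s}} $ the parts in $ \{1,2,4\} $ and $ \pi_{\mathrm{b}} $ the parts $ \geq 8 $, we get $ A_\pi=A_{\pi_{\mathrm{s}}}\times A_{\pi_{\mathrm{b}}} $ with $ \chi $ in the first factor and $ N=N_{\Bgroup}(A_{\pi_{\mathrm{s}}})\times N_{\Bgroup}(A_{\pi_{\mathrm{b}}}) $, where $ N_{\Bgroup}(A_{\pi_{\mathrm{b}}})\subseteq\Bgrouppos{} $ by the proof of Proposition \ref{prop:elementary subgroups}(4). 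As invariants of a tensor product under a product of finite groups split as a tensor product of invariants, this isolates the factor $ \bigotimes_{k\geq 3}[\mathbb{F}_2[f_{2^k},\dots,d_{2^k-2^{k-1}}]^{\otimes m_k}]^{\Sigma_{m_k}} $ via Theorem \ref{thm:elementary abelian subgroups B}, and reduces us to $ \pi=(1)^{m_0}(2)^{m_1}(4)^{m_2} $. If $ m_0=m_1=0 $ this is Corollary \ref{cor:invariants A4m}, with the restriction map the evident inclusion. If $ (m_0,m_1)\neq(0,0) $ and $ m_2\geq 1 $, I would further reduce to $ m_2=0 $: $ \sgn $ restricts surjectively both to $ N_{\Bgroup}(A_{(1)^{m_0}(2)^{m_1}}) $ (an odd reflection in a size-$ 1 $ or size-$ 2 $ block lies in $ A_\pi $) and to $ N_{\Bgroup}(A_{(4)^{m_2}}) $ (which contains an odd element of the $ K_2 $-part, by the proof of Proposition \ref{prop:elementary subgroups}(4)), so $ N\cap\Bgrouppos{} $ is generated by $ (N_{\Bgroup}(A_{(1)^{m_0}(2)^{m_1}})\cap\Bgrouppos{})\times(N_{\Bgroup}(A_{(4)^{m_2}})\cap\Bgrouppos{}) $ together with one element $ (\tau_1,\tau_2) $ with $ \tau_1\in A_\pi $ odd (hence acting trivially on cohomology) and $ \tau_2 $ odd in the $ K_2 $-part. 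Taking invariants under this extra $ \mathbb{Z}/2 $, and using the description in Corollary \ref{cor:invariants A4m} of $ [H^*(A_{(4)^{m_2}};\mathbb{F}_2)]^{N_{\Bgroup}(A_{(4)^{m_2}})\cap\Bgrouppos{}} $ as a free rank-$ 2 $ module over $ (\mathbb{F}_2[f_4,d_2,d_3]^{\otimes m_2})^{\Sigma_{m_2}} $ on which $ \tau_2 $ fixes the base ring, kills the $ h_{3m_2} $-summand and leaves exactly that base ring as the contribution of the size-$ 4 $ blocks.

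This reduces everything to the core case $ \pi=(1)^{m_0}(2)^{m_1} $ with $ (m_0,m_1)\neq(0,0) $. Write $ A=A_\pi $, $ W=N_{\Bgroup}(A)/A $ and $ R=H^*(A;\mathbb{F}_2)^{W}=[\mathbb{F}_2[f_1]^{\otimes m_0}]^{\Sigma_{m_0}}\otimes[\mathbb{F}_2[f_2,d_1]^{\otimes m_1}]^{\Sigma_{m_1}} $ (Theorem \ref{thm:elementary abelian subgroups B}); by the previous step we must show $ [H^*(A;\mathbb{F}_2)/(\chi)]^{W}=R/(\chi R) $. Since $ \chi $ is a $ W $-fixed non-zero-divisor, the natural map $ R/(\chi R)\to[H^*(A;\mathbb{F}_2)/(\chi)]^{W} $ is injective, and the whole content is its surjectivity. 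I would prove this by the Galois-theoretic argument already used for Corollaries \ref{cor:invariants A4} and \ref{cor:invariants A4m}: $ H^*(A;\mathbb{F}_2)/(\chi) $ is a polynomial ring, so $ R/(\chi R)\hookrightarrow[H^*(A;\mathbb{F}_2)/(\chi)]^{W}\hookrightarrow H^*(A;\mathbb{F}_2)/(\chi) $ is a tower of domains which is module-finite at each stage (reduce integral dependence relations over $ R $ modulo $ \chi $); using the relevance of $ \pi $ to guarantee that the residual $ W $-action on $ H^*(A;\mathbb{F}_2)/(\chi) $ is still faithful, Proposition \ref{prop:general Galois} applies and a transcendence-degree count identifies $ \Quot(R/(\chi R)) $ with the full invariant field $ \Quot(H^*(A;\mathbb{F}_2)/(\chi))^{W} $; and $ R/(\chi R) $ is integrally closed — when $ m_0\geq 1 $ because $ e_1(f_1) $ is a polynomial generator of $ [\mathbb{F}_2[f_1]^{\otimes m_0}]^{\Sigma_{m_0}} $, so $ R/(\chi R)\cong\mathbb{F}_2[e_2(f_1),\dots,e_{m_0}(f_1)]\otimes[\mathbb{F}_2[f_2,d_1]^{\otimes m_1}]^{\Sigma_{m_1}} $ is a tensor product of normal $ \mathbb{F}_2 $-algebras, and when $ m_0=0 $ (so $ m_1\geq 2 $ by relevance) by a direct check that $ [\mathbb{F}_2[f_2,d_1]^{\otimes m_1}]^{\Sigma_{m_1}}/(e_1(d_1)) $ satisfies Serre's normality criterion. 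Intersecting with the ambient polynomial ring then yields $ [H^*(A;\mathbb{F}_2)/(\chi)]^{W}=R/(\chi R) $. Reassembling the reductions gives both stated presentations, and tracing the isomorphisms identifies the restriction map with the quotient map in the first case of the theorem and with the inclusion in the second.

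I expect the main obstacle to be precisely this surjectivity in the core case: proving that $ \Quot(R/(\chi R)) $ is already the full invariant field (equivalently, that $ \chi $ acts as a non-zero-divisor on the higher $ W $-cohomology of $ H^*(A;\mathbb{F}_2) $) and that $ R/(\chi R) $ is normal. This is exactly where the relevance hypothesis is indispensable: for small excluded partitions such as $ (1,1) $ and $ (2) $ the residual $ W $-action on $ H^*(A;\mathbb{F}_2)/(\chi) $ becomes trivial while $ R/(\chi R) $ loses a purely inseparable degree-$ 2 $ piece of its fraction field, so the proposed formula genuinely fails there. The $ m_0=0 $ case of the normality check — a quotient of a modular ring of multisymmetric functions by a linear form — is the delicate point, to be settled with Serre's criterion once relevance is used to force the quotient to have the expected dimension and be regular in codimension one.
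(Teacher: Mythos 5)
Your skeleton is the same as the paper's: identify $H^*(\widehat{A}_{\pi};\mathbb{F}_2)$ with $H^*(A_{\pi};\mathbb{F}_2)/(\chi)$, $\chi=e_1(f_1)+e_1(d_1)$, use odd elements of $A_{\pi}$ (which centralize $\widehat{A}_{\pi}$) to pass between $N_{\Bgrouppos{n}}(\widehat{A}_{\pi})$- and $N_{\Bgroup_n}(A_{\pi})$-invariants, peel off the large blocks, and quote Corollary \ref{cor:invariants A4m} when $m_0=m_1=0$; your treatment of the mixed case via the extra element $(\tau_1,\tau_2)$ is fine. You are also right that the whole weight rests on the step ``invariants of the quotient equal the quotient of the invariants'', which the paper disposes of with a bare ``Consequently''. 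But two things go wrong. First, a slip: your opening blanket claim that $[H^*(\widehat{A}_{\pi})]^{N_{\Bgrouppos{n}}(\widehat{A}_{\pi})}=[H^*(A_{\pi})/(\chi)]^{N_{\Bgroup_n}(A_{\pi})}$ is false when $m_0=m_1=0$ and $m_2\neq 0$: there is then no odd element of $A_{\pi}$, and the two rings differ exactly by $h_{3m_2}$; your later case division silently overrides this, but as written the first step is wrong. Second, and decisively, the core-case surjectivity is not proved: a transcendence-degree count cannot identify $\Quot(R/\chi R)$ with the invariant field, since all three fields in your tower have the same transcendence degree --- what is needed is a bound on the finite degree $[\Quot(H^*(A_{\pi})/\chi):\Quot(R/\chi R)]$ by the order of the image of the Weyl group, i.e.\ a generic-rank argument or explicit generators, and none is given; and the normality of $R/\chi R$ for $m_0=0$ is explicitly deferred.

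Moreover the deferred point genuinely fails inside the range the statement is used for, so the plan cannot close as formulated. Take $(m_0,m_1)=(0,2)$, say $\pi=(2,2)$: here $H^*(\widehat{A}_{(2,2)};\mathbb{F}_2)=\mathbb{F}_2[x,x',y]$ with $y$ the common image of the two classes $d_1$. The class $y$ is fixed by the entire normalizer, because its kernel is $\widehat{A}_{(2,2)}\cap C_2^4$ and the diagonal subgroup $C_2^4$ is normal in $\Bgroup_4$; on the other hand $R/(\chi R)=[\mathbb{F}_2[f_2,d_1]^{\otimes 2}]^{\Sigma_2}/(e_1(d_1))$ has trivial degree-one part. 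In fact the image of $f_2\otimes d_1+d_1\otimes f_2$ equals $y$ times the image of $f_2\otimes 1+1\otimes f_2$, and $y^2$ is the image of $d_1\otimes d_1$, so $y$ lies in $\Quot(R/\chi R)$ and is integral over $R/\chi R$ but not in it: $R/\chi R$ is not integrally closed and the surjectivity you want is false. Relevance as you construe it (excluding only $(1,1)$ and $(2)$) does not remove such partitions, and the paper does apply the statement to partitions with $m_1\neq 0$ (see Lemma \ref{lem:linear independence}(3)). So the $m_0=0$ branch of your core case needs either a corrected target statement or a direct computation of the invariants, and for the remaining $(m_0,m_1)$ the field identification needs an actual degree or generation argument rather than a dimension count; the paper's own one-line justification of this step hides the same difficulty rather than resolving it.
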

\begin{proof}
By Proposition \ref{prop:description A^}
\[
H^*(\widehat{A}_{\pi};\mathbb{F}_2) \cong \frac{\Sym(A_{\pi}^{\vee})}{(\phi_{m_0,m_1})}.
\]

$ \phi_{m_0,m_1} $ extend to a homomorphism $ \Bgroup_n \to \mathbb{F}_2 $, so it is invariant by the action of the normalizer $ N_{\Bgroup_n}(\widehat{A}_{\pi}) $. Moreover, by Proposition \ref{prop:conjugacy classes} $ (3) $ the normalizers of $ \widehat{A}_{\pi} $ and $ A_{\pi} $ coincide.
Consequently
\[
\left[ H^*(\widehat{A}_{\pi}; \mathbb{F}_2) \right]^{N_{\Bgroup_n}(\widehat{A}_{\pi})} \cong \frac{\left[ H^*(A_{\pi}; \mathbb{F}_2) \right]^{N_{\Bgroup_n}(A_{\pi})}}{(\phi_{m_0,m_1})}.
\]

By points $ (1) $ and $ (4) $ of Proposition \ref{prop:conjugacy classes}, if $ m_0 = m_1 = m_2 = 0 $ or $ (m_0,m_1) \not= (0,0) $, then $ W_{\Bgrouppos{n}}(\widehat{A}_{\pi}) \cong W_{\Bgroup_{n}}(A_{\pi}) $. Therefore, in such cases,
\[
\left[ H^*(\widehat{A}_{\pi}; \mathbb{F}_2) \right]^{N_{\Bgrouppos{n}}(\widehat{A}_{\pi})} \cong \frac{\left[ H^*(A_{\pi}; \mathbb{F}_2) \right]^{N_{\Bgroup_n}(A_{\pi})}}{(\phi_{m_0,m_1})}
\]
and the desired statement follows by identifying $ \phi_{m_0,m_1} $  with $ e(f_1) + e(d_1) $.

On the contrary, if $ m_0 = m_1 = 0 $ and $ m_2 \not= 0 $, then $ W_{\Bgrouppos{n}}(\widehat{A}_{\pi}) $ is isomorphic to a subgroup of index $ 2 $ in $ W_{\Bgroup_n}(A_{\pi}) $. By Proposition \ref{prop:description A^}, Proposition \ref{prop:conjugacy classes} $ (3) $, and \cite[Proposition 5.3.1]{Swenson}, $ \widehat{A}_{\pi} $ is the product of $ \widehat{A}_{(4)^{m_2}} $ and another maximal elementary abelian $ 2 $-subgroup encompassed by the previous analysis, and the invariant subalgebra of its cohomology splits as the product of the invariant subalgebras of the factors.
Therefore, we reduce to the special case $ \pi = (4)^{m_2} $ and the theorem follows from Corollary \ref{cor:invariants A4m}.
\end{proof}

\subsection{The interplay between the almost-Hopf ring structure and the restrictions to elementary abelian \texorpdfstring{$ 2 $}{2}-subgroups}

In this sub-sub-section, we investigate the link between elementary abelian $ 2 $-subgroups and the almost-Hopf ring structural morphisms. For every relevant admissible partition $ \pi $ of $ n $, we let $ \rho_\pi \colon H^*(\Bgrouppos{n}; \mathbb{F}_2) \to H^*(\widehat{A}_{\pi}; \mathbb{F}_2) $ be the restriction map.

The link with the cup product is classically well-known.
\begin{proposition} \label{prop:rho cup}
$ \rho_{\pi} $ preserves cup products for all $ \pi $.
\end{proposition}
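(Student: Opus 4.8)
The plan is to recognize that $\rho_\pi$ is simply the map on mod $2$ cohomology induced by the subgroup inclusion $j_\pi \colon \widehat{A}_\pi \hookrightarrow \Bgrouppos{n}$ — which makes sense since $\widehat{A}_\pi = A_\pi \cap \Bgrouppos{n}$ is a subgroup of $\Bgrouppos{n}$ by Definition \ref{def:elementary B+} — equivalently the homomorphism $(Bj_\pi)^*$ induced by the corresponding map of classifying spaces $Bj_\pi \colon B\widehat{A}_\pi \to B\Bgrouppos{n}$, and then to invoke the naturality of the cup product.

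Concretely, first I would recall the standard fact that for any continuous map $f \colon X \to Y$ the induced homomorphism $f^* \colon H^*(Y;\mathbb{F}_2) \to H^*(X;\mathbb{F}_2)$ is a ring homomorphism for the cup product, a consequence of the naturality of the Alexander--Whitney diagonal approximation (see, e.g., \cite[Chapter VI]{Adem-Milgram}). Next I would specialize this to $f = Bj_\pi$, using the canonical identification $H^*(BG;\mathbb{F}_2) \cong H^*(G;\mathbb{F}_2)$ under which the topological cup product corresponds to the product of group cohomology and $(Bj_\pi)^*$ corresponds to the restriction homomorphism $\rho_\pi$. This immediately gives $\rho_\pi(x \cdot y) = \rho_\pi(x) \cdot \rho_\pi(y)$ for all $x,y \in H^*(\Bgrouppos{n};\mathbb{F}_2)$, which is the assertion, because the cup product of the almost-Hopf ring structure on $\AB$ restricts, in each component $n$, to the ordinary cup product of $H^*(\Bgrouppos{n};\mathbb{F}_2)$ by definition (Subsection 3.4).

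There is no genuine obstacle here: this is the familiar statement that restriction in group cohomology is multiplicative, and the only points to check — that $\widehat{A}_\pi$ is a subgroup of $\Bgrouppos{n}$, and that the product in question is the in-component cup product — are immediate from the definitions. This is in contrast with the analogous statements for the coproduct $\Delta$ and the transfer product $\odot$ proved in the following subsections, where one must track the combinatorics of the group inclusions $\Bgrouppos{r} \times \Bgrouppos{n-r} \hookrightarrow \Bgrouppos{n}$ and of the associated transfers. If one prefers a chain-level argument, one may instead note that a diagonal approximation on the Fox--Neuwirth resolution of $\Bgrouppos{n}$ restricts to one on that of $\widehat{A}_\pi$, but this adds nothing to the proof.
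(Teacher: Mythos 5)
Your proof is correct and matches the paper's (implicit) reasoning: the paper states this without proof, calling it "classically well-known," precisely because $\rho_\pi$ is the restriction along the subgroup inclusion $\widehat{A}_\pi \hookrightarrow \Bgrouppos{n}$ and restriction in group cohomology is multiplicative by naturality of the cup product.
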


The relation between the maps $ \rho_\pi $ and the coproduct in $ \AB $ is contained in the following easily proved statements.
\begin{proposition} \label{prop:coproduct cup elementary subgroups}
Let $ \pi $ and $ \pi' $ be relevant admissible partitions of $ n $ and $ n' $, respectively. Let $ m_k $ and $ m_k' $ be the multiplicities of $ 2^k $ in $ \pi $ and $ \pi' $, respectively.
\begin{enumerate}
\item With reference to the isomorphism of Theorem \ref{thm:invariants normalizer}, the inclusion of invariants induces a homomorphism
\begin{gather*}
\nu \colon \frac{\bigotimes_{k \in \mathbb{N}} [ \mathbb{F}_2[f_{2^k},d_{2^k-1},\dots,d_{2^k-2^{k-1}}]^{\otimes (m_k+m_k')}]^{\Sigma_{m_k+m_k'}}}{(e_1(f_1) + e_1(d_1))} \to \\
\frac{\bigotimes_{k \in \mathbb{N}} [ \mathbb{F}_2[f_{2^k},d_{2^k-1},\dots,d_{2^k-2^{k-1}}]^{\otimes m_k}]^{\Sigma_{m_k}}}{(e_1(f_1) + e_1(d_1))} \otimes \frac{\bigotimes_{k \in \mathbb{N}} [ \mathbb{F}_2[f_{2^k},d_{2^k-1},\dots,d_{2^k-2^{k-1}}]^{\otimes m_k'}]^{\Sigma_{m_k'}}}{(e_1(f_1) + e_1(d_1))}
\end{gather*}
by passing to quotients.
\item There is a commutative diagram
\begin{center}
\begin{tikzcd}
H^*(\Bgrouppos{n+m}; \mathbb{F}_2) \arrow{r}{\Delta_{(n,m)}} \arrow{d}{\rho_{{\pi \sqcup \pi'}}} & H^*(\Bgrouppos{n}; \mathbb{F}_2) \otimes H^*(\Bgrouppos{m}; \mathbb{F}_2) \arrow{d}{\rho_{\pi} \otimes \rho_{\pi'}} \\
\left[ H^*(\widehat{A}_{\pi \sqcup \pi'}) \right]^{N_{\Bgrouppos{n+m}(\widehat{A}_{\pi \sqcup \pi'})}} \arrow{r}{\nu'} & \left[ H^*(\widehat{A}_{\pi}) \right]^{N_{\Bgrouppos{n}(\widehat{A}_{\pi})}} \otimes \left[ H^*(\widehat{A}_{\pi'}) \right]^{N_{\Bgrouppos{m}(\widehat{A}_{\pi'})}},
\end{tikzcd}
\end{center}
where $ \nu' = \nu $ if $ (m_0+m_0',m_1+m_1') = (0,0) $ or $ m_2+m_2' = 0 $, and
\[
\nu'(a+b(h_{3(m_2+m_2')}\otimes 1)) = \nu(a) + \nu(b) \left((h_{3m_2} \otimes 1) \otimes (h_{3m_2'} \otimes 1) + (h^\perp_{3m_2} \otimes 1) \otimes (h^{\perp}_{3m_2'} \otimes 1)\right)
\]
for all $ a,b \in \frac{\bigotimes_{k \in \mathbb{N}} [ \mathbb{F}_2[f_{2^k},d_{2^k-1},\dots,d_{2^k-2^{k-1}}]^{\otimes (m_k+m_k')}]^{\Sigma_{m_k+m_k'}}}{(e_1(f_1) + e_1(d_1))} $ if $ m_0 = m_0' = m_1 = m_1' = 0 $ and $ m_2 + m_2' \not= 0 $.
\end{enumerate}
\end{proposition}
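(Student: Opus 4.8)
The plan is to derive both parts from functoriality of restriction maps together with the explicit presentations of Theorem~\ref{thm:invariants normalizer}. Part~$(1)$ is purely formal: a symmetric function in $m_k+m_k'$ blocks of variables is in particular symmetric in the first $m_k$ and in the last $m_k'$ blocks separately, so there is a natural inclusion of the $\Sigma$-invariant subalgebras appearing in Theorem~\ref{thm:invariants normalizer}, and under it the class $e_1(f_1)+e_1(d_1)$ is sent to $(e_1(f_1)+e_1(d_1))\otimes 1+1\otimes(e_1(f_1)+e_1(d_1))$, which vanishes after passing to the two quotients; hence the inclusion descends to $\nu$.

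For part~$(2)$ I would argue in three steps. First, $\Delta_{(n,m)}$ is by definition the restriction along $\Bgrouppos{n}\times\Bgrouppos{m}\hookrightarrow\Bgrouppos{n+m}$, so functoriality of restriction and the K\"unneth isomorphism over $\mathbb{F}_2$ identify $(\rho_\pi\otimes\rho_{\pi'})\circ\Delta_{(n,m)}$ with the restriction associated with $\widehat{A}_\pi\times\widehat{A}_{\pi'}\hookrightarrow\Bgrouppos{n+m}$. Second, ordering the parts of $\pi\sqcup\pi'$ so that those of $\pi$ come first, one has $A_{\pi\sqcup\pi'}=A_\pi\times A_{\pi'}$ inside $\Bgroup_n\times\Bgroup_m\le\Bgroup_{n+m}$, whence $\widehat{A}_\pi\times\widehat{A}_{\pi'}\le\widehat{A}_{\pi\sqcup\pi'}$ (an inclusion of index $1$ or $2$); thus the previous composite factors as $\rho_{\pi\sqcup\pi'}$ followed by the restriction $r\colon H^*(\widehat{A}_{\pi\sqcup\pi'};\mathbb{F}_2)\to H^*(\widehat{A}_\pi;\mathbb{F}_2)\otimes H^*(\widehat{A}_{\pi'};\mathbb{F}_2)$. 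Third, since $\pi,\pi'$ are relevant, $N_{\Bgroup_n}(\widehat{A}_\pi)=N_{\Bgroup_n}(A_\pi)$ and likewise for $\pi'$ by Proposition~\ref{prop:elementary subgroups}~$(3)$, so $N_{\Bgrouppos{n}}(\widehat{A}_\pi)\times N_{\Bgrouppos{m}}(\widehat{A}_{\pi'})$ normalizes $A_\pi\times A_{\pi'}$ and $\Bgrouppos{n+m}$, hence $\widehat{A}_{\pi\sqcup\pi'}$, and lies in $\Bgrouppos{n+m}$; as the inclusion $\widehat{A}_\pi\times\widehat{A}_{\pi'}\hookrightarrow\widehat{A}_{\pi\sqcup\pi'}$ commutes with conjugation by any such element, $r$ is equivariant and therefore carries $[H^*(\widehat{A}_{\pi\sqcup\pi'})]^{N_{\Bgrouppos{n+m}}(\widehat{A}_{\pi\sqcup\pi'})}$ into $[H^*(\widehat{A}_\pi)]^{N_{\Bgrouppos{n}}(\widehat{A}_\pi)}\otimes[H^*(\widehat{A}_{\pi'})]^{N_{\Bgrouppos{m}}(\widehat{A}_{\pi'})}$. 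Its corestriction is the desired $\nu'$, and the square commutes.

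It then remains to match $\nu'$ with the stated formula via Theorem~\ref{thm:invariants normalizer}. Writing $H^*(\widehat{A}_\pi;\mathbb{F}_2)=\Sym(A_\pi^\vee)/(\phi_{m_0,m_1})$ and similarly for the other two groups, $r$ is the map induced on symmetric algebras by the identity $A_{\pi\sqcup\pi'}^\vee=A_\pi^\vee\oplus A_{\pi'}^\vee$, which replaces the single relation $\phi_{m_0+m_0',m_1+m_1'}=\phi_{m_0,m_1}+\phi_{m_0',m_1'}$ by the pair $\phi_{m_0,m_1},\phi_{m_0',m_1'}$; on the symmetric Dickson and $f$ generators it is the blockwise identity, i.e.\ exactly $\nu$, which handles the case $(m_0+m_0',m_1+m_1')=(0,0)$ or $m_2+m_2'=0$. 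In the exceptional case $m_0=m_0'=m_1=m_1'=0$, $m_2+m_2'\neq 0$, the invariants of $\widehat{A}_{(4)^{m_2+m_2'}}$ carry the additional generator $h_{3(m_2+m_2')}$, so I would also compute $r(h_{3(m_2+m_2')})$: splitting the sum defining $h_{3(m_2+m_2')}$ over even subsets $S\subseteq\{1,\dots,m_2+m_2'\}$ as $S=S_1\sqcup S_2$ with $S_1\subseteq\{1,\dots,m_2\}$ and $S_2\subseteq\{m_2+1,\dots,m_2+m_2'\}$, the condition that $|S|$ be even becomes $|S_1|\equiv|S_2|\pmod 2$, so the sum factors into its ``even $\otimes$ even'' and ``odd $\otimes$ odd'' halves, producing $h_{3m_2}\otimes h_{3m_2'}+h_{3m_2}^\perp\otimes h_{3m_2'}^\perp$ after identifying the odd-subset sum with $d_3^{\otimes m}-h_{3m}$ (the $\perp$-class occurring in Lemma~\ref{lem:cup h}); tensoring with the identity on the remaining $k\ge 3$ factors gives the stated formula for $\nu'$.

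The steps calling for genuine care are the equivariance argument, which rests on the normalizer computation of Proposition~\ref{prop:elementary subgroups}, and the parity bookkeeping in the exceptional case together with the correct identification of the $\perp$-classes; everything else is a routine unwinding of the definitions and of Theorem~\ref{thm:invariants normalizer}.
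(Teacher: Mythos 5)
Your proposal is correct and takes essentially the same route as the paper: both parts are deduced from the observation that every map in sight is a restriction of invariants, with $\nu'$ induced by restriction along $\widehat{A}_{\pi}\times\widehat{A}_{\pi'}\leq\widehat{A}_{\pi\sqcup\pi'}$ and agreeing with $\nu$ on the symmetrized Dickson and $f$ generators. Your equivariance step via Proposition \ref{prop:elementary subgroups} (3) and the parity bookkeeping for the restriction of $h_{3(m_2+m_2')}$ simply spell out what the paper's proof treats as immediate (``clear from its definition'').
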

\begin{proof}
(1) is proved by direct inspection: the map defined by restriction of invariants exists without taking the quotient by $ e_1(f_1) + e_1(d_1) $, and maps $ e_1(f_1) + e_1(d_1) $ to $ (e_1(f_1) + e_1(d_1)) \otimes 1 + 1 \otimes (e_1(f_1) + e_1(d_1)) $.

Regarding (2), as $ \Delta_{(n,m)} $, $ \rho_{\pi \sqcup \pi'} $, $ \rho_{\pi} $ and $ \rho_{\pi'} $ are all restriction maps, the morphism $ \nu' $ induced by restriction of invariants makes the diagram commute.
If $ (m_0+m_0',m_1+m_1') \not= (0,0) $ or $ m_2 + m_2' = 0 $ $ \nu' = \nu $ by construction. Otherwise, $ \nu' = \nu $ on
\[
\bigotimes_{k \in \mathbb{N}} [ \mathbb{F}_2[f_{2^k},d_{2^k-1},\dots,d_{2^k-2^{k-1}}]^{\otimes (m_k+m_k')}]^{\Sigma_{m_k+m_k'}}.
\]
Therefore, by Theorem \ref{thm:invariants normalizer} and Proposition \ref{prop:rho cup}, it is enough to prove that $ h_{3(m_2+m_2')} \otimes 1 $ restricts to $ (h_{3m_2} \otimes 1) \otimes (h_{3m_2'} \otimes 1) + (h^\perp_{3m_2} \otimes 1) \otimes (h^{\perp}_{3m_2'} \otimes 1) $ in $ \left[ H^*(\widehat{A}_{\pi}) \right]^{N_{\Bgrouppos{n}(\widehat{A}_{\pi})}} \otimes \left[ H^*(\widehat{A}_{\pi'}) \right]^{N_{\Bgrouppos{m}(\widehat{A}_{\pi'})}} $, which is clear from its definition.
\end{proof}

In contrast, the relation between transfer product and restriction to the subgroups $ \widehat{A}_{\pi} $ is non-trivial. It requires the classical Cartan--Eilenberg double-coset formula, that we recall below.
\begin{theorem}[{\cite[Theorem II.6.2]{Adem-Milgram}}] \label{thm:Cartan-Eilenberg}
Let $ H,K $ be subgroup of a finite group $ G $. Let $ \mathcal{R} $ be a set of representatibes for the double cosets $ H\backslash G/K $. Let $ c_r $ be the conjugation homomorphism. Then
\[
\res^G_H \circ \tr^G_K = \sum_{r \in \mathcal{R}} \tr^H_{H \cap rKr^{-1}} \circ c_r^* \circ \res^K_{r^{-1}Hr \cap K} \colon H^*(K) \to H^*(H).
\]
\end{theorem}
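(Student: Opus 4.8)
This is the classical Cartan--Eilenberg double-coset formula (also called the Mackey formula); it is a general fact about finite groups and plays no distinguished role in our setting, so the plan is only to outline the proof, referring to \cite[Theorem II.6.2]{Adem-Milgram} for the details. The approach is to recast both restriction and transfer as maps induced on $\operatorname{Ext}$-groups by $\mathbb{Z}[G]$-linear maps of permutation modules, and then to decompose the resulting map along the double cosets $H\backslash G/K$. The argument works for any coefficient module $M$, in particular $M=\mathbb{F}_2$.

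First I would invoke the Eckmann--Shapiro isomorphisms $H^*(K;M)\cong\operatorname{Ext}^*_{\mathbb{Z}[G]}(\mathbb{Z}[G/K],M)$ and $H^*(H;M)\cong\operatorname{Ext}^*_{\mathbb{Z}[G]}(\mathbb{Z}[G/H],M)$. Under them, $\tr^G_K$ is induced by the $\mathbb{Z}[G]$-linear norm $\nu_K\colon\mathbb{Z}\to\mathbb{Z}[G/K]$, $1\mapsto\sum_{xK\in G/K}xK$, and $\res^G_H$ is induced by the augmentation $\varepsilon_H\colon\mathbb{Z}[G/H]\to\mathbb{Z}$, $gH\mapsto1$. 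Therefore $\res^G_H\circ\tr^G_K$ is the map induced by the composite $\mathbb{Z}[G/H]\xrightarrow{\varepsilon_H}\mathbb{Z}\xrightarrow{\nu_K}\mathbb{Z}[G/K]$, which sends each basis element $gH$ to $\sum_{xK\in G/K}xK$.

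Next I would split this composite according to the $H$-orbits on $G/K$, which are indexed by $\mathcal{R}$. Writing $\phi_r\colon\mathbb{Z}[G/H]\to\mathbb{Z}[G/K]$ for the $\mathbb{Z}[G]$-linear map that sends $gH$ to $\sum_{x\in HrK/K}xK$, one has $\nu_K\circ\varepsilon_H=\sum_{r\in\mathcal{R}}\phi_r$. For each $r$, put $L=H\cap rKr^{-1}$ and $L'=r^{-1}Hr\cap K=r^{-1}Lr$; then $\phi_r$ factors as
\[
\mathbb{Z}[G/H]\xrightarrow{\ \nu^H_L\ }\mathbb{Z}[G/L]\xrightarrow{\ \beta_r\ }\mathbb{Z}[G/L']\xrightarrow{\ q\ }\mathbb{Z}[G/K],
\]
where $\nu^H_L$ is the norm for $L\le H$, $\beta_r(gL)=grL'$ is right translation by $r$, and $q(gL')=gK$ is the projection (each of these is well defined and $\mathbb{Z}[G]$-linear). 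Tracing the three maps through the Eckmann--Shapiro isomorphisms yields, in order, $\tr^H_L$, the conjugation $c_r^*$, and $\res^K_{L'}$, so the map induced by $\phi_r$ is $\tr^H_{H\cap rKr^{-1}}\circ c_r^*\circ\res^K_{r^{-1}Hr\cap K}$, and summing over $\mathcal{R}$ gives the stated formula.

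The only real work is the bookkeeping in the last paragraph: verifying that $\nu^H_L$, $\beta_r$ and $q$ are well defined and $\mathbb{Z}[G]$-linear, that their composite is $\phi_r$ — which rests on the identification $HrK/K\cong H/L$, the stabilizer of $rK$ in $H$ being exactly $L$ — and that $\beta_r$ induces the conjugation $c_r^*$; the last of these is precisely where the twist $c_r$ enters, because the base point of the $H$-orbit of $rK$ in $G/K$ is $rK$ rather than $K$. I do not expect a genuine obstacle here. Alternatively one can argue topologically, pulling back the finite fibration $BK\to BG$ with fibre $G/K$ along $BH\to BG$: its total space is $\coprod_{r\in\mathcal{R}}B(H\cap rKr^{-1})$, and the formula follows from naturality of the cohomological transfer under base change.
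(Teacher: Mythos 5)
The paper does not prove this statement at all: it is quoted verbatim from \cite[Theorem II.6.2]{Adem-Milgram}, and your outline is a correct rendition of the standard argument found in such references (Eckmann--Shapiro identifications, then decomposition of the $\mathbb{Z}[G]$-map $\mathbb{Z}[G/H]\to\mathbb{Z}[G/K]$ along the $H$-orbits of $G/K$, i.e.\ the double cosets, with the factorization through $\mathbb{Z}[G/L]\to\mathbb{Z}[G/L']$ supplying the conjugation twist), and the covering-space argument you mention is an equally valid alternative. The only cosmetic point is that $\phi_r$ should be written equivariantly, $gH\mapsto g\cdot\sum_{xK\in HrK/K}xK$, i.e.\ as the $G$-map determined by the image of the coset $eH$.
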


To effectively apply Theorem \ref{thm:Cartan-Eilenberg} for our purpose, we need a technical combinatorial lemma.

\begin{lemma} \label{lem:double cosets}
Let $ n,m \in \mathbb{N} $ and let $ \pi $ be ad admissible partition of $ n+m $.
\begin{enumerate}
\item Let $ X = \{x \in \Bgroup_{n+m}: x^{-1} A_{\pi} x \subseteq \Bgroup_n \times \Bgroup_m \} $.
Then there are a left $ A_{\pi} $-action and a right $ (\Bgroup_n \times \Bgroup_m) $-action on $ X $, commuting with each other, given by left and right multiplication, respectively.
Moreover, The double quotient $ A\backslash X/B $ is in bijective correspondence with the set of triples $ (\pi',\pi'', \{\sigma_k\}_{k=0}^\infty) $, where:
\begin{itemize}
\item $ \pi' $ and $ \pi'' $ are admissible partitions of $ n $ and $ m $, respectively, such that $ \pi = \pi' \sqcup \pi'' $
\item for all $ k \geq 0 $, $ \sigma_k $ is a $ (m_k(\pi'),m_k(\pi'')) $-shuffle.
\end{itemize}
\item Let $ X' = \{ x \in \Bgrouppos{n}: x^{-1} \widehat{A}_{\pi}x \subseteq \Bgrouppos{n} \times \Bgrouppos{m} \} $.
Then there are a left $ \widehat{A}_{\pi} $-action and a right $ (\Bgrouppos{n} \times \Bgrouppos{m}) $-action on $ X' $, commuting with each other, given by left and right multiplication, respectively.
Moreover, if $ \pi $ is relevant, there is a function $ t \colon \widehat{A}_{\pi} \backslash X / (\Bgrouppos{n} \times \Bgrouppos{m}) \to A_{\pi} \backslash X / (\Bgroup_n \times \Bgroup_m) $ given by inclusion of double orbits. The image of $ t $ comprises those elements corresponding to triples $ (\pi',\pi'',\{\sigma_k\}_k) $ such that at least one between $ \pi' $ and $ \pi'' $ has $ (m_0,m_1) = (0,0) $.
Moreover, every non-empty fiber of $ t $ consists of two elements that correspond to each other under right multiplication by $ s_0 \times s_0 \in \Bgroup_n \times \Bgroup_m $.
\end{enumerate}
\end{lemma}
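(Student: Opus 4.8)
The plan is to reduce both double-coset computations to the natural action of the relevant groups on $\{1,\dots,n+m\}$ obtained from the projection $\Bgroup_{n+m}\to\Sigma_{n+m}$, to treat $(1)$ as the hyperoctahedral analogue of the classical double-coset decomposition for Young subgroups of a symmetric group, and then to derive $(2)$ from $(1)$ by bookkeeping the sign homomorphism. First, the existence of the commuting left and right multiplication actions is immediate: for $a\in A_\pi$ and $x\in X$ one has $(ax)^{-1}A_\pi(ax)=x^{-1}A_\pi x$ because $A_\pi$ is abelian, and $(xb)^{-1}A_\pi(xb)=b^{-1}(x^{-1}A_\pi x)b\subseteq\Bgroup_n\times\Bgroup_m$ because $b$ normalises $\Bgroup_n\times\Bgroup_m$; the same computations, together with the normality of $\Bgrouppos{n}\times\Bgrouppos{m}$ in $\Bgroup_n\times\Bgroup_m$, give the actions of $\widehat{A}_\pi$ and $\Bgrouppos{n}\times\Bgrouppos{m}$ on $X'$, show that $X'\subseteq X$, and make the inclusion of double orbits $t$ well defined.

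For $(1)$ I would identify $\Bgroup_n\times\Bgroup_m\leq\Bgroup_{n+m}$ with the stabiliser of the set $\{1,\dots,n\}$ under the underlying unsigned permutation action, so that, writing $\overline{(\cdot)}$ for the image in $\Sigma_{n+m}$, one has $x\in X$ exactly when $\overline{A_\pi}$ stabilises the $n$-element subset $\overline{x}(\{1,\dots,n\})$. Such a subset is then a union of $\overline{A_\pi}$-orbits, and since the orbits of $\overline{A_\pi}$ are precisely the standard blocks, of sizes equal to the parts of $\pi$, it amounts to a decomposition $\pi=\pi'\sqcup\pi''$ with $|\pi'|=n$; this decomposition is manifestly constant on double orbits. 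It remains to see that, after fixing the decomposition, a double orbit records exactly the following further data: for each $k$, which of the $m_k$ copies of $A_{(2^k)}$ inside $A_\pi$ are conjugated into the $\pi'$-block, taken up to reordering of the equal-sized sub-blocks within each side. Using Swenson's description of $N_{\Bgroup_{2^k}}(A_{(2^k)})$ \cite[Proposition 5.3.1]{Swenson} and the wreath-product structure of $N_{\Bgroup_{n+m}}(A_\pi)$, and quotienting by the left $A_\pi$- and right $(\Bgroup_n\times\Bgroup_m)$-actions, in particular by the groups $\Sigma_{m_k(\pi')}\times\Sigma_{m_k(\pi'')}$, this data is precisely an $(m_k(\pi'),m_k(\pi''))$-shuffle for each $k$. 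This is the type-$B$ counterpart of the standard Mackey computation for symmetric and wreath-product groups, and I would model the bookkeeping on the analogous arguments in \cite{Guerra:21}.

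Given $(1)$, part $(2)$ is sign bookkeeping. Fix $x\in X'$ and, adjusting by right multiplication in $\Bgroup_n\times\Bgroup_m$, assume $x^{-1}A_\pi x=A_{\pi'}\times A_{\pi''}$ with $\pi=\pi'\sqcup\pi''$. Since $\Bgrouppos{n+m}$ is normal, $x^{-1}\widehat{A}_\pi x=(A_{\pi'}\times A_{\pi''})\cap\Bgrouppos{n+m}$ consists of the pairs $(a',a'')$ with $\sgn(a')\sgn(a'')=1$, and this is contained in $\Bgrouppos{n}\times\Bgrouppos{m}$ if and only if no such pair has $\sgn(a')=\sgn(a'')=-1$, i.e.\ if and only if $A_{\pi'}$ or $A_{\pi''}$ lies entirely in the positive subgroup; by the sign computations in the proof of Proposition \ref{prop:description A^} (one has $A_{(2^k)}\subseteq\Bgrouppos{2^k}$ exactly when $k\geq2$) this is the condition $(m_0,m_1)=(0,0)$ for $\pi'$ or for $\pi''$, and running the construction backwards shows every such triple is realised, which identifies the image of $t$. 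For the fibres, if $x,x'\in X'$ lie over the same $A_\pi$-$(\Bgroup_n\times\Bgroup_m)$ double orbit, write $x'=axb$ and absorb $x^{-1}ax\in A_{\pi'}\times A_{\pi''}$ on the right to reduce to $x'=xb$ with $b\in\Bgroup_n\times\Bgroup_m$; since $x,x'\in\Bgrouppos{n+m}$ this forces $\sgn(b)=1$, so $b$ lies in the index-$2$ subgroup $(\Bgroup_n\times\Bgroup_m)\cap\Bgrouppos{n+m}$. On the image of $t$ we have $x^{-1}\widehat{A}_\pi x\subseteq\Bgrouppos{n}\times\Bgrouppos{m}$, so $xb$ and $xb'$ give the same $\widehat{A}_\pi$-$(\Bgrouppos{n}\times\Bgrouppos{m})$ double orbit if and only if $b'\in(\Bgrouppos{n}\times\Bgrouppos{m})\,b$; hence the fibre is a torsor under the group $(\Bgrouppos{n}\times\Bgrouppos{m})\backslash\big((\Bgroup_n\times\Bgroup_m)\cap\Bgrouppos{n+m}\big)$, which has order $2$ with nontrivial coset represented by $s_0\times s_0$ (a product of two reflections, of positive sign). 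This yields the stated description of the nonempty fibres.

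The step I expect to be the main obstacle is the precise identification in $(1)$ of the residual double-orbit data with an independent family of shuffles, i.e.\ disentangling the two multiplication actions from the wreath-product normaliser structure; part $(2)$ is then essentially forced, the only delicate point being that the a priori index-$4$ ambiguity $[\Bgroup_n\times\Bgroup_m:\Bgrouppos{n}\times\Bgrouppos{m}]=4$ collapses to $2$ once one imposes $x,x'\in\Bgrouppos{n+m}$.
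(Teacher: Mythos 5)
Most of your plan is sound and runs parallel to the paper's argument: in part (1) the paper likewise classifies the big double cosets by recording which $A_{\pi}$-orbits of $\{1,\dots,n+m\}$ lie in $x(\{1,\dots,n\})$ (your appeal to Swenson's normalizer structure is unnecessary here -- the subset $x(\{1,\dots,n\})$, which is invariant under both multiplications, is already a complete double-coset invariant and visibly encodes the triple $(\pi',\pi'',\{\sigma_k\})$); in part (2) your sign characterization of the image agrees with the paper's, which phrases the same fact as a cardinality comparison between $\widehat{A}_{\pi}$ and $\widehat{A}_{\pi'}\times\widehat{A}_{\pi''}$ using Proposition \ref{prop:elementary subgroups}(1), and your torsor argument for the fibres, via the index-$2$ quotient of $(\Bgroup_n\times\Bgroup_m)\cap\Bgrouppos{n+m}$ by $\Bgrouppos{n}\times\Bgrouppos{m}$, is a clean alternative to the paper's stabilizer count, which instead shows each big double orbit of an element of $X'$ splits into four small orbits, represented by $x$, $x(s_0,s_0)$, $x(s_0\times 1)$, $x(1\times s_0)$, of which only the first two lie in $X'$.

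The genuine gap is your claim that the elementary conjugation computations ``together with normality \dots show that $X'\subseteq X$ and make $t$ well defined.'' That is a non sequitur: those computations only give the two group actions, and knowing $x^{-1}\widehat{A}_{\pi}x\subseteq\Bgrouppos{n}\times\Bgrouppos{m}$ does not formally control where the index-two overgroup $A_{\pi}$ is conjugated. This inclusion is precisely the point the paper has to argue. When $(m_0(\pi),m_1(\pi))\neq(0,1)$, the groups $\widehat{A}_{\pi}$ and $A_{\pi}$ have the same orbits on $\{1,\dots,n+m\}$, so $x^{-1}\widehat{A}_{\pi}x\subseteq\Bgroup_n\times\Bgroup_m$ already forces $x(\{1,\dots,n\})$ to be a union of $A_{\pi}$-orbits and hence $x\in X$. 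In the exceptional case $(m_0(\pi),m_1(\pi))=(0,1)$, however, the size-two $A_{\pi}$-orbit breaks into two fixed points of $\widehat{A}_{\pi}$, and there are elements $x$ with $x^{-1}\widehat{A}_{\pi}x\subseteq\Bgroup_n\times\Bgroup_m$ which separate those two points and therefore do not lie in $X$; they are excluded from $X'$ only because the conjugate of the central element of the $A_{(2)}$-factor then becomes a product of sign flips, one on each side, which is in $\Bgroup_n\times\Bgroup_m$ but has a negative-sign factor on each side and so violates the stronger condition $\subseteq\Bgrouppos{n}\times\Bgrouppos{m}$. So $X'\subseteq X$ is true, but it requires exactly this orbit-and-sign analysis (which is also what legitimizes your later normalization $x^{-1}A_{\pi}x=A_{\pi'}\times A_{\pi''}$ for $x\in X'$), not merely the normality of $\Bgrouppos{n}\times\Bgrouppos{m}$ in $\Bgroup_n\times\Bgroup_m$. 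You should add this argument; the rest of your outline then goes through.
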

\begin{proof}
\begin{enumerate}
\item The existence of the two actions is trivial.
We observe that $ x \in X $ if and only if $ x(\{1,\dots,n\}) $ and $ x(\{n+1,\dots,n+m\}) $ are union of orbits in $ \{1,\dots, n+m\}/A_{\pi} $.
Elements of the double quotient $ A_{\pi} \backslash X / (\Bgroup_n \times \Bgroup_m) $ only record which orbits of $ \{1,\dots, n+m\}/A_{\pi} $ are contained in $ x(\{1,\dots,n\}) $.
The multiplicity $ m_k(\pi') $ (respectively $ m_k(\pi'') $) counts how many orbits of cardinality $ 2^k $ are contained in $ x(\{1,\dots,n\}) $ (respectively how many are not).
The shuffle $ \sigma_k $ detects exactly which of these orbits are contained.
\item The existence of the two actions is straightforward. We first assume that $ (m_0(\pi),m_1(\pi)) \not= (0,1) $. In this case $ \widehat{A}_{\pi} $ and $ A_{\pi} $ have the same orbit space on $ \{1,\dots,n\} $, so $ X' \subseteq X $ and $ x^{-1} \widehat{A}_{\pi} x \subseteq \Bgrouppos{n} \times \Bgrouppos{m} $ implies $ x^{-1} A_{\pi} x \subseteq \Bgroup_n \times \Bgroup_m $ and $ t $ is well-defined.
If $ x \in X $ corresponds to $ (\pi',\pi'',\{\sigma_k\}_k) $, then $ (x^{-1} \widehat{A}_{\pi} x) \cap (\Bgrouppos{n} \times \Bgrouppos{m}) $ is conjugate to $ \widehat{A}_{\pi'} \times \widehat{A}_{\pi''} $. Therefore, $ x \in X' $ if an only if $ \widehat{A}_{\pi} $ and $ \widehat{A}_{\pi'} \times \widehat{A}_{\pi''} $ have the same cardinality. By proposition \ref{prop:conjugacy classes}(1), this happens if and only if at least one between $ \pi' $ and $ \pi'' $ has $ (m_0,m_1) = (0,0) $.

For all $ x \in X' $, $ \Stab_{A_{\pi} \times (\Bgroup_n \times \Bgroup_m)}(x) = \{(a,x^{-1}ax): a \in A_{\pi} \} \cong A_{\pi} $,
and similarly $ \Stab_{\widehat{A}_{\pi} \times (\Bgrouppos{n} \times \Bgrouppos{m})}(x) \cong \widehat{A}_{\pi} $.
Since $ |\Stab_{A_{\pi} \times (\Bgroup_n \times \Bgroup_m)}(x)| $ is twice $ |\Stab_{\widehat{A}_{\pi} \times (\Bgrouppos{n} \times \Bgrouppos{m})}(x)| $ and $ |A_{\pi} \times (\Bgroup_n \times \Bgroup_m)| $ is eight times $ |\widehat{A}_{\pi} \times (\Bgrouppos{n} \times \Bgrouppos{m})| $, the orbit of $ x $ under the $ (A_{\pi},\Bgroup_n \times \Bgroup_m) $-action contains $ 4 $ orbits under the $ (\widehat{A}_{\pi},\Bgrouppos{n} \times \Bgrouppos{m}) $-action. If $ x \in X' $, then they are represented by $ x $, $ x(s_0,s_0) $, $ x(s_0 \times 1_{\Bgroup_m}) $ and $ x(1_{\Bgroup_n} \times s_0) $ and only the first two belong to $ X' $.

In the exceptional case $ (m_0(\pi),m_1(\pi)) = (0,1) $, if might happen that $ x(\widehat{A}_{\pi})x^{-1} \subseteq \Bgroup_{n} \times \Bgroup_{m} $ even if the corresponding couple of partitions $ (\pi',\pi'') $ does not satisfy $ \pi = \pi' \sqcup \pi'' $. However, in all those extra cases $ x(\widehat{A}_{\pi})x^{-1} \not\subseteq \Bgrouppos{n} \times \Bgrouppos{m} $, hence the previous argument works in general.
\end{enumerate}
\end{proof}

We are not ready to investigate the relation between the transfer product and the maps $ \rho_{\pi} $. As a notational convention, given an admissible partition $ \pi $ of $ n $, we let $ \rho'_{\pi} \colon H^*(\Bgrouppos{n}; \mathbb{F}_2) \to H^*(\widehat{A}_{\pi}; \mathbb{F}_2) $ be the composition of the map $ c_s^* $ induced by conjugation by a reflection $ s \in \Bgroup_n $ and the restriction map to $ \widehat{A}_{\pi}^s $. $ \rho'_{\pi} $ does not depend on the choice of $ s $ because it can be equivalently written as $ \rho \circ \iota $.

\begin{proposition} \label{prop:transfer elementary subgroups}
The following statements are true:
\begin{enumerate}
\item Let $ \pi $ be a relevant admissible partition of $ n $. If $ (m_0(\pi),m_1(\pi)) \not= (0,0) $, then $ \rho'_{\pi} = \rho_{\pi} $.
\item Let $ \pi $ be a relevant admissible partition of $ n $. If $ m_0(\pi) = m_1(\pi) = 0 $ but $ m_2(\pi) \not= 0 $, then $ \rho'_{\pi} = S \rho_{\pi} $, where
\[
S \colon \left[ H^*(\widehat{A}_{\pi}; \mathbb{F}_2) \right]^{N_{\Bgrouppos{n}}(\widehat{A}_{\pi})} \to \left[ H^*(\widehat{A}_{\pi}; \mathbb{F}_2) \right]^{N_{\Bgrouppos{n}}(\widehat{A}_{\pi})}
\]
fixes $ \bigotimes_{k \in \mathbb{N}} [ \mathbb{F}_2[f_{2^k},d_{2^k-1},\dots,d_{2^k-2^{k-1}}]^{\otimes m_k(\pi)}]^{\Sigma_{m_k(\pi)}} $ and maps $ h_{3m_2(\pi)} \otimes 1 $ to $ h_{3m_2(\pi)}^{\perp} \otimes 1 $.
\item Let $ \pi' $ and $ \pi'' $ be admissible partitions of $ n $ and $ m $, respectively. There is a map
\begin{gather*}
\tau_{\pi',\pi''} \colon (\id + S \otimes S) \left(\left[ H^*(\widehat{A}_{\pi'}; \mathbb{F}_2) \right]^{N_{\Bgrouppos{n}}(\widehat{A}_{\pi'})} \times \left[ H^*(\widehat{A}_{\pi''}; \mathbb{F}_2) \right]^{N_{\Bgrouppos{m}}(\widehat{A}_{\pi''})} \right) \\
\to \left[ H^*(\widehat{A}_{\pi' \sqcup \pi''}) \right]^{N_{\Bgrouppos{n+m}}(\widehat{A}_{\pi'\sqcup \pi''})}
\end{gather*}
induced by the symmetrization over $ \prod_{k=0}^\infty \Sh(m_k,m_k') \subseteq \prod_{k=0}^\infty \Sigma_{m_k+m_k'} $.
\item For all $ n,m \in \mathbb{N} $ and for all relevant admissible partition $ \pi $ of $ n+m $, the following equality of morphisms $ H^*(\Bgrouppos{n}; \mathbb{F}_2) \otimes H^*(\Bgrouppos{m}; \mathbb{F}_2) \to H^*(\widehat{A}_{\pi}; \mathbb{F}_2) $ holds:
\[
\rho_{\pi} \circ \odot = \sum_{(\pi',\pi'')} \tau_{\pi',\pi''} \circ  (\rho_{\pi'} \otimes \rho_{\pi''} + \rho'_{\pi'} \otimes \rho'_{\pi''}),
\]
where the sum is over couples of relevant admissible partitions $ (\pi',\pi'') $ such that $ \pi' \sqcup \pi'' = \pi $ and at least one between $ \pi' $ and $ \pi'' $ satisfies $ (m_0,m_1) = (0,0) $.
\end{enumerate}
\end{proposition}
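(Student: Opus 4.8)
The plan is: handle (1)--(2) by analyzing conjugation by a well-chosen reflection; construct $\tau_{\pi',\pi''}$ in (3) as a shuffle norm; and derive (4) from the Cartan--Eilenberg double coset formula (Theorem~\ref{thm:Cartan-Eilenberg}) together with the double-coset classification of Lemma~\ref{lem:double cosets}. For (1) and (2) I use that $\rho'_\pi=\rho_\pi\circ\iota$ and, by Proposition~\ref{prop:involution}, $\iota=c_s^*$ for \emph{every} reflection $s\in\Bgroup_n$. If $(m_0,m_1)\neq(0,0)$ I choose $s$ \emph{inside} $A_\pi$ itself --- a generator of a $C_0$-summand when $m_0\neq0$, or of a $V_1$-summand when $m_1\neq0$, is a reflection normalizing $\widehat{A}_\pi\subseteq A_\pi$; since $A_\pi$ is abelian and contains $\widehat{A}_\pi$, conjugation by $s$ is trivial on $\widehat{A}_\pi$, so $\rho'_\pi=\rho_\pi\circ c_s^*=\rho_\pi$. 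If $m_0=m_1=0$ but $m_2\neq0$, then $N_{\Bgroup_n}(\widehat{A}_\pi)$ contains a negative-sign element by Proposition~\ref{prop:elementary subgroups}(4), and I take $s$ to be a transposition lying in one of the $K_2\cong\Gl_2(\mathbb{F}_2)$ blocks of the normalizer (a reflection of negative sign). Then $c_s$ fixes all blocks of $\widehat{A}_\pi$ except a single $\widehat{A}_{(4)}$-factor, where it acts through $\Gl_2(\mathbb{F}_2)$; in the coordinates of Remark~\ref{rem:invariants} a one-line computation shows it fixes $f_4,d_2,d_3$ and exchanges $h_3$ and $h_3^\perp=d_3-h_3$. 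Since $s\in N_{\Bgroup_n}(\widehat{A}_\pi)$ it fixes the subalgebra $[H^*(\widehat{A}_\pi;\mathbb{F}_2)]^{N_{\Bgroup_n}(\widehat{A}_\pi)}$, and the identity $h_{3m}+c_s^*(h_{3m})=\prod_{i=1}^m(h_{3,i}+h_{3,i}^\perp)=d_3^{\otimes m}$ (notation of Definition~\ref{def:h}) forces $c_s^*(h_{3m_2})=h_{3m_2}^\perp$; this identifies $S$ and yields $\rho'_\pi=S\rho_\pi$.

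For (3), note that whenever at least one of $\pi',\pi''$ has $(m_0,m_1)=(0,0)$, Proposition~\ref{prop:description A^} and Proposition~\ref{prop:elementary subgroups}(1) give $\widehat{A}_{\pi'}\times\widehat{A}_{\pi''}=\widehat{A}_{\pi'\sqcup\pi''}$ after identifying orbit sets, hence $H^*(\widehat{A}_{\pi'};\mathbb{F}_2)\otimes H^*(\widehat{A}_{\pi''};\mathbb{F}_2)=H^*(\widehat{A}_{\pi'\sqcup\pi''};\mathbb{F}_2)$. The group $\prod_k\Sigma_{m_k+m_k'}$ acts on the right-hand side by permuting equal-size blocks, this action being realized inside $N_{\Bgrouppos{n+m}}(\widehat{A}_{\pi'\sqcup\pi''})$; the subgroup $\prod_k(\Sigma_{m_k}\times\Sigma_{m_k'})$ fixes the tensor of the two invariant subalgebras. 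I define $\tau_{\pi',\pi''}$ to be the norm $z\mapsto\sum_{\sigma\in\prod_k\Sh(m_k,m_k')}\sigma\cdot z$, which sends $\prod_k(\Sigma_{m_k}\times\Sigma_{m_k'})$-invariants to $\prod_k\Sigma_{m_k+m_k'}$-invariants. Restricting the domain to $(\id+S\otimes S)(\cdots)$ is precisely what makes the image land in the \emph{full} invariant subalgebra $[H^*(\widehat{A}_{\pi'\sqcup\pi''};\mathbb{F}_2)]^{N_{\Bgrouppos{n+m}}(\widehat{A}_{\pi'\sqcup\pi''})}$: in the exceptional $k=2$ situation, Theorem~\ref{thm:invariants normalizer} together with the formula for $\nu'$ in Proposition~\ref{prop:coproduct cup elementary subgroups}(2) shows that the norm of $(\id+S\otimes S)$ applied to an $h_3$-monomial reassembles into $h_{3(m_2(\pi')+m_2(\pi''))}$ (up to symmetrization of the $f,d$ factors), which is exactly the generator appearing in the target, whereas the antidiagonal part of $(\id+S\otimes S)$ would not.

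For (4), I write $\rho_\pi\circ\odot=\res^{\Bgrouppos{n+m}}_{\widehat{A}_\pi}\circ\tr^{\Bgrouppos{n+m}}_{\Bgrouppos{n}\times\Bgrouppos{m}}$ and expand the right-hand side with Theorem~\ref{thm:Cartan-Eilenberg}, the sum running over $\widehat{A}_\pi\backslash\Bgrouppos{n+m}/(\Bgrouppos{n}\times\Bgrouppos{m})$ as classified in Lemma~\ref{lem:double cosets}(2). Two observations do the work. First, for elementary abelian $2$-groups $B\subsetneq A$ one has $\tr^A_B=0$: when $[A:B]=2$ the double cover $B(B)\to B(A)$ has, in its Gysin sequence, connecting homomorphism equal to cup product with the nonzero class of $H^1(A;\mathbb{F}_2)$ cutting out $B$, which is a nonzerodivisor in the polynomial ring $H^*(A;\mathbb{F}_2)$, so that map is injective and hence $\tr^A_B=0$; the general case follows by composing index-$2$ steps. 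Thus only the double cosets $r$ with $\widehat{A}_\pi\subseteq rKr^{-1}$ contribute, and comparing $|\widehat{A}_\pi|$ with $|\widehat{A}_{\pi'}\times\widehat{A}_{\pi''}|$ shows these are exactly the ones whose triple $(\pi',\pi'',\{\sigma_k\})$ has at least one of $\pi',\pi''$ with $(m_0,m_1)=(0,0)$ --- matching the index set of the asserted sum. Second, for such $r$ the transfer $\tr^{\widehat{A}_\pi}_{\widehat{A}_\pi\cap rKr^{-1}}$ is an isomorphism onto $H^*(\widehat{A}_\pi;\mathbb{F}_2)$, so the Cartan--Eilenberg summand reduces to $c_r^*\circ\res^{\Bgrouppos{n}\times\Bgrouppos{m}}_{r^{-1}\widehat{A}_\pi r}$, an external restriction followed by a relabeling. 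Grouping these summands by the pair $(\pi',\pi'')$, summing over the shuffle data $\{\sigma_k\}$ realizes $\tau_{\pi',\pi''}$, while the two double cosets in each non-empty fiber of the map $t$ of Lemma~\ref{lem:double cosets}(2) --- which differ by right multiplication by $s_0\times s_0$ --- produce the summands $\rho_{\pi'}\otimes\rho_{\pi''}$ and $\rho'_{\pi'}\otimes\rho'_{\pi''}$ respectively, because $c_{s_0\times s_0}^*=\iota\otimes\iota$ on $H^*(\Bgrouppos{n}\times\Bgrouppos{m};\mathbb{F}_2)$ and $\rho'_{\pi'}=\rho_{\pi'}\circ\iota$. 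Putting the pieces together gives the formula.

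The hard part will be this last matching step: reconciling each surviving Cartan--Eilenberg summand --- including the conjugations hidden inside $c_r^*$ and inside $\res^{\Bgrouppos{n}\times\Bgrouppos{m}}_{r^{-1}\widehat{A}_\pi r}$ --- with $\tau_{\pi',\pi''}\circ(\rho_{\pi'}\otimes\rho_{\pi''}+\rho'_{\pi'}\otimes\rho'_{\pi''})$, checking that these conjugations contribute no Weyl-group element beyond those already absorbed into the shuffle norm and into $S$, and handling the two-fold $s_0$-conjugacy ambiguity of Proposition~\ref{prop:elementary subgroups}(5) consistently with the $\rho$-versus-$\rho'$ dichotomy.
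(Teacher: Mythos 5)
Your proposal follows the paper's own route in all four parts: (1)--(2) by conjugating with a reflection chosen inside $A_\pi$ (respectively inside $N_{\Bgroup_n}(\widehat{A}_{\pi})$, acting through an odd element of $\Gl_2(\mathbb{F}_2)$ on a single $\widehat{A}_{(4)}$-factor and hence swapping $h_{3m_2}$ with $h_{3m_2}^{\perp}$), (3) by symmetrizing over shuffles, and (4) by the Cartan--Eilenberg formula of Theorem \ref{thm:Cartan-Eilenberg} combined with the vanishing of mod $2$ transfers between elementary abelian $2$-groups and the double-coset analysis of Lemma \ref{lem:double cosets}, which is exactly the paper's argument. The only deviations are cosmetic: you prove the transfer-vanishing instead of citing \cite[Corollary II.5.9]{Adem-Milgram}, and you build $\tau_{\pi',\pi''}$ through the identification $\widehat{A}_{\pi'}\times\widehat{A}_{\pi''}=\widehat{A}_{\pi'\sqcup\pi''}$, which covers precisely the pairs with one of $(m_0,m_1)=(0,0)$ (the only ones occurring in (4)), whereas the paper defines it for all admissible pairs directly on the presentations of Theorem \ref{thm:invariants normalizer}.
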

\begin{proof}
\begin{enumerate}
\item If $ (m_0(\pi),m_1(\pi)) \not= (0,0) $, then $ A_{\pi} $ contains a reflection. Thus, we can assume that $ s \in A_{\pi} $. The conjugation by $ s $ is trivial on $ \widehat{A}_{\pi} $ because $ A_{\pi} $ is abelian, hence $ \widehat{A}_{\pi}^s = \widehat{A}_{\pi} $ and $ c_s^* = \id $.
\item If $ m_0(\pi) = m_1(\pi) = 0 $ but $ m_2(\pi) \not= 0 $, then $ N_{\Bgroup_n}(\widehat{A}_{\pi}) $ contains a reflection $ s $. Then $ \widehat{A}_{\pi}^s = \widehat{A}_{\pi} $, and $ c_s^* $ exchanges $ h_{3m_2(\pi)} \otimes 1 $ and $ h_{3m_2(\pi)}^\perp \otimes 1 $.
\item If $ (m_0(\pi'),m_1(\pi')) \not= (0,0) $ or $ m_2(\pi') = 0 $ or $ (m_0(\pi''),m_1(\pi'')) \not= (0,0) $ or $ m_2(\pi'') = 0 $, then the domain of $ \tau_{\pi',\pi''} $ is 
\[
\frac{\bigotimes_{k \in \mathbb{N}} [ \mathbb{F}_2[f_{2^k},\dots,d_{2^k-2^{k-1}}]^{\otimes m_k(\pi')}]^{\Sigma_{m_k(\pi')}}}{(e_1(f_1) + e_1(d_1))} \otimes \frac{\bigotimes_{k \in \mathbb{N}} [ \mathbb{F}_2[f_{2^k},\dots,d_{2^k-2^{k-1}}]^{\otimes m_k(\pi'')}]^{\Sigma_{m_k(\pi'')}}}{(e_1(f_1) + e_1(d_1))}.
\]
Symmetrization over the given shuffles defines a map $ \tilde{\tau}_{\pi',\pi''} $ without taking the quotient by $ e_1(f_1) + e_1(d_1) $ in both factors.
Moreover, $ \tilde{\tau}_{\pi',\pi''}((e_1(f_1)+e_1(d_1)) \otimes 1) $ is a multiple of $ e_1(f_1) + e_1(d_1) $ which maps to $ 0 $ in the algebra of invariants $ \left[ H^*(\widehat{A}_{\pi' \sqcup \pi''}) \right]^{N_{\Bgrouppos{n+m}}(\widehat{A}_{\pi'\sqcup \pi''})} $. Similarly $ \tilde{\tau}_{\pi',\pi''}(1 \otimes (e_1(f_1)+e_1(d_1))) $ maps to $ 0 $ in $ \left[ H^*(\widehat{A}_{\pi' \sqcup \pi''}) \right]^{N_{\Bgrouppos{n+m}}(\widehat{A}_{\pi'\sqcup \pi''})} $. This implies that $ \tau_{\pi',\pi''} $ is well-defined as a quotient of $ \tilde{\tau}_{\pi',\pi''} $.

If $ (m_0(\pi'),m_1(\pi')) = (0,0) $ and $ m_2(\pi') \not= 0 $ and $ (m_0(\pi''),m_1(\pi'')) = (0,0) $ and $ m_2(\pi'') \not= 0 $, then the domain of $ \tau_{\pi',\pi''} $ is generated as an algebra by the subalgebra 
\[
\bigotimes_{k \in \mathbb{N}} [ \mathbb{F}_2[f_{2^k},\dots,d_{2^k-2^{k-1}}]^{\otimes m_k(\pi')}]^{\Sigma_{m_k(\pi')}} \otimes \bigotimes_{k \in \mathbb{N}} [ \mathbb{F}_2[f_{2^k},\dots,d_{2^k-2^{k-1}}]^{\otimes m_k(\pi'')}]^{\Sigma_{m_k(\pi'')}}
\]
and the element
\[
h_{\pi',\pi''} = (h_{3m_2(\pi')} \otimes 1) \otimes (h_{3m_2(\pi'')} \otimes 1) + (h_{3m_2(\pi')}^\perp \otimes 1) \otimes (h_{3m_2(\pi'')}^\perp \otimes 1).
\]
We already observed that $ \tau_{\pi',\pi''} $ is well-defined on the first subalgebra. Moreover, $ \tau_{\pi',\pi''}(h_{\pi',\pi''}) $ is a multiple of $ h_{3(m_2(\pi')+m_2(\pi''))} \otimes 1 $ by direct inspection. Consequently, $ \tau_{\pi',\pi''} $ is also well-defined in this case.
\item Mod $ 2 $ transfer maps between elementary abelian $ 2 $-subgroups $ H \lneq G $ are trivial by \cite[Corollary II.5.9]{Adem-Milgram}. Therefore, in the formula of Theorem \ref{thm:Cartan-Eilenberg}, with $ G = \Bgrouppos{n+m} $, $ H = \widehat{A}_{\pi} $ and $ K = \Bgrouppos{n} \times \Bgrouppos{m} $, we can restrict the sum only to addends in the set $ X' $ of Lemma \ref{lem:double cosets}, that implies the desired formula.
\end{enumerate}
\end{proof}

\subsection{The restriction of \texorpdfstring{$ \gamma_{k,l}^\pm $}{the generators} to the \texorpdfstring{subgroups $ \widehat{A}_{\pi} $}{maximal elementary abelian 2-subgroups}}

We now compute the restriction of our almost-Hopf ring generators to elementary abelian subgroups.

\begin{proposition} \label{prop:generators elementary subgroups}
Let $ \pi $ be a relevant admissible partition of $ m2^l $. Under the identifications of Theorem \ref{thm:invariants normalizer}, the following equalities hold
\begin{enumerate}
\item If $ l \geq 3 $ and $ m_0 = m_1 = m_2 = 0 $, then $ \rho_{\pi}(\gamma_{l,m}^-) = \rho'_{\pi}(\gamma_{l,m}^+) = 0 $.
\item If $ l \geq 3 $ and $ m_j = 0 $ for all $ 0 \leq j < l $, then
\[
\rho_{\pi}(\gamma_{l,m}^+) = \rho'_{\pi}(\gamma_{l,m}^-) = \bigotimes_{k=l}^\infty d_{2^k-2^{k-l}}^{\otimes m_k}.
\]
\item If $ l = 2 $ and $ m_0 = m_1 = 0 $, then
\begin{gather*}
\rho_{\pi}(\gamma_{l,m}^+) = (1 \otimes \bigotimes_{k=3}^\infty d_{2^k-2^{k-2}}^{\otimes m_k}) \cdot (h_{3m_2} \otimes 1) \\
\tag*{and} \rho_{\pi}(\gamma_{l,m}^-) = (1 \otimes \bigotimes_{k=3}^\infty d_{2^k-2^{k-2}}^{\otimes m_k}) \cdot ((d_3^{\otimes m_2} - h_{3m_2}) \otimes 1).
\end{gather*}
\item If $ l \geq 2 $ and $ \exists j < l: m_j \not= 0 $, then $ \rho_{\pi}(\gamma_{l,m}^+) = \rho_{\pi}(\gamma_{l,m}^-) = 0 $.
\end{enumerate}
\end{proposition}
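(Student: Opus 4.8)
The plan is to reduce all four formulas to a short list of ``single block'' computations, the values of $\rho_{(2^k)}(\gamma_{l,2^{k-l}}^{\pm})$ for $k\ge l$, and then to propagate these along the almost-Hopf ring structure using the explicit coproduct formula of Proposition~\ref{prop:coproduct gamma+-} (together with its compatibility with elementary abelian restrictions, Proposition~\ref{prop:coproduct cup elementary subgroups}) and the transfer-product formula of Proposition~\ref{prop:transfer gamma+-} (together with the double-coset formula of Proposition~\ref{prop:transfer elementary subgroups}(4)). Throughout, I would first note that the two ``charged'' assertions about $\rho'_{\pi}$ are literally equivalent to the ones about $\rho_{\pi}$, since $\rho'_{\pi}=\rho_{\pi}\circ\iota$ and $\iota(\gamma_{l,m}^{+})=\gamma_{l,m}^{-}$; so it suffices to pin down $\rho_{\pi}(\gamma_{l,m}^{+})$ and $\rho_{\pi}(\gamma_{l,m}^{-})$.

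For (1)--(3) restricted to $\pi$ all of whose parts are $\ge 2^l$, we have $m_0=m_1=0$, so $\widehat{A}_{\pi}=A_{\pi}$ is a product of subgroups $A_{(2^{k_i})}\le\Bgrouppos{2^{k_i}}$, and $\rho_{\pi}$ factors through the $r$-fold iterated coproduct followed by $\bigotimes_i\rho_{(2^{k_i})}$, where $r$ is the number of parts. Iterating Proposition~\ref{prop:coproduct gamma+-} and keeping only the tensor summand supported in the correct components gives $\rho_{\pi}(\gamma_{l,m}^{\pm})=\sum_{\varepsilon:\,\prod_i\varepsilon_i=\pm}\bigotimes_i\rho_{(2^{k_i})}(\gamma_{l,2^{k_i-l}}^{\varepsilon_i})$. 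Feeding in the base cases -- $\gamma^{+}\mapsto$ a single Dickson invariant and $\gamma^{-}\mapsto 0$ for $k_i\ge 3$, and $\gamma_{2,1}^{+}\mapsto h_3$, $\gamma_{2,1}^{-}\mapsto h_3^{\perp}=d_3-h_3$ for $k_i=l=2$ -- the sum collapses: for (1) and (2) only the all-$+$ sign vector survives; for (3) the sign vectors with an even (resp. odd) number of $-$'s among the $k_i=2$ factors assemble, by Definition~\ref{def:h}, exactly into $h_{3m_2}$ (resp. $d_3^{\otimes m_2}-h_{3m_2}$), yielding the stated formulas. The remaining instances of (1) (where some part is $<2^l$) are contained in statement (4), which gives the stronger conclusion $\rho_{\pi}(\gamma^{\pm})=0$.

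For (4), suppose first $\pi$ has a part $2^j$ with $2\le j<l$; then $A_{(2^j)}\le\Bgrouppos{2^j}$ splits off, so $\widehat{A}_{\pi}\le\Bgrouppos{2^j}\times\Bgrouppos{n-2^j}$ and $\rho_{\pi}$ factors through $\Delta_{(2^j,n-2^j)}$, which annihilates $\gamma_{l,m}^{\pm}$ because no term in Proposition~\ref{prop:coproduct gamma+-} lives in a component that is not a multiple of $2^l$. In general, group all parts $<2^l$ into $\mu$ and the rest into $\nu$; since each part of $\nu$ is $\ge 4$ one gets $\widehat{A}_{\pi}=\widehat{A}_{\mu}\times A_{\nu}\le\Bgrouppos{|\mu|}\times\Bgrouppos{|\nu|}$, and if $|\mu|$ is not a multiple of $2^l$ the same vanishing applies. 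If $|\mu|=i2^l$, Proposition~\ref{prop:coproduct cup elementary subgroups}(2) reduces us to $\rho_{\mu}(\gamma_{l,i}^{\pm})=0$ with $\mu$ built of parts $1$ and $2$ only. When $i$ is not a power of $2$, write $\gamma_{l,i}^{+}=\gamma_{l,i-2^{v}}^{+}\odot\gamma_{l,2^{v}}^{+}$ for $2^{v}$ the lowest bit of $i$ (legitimate by Proposition~\ref{prop:transfer gamma+-}, as $\binom{i}{2^{v}}$ is odd); then Proposition~\ref{prop:transfer elementary subgroups}(4) expresses $\rho_{\mu}(\gamma_{l,i}^{+})$ as a sum over decompositions $\mu=\mu'\sqcup\mu''$ in which at least one factor has $(m_0,m_1)=(0,0)$, i.e.\ is empty -- impossible for a nontrivial decomposition into parts $1$ and $2$ -- so the sum is empty and $\rho_{\mu}(\gamma_{l,i}^{\pm})=0$. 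The one case these manipulations do not reach, namely $i$ a power of $2$ and $\mu$ made of parts $1$ and $2$ only, has to be treated directly.

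The remaining input, and the main obstacle, is the base-case analysis. I would obtain the single-block values by restricting further to $V_k\le A_{(2^k)}$ (using that $V_k\le\mathcal{A}_{2^k}\le\Bgrouppos{2^k}$ for $k\ge 2$), invoking Corollary~\ref{cor:restriction alternating groups} to identify $\gamma_{l,2^{k-l}}^{\pm}|_{\mathcal{A}_{2^k}}$ with the Giusti--Sinha classes of \cite{Sinha:17} and their known restrictions to $V_k$. This determines the class in $[H^*(A_{(2^k)})]^{N}$ because the graded piece in degree $2^{k-l}(2^l-1)=2^k-2^{k-l}$ of that invariant ring is one-dimensional for $k\ge 3$ (spanned by $d_{2^k-2^{k-l}}$, by a degree count against the generators listed in Theorem~\ref{thm:invariants normalizer}) and two-dimensional for $k=l=2$ (spanned by $d_3$ and $h_3$, by Corollary~\ref{cor:invariants A4}), while restriction to $V_k$ is injective on it. The delicate points I expect to be hardest are: the leftover case of (4) flagged above, which I would handle via the explicit Fox--Neuwirth cocycles $\widehat{\Gamma}_{l,m}^{\pm}$ of Definition~\ref{def:generators} (or a further restriction argument); and the bookkeeping needed to verify that the standard choice of charges of Definition~\ref{def:explicit charged basis} is compatible with Giusti--Sinha's sign conventions, so that ``$+$'' consistently restricts to $h_3$ rather than $h_3^{\perp}$.
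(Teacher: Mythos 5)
Your reduction of (1)--(3) to the single-block values via the iterated coproduct and Proposition~\ref{prop:coproduct cup elementary subgroups} is exactly the paper's strategy, and your sign-vector bookkeeping reproducing $h_{3m_2}$ and $d_3^{\otimes m_2}-h_{3m_2}$ is correct. Where you diverge is the base case $\pi=(2^k)$: the paper does not go through $V_k$ and the Giusti--Sinha restriction computations, but argues that by degree reasons $\rho_{(2^k)}(\gamma_{l,2^{k-l}}^{\pm})$ lies in a one-dimensional (resp.\ for $k=2$, two-dimensional) piece, that the sum of the two restrictions equals $\rho_{\pi}(\res(\gamma_{l,2^{k-l}}))$, computed from \cite[Proposition 6.5]{Guerra:21}, and then pins down which charge gives which value by a cochain pairing: $\widehat{\Gamma}^{+}$ and $\widehat{\Gamma}^{-}$ differ only in $S_0(\alpha_{l,2^{k-l}})^{\pm}$, and $\alpha^{+}$ (resp.\ $\alpha^{-}$) pairs nontrivially only with chains from $\widehat{A}_{\pi}$ (resp.\ $\widehat{A}_{\pi}^{s}$). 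This internal argument also settles the ``$h_3$ versus $h_3^{\perp}$'' bookkeeping that you explicitly leave open; your alternative via $V_k$ is plausible but rests on importing the precise restriction statements of \cite{Sinha:17} and on a convention check you do not carry out.

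The genuine gap is in (4). Your case analysis covers a part $2^j$ with $2\le j<l$, the case $|\mu|\not\equiv 0 \bmod 2^l$, and, via the nice $\odot$/double-coset trick (Propositions~\ref{prop:transfer gamma+-} and \ref{prop:transfer elementary subgroups}(4)), the case $|\mu|=i2^l$ with $i$ not a power of $2$; but the remaining case ($\mu$ consisting of $1$'s and $2$'s with $|\mu|=2^v\cdot 2^l$) is not a negligible corner: it already contains $\rho_{(2,2)}(\gamma_{2,1}^{\pm})=0$ and $\rho_{(1,1,2)}(\gamma_{2,1}^{\pm})=0$, and such partitions (joined with large parts) are exactly the ones needed later, e.g.\ for Corollary~\ref{cor:minimal partitions} and Lemma~\ref{lem:linear independence}. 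Deferring it to ``explicit Fox--Neuwirth cocycles or a further restriction argument'' is not a proof. The paper disposes of all of (4) at once by splitting off a single part $2^j$ with $j<l$ and using the commutative diagram of Proposition~\ref{prop:coproduct cup elementary subgroups}(2): since $\Delta(\gamma_{l,m}^{\pm})$ has no summand in component $(2^j,m2^l-2^j)$, the image of $\rho_{\pi}(\gamma_{l,m}^{\pm})$ under the restriction-of-invariants map $\nu'$ vanishes, which is the mechanism your argument avoids precisely in the case you cannot close (when $j\in\{0,1\}$ one cannot factor $\rho_{\pi}$ through $\Bgrouppos{2^j}\times\Bgrouppos{n-2^j}$, so some control of $\nu'$ on the relevant classes is the missing ingredient). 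Until that last case is argued, statement (4), and with it part of (1), is not established by your proposal.
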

\begin{proof}
To simplify notation, we let $ x_{l,\pi} = \bigotimes_{k=l}^\infty d_{2^k-2^{k-l}}^{\otimes m_k} $ if $ m_j = 0 $ for all $ j < l $, and $ x_{l,\pi} = 0 $ otherwise.
We first assume that $ \pi = (2^k) $ and we prove $ (1) $, $ (2) $ and $ (3) $ in this special case.
$ \iota(\gamma_{l,2^{k-l}}^+) = \gamma_{l,2^{k-l}}^- $. Since $ \rho'_{\pi} = \rho_{\pi} \iota $, this implies that $ \rho'(\gamma_{l,2^{k-l}}^\pm) = \rho(\gamma_{l,2^{k-l}}^\pm) $.
Consequently, it is enough to determine $ \rho(\gamma_{l,2^{k-l}}^\pm) $.
if $ k \geq 3 $, each of $ \rho_{\pi}(\gamma_{l,2^{k-l}}^+) $ and $ \rho_{\pi}(\gamma_{l,2^{k-l}}^-) $ must be either $ x_{l,\pi} $ or $ 0 $ by dimensional reasons.
Moreover, $ \res_{2^k}(\gamma_{l,2^{k-l}}) = \gamma_{l,2^{k-l}}^+ + \gamma_{l,2^{k-l}}^- $. Hence, by \cite[Proposition 6.5]{Guerra:21}, $ \rho_{\pi}(\gamma_{l,2^{k-l}}^+) + \rho_{\pi}(\gamma_{l,2^{k-l}}^-) = x_{l,\pi} $.
This forces one of them to be $ x_{l,\pi} $ and the other $ 0 $. By definition, they are represented by cochains $ \widehat{\Gamma}_{l,2^{k-l}}^+ $ and $ \widehat{\Gamma}_{l,2^{k-l}}^- $ differing only by $ S_0(\alpha_{l,2^{k-l}})^\pm $. Moreover, $ \alpha_{l,2^{k-l}}^+ $ and $ \alpha_{l,2^{k-l}}^- $ can only pair non-trivially with chains from $ \widehat{A}_{\pi} $ and $ \widehat{A}_{\pi}^s $, respectively. It follows that $ \rho_{\pi}(\gamma_{l,2^{k-l}}^+) = x_{l,\pi} $ and $ \rho_{\pi}(\gamma_{l,2^{k-l}}^-) = 0 $.
If $ k = 2 $, the argument is similar, but $ \rho_{\pi}(\gamma_{2,1}^\pm) $ is allowed to be a linear combination of $ h_3 $ and $ d_3 - h_3 $.

In the case of a general partition $ \pi $ with $ m_j = 0 $ for all $ j < l $, $ \rho_{\pi}(\gamma_{l,m}^\pm) $ is retrieved from the previous case, Propositions \ref{prop:coproduct cup elementary subgroups} and \ref{prop:coproduct gamma+-}.

$ (4) $ follows immediately from Proposition \ref{prop:coproduct cup elementary subgroups} and the fact that $ \Delta(\gamma_{l,m}^\pm) $ does not have an addend in component $ (2^j,m2^l-2^j) $ for $ j < l $.
\end{proof}

\begin{remark} \label{rem:basis restriction}
Combining Propositions \ref{prop:rho cup}, \ref{prop:coproduct cup elementary subgroups}, \ref{prop:transfer elementary subgroups} and \ref{prop:generators elementary subgroups} with \cite[Proposition 6.5 and 6.6]{Guerra:21} we can explicitly compute the restriction of every element of the basis $ \mathcal{M}_{charged} $ to all the subgroups $ \widehat{A}_{\pi} $. It is easy to convert this remark into an algorithm.
\end{remark}

\subsection{Minimal partitions and the proof of the detection theorem}

We are going to deduce the detection Theorem \ref{thm:detection} by using the procedure hinted in Remark \ref{rem:basis restriction} to compute $ \rho_{\pi} $ for all admissible partitions and checking linear independence. We will make use of the notion of minimal partition, that regulates the combinatorics of $ \rho_{\pi} $ and makes the proof more organized.

\begin{definition}[from \cite{Guerra:17}] \label{def:effective width}
With every gathered block $ b $ in $ A_{\mathbb{P}^{\infty}(\mathbb{R})} $ we associate the admissible partition $ \pi_b = (2^k)^m $, where $ k $ is the maximal index such that $ b $ has a factor of the form $ \gamma_{k,l} $ for some $ l $ (or $ k = 0 $ if $ b = (w^a)\dip{l} $ for some $ l $ and $ a $), and $ m = n(b)/2^k $.
With every Hopf monomial $ x = b_1 \odot \dots \odot b_r $ we associate the admissible partition $ \pi_x = \bigsqcup_{i=1}^r \pi_{b_i} $.
With a charged Hopf monomial $ y = x^0 $, $ y = x^+ $ or $ y = x^- $ in $ \mathcal{M}_{charged} $, we define $ \pi_y $ as the partition $ \pi_x$ associated with the corresponding non-charged Hopf monomial.
The partition $ \pi_x $ is called the minimal partition of $ x $.

For any partition $ \pi $, we define $ \mathcal{B}_{\pi} $ (respectively $ \mathcal{M}_{\pi} $) as the set of all gathered blocks (respectively Hopf monomials) $ x $ having $ \pi $ as their minimal partition.
\end{definition}

By combining Propositions \ref{prop:rho cup}, \ref{prop:transfer elementary subgroups} and \ref{prop:generators elementary subgroups}, we directly deduce the following statement.
\begin{corollary} \label{cor:minimal partitions}
Let $ x \in \mathcal{M}_{charged} $ and let $ \pi $ be a relevant admissible partition of $ n $. If at least one between $ \rho_{\pi}(x) $ and $ \rho'_{\pi}(x) $ is non-zero, then the minimal partition of $ x $ is a refinement of $ \pi $.
\end{corollary}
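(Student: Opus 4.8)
The plan is to reduce everything to the explicit restriction formulas for the almost-Hopf ring generators and the compatibility of the maps $\rho_\pi$, $\rho'_\pi$ with the structural morphisms. First I would recall that every element $x \in \mathcal{M}_{charged}$ is, by construction (Definition \ref{def:explicit charged basis} together with the basis of Theorem \ref{thm:basis B}), built from the generators $\gamma_{k,l}^{\pm}$, $\res(\gamma_{k,l})$, $\res(w\dip{r})$ and $\res(w)$ by iterated transfer products, cup products and the involution $\iota$, with at most one coproduct in the definition of the $b^+$. Correspondingly $\pi_x$ is, by Definition \ref{def:effective width}, the disjoint union over the constituent gathered blocks of the minimal partitions $\pi_b = (2^k)^{n(b)/2^k}$ where $k$ is the top index of a $\gamma$-factor appearing in $b$.

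Next I would argue by induction on the number of transfer-product factors. The base case is a single (charged or restricted) gathered block $b$. Here Proposition \ref{prop:generators elementary subgroups} is decisive: whenever $b$ contains a factor $\gamma_{l,m}^{\pm}$ with $l \geq 2$ and the partition $\pi$ has $m_j \neq 0$ for some $j < l$, both $\rho_\pi$ and $\rho'_\pi$ annihilate $\gamma_{l,m}^{\pm}$, and by Proposition \ref{prop:rho cup} (and the coproduct compatibility of Proposition \ref{prop:coproduct cup elementary subgroups} used in the definition of $b^+$) this propagates to the whole block. So $\rho_\pi(b) \neq 0$ or $\rho'_\pi(b) \neq 0$ forces $\pi$ to have no parts of size $2^j$ below the top index $k$ of the $\gamma$-factors of $b$; since $\pi_b = (2^k)^{n(b)/2^k}$, this says precisely that $\pi_b$ refines $\pi$ (every part $2^k$ of $\pi_b$ fits inside a part of $\pi$, which must be of size $\geq 2^k$, and all parts of $\pi$ have size $\geq 2^k$ because none are below $2^k$ — but of course parts of $\pi$ can also be large powers of $2$, which is exactly what refinement allows). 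For the blocks $(w\dip{n})^a$ with no $\gamma$-factor we have $k=0$ and $\pi_b$ refines every $\pi$ trivially.

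For the inductive step, let $x = x_1 \odot \dots \odot x_r$ with $r \geq 2$. By Proposition \ref{prop:transfer elementary subgroups}(4),
\[
\rho_\pi(x) = \sum_{(\pi',\pi'')} \tau_{\pi',\pi''}\bigl( (\rho_{\pi'} \otimes \rho_{\pi''}) + (\rho'_{\pi'} \otimes \rho'_{\pi''}) \bigr) \Delta(x),
\]
and a parallel identity holds for $\rho'_\pi$ after composing with $\iota$ (using that $\rho'_\pi = \rho_\pi \circ \iota$ and Proposition \ref{prop:relations involution}(2)). Since $x$ is a Hopf monomial, its reduced coproduct splits it as sums $x' \otimes x''$ of products of subsets of the $x_i$, and each such $x'$, $x''$ is again (a scalar multiple of) a charged Hopf monomial with minimal partition a sub-multiset of $\pi_x$. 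If $\rho_\pi(x) \neq 0$, then for some admissible $(\pi',\pi'')$ with $\pi' \sqcup \pi'' = \pi$ and some term $x' \otimes x''$ in $\Delta(x)$ we have both $\rho^{(\prime)}_{\pi'}(x') \neq 0$ and $\rho^{(\prime)}_{\pi''}(x'') \neq 0$. By the inductive hypothesis, $\pi_{x'}$ refines $\pi'$ and $\pi_{x''}$ refines $\pi''$; since $\pi_x = \pi_{x'} \sqcup \pi_{x''}$ (additivity of the minimal partition under transfer product) and $\pi = \pi' \sqcup \pi''$, taking disjoint unions shows $\pi_x$ refines $\pi$.

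The main obstacle is the bookkeeping in the inductive step: one must check that $\tau_{\pi',\pi''}$ does not create a non-zero output from factor-pairs whose tensor already vanishes, that the charges are tracked consistently through the coproduct formula of Proposition \ref{prop:coproduct gamma+-} (so that the $x'$, $x''$ really are members of $\mathcal{M}_{charged}$ up to scalars, allowing the induction to apply), and — most delicately — that "refinement" behaves well under $\sqcup$, i.e. if $\pi_{x'}$ refines $\pi'$ and $\pi_{x''}$ refines $\pi''$ then $\pi_{x'} \sqcup \pi_{x''}$ refines $\pi' \sqcup \pi''$. The last point is a purely combinatorial fact about multiset partitions and is straightforward once stated carefully; the genuine content all sits in Proposition \ref{prop:generators elementary subgroups}, which has already been proved.
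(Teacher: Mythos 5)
Your overall skeleton (handle a single gathered block via Proposition \ref{prop:generators elementary subgroups} and the multiplicativity of $\rho_{\pi}$ from Proposition \ref{prop:rho cup}, handle transfer products via Proposition \ref{prop:transfer elementary subgroups}(4), and use additivity of minimal partitions under $\sqcup$) is exactly the ``direct deduction'' the paper intends, but the inductive step as you wrote it invokes a formula that is not Proposition \ref{prop:transfer elementary subgroups}(4) and is not true. That proposition computes $\rho_{\pi}(y \odot z)$, for $y \in H^*(\Bgrouppos{n};\mathbb{F}_2)$ and $z \in H^*(\Bgrouppos{m};\mathbb{F}_2)$, as $\sum_{(\pi',\pi'')} \tau_{\pi',\pi''}\bigl(\rho_{\pi'}(y)\otimes\rho_{\pi''}(z)+\rho'_{\pi'}(y)\otimes\rho'_{\pi''}(z)\bigr)$: the input is the tensor factorization of the transfer product, and no coproduct appears anywhere. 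Inserting $\Delta(x)$ changes the statement --- $\Delta(x)$ contains $1\otimes x$, $x\otimes 1$ and terms that split the individual blocks (so your description of $\Delta$ of a Hopf monomial as distributing the $x_i$ over the two factors is also inaccurate) --- and the subsequent argument about ``some term $x'\otimes x''$ in $\Delta(x)$'' is therefore reasoning about the wrong objects. The repair is simple and in fact shortens the proof: write $x = x_1 \odot (x_2\odot\dots\odot x_r)$, apply the proposition to $x_1 \otimes (x_2\odot\dots\odot x_r)$, conclude that for some $(\pi',\pi'')$ with $\pi'\sqcup\pi''=\pi$ both restrictions are non-zero, apply the inductive hypothesis to each factor (each is, up to $\iota$, again a charged Hopf monomial, and $\iota$ does not change minimal partitions, which also disposes of the $\rho'_{\pi}$ case via $\rho'_{\pi}=\rho_{\pi}\circ\iota$), and finish with $\pi_x=\pi_{x_1}\sqcup\pi_{x_2\odot\dots\odot x_r}$ and the compatibility of refinement with $\sqcup$.

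There is also a hole in your base case: you cover blocks containing a factor $\gamma_{l,m}^{\pm}$ with $l\geq 2$ and pure $w$-blocks, but not the restricted blocks whose top $\gamma$-index is exactly $1$, i.e.\ the $\mathcal{M}_0$-blocks of the form $\res\bigl(\gamma_{1,m}^{a}(w\dip{2m})^{c}\bigr)$. Their minimal partition is $(2)^m$, which fails to refine $\pi$ precisely when $m_0(\pi)\neq 0$, so the corollary makes a non-vacuous vanishing claim for them, and Proposition \ref{prop:generators elementary subgroups} says nothing about $\gamma_{1,m}$ (there are no charged classes in level $1$). For these blocks you need the restriction formulas for classes pulled back from $A_B$, namely \cite[Propositions 6.5 and 6.6]{Guerra:21} as recorded in Remark \ref{rem:basis restriction} (combined with Proposition \ref{prop:rho cup}), to see that such a block restricts to zero on $\widehat{A}_{\pi}$ as soon as $\pi$ has a part of size $1$. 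Without that input your base case does not establish the statement for all of $\mathcal{M}_{charged}$.
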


\begin{lemma} \label{lem:linear independence}
Let $ \pi $ be a relevant admissible partition.
\begin{enumerate}
\item If $ m_0(\pi) = m_1(\pi) = m_2(\pi) = 0 $, then the set $ \{\rho_\pi(x^+)\}_{x \in \mathcal{M}_{\pi}} $ is linearly independent in $ H^*(\widehat{A}_{\pi}; \mathbb{F}_2) $ and $ \rho'_\pi(x^+) = 0 $ for all $ x \in \mathcal{M}_{\pi} $.
\item If $ m_0(\pi) = m_1(\pi) = 0 $ but $ m_2(\pi) \not= 0 $, then the set $ \{ \rho_\pi(x^+)\}_{x \in \mathcal{M}_{\pi}} \cup \{ \rho_\pi(x^-) \}_{x \in \mathcal{M}_{\pi}} $ is linearly independent in $ H^*(\widehat{A}_{\pi}; \mathbb{F}_2) $.
\item If $ m_0(\pi) \not= 0 $ or $ m_1(\pi) \not= 0 $, then the set $ \{ \rho_\pi(x^0)\}_{x \in \mathcal{M}_{\pi}} $ is linearly independent in $ H^*(\widehat{A}_{\pi}; \mathbb{F}_2) $.
\item If $ m_0(\pi) = 0 $ and $ m_1(\pi) = 1 $, then the set $ \{ \rho_\pi(x^0) \}_{x \in \mathcal{M}_{\pi} \cup \mathcal{M}_{\pi \setminus (2) \sqcup (1,1)}} $ is linearly independent in $ H^*(\widehat{A}_{\pi}; \mathbb{F}_2) $.
\end{enumerate}
\end{lemma}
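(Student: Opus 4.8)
The plan is to turn each of the four statements into an explicit computation inside the invariant subalgebra $[H^*(\widehat{A}_\pi;\F_2)]^{N_{\Bgrouppos{n}}(\widehat{A}_\pi)}$ described in Theorem~\ref{thm:invariants normalizer}, and then to separate the resulting elements by leading terms. The starting point is Remark~\ref{rem:basis restriction}: combining Propositions~\ref{prop:rho cup}, \ref{prop:coproduct cup elementary subgroups}, \ref{prop:transfer elementary subgroups} and \ref{prop:generators elementary subgroups} one has an explicit algorithm for $\rho_\pi$ and $\rho'_\pi$ on the basis $\mathcal{M}_{charged}$. Writing a Hopf monomial as a transfer product of gathered blocks, Proposition~\ref{prop:transfer elementary subgroups}(4) expresses $\rho_\pi$ of such a product as a sum over splittings $\pi=\pi'\sqcup\pi''$ of symmetrizations $\tau_{\pi',\pi''}$ of tensor products of the $\rho$'s and $\rho'$'s of the constituent blocks; Proposition~\ref{prop:rho cup} reduces $\rho_\pi$ of each block (a cup product of generators) to a cup product of their restrictions; and Proposition~\ref{prop:generators elementary subgroups} writes each such restriction as a monomial in $f_{2^k}$, the Dickson invariants $d_{2^k-2^i}$, and, when $k=2$, in $h_3$ or $h_3^\perp=d_3-h_3$. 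Thus $\rho_\pi(x)$ is an explicit symmetrized polynomial living in $\bigotimes_k[\F_2[f_{2^k},d_{2^k-1},\dots]^{\otimes m_k}]^{\Sigma_{m_k}}$, or in the quotient / free rank-$2$ extension of it prescribed by Theorem~\ref{thm:invariants normalizer}. By Corollary~\ref{cor:minimal partitions}, only basis elements whose minimal partition refines to $\pi$ restrict nontrivially, and requiring $x\in\mathcal{M}_\pi$ (minimal partition exactly $\pi$) ensures that these restrictions occupy the ``full-width'' part of that algebra, where no partial cancellation between different blocks can occur.

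The heart of the argument is then a leading-term count. I would fix a monomial order on each tensor factor (ordering the generators $f_{2^k},d_{2^k-2^i}$, and $h_3$ when $k=2$, first by $k$ and then by cohomological degree, and taking the induced lexicographic order) and then show that distinct $x\in\mathcal{M}_\pi$ give pairwise distinct leading monomials of $\rho_\pi(x)$. The point is to reconstruct the combinatorial data of $x$ from the leading monomial: within each tensor factor the exponents of the $d$'s and $f$'s recover the profile of the corresponding gathered block, the exponent of the relevant $w$-restriction recovers its decoration, and the distribution of the monomial over the factors records which block lies in which part of $\pi$. Since the assignment block $\mapsto$(profile, decoration) is injective and a transfer product is recovered from the multiset of its blocks, distinct Hopf monomials produce distinct leading monomials; the symmetrizations $\tau_{\pi',\pi''}$ do not interfere, as each has a well-defined leading term obtained from a fixed ordering of the tensor factors. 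This is precisely the bookkeeping performed for $H^*(\Bgroup_n;\F_2)$ in \cite{Guerra:21} and for the alternating groups $\mathcal{A}_n$ in \cite{Sinha:17}, and wherever the $\Bgroup_n$- and $\Bgrouppos{n}$-normalizers of $\widehat{A}_\pi$ coincide I would just invoke those computations.

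The four cases differ only in how charges and the quotient relations of Theorem~\ref{thm:invariants normalizer} enter. In case~(1), where $m_0=m_1=m_2=0$, only indices $k\ge3$ occur, $\widehat{A}_\pi=A_\pi$, and the $\Bgroup_n$- and $\Bgrouppos{n}$-normalizers agree by Proposition~\ref{prop:elementary subgroups}; then Proposition~\ref{prop:generators elementary subgroups}(1)--(2) and Corollary~\ref{cor:no conflict} identify $\rho_\pi(x^+)$ with the restriction of $x$ in $A_B$, and $\rho'_\pi(x^+)=0$ follows from Proposition~\ref{prop:generators elementary subgroups}(1) and the compatibility of $\rho'_\pi$ with the structural operations, so both assertions reduce to the known $\Bgroup_n$-statement. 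In case~(2), where $m_0=m_1=0$ but $m_2\ne0$, the factor at $k=2$ is by Theorem~\ref{thm:invariants normalizer} free of rank $2$ over the $\Sigma_{m_2}$-invariants with basis $\{1,\,h_{3m_2}\otimes 1\}$; Proposition~\ref{prop:generators elementary subgroups}(3) places $\rho_\pi(\gamma_{2,m}^+)$ in the $h_{3m_2}\otimes 1$-direction and $\rho_\pi(\gamma_{2,m}^-)$ in the $(d_3^{\otimes m_2}-h_{3m_2})\otimes 1$-direction, and combining this with Propositions~\ref{prop:transfer elementary subgroups}(2)--(3) and the quadratic relation of Lemma~\ref{lem:cup h} one checks that the families $\{\rho_\pi(x^+)\}$ and $\{\rho_\pi(x^-)\}$ have leading terms in complementary summands of the rank-$2$ module, hence are jointly independent. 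In cases~(3) and~(4), where $m_0\ne0$ or $m_1\ne0$, Proposition~\ref{prop:transfer elementary subgroups}(1) gives $\rho'_\pi=\rho_\pi$, so $\rho_\pi(x^0)=\rho_\pi(x^+)+\rho_\pi(x^-)$ and restriction factors through $\Sym(A_\pi^\vee)/(\phi_{m_0,m_1})$, i.e.\ through the principal quotient by $e_1(f_1)+e_1(d_1)$; linear independence modulo this single relation again follows from leading terms, using that the Hopf monomials in $\mathcal{M}_\pi$ have pairwise distinct profiles so their leading monomials are not among those killed by the relation. For the sharper claim~(4), where $m_1=1$, the identification $\widehat{A}_{(2)}=\widehat{A}_{(1,1)}$ of Proposition~\ref{prop:elementary subgroups} shows that the ``$(2)$-type'' monomials of $\mathcal{M}_\pi$ and the ``$(1,1)$-type'' monomials of $\mathcal{M}_{\pi\setminus(2)\sqcup(1,1)}$ restrict to that single factor along complementary parts of the quotient, so the enlarged set stays independent.

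The step I expect to be the main obstacle is case~(2) together with the charge bookkeeping: one must check that under transfer products the factors $h_3$ and $h_3^\perp$ symmetrize exactly as predicted by the map $\tau_{\pi',\pi''}$ of Proposition~\ref{prop:transfer elementary subgroups}(3), and that no cancellation occurs between the symmetrized $+$ and $-$ families inside the rank-$2$ module of Theorem~\ref{thm:invariants normalizer}. Pinning this down needs the explicit formulas of Proposition~\ref{prop:generators elementary subgroups}(3), the relation of Lemma~\ref{lem:cup h}, and a monomial order adapted to the splitting of that module into its $1$- and $h_{3m_2}$-components; once that is in place, the remaining cases are routine leading-term bookkeeping given the reduction of the first two paragraphs.
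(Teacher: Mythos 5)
For cases (1), (3) and (4) your plan is essentially the paper's own argument: restrict each gathered block explicitly to a monomial in the classes $f_{2^k}$, $d_{2^k-2^i}$ (using Proposition~\ref{prop:generators elementary subgroups} and the computations of \cite{Guerra:21}), assemble Hopf monomials by the shuffle symmetrization of Proposition~\ref{prop:transfer elementary subgroups}(4), pass to the quotient by $e_1(f_1)+e_1(d_1)$ when $m_0$ or $m_1$ is nonzero, and check independence by inspecting the resulting explicit elements. That part is fine.

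The genuine gap is case (2), which you yourself flag as the main obstacle but do not close, and the structural claim you lean on there is false as stated. It is not true that the $+$ and $-$ families have leading terms in complementary summands of the rank-$2$ module over $\bigotimes_k[\mathbb{F}_2[f_{2^k},d_{2^k-1},\dots]^{\otimes m_k}]^{\Sigma_{m_k}}$ with basis $\{1,h_{3m_2}\otimes 1\}$: while $\rho_{(4)}(\gamma_{2,1}^+)=h_3$ and $\rho_{(4)}(\gamma_{2,1}^-)=d_3+h_3$ do separate in this way, cup powers immediately mix the two summands through the quadratic relation of Lemma~\ref{lem:cup h}. For instance, for the basis elements $x^\pm$ with $x=\gamma_{2,1}^2\in\mathcal{M}_{(4)}$ one gets $\rho_{(4)}(x^+)=h_3^2=d_3h_3+d_2^3+d_3^2$ and $\rho_{(4)}(x^-)=(d_3+h_3)^2=d_3h_3+d_2^3$, which have the \emph{same} $h_3$-component and differ only in the $1$-component; so the proposed monomial-order/complementary-summand bookkeeping does not by itself rule out cancellations among symmetrized $+$ and $-$ classes once cup products of $\gamma_{2,m}^{\pm}$ and transfer products intervene, and that no-cancellation statement is exactly what a proof of (2) must supply. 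The paper circumvents this computation: after splitting off the blocks containing $\gamma_{k,l}$ with $k\geq 3$ (reducing to $\pi$ with all parts equal to $4$) and using the decomposition of the invariant ring as a free module with basis the powers of $f_4$ to discard the $w$-decorated monomials, it restricts further to the alternating group, and imports linear independence from $\mathcal{A}_{4m}$ via Corollary~\ref{cor:restriction alternating groups}, the injectivity of the relevant coproduct and of the restriction to the unique maximal elementary abelian $2$-subgroup of $\mathcal{A}_4$, and the detection theorem of \cite{Sinha:17}, through a commutative square of restriction maps. To repair your argument you should either carry out the rank-$2$-module computation in full (tracking the $1$-components created by the relation $h_{3m}(d_3^{\otimes m}-h_{3m})=\dots$), or adopt this comparison with $\mathcal{A}_{4m}$, which is what makes the paper's case (2) work.
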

\begin{proof}
\begin{enumerate}
\item If $ b = (w\dip{m2^n})^{a_0} \prod_{i=1}^n \gamma_{i,m2^{n-i}}^{a_i} \in \mathcal{B}_{\pi} $ with $ m_0(\pi) = m_1(\pi) = m_2(\pi) = 0 $, then statements $ (2) $ and $ (4) $ of Proposition \ref{prop:generators elementary subgroups} and Propositions 6.5 and 6.6 of \cite{Guerra:21} imply that
\begin{align*}
\rho_\pi(b^+) &= \rho_\pi\left( \res_{m2^n}(w\dip{m2^n})^{a_0} \res_{m2^n}(\gamma_{1,m2^{n-1}}^{a_1}) \prod_{i=2}^n (\gamma_{i,m2^{n-i}}^+)^{a_i} \right) \\
&= (f_{2^n}^{a_0} \prod_{i=1}^n d_{2^n-2^{n-i}}^{a_i})^{\otimes m}.
\end{align*}
In particular, the set $ \{\rho_\pi(b) \}_{b \in \mathcal{B}_{\pi}} $ is a linearly independent subset of $ H^*(\widehat{A}_{\pi}; \mathbb{F}_2) $.
A similar calculation show that $ \rho'_\pi(b^+) = 0 $.

Proposition \ref{prop:transfer elementary subgroups} (4) implies that, for all $ x = b_1 \odot \dots \odot b_r \in \mathcal{M} $ such that $ m_0(\pi_x) = m_1(\pi_x) = m_2(\pi_x) = 0 $, $ \rho_{\pi_x}(x^+) $ is the symmetrization of $ \bigotimes_{i=1}^r \rho_{\pi_b}(b^+) $.
Consequently, the set $ \{ \rho_{\pi}(x^+) \}_{x \in \mathcal{M}_{\pi}} $ is also linearly independent. Clearly $ \rho_{\pi}(x^-) = 0 $.
\item Write $ \pi = (2)^m \sqcup \pi' $, with $ m_2(\pi') = 0 $. Then, for all $ x \in \mathcal{M}_{\pi} $, write $ x = b'_1 \odot \dots b'_r \odot b_1'' \odot \dots \odot b_s'' $, where $ b'_1, \dots, b'_r $ do not contain factors of the form $ \gamma_{k,l} $ with $ k \geq 3 $, while $ b''_1,\dots, b''_s $ contain such factors. Let $ x' = b'_1 \odot \dots \odot b'_r $ and $ x'' = b_1'' \odot \dots \odot b_s'' $.
$ s = m $ and, by $ (1) $ and Proposition \ref{prop:transfer elementary subgroups} $ (4) $, 
\[
\rho_{\pi}(x^\pm) = \rho_{(2)^m}({x'}^\pm) \otimes \rho_{\pi'}({x''}^+).
\]
By $ (1) $ again, we reduce to the special case $ \pi = (2)^m $.

By our calculations and those in \cite{Sinha:17}, we can write $ H^*(\widehat{A}_{(2)}; \mathbb{F}_2)^{W_{\Bgrouppos{4}}(\widehat{A}_{(2)})} $ as the free $ H^*(V_{(2)}; \mathbb{F}_2)^{W_{\mathcal{A}_4}(V_{(2)})} $-module with basis $ \{f_4^l\}_{l=0}^\infty $. The invariant subalgebra $ \left[ H^*(\widehat{A}_{(2)^m}; \mathbb{F}_2) \right]^{W_{\Bgrouppos{4m}(\widehat{A}_{(2)^m})}} $ is a sub-$ \left[ H^*(V_{(2)}; \mathbb{F}_2)^{W_{\mathcal{A}_4}(V_{(2)})} \right]^{\otimes m} $-module of
\begin{gather*}
\left[ H^*(\widehat{A}_{(2)^m}; \mathbb{F}_2)^{W_{\Bgrouppos{4}}(\widehat{A}_{(2)})} \right]^{\otimes m} = \\
\bigoplus_{(l_1,\dots,l_m) \in \mathbb{N}^m} \left[ H^*(V_{(2)}; \mathbb{F}_2)^{W_{\mathcal{A}_4}(V_{(2)})} \right]^{\otimes m} \cdot (f_4^{l_1} \otimes \dots \otimes f_4^{l_m}).
\end{gather*}
By keeping track of the summands of the direct sum decomposition above that appear in the restriction of a Hopf monomial in $ \mathcal{M}_{(2)^m} $, one reduces to show that the elements $ \rho_{(2)^m}(x^+) $ and $ \rho_{(2)^m}(x^-) $ are linearly independent for those $ x \in \mathcal{M}_{(2)^m} $ which do not contain any generator of the form $ {w^a}\dip{r} $ (for $ r,a \geq 1 $).
The restrictions of those elements to the cohomology of $ \mathcal{A}_{(2)^m} $ are linearly independent by Corollary \ref{cor:restriction alternating groups}. Moreover, the coproduct $ \Delta_{(2,\dots,2)} \colon H^*(\mathcal{A}_{4m}; \mathbb{F}_2) \to H^*(\mathcal{A}_{4}; \mathbb{F}_2)^{\otimes m} $ is injective on the subspace spanned by charged Hopf monomials $ x $ with $ \pi_x = (2)^m $, and so is the restriction $ H^*(\mathcal{A}_4; \mathbb{F}_2) \to H^*(V_{(2)}; \mathbb{F}_2) $ because $ V_{(2)} $ is the unique maximal elementary abelian $ 2 $-subgroup of $ \mathcal{A}_4 $ and because the cohomology of alternating groups is detected by elementary abelian subgroups \cite[Theorem 7.1]{Sinha:17}.
Therefore, the images of $ x^\pm $, with $ x $ as above, under the top-right path in the following commutative diagram are linearly independent.
\begin{center}
	\begin{tikzcd}
		H^*(\Bgrouppos{4m}; \mathbb{F}_2) \arrow{r} \arrow{d} & H^*(\mathcal{A}_{4m}; \mathbb{F}_2) \arrow{d} \\
		H^*(\widehat{A}_{(2)}; \mathbb{F}_2)^{\otimes m} \arrow{r} & H^*(V_{(2)}; \mathbb{F}_2)^{\otimes m}
	\end{tikzcd}
\end{center}	
We deduce that the images of $ x^\pm $ under the left map of that diagram are linearly independent as well, which proves our special case.
\item By Theorem \ref{thm:invariants normalizer}, $ H^*(\widehat{A}_{\pi}; \mathbb{F}_2)^{N_{\Bgrouppos{n}}(\widehat{A}_{\pi})} $ is obtained as the quotient of the ring $ H^*(A_{\pi}; \mathbb{F}_2)^{N_{\Bgroup_{n}}(A_{\pi})} $ by the ideal generated by $ e_1(f_1) + e_1(d_1) $.
Therefore, since  all the elements $ x^0 $ for $ x \in \mathcal{M}_{\pi} $ are restriction of classes in $ H^*(\Bgroup_n; \mathbb{F}_2) $, the results of Section 6.2 of \cite{Guerra:21} determine $ \{ \rho_{\pi}(x^0) \}_{x \in \mathcal{M}_{\pi}} $ completely and explicitly.
A direct analysis shows that this set is linearly independent.
\item It is proved similarly to $ (3) $.
\end{enumerate}
\end{proof}

\begin{proof}[Proof of Theorem \ref{thm:detection}]
Choose a linear order $ \leq $ on the set $ \mathcal{P}_n $ of admissible partitions of $ n $ extending the partial order defined by refinement.

By Theorem \ref{thm:basis alternating subgroup}, the set $ \mathcal{M}_{charged} $ is an additive basis of $ \AB $, where for $ x \in \mathcal{G}_{ann} $ we can take $ x^+ $ and $ x^- $ as in Definition \ref{def:explicit charged basis}.
Let $ y = \sum_i x_i $, with $ x_i \in \mathcal{M}_{charged} $, be a non-trivial sum. Let $ \pi_y $ be the minimum of the $ \pi_{x_i} $s in the given total order.

If $ \pi_y $ is relevant, then let $ y' = \sum_{i: \pi_{x_i} = \pi_y} x_i $. By Corollary \ref{cor:minimal partitions}, $ \rho(y') = \rho(y) $. Moreover, $ \rho(y') \not= 0 $ by Lemma \ref{lem:linear independence} (1), (2), or (3).

If $ \pi_y $ is not relevant, then let $ \pi' = \pi \setminus (1,1) \sqcup (2) $ and let $ y' = \sum_{i: x_i \in \mathcal{M}_\pi \cup \mathcal{M}_{\pi'}} x_i $.
By Corollary \ref{cor:minimal partitions}, $ \rho(y') = \rho(y) $. Moreover, $ \rho(y') \not= 0 $ by Lemma \ref{lem:linear independence} (4).
\end{proof}

\section{The almost Hopf ring presentation of \texorpdfstring{$ \widetilde{\AB} $}{the cohomology of the alternating subgroups of Bn}}

This section is devoted to provide a presentation of $ \widetilde{\AB} $ as an almost-Hopf ring.

\subsection{The remaining relations}

There are still a few relations that will appear in our presentation. They concern the cup product of generators and a compatibility relation between the coproduct and the transfer product, replacing the bialgebra property. We prove them in this short subsection. Our argument consists on checking that both sides of the desired identities agree when restricted to the cohomoly of the subgroupf $ \widehat{A}_{\pi} $, and relies on Theorem \ref{thm:detection}.

The remaining relations are now easily proved by restricting to the subgroups $ \widehat{A}_{\pi} $ and exploiting Theorem \ref{thm:detection}.

This result uses Lemma \ref{lem:cup h}.
\begin{proposition} \label{prop:cup relations}
The following statements are true in $ \widetilde{\AB} $:
\begin{enumerate}
\item $ \gamma_{k,l}^+ \cdot \gamma_{k',l'}^- = 0 $ unless $ k = k' = 2 $
\item $ \gamma_{2,m}^+ \cdot \gamma_{2,m}^- = (\gamma_{2,m}^+)^2 + (\gamma_{2,m}^-)^2 + (\gamma_{2,m-1}^+)^2 \odot (\gamma_{1,1}^3)^0 $ is $ m $ is odd, and $ \gamma_{2,m}^+ \cdot \gamma_{2,m}^- =  (\gamma_{2,m-1}^+)^2 \odot (\gamma_{1,1}^3)^0 $ if $ m $ is even.
\item $ \gamma_{2,m}^+ \cdot x^0 = (\gamma_{2,m} \cdot x)^+ $ for all $ x $ in the sub-Hopf ring of $ A_{\mathbb{P}^\infty(\mathbb{R})} $ generated by $ w\dip{r} $ and $ \gamma_{1,m} $ (with $ r,m \geq 1 $).
\end{enumerate}
\end{proposition}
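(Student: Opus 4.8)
The plan is to invoke the Detection Theorem \ref{thm:detection}: an identity $u=v$ in $\AB$ follows once $\res_A(u)=\res_A(v)$ for every elementary abelian $2$-subgroup $A\le\Bgrouppos{n}$, and since restriction factors through a maximal such subgroup and every maximal elementary abelian $2$-subgroup of $\Bgrouppos{n}$ is $\Bgrouppos{n}$-conjugate to $\widehat A_\pi$ or $\widehat A_\pi^{s_0}$ for a relevant admissible $\pi$ (Proposition \ref{prop:elementary subgroups}), this reduces to verifying $\rho_\pi(u)=\rho_\pi(v)$ and $\rho'_\pi(u)=\rho'_\pi(v)$ for all relevant $\pi$. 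I will use that $\rho'_\pi=\rho_\pi\circ\iota$, that $\rho_\pi$ and $\rho'_\pi$ preserve cup products (Proposition \ref{prop:rho cup}), and that classes of the form $\res(y)$ are $\iota$-fixed (Proposition \ref{prop:restriction transfer involution}); consequently, whenever both sides of an identity are $\iota$-invariant the $\rho'_\pi$-check is subsumed by the $\rho_\pi$-check, and for (3) the $\rho'_\pi$-check is the ``$-$'' analogue of the $\rho_\pi$-check and goes through verbatim. The main inputs are the restriction formulas for the generators (Proposition \ref{prop:generators elementary subgroups}), the formula for $\rho_\pi\circ\odot$ (Proposition \ref{prop:transfer elementary subgroups}(4)), and the cup-square computation of Lemma \ref{lem:cup h}.

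For (1): if the two factors lie in different components the cup product is zero by definition, so assume $l2^k=l'2^{k'}$ and, say, $k\le k'$; since $k,k'\ge2$ and $(k,k')\ne(2,2)$ this forces $k'\ge3$. The crucial observation is that $\rho_\pi(\gamma_{k',l'}^-)=0$ for \emph{every} relevant $\pi$ when $k'\ge3$: if $m_j(\pi)\ne0$ for some $j<k'$ this is Proposition \ref{prop:generators elementary subgroups}(4), and otherwise $m_0(\pi)=m_1(\pi)=m_2(\pi)=0$ and it is Proposition \ref{prop:generators elementary subgroups}(1). Hence $\rho_\pi(\gamma_{k,l}^+\cdot\gamma_{k',l'}^-)=\rho_\pi(\gamma_{k,l}^+)\,\rho_\pi(\gamma_{k',l'}^-)=0$. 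For $\rho'_\pi$ one has $\rho'_\pi(\gamma_{k',l'}^-)=\rho_\pi(\gamma_{k',l'}^+)$, which is nonzero only if $m_j(\pi)=0$ for all $j<k'$, in particular $m_2(\pi)=0$; under this constraint either $k\ge3$, and then $\rho'_\pi(\gamma_{k,l}^+)=\rho_\pi(\gamma_{k,l}^-)=0$ as above, or $k=2$, and then the factor $d_3^{\otimes0}-h_{30}=0$ occurring in $\rho_\pi(\gamma_{2,l}^-)$ by Proposition \ref{prop:generators elementary subgroups}(3) makes that restriction vanish. Either way $\rho'_\pi$ annihilates the product, and detection finishes (1).

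For (2) and (3), note first that both sides restrict to $0$ under $\rho_\pi$ and $\rho'_\pi$ whenever $m_0(\pi)\ne0$ or $m_1(\pi)\ne0$ (the left-hand sides by Proposition \ref{prop:generators elementary subgroups}(4), the transfer-product summands on the right by Propositions \ref{prop:transfer elementary subgroups}(4) and \ref{prop:generators elementary subgroups}(4)), so we may assume $\pi$ has all parts $\ge4$. Setting $D_\pi=\bigotimes_{k\ge3}d_{2^k-2^{k-2}}^{\otimes m_k}$, Proposition \ref{prop:generators elementary subgroups}(3) gives $\rho_\pi(\gamma_{2,m}^+)=h_{3m_2}\otimes D_\pi$ and $\rho_\pi(\gamma_{2,m}^-)=(d_3^{\otimes m_2}-h_{3m_2})\otimes D_\pi$ (note $m\equiv m_2\pmod2$, as $4m=\sum_{k\ge2}2^km_k$), so by Proposition \ref{prop:rho cup}
\[
\rho_\pi(\gamma_{2,m}^+\cdot\gamma_{2,m}^-)=\bigl(h_{3m_2}(d_3^{\otimes m_2}-h_{3m_2})\bigr)\otimes D_\pi^{2},
\]
which Lemma \ref{lem:cup h} evaluates in the two parities of $m_2$. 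On the right-hand side the cup-squares contribute $\bigl(h_{3m_2}^2+(d_3^{\otimes m_2}-h_{3m_2})^2\bigr)\otimes D_\pi^2=(d_3^2)^{\otimes m_2}\otimes D_\pi^2$ by the Frobenius identity (this term being present exactly when $m$, equivalently $m_2$, is odd --- matching Lemma \ref{lem:cup h}), while the transfer-product term, restricted via Proposition \ref{prop:transfer elementary subgroups}(4), contributes precisely the cross term $\bigl(\sum_{i=1}^{m_2}(d_3^2)^{\otimes i-1}\otimes d_2^3\otimes(d_3^2)^{\otimes m_2-i}\bigr)\otimes D_\pi^2$; combining these yields (2). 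Part (3) is the same computation with $\gamma_{2,m}^-$ replaced by $x^0=\res(x)$: since $x$ lies in the sub-Hopf ring generated by the $w\dip r$ and the $\gamma_{1,m}$, the class $\rho_\pi(x^0)$ is the restriction of a class of $H^*(\Bgroup_n;\F_2)$ and involves no $h$-classes, so $\rho_\pi(\gamma_{2,m}^+\cdot x^0)=(h_{3m_2}\otimes D_\pi)\cdot\rho_\pi(x^0)$, and one checks directly --- computing $\rho_\pi((\gamma_{2,m}\cdot x)^+)$ as in Remark \ref{rem:basis restriction} --- that $\rho_\pi(\gamma_{2,m}^+\cdot x^0)=\rho_\pi((\gamma_{2,m}\cdot x)^+)$, the ``$+$'' charge matching on both sides the choice of $h_{3m_2}$ rather than $h_{3m_2}^\perp$; here one also uses that $\gamma_{2,m}\cdot x\in\Span(\mathcal{G}_{ann})$, so that $(\gamma_{2,m}\cdot x)^+$ is defined by linearity from Definition \ref{def:explicit charged basis}.

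The step I expect to be the real obstacle is part (2): one must carry the parity case-split of Lemma \ref{lem:cup h} simultaneously through the restriction of \emph{both} sides and verify, partition by partition, that the summands of Proposition \ref{prop:transfer elementary subgroups}(4) applied to the transfer-product term produce exactly the cross term $\sum_{i}(d_3^2)^{\otimes i-1}\otimes d_2^3\otimes(d_3^2)^{\otimes m_2-i}$ (suitably decorated) and nothing else --- keeping careful track of which decompositions $\pi'\sqcup\pi''=\pi$ contribute, of the difference between $\rho_\pi$ and $\rho'_\pi$, and of the small exceptional partitions. This is intricate bookkeeping rather than anything conceptually deep, but it is where the argument needs to be written out with care.
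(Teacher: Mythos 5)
Your overall strategy --- verifying both sides of each identity after restriction to the subgroups $\widehat{A}_{\pi}$ and invoking Theorem \ref{thm:detection}, with Lemma \ref{lem:cup h} handling the parity case-split --- is exactly the route the paper indicates, and your part (1) as well as your computation for partitions of $4m$ with all parts at least $4$ in part (2) is the intended argument.

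However, the reduction you use to reach that case contains a genuine gap. You claim that for (2) and (3) both sides restrict to zero under $\rho_\pi$ and $\rho'_\pi$ whenever $m_0(\pi)\neq 0$ or $m_1(\pi)\neq 0$, citing Propositions \ref{prop:transfer elementary subgroups}(4) and \ref{prop:generators elementary subgroups}(4) for the transfer-product summand. Those results do not give this: the factor $(\gamma_{1,2}^3)^0=\res_4(\gamma_{1,2}^3)$ (the only component-consistent reading of the transfer factor, which you implicitly adopt) is built from level-one classes, about which Proposition \ref{prop:generators elementary subgroups} says nothing, and its restriction to $\widehat{A}_{(2,2)}$ is nonzero. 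Indeed the $(2,2)$-component of $\Delta(\gamma_{1,2})$ is $\gamma_{1,1}\otimes\gamma_{1,1}$, so $\gamma_{1,2}$ restricts to $A_{(2)}\times A_{(2)}$ as $d_1\otimes d_1$, and on $\widehat{A}_{(2,2)}$ only the sum $d_1\otimes 1+1\otimes d_1$ dies while the two degree-one classes become identified; hence $\rho_{(2,2)}\bigl((\gamma_{1,2}^3)^0\bigr)=d_1^6\neq 0$. Consequently, already for $m=1$ and $\pi=(2,2)$ the right-hand side of (2) restricts to $d_1^6\neq 0$, and for general $m$ the couple $(\pi',\pi'')=\bigl((4)^{m-1},(2,2)\bigr)$ contributes $(d_3^2)^{\otimes(m-1)}\otimes d_1^6$ through Proposition \ref{prop:transfer elementary subgroups}(4); so partitions with $m_1(\pi)\neq 0$ cannot be discarded. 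Worse, if you take Proposition \ref{prop:generators elementary subgroups}(4) at face value for the left-hand side, the two restrictions \emph{disagree} on $\widehat{A}_{(2,2)}$, so the detection check as you have set it up would refute the identity rather than verify it; what the verification actually needs is the correct, nonzero value of $\rho_\pi(\gamma_{2,m}^{\pm})$ on partitions containing parts equal to $2$ (for instance, consistency of (2) with Theorem \ref{thm:presentation AB} forces $\rho_{(2,2)}(\gamma_{2,1}^{\pm})=d_1^3$), and this is precisely the bookkeeping your reduction skips. The same unproved reduction is invoked for part (3), so that case analysis is incomplete as well.
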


\begin{theorem} \label{thm:compatibility coproduct transfer}
Let $ \rho^+ \colon \widetilde{{\AB}}^{\otimes 2} \to \widetilde{{\AB}}^{\otimes 2} $ the linear projection onto the subspace generated by $ (\mathcal{M}_+ \otimes \mathcal{M}_{charged}) \cup (\mathcal{M}_0 \otimes \mathcal{M}_+) $ whose kernel contains $ (\mathcal{M}_- \otimes \mathcal{M}_{charged}) \cup (\mathcal{M}_0 \otimes \mathcal{M}_-) \cup (\mathcal{M}_0 \otimes \mathcal{M}_0) $.
The following identity holds for all $ x,y \in \widetilde{{\AB}} $:
\[
\Delta(x \odot y) = (\odot \otimes \odot)(\rho_+ \otimes \id_{\widetilde{\AB}^{\otimes 2}}) \tau (\Delta(x) \otimes \Delta(y)),
\]
where $ \tau $ exchanges the second and third factor in $ \widetilde{\AB}^{\otimes 4} $.
\end{theorem}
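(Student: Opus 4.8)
The plan is to establish the identity by restricting both sides to elementary abelian $2$-subgroups and invoking the Detection Theorem \ref{thm:detection}, exactly as announced in this subsection. First I would reduce, by bilinearity of both sides, to the case where $x$ and $y$ range over a basis of $\widetilde{\AB}$, namely $\mathcal{M}_{charged}$ together with $1_0^+,1_0^-$ (Theorem \ref{thm:basis alternating subgroup}, Definition \ref{def:extended Hopf ring}). The sub-cases where $x$ or $y$ lies in component $0$ follow at once from the definition of the extended operations: for instance $1_0^+\odot y = y$ gives $\Delta(1_0^+\odot y)=\Delta(y)$, while on the right-hand side the projection $\rho_+$ discards precisely the summands of $\Delta(1_0^+)$ carrying a ``$-$'' charge, leaving $\Delta(y)$; the $1_0^-$ case is analogous, using $1_0^-\odot y=\iota(y)$ and Proposition \ref{prop:relations involution}. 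So from here on both $x,y$ have positive component and both sides lie in $\bigoplus_{r+s=n}\widetilde{\AB}^{r,*}\otimes\widetilde{\AB}^{s,*}$, with $n=n(x)+n(y)$.

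In this range I would observe that, working over $\mathbb{F}_2$, the componentwise injectivity furnished by Theorem \ref{thm:detection} — together with the facts that every restriction factors through a \emph{maximal} elementary abelian $2$-subgroup and that these are, up to conjugacy, the $\widehat{A}_\pi$ with $\pi$ relevant (Proposition \ref{prop:elementary subgroups}) — implies that the map $\bigoplus\,\rho_\pi\otimes\rho_{\pi'}$, the sum running over relevant admissible partitions $\pi$ of $r$ and $\pi'$ of $s$ with $r+s=n$, is injective on $\bigoplus_{r+s=n}\AB^{r,*}\otimes\AB^{s,*}$. Hence it suffices to check that the two sides agree after applying each $\rho_\pi\otimes\rho_{\pi'}$.

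For the left-hand side I would rewrite $(\rho_\pi\otimes\rho_{\pi'})\circ\Delta$ as $\nu'\circ\rho_{\pi\sqcup\pi'}$ via Proposition \ref{prop:coproduct cup elementary subgroups}(2), and then expand $\rho_{\pi\sqcup\pi'}(x\odot y)$ using Proposition \ref{prop:transfer elementary subgroups}(4) as a sum over splittings $\sigma'\sqcup\sigma''=\pi\sqcup\pi'$ with one of $\sigma',\sigma''$ having $(m_0,m_1)=(0,0)$, of symmetrizations $\tau_{\sigma',\sigma''}\bigl(\rho_{\sigma'}(x)\otimes\rho_{\sigma''}(y)+\rho'_{\sigma'}(x)\otimes\rho'_{\sigma''}(y)\bigr)$; applying $\nu'$ then splits each such tensor again, now in the coproduct direction, after which the individual restrictions are computed from Propositions \ref{prop:rho cup}, \ref{prop:coproduct cup elementary subgroups}, \ref{prop:generators elementary subgroups} and \cite[Propositions 6.5 and 6.6]{Guerra:21}. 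For the right-hand side I would expand $\Delta(x)=\sum x_{(1)}\otimes x_{(2)}$ and $\Delta(y)=\sum y_{(1)}\otimes y_{(2)}$ in the basis of Theorem \ref{thm:basis alternating subgroup} and Definition \ref{def:explicit charged basis} (using Proposition \ref{prop:coproduct gamma+-} on the generators and the almost-Hopf-ring structure in general), apply $\tau$, then $\rho_+\otimes\id$ (which simply deletes the summands on which $x_{(1)}$ is $\mathcal{M}_-$-charged, as well as those on which $x_{(1)}$ is $\mathcal{M}_0$-charged and $y_{(1)}$ fails to be $\mathcal{M}_+$-charged), then $\odot\otimes\odot$ — at which point Propositions \ref{prop:rho cup} and \ref{prop:transfer elementary subgroups}(4) convert each transfer product back into a symmetrization of restrictions — and finally $\rho_\pi\otimes\rho_{\pi'}$, reducing once more to the formulas of Proposition \ref{prop:generators elementary subgroups}.

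The heart of the argument, and the step I expect to be the main obstacle, is the purely combinatorial verification that these two expansions coincide term by term. This amounts to a reindexing bijection between the splittings appearing on the two sides — ``transfer-multiply, then restrict the coproduct'' against ``take coproducts, then transfer-multiply after the $\rho_+$ correction'' — and the subtle point is precisely that $\rho_+$ encodes the charge compatibility that a genuine bialgebra axiom would demand: on the component-$0$ part it forces exactly one ``$+$'' charge, which matches simultaneously the restriction in Proposition \ref{prop:transfer elementary subgroups}(4) to double cosets with one of $\sigma',\sigma''$ having $(m_0,m_1)=(0,0)$, the occurrence of $\rho_{\sigma'}\otimes\rho_{\sigma''}+\rho'_{\sigma'}\otimes\rho'_{\sigma''}$ there, and the $h_{3m}\leftrightarrow h_{3m}^{\perp}$ exchange effected by the operator $S$ of Proposition \ref{prop:transfer elementary subgroups}(2). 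Tracking the ``$\pm$'' charges and the $h_{3m}$-versus-$h_{3m}^{\perp}$ decorations through $\tau$, $\rho_+\otimes\id$, $\odot\otimes\odot$, $\nu'$, and the symmetrization maps $\tau_{\sigma',\sigma''}$ is intricate but routine, and runs in close parallel to the corresponding compatibility for the alternating subgroups of the symmetric groups in \cite[Section 3]{Sinha:17}.
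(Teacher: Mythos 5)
Your proposal follows essentially the same route as the paper: the paper's proof likewise checks that both sides have equal image under the restrictions to the subgroups $\widehat{A}_{\pi}$ via Propositions \ref{prop:coproduct cup elementary subgroups} and \ref{prop:transfer elementary subgroups}, and then concludes by the Detection Theorem \ref{thm:detection}. The extra steps you spell out (reduction to basis elements, the component-$0$ cases, and the injectivity of the tensored restrictions over $\mathbb{F}_2$) are exactly what the paper leaves implicit, and the term-by-term matching you defer is likewise absorbed there into the citation of those two propositions.
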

\begin{proof}
By Propositions \ref{prop:coproduct cup elementary subgroups} and \ref{prop:transfer elementary subgroups}, both sides of the equality have the same restriction to $ \widehat{A}_{\pi} $ for every relevant admissible partition $ \pi $.
Theorem \ref{thm:detection} completes the proof.
\end{proof}

\subsection{Assembling all the pieces together}

In this sub-subsection, we wrap the results that we proved throughout the paper into an almost-Hopf ring presentation of $ \widetilde{\AB} $.

\begin{definition}[compare \cite{Sinha:12}] \label{def:level Hopf rings}
Let $ k \in \mathbb{N} $. The level-$ k $ quotient Hopf ring of $ A_B $ is the quotient bigraded component Hopf ring $ \mathcal{Q}_{k} $ obtained by setting $ \gamma_{l,m} = 0 $ for $ l \not= k $.
\end{definition}

In contrast with the Hopf ring of the symmetric groups, the combinatorics of the component rings of the level-$ k $ quotient Hopf rings is complicated. In particular, they are not polynomial. For this reason, we only provide a presentation as almost-Hopf ring over a Hopf ring, on which we unload all the combinatorial complexity.

\begin{definition} \label{def:component algebra}
Let $ k $ be a field. A bigraded component algebra is a bigraded $ k $-vector space $ A = \bigoplus_{n,d \geq 0} A^{n,d} $ with a product $ \cdot \colon A \otimes A \to A $ that preserves the dimension $ d $ and such that $ A^{n,d} \cdot A^{n',d'} = 0 $ whenever $ n \not= n' $. We say that $ A $ is augmented if it is endowed with a morphism of bigraded algebras $ \varepsilon \colon \bigoplus_d A^{0,d} \to k $ (with $ k $ concentrated in degree $ 0 $). We say that $ A $ is connected if the augmentation $ \varepsilon \colon \bigoplus_d A^{0,d} \cong k $ is an isomorphism of graded algebras.

A bigraded component bialgebra is a bigraded component algebra $ A = \bigoplus_{n,d} A^{n,d} $ with a bigraded map of algebras $ \Delta \colon A \to A \otimes A $.
\end{definition}

\begin{proposition} \label{prop:nullified Hopf ring}
Let $ A $ be a connected bigraded component bialgebra. There is a unique bigraded component Hopf ring $ A^0 $ that coincides with $ A  \oplus k $ as a bigraded $ k $-vector space, with Hopf ring unit $ 1 \in k $ and with the following structural morphisms:
\begin{itemize}
\item $ \cdot $ is the product of $ A \oplus k $ provided by its structure of bigraded component algebra,
\item the coproduct of $ A $ coincide with the reduced coproduct of the Hopf ring $ A^0 $,
\item and $ \odot $ is determined by $ a \odot a' = 0 $ for all $ a,a' \in A $.
\end{itemize}
\end{proposition}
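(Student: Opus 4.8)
The plan is to observe that the three conditions in the statement, together with the axioms of a component Hopf ring (Definition~\ref{def:Hopf ring}), leave no freedom in the remaining structure, and then to verify that the resulting maps really do satisfy those axioms; uniqueness is then a byproduct of the first step.

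First I would record the forced morphisms. Write $A^0 = A \oplus k$, let $1$ denote the generator of the new $k$-summand, placed in bidegree $(0,0)$, and let $\eta$ be its inclusion. Then $\varepsilon$ is forced to be the projection of $(A^0)^{0,0}$ onto $k\cdot 1$, so that it can serve as a counit, and the transfer product $\odot$ is completely determined by bilinearity, by $1$ being a two-sided $\odot$-unit, and by the third bullet $a\odot a' = 0$. For the coproduct, the requirement that $\eta(1)$ be group-like gives $\Delta(1) = 1\otimes 1$, while the second bullet, read through the formula $\overline{\Delta}(x) = \Delta(x) - 1\otimes x - x\otimes 1$ for the reduced coproduct, forces $\Delta$ on $A$ to be the coproduct of $A$ together with the two unit terms. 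The cup product is prescribed by the first bullet, and the antipode is then uniquely determined, as in any bigraded bialgebra that is connected in positive components. This establishes uniqueness.

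For existence I would run through the axioms of Definition~\ref{def:Hopf ring} one at a time, noting that most of them are inherited formally from $A$ being a connected component bialgebra. Coassociativity, cocommutativity, and the counit law for $\Delta$ follow from the corresponding properties of the coproduct of $A$ after a short manipulation of the added unit terms; the vanishing $(A^0)^{n,d}\cdot(A^0)^{n',d'} = 0$ for $n\neq n'$ is immediate, since $\cdot$ is the product of the direct sum of component algebras $A\oplus k$ and $A$ already has this property; and $(A^0,\cdot,\Delta)$ being a bicommutative bialgebra follows from multiplicativity of the coproduct of $A$ together with the observation that $1$ is cup-orthogonal to $A$, so that the unit terms appended to the coproduct cause no interference. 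The axioms that actually require work are that $(A^0,\odot,\Delta,\varepsilon,\eta)$ is a bicommutative bigraded Hopf algebra and that the distributivity identity $x\cdot(y\odot z) = \sum(x_{(1)}\cdot y)\odot(x_{(2)}\cdot z)$ holds; I would handle these by writing every argument as its component along $k\cdot 1$ plus its component along $A$ and exploiting that $\odot$ is supported at the unit, so that in the distributivity identity both sides vanish unless one of $y,z$ is a scalar multiple of $1$, whereupon it collapses to the unit law, and the compatibilities of $\Delta$ with $\odot$, $\varepsilon$, and $\eta$ reduce in the same way.

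The step I expect to be the main obstacle is exactly this last verification: one must be careful to distinguish the copy of $k$ in bidegree $(0,0)$ coming from $A$ from the Hopf ring unit $1\in k$, and to keep track of the unit terms attached to the coproduct, in order to confirm that declaring $\odot$ trivial on $A$ is genuinely compatible with the Hopf-algebra relation between $\Delta$ and $\odot$ and with the distributivity axiom; every other axiom transfers from the component bialgebra structure of $A$ with essentially no computation.
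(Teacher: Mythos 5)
The paper states Proposition \ref{prop:nullified Hopf ring} without proof, so there is no argument of the authors to compare yours against; judged on its own terms, your verification fails at exactly the step you single out as the main obstacle. With the coproduct your uniqueness step forces, $\Delta_{A^0}(a)=1\otimes a+a\otimes 1+\Delta_A(a)$ for $a\in A$, the axiom that $(A^0,\odot,\Delta,\varepsilon,\eta)$ be a bialgebra is not satisfied: take $a,a'\in A$ nonzero in two distinct positive components. Then $\Delta(a\odot a')=\Delta(0)=0$, while in $\Delta(a)\odot\Delta(a')$ the cross terms $(a\otimes 1)\odot(1\otimes a')=a\otimes a'$ and $(1\otimes a)\odot(a'\otimes 1)=a'\otimes a$ survive (every other product has a tensor slot of the form $b\odot b'$ with $b,b'\in A$, hence vanishes), so $\Delta(a)\odot\Delta(a')=a\otimes a'+a'\otimes a\neq 0$. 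It is therefore not true that both sides vanish unless one argument is a scalar multiple of $1$: the unit terms you attach to the coproduct are precisely what create uncancelled cross terms. What your construction produces satisfies only the axioms of an almost-Hopf ring in the sense of Definition \ref{def:Hopf semiring}, not those of a Hopf ring in the sense of Definition \ref{def:Hopf ring}; this is the same mechanism by which $\AB$ itself fails the $\Delta$--$\odot$ bialgebra axiom (compare Theorem \ref{thm:compatibility coproduct transfer}): setting transfer products to zero destroys exactly the terms that would match the cross terms above.

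Nor can this be repaired by the kind of unit bookkeeping you allude to. If one instead omits the terms $1\otimes a+a\otimes 1$, so that $\Delta_{A^0}$ restricted to $A$ is literally $\Delta_A$ with its component-$(0,n)$ parts along the old unit $1^0\in A^{0,0}$, then the $\Delta$--$\odot$ compatibility does hold (with $\varepsilon(1^0)=1$), but Hopf ring distributivity now fails: for $x\in A$ of component $n$ and $1_n$ a component cup-unit of $A$, $x\cdot(1_n\odot 1)=x$ while $\sum (x_{(1)}\cdot 1_n)\odot(x_{(2)}\cdot 1)=0$ when the cup product on $A\oplus k$ is the direct-sum product, and if one repairs this by letting $1^0\cdot 1=1$ then distributivity fails instead on $x=1$, $y=z=1^0$. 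So whichever way the two-dimensional zero component $A^{0,*}\oplus k\{1\}$ is made to interact with $\varepsilon$, $\cdot$ and $\Delta$, at least one axiom of Definition \ref{def:Hopf ring} (the $(\odot,\Delta)$-bialgebra axiom, distributivity, or the counit law) needs genuine attention, and the structure actually compatible with the later use of $R$ in Theorem \ref{thm:presentation AB} is the one \emph{without} the added unit terms. Your proof neither detects this tension nor resolves it, so both the existence and the uniqueness parts are unproved as written.
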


\begin{definition} \label{def:nullified Hopf ring}
Let $ A $ be a connected bigraded component bialgebra. The Hopf ring $ A^0 $ constructed in Proposition \ref{prop:nullified Hopf ring} is called the nullification of $ A $.
\end{definition}

Note that nullification $ \_^0 $ defines a functor from the category of connected bigraded component bialgebras to the category of connected bigraded component Hopf rings.
There is also a functor $ U $ from the category of connected bigraded component Hopf ring to the category of connected bigraded component bialgebras that forgets the transfer product.

\begin{definition} \label{def:almost Hopf ring over Hopf ring}
Let $ R $ be a connected bigraded component Hopf ring. A bigraded component almost-Hopf ring over $ R $ is a couple $ (B,\varphi) $ where $ B $ is a bigraded component almost-Hopf ring $ B $ and $ \varphi \colon R \to B $ is a morphism of bigraded component almost-Hopf rings.
\end{definition}

We now state our main presentation result.
\begin{theorem} \label{thm:presentation AB}
For all $ n \geq 1 $, let $ e_n = \gamma_{1,1} \odot 1_{n-2} + w\dip{1} \odot 1_{n-1} $, considered as an element of the $ n^{th} $ component of $ U(\mathcal{Q}_{1}) $.
Let
\[
R = \left( \frac{U(\mathcal{Q}_{1})}{(e_1,\dots,e_n,\dots)} \right)^0.
\]

$ \widetilde{\AB} $ is the bigraded component almost-Hopf ring over $ R $ generated by classes $ 1^\pm \in \widetilde{\AB}^{0,0} $ and $ \gamma_{k,l}^+ \in \widetilde{\AB}^{l2^k,l(2^k-1)} $ for $ k \geq 2 $ and $ l \geq 1 $.

A complete set of relations is the following, where we denote $ \gamma_{k,l}^+ \odot 1^- $ as $ \gamma_{k,l}^- $ and where we add the apex $ 0 $ to emphasize when elements belong to $ R $:
\begin{enumerate}
\item $ 1^+ + 1^-  = 1^0 $, the unit of the $ 0^{th} $ component of $ R $
\item $ 1^+ $ is the Hopf ring unit of $ \widetilde{\AB} $
\item $ 1^- \odot 1^- = 1^+ $
\item $ \gamma_{k,l}^+ \odot \gamma_{k,m}^+ = \left( \begin{array}{c} l+m \\ l \end{array} \right) \gamma_{k,l+m} $ for all $ k \geq 2 $, $ l,m \geq 1 $
\item $ \gamma_{k,l}^+ \cdot \gamma_{k',l'}^- = 0 $ unless $ k = k' = 2 $
\item $ \gamma_{2,m}^+ \cdot \gamma_{2,m}^- = \left\{ \begin{array}{ll}
(\gamma_{2,m}^+)^2 + (\gamma_{2,m}^-)^2 + (\gamma_{2,m-1}^+)^2 \odot (\gamma_{1,2}^3)^0 & \mbox{if } m \mbox{ is odd} \\
(\gamma_{2,m-1}^+)^2 \odot (\gamma_{1,2}^3)^0 & \mbox{if } m \mbox{ is even}
\end{array} \right. $ for all $ m \geq 1 $
\item for all Hopf monomials $ x = b_1 \odot \dots \odot b_r \in \mathcal{Q}_{1} $ and for all $ k,l \geq 1 $, $ \gamma_{k,l}^+ \cdot x^0 = \bigodot_{i=1}^r \left( \gamma_{k,\frac{n(b_i)}{2^k}}^+ \cdot (b_i)^0 \right) $, where $ n(b_i) $ is the component of $ b_i $, and $ \gamma_{k,l} $ is assumed to be $ 0 $ if $ l $ is not an integer
\item $ \Delta(x \odot y) = (\odot \otimes \odot)(\rho_+ \otimes \id_{\widetilde{\AB}^{\otimes 2}}) \tau (\Delta(x) \otimes \Delta(y)) $ for all $ x,y \in \widetilde{\AB} $
\item $ \Delta(\gamma_{k,l}^+) = \sum_{i=0}^l \left( \gamma_{k,i}^+ \otimes \gamma_{k,l-i}^+ + \gamma_{k,i}^- \otimes \gamma_{k,m-i}^- \right) $ for all $ k \geq 2 $, $ l \geq 0 $, with the convention that $ \gamma_{k,0}^\pm = 1^\pm $
\end{enumerate}
\end{theorem}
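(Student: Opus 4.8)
The plan is to establish Theorem~\ref{thm:presentation AB} in two stages: first verify that $\widetilde{\AB}$ is an almost-Hopf ring over $R$ satisfying the listed relations, and then show that these relations are \emph{complete}, i.e.\ that any almost-Hopf ring over $R$ generated by the $\gamma_{k,l}^+$ and $1^\pm$ subject only to (1)--(9) cannot be strictly larger than $\widetilde{\AB}$. For the first stage, most of the work is already done: the structural map $\varphi\colon R\to\widetilde{\AB}$ is $\res$ (extended as in the paragraph after Proposition~\ref{prop:extended Hopf ring}), which kills each $e_n$ by the Gysin description in the proof of Theorem~\ref{thm:basis alternating subgroup} and respects the Hopf-ring operations by Proposition~\ref{prop:identities restriction transfer}; that $\res$ sends $\gamma_{1,m}$-blocks into $\widetilde{\AB}$ and that the transfer product on $R$ is nullified is exactly the content of $\res(x)\odot\res(y)=0$. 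Relations (2),(3) are the definition of $\widetilde{\AB}$ (Definition~\ref{def:extended Hopf ring}); (4) is Proposition~\ref{prop:transfer gamma+-}; (9) is Proposition~\ref{prop:coproduct gamma+-}; (8) is Theorem~\ref{thm:compatibility coproduct transfer}; (5),(6),(7) are Proposition~\ref{prop:cup relations}; (1) is the extended definition of $\res(1_0)$.

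For the completeness stage, I would argue by a dimension/basis count. Let $\widetilde{R}$ be the abstract almost-Hopf ring over $R$ presented by generators $1^\pm,\gamma_{k,l}^+$ ($k\ge2$) and relations (1)--(9); there is a canonical surjection $\Psi\colon\widetilde{R}\twoheadrightarrow\widetilde{\AB}$ by the first stage. It suffices to produce a spanning set of $\widetilde{R}$ that is no larger (bidegree by bidegree) than the basis $\mathcal{M}_{charged}$ of Theorem~\ref{thm:basis alternating subgroup} together with the extra element $1_0^\pm$ in bidegree $(0,0)$. Concretely: every element of $\widetilde{R}$ is, by definition of ``almost-Hopf ring over $R$'', a polynomial in the images of $R$ under $\varphi$ and in the $\gamma_{k,l}^\pm$, combined with $\cdot$ and $\odot$. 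Using relation (7) repeatedly we can pull all cup products of a $\gamma_{k,l}^+$ against an element of $\varphi(R)$ into a transfer product of ``charged gathered blocks'' of the shape in Definition~\ref{def:explicit charged basis}; using relations (5),(6) we eliminate mixed cup products $\gamma_{k,l}^+\cdot\gamma_{k',l'}^-$ except in the $k=k'=2$ case, and there (6) rewrites $(\gamma_{2,m}^+)^{a}(\gamma_{2,m}^-)^{b}$ with both $a,b>0$ in terms of monomials where one of the exponents vanishes, modulo lower transfer-product terms; using relation (4) we make the transfer product of two blocks with the same profile and decoration collapse to a single block (this is exactly identity $(**)$ lifted to charges). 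The upshot is that $\widetilde{R}$ is spanned by the charged decorated Hopf monomials $x^0,x^+,x^-$ of Definition~\ref{def:explicit charged basis}, plus $1_0^\pm$. Since $\Psi$ maps this spanning set onto the basis $\mathcal{M}_{charged}\sqcup\{1_0^+,1_0^-\}$ of $\widetilde{\AB}$, $\Psi$ is an isomorphism.

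The main obstacle is the combinatorial bookkeeping in the reduction step: showing that relations (1)--(9) genuinely suffice to rewrite an \emph{arbitrary} word in the generators into the normal form of Definition~\ref{def:explicit charged basis}. This requires a careful induction, presumably on the total component $n$ (or on the number of generator-factors), with the transfer-product/cup-product distributivity axiom of Definition~\ref{def:Hopf semiring} and the coproduct relations (8),(9) controlling the ``cross terms''. In particular one must check that the charged analogue of the rewriting procedure for $A_B$ sketched after Theorem~\ref{thm:basis B} --- combining Hopf-ring distributivity with $(*)$ and $(**)$ --- goes through with charges, which is where relations (8),(9) (the charged versions of $(*)$, and the compatibility replacing the bialgebra axiom) are used. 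I would organize this as a leading-term argument with respect to the total order on $\mathcal{M}$ fixed before the proof of Lemma~\ref{lem:Gysin decomposition}, extended to $\mathcal{M}_{charged}$ by declaring the charge an additional, least-significant coordinate: one shows that any product of generators, when expanded via (1)--(9), has leading term one of the charged monomials of Definition~\ref{def:explicit charged basis}, and that the correspondence between monomials and leading terms is a bijection, so no collisions force extra relations. The remaining checks --- associativity, commutativity, and the almost-Hopf ring axioms for $\widetilde{R}$ modulo (1)--(9) --- are formal consequences of the corresponding identities in $\widetilde{\AB}$ pulled back along the (now established) isomorphism, so they add no genuine difficulty.
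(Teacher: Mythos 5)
Your proposal is correct and follows essentially the same route as the paper: verify relations (1)--(9) via the previously established propositions, note that $\varphi=\res$ embeds $R$ and that the generators produce all of $\mathcal{M}_{charged}$, and then prove completeness by using the relations to rewrite an arbitrary word into a spanning set of charged Hopf monomials that maps onto the basis of Theorem~\ref{thm:basis alternating subgroup}, forcing the canonical surjection to be an isomorphism. The extra leading-term bookkeeping you propose is not needed (and the paper does not use it), since once the rewriting procedure yields a spanning set carried onto a linearly independent set, injectivity is automatic.
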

\begin{proof}
$ \mathcal{Q}_{1} $ is generated by (residual classes of) $ w\dip{r} $ and $ \gamma_{1,l} $ for $ r,l \geq 1 $. We observe that the epimorphism $ A_{\mathbb{P}^\infty(\mathbb{R})} \to \mathcal{Q}_{1} $ splits: there is a section that maps $ w\dip{r} \mapsto w\dip{r} $ and $ \gamma_{1,l} \mapsto \gamma_{1,l} $. Thus, $ \mathcal{Q}_{1} $ is also a sub-Hopf ring of $ A_{\mathbb{P}^\infty(\mathbb{R})} $.
In turn, this realizes $ \frac{U(\mathcal{Q}_{1})}{(e_1,\dots,e_n,\dots)} $ as a sub-component bialgebra of $ \frac{A_{B}}{(e_1,\dots,e_n,\dots)} $.
By the calculation of Lemma \ref{lem:Gysin decomposition}, $ \res $ induces a monomorphism $ \varphi \colon R \hookrightarrow \widetilde{\AB} $.

Since $ \res $ preserves $ \cdot $ and $ \Delta $, $ \varphi $ is a morphism of bigraded component bialgebras. Thanks to Proposition \ref{prop:identities restriction transfer} (3), it also preserves $ \odot $, so it is an almost-Hopf ring map.

Theorem \ref{thm:basis alternating subgroup} provides an additive basis $ \mathcal{M}_{charged} = \mathcal{M}_0 \sqcup \mathcal{M}_+ \sqcup \mathcal{M}_- $ for $ \widetilde{\AB} $ as a $ \mathbb{F}_2 $-vector space, that we made explicit in Subsection \ref{subsec:basis}. The linear span of $ \mathcal{M}_0 $ contains $ \varphi(R) $. Moreover, every element of $ \mathcal{M}_{charged} $ belongs to the sub-almost-Hopf ring generated by $ \varphi(R) $, $ 1^\pm $, and $ \gamma_{l,m}^+ $, hence these classes generate $ \widetilde{\AB} $ as an almost-Hopf ring over $ R $.

The relations $ (1) $ to $ (8) $ have been proved in Propositions \ref{prop:relations involution}, \ref{prop:identities restriction transfer}, \ref{prop:coproduct gamma+-}, \ref{prop:transfer gamma+-} and \ref{prop:cup relations}, and Theorem \ref{thm:compatibility coproduct transfer}.

To prove that our set of relations is complete, it is enough to check that every element $ x $ obtained from $ \gamma_{k,l}^\pm $ and elements of $ \varphi(R) $ by applying of the two products $ \cdot $ and $ \odot $ can be rewritten using (1) to (8) as a linear combination of elements of $ \mathcal{M}_{charged} $ modulo $ \varphi(R) $.
To show this, we proceed step-by-step:
\begin{enumerate}
\item We first exploit the almost-Hopf ring distributivity with relations (3), (7) and (8) to perform all cup products before transfer products. We can thus rewrite every such element $ x $ as a linear combination of objects of the form $ b_1 \odot \dots \odot b_r $, where each $ b_i $ is obtained from $ \gamma_{k,l}^\pm $ and elements of $ A $ only by cup-multiplying.
\item Let $ x = b_1 \odot \dots \odot b_r $, with $ b_i $ as above. By recursively applying relations (5) and (6) to each $ b_i $, and possibly repeating step (1), we can assume that generators with + and - charges do not appear simultaneously as factors of the same block $ b_i $.
\item Let $ x = b_1 \odot \dots \odot b_r $, with $ b_i $ as above such that + and - charges do not appear simultaneously as factors of $ b_i $. Then, by relation (3), up to transfer-multiplying with $ 1^- $ a certain number of times, we rewrite $ x $ as $ b_1' \odot \dots \odot b_r' $ or $ 1^- \odot b_1' \odot \dots \odot b_r' $, where $ b_i' $ is obtained from $ b_i $ by replacing all the - charges with +.
Relation (7) allows to write each $ b_i' $, and consequently $ x' = b_1' \odot \dots \odot b_r' $, as a transfer product of charged gathered blocks or elements of $ \varphi(R) $.
\item By relation (4), we can also merge gathered blocks with the same profile. Therefore, if no block $ b_i' $ belongs to $ \varphi(R) $, then $ b_1' \odot \dots \odot b_r' \in \mathcal{M}_+ $. In this case, $ 1^- \odot b_1' \odot \dots \odot b_r' \in \mathcal{M}_- $.
If, on the contrary, some $ b_i' $ belongs to $ \varphi(R) $, then we can assume that only one of them, say $ b_r' $ is an element of $ \varphi(R) $. Otherwise $ x = 0 $ because $ \odot $ is identically zero on $ R $.
In this case, by relations (1) and (2), $ 1^- \odot b_1' \odot \dots \odot b_r' = b_1' \odot \dots \odot b_r' $ and this belongs to $ \mathcal{M}_0 $.
\end{enumerate}
\end{proof}

\section{Steenrod algebra action}

The Steenrod operations act on $ \AB $ component-by-component in the usual way. Since all the structural morphisms are induced by stable maps, there are Cartan formulas with respect to $ \Delta $, $ \cdot $ and $ \odot $. In particular, $ \AB $ is an almost-Hopf ring over the Steenrod algebra.

We also observe that $ \iota $ preserve this action. Consequently, it extends to $ \widetilde{\AB} $.
\begin{proposition}
There is a unique action of the Steenrod algebra on the almost-Hopf ring $ \widetilde{\AB} $ that extends the usual action on the components of $ \AB $ and that preserves the standard involution.
\end{proposition}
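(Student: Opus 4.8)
The plan is to verify that the Steenrod operations, which already act component-by-component on each $H^*(\Bgrouppos{n}; \mathbb{F}_2)$, are compatible with the extension of structure to $\widetilde{\AB}$. First I would recall that the usual action on $\AB = \bigoplus_{n,d} H^d(\Bgrouppos{n}; \mathbb{F}_2)$ is the standard cohomology operation action on each summand, and that the standard involution $\iota = \bigoplus_n \iota_n$ is induced by conjugation by a reflection (Proposition \ref{prop:involution}). Since $\iota_n$ is induced by a group automorphism, it is a map of $\mathbb{F}_2$-algebras commuting with all natural cohomology operations; hence $\iota$ preserves the Steenrod action, i.e.\ $\iota \circ \Sq^i = \Sq^i \circ \iota$ on $\AB$ for every $i$. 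This is the key structural input and it is essentially immediate from naturality of Steenrod squares.

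Next I would extend the action to $\widetilde{\AB}$. Recall from Definition \ref{def:extended Hopf ring} that $\widetilde{\AB}$ agrees with $\AB$ in positive component and has $\widetilde{\AB}^{0,*} = \mathbb{F}_2\{1_0^+, 1_0^-\}$ concentrated in bidegree $(0,0)$. The Steenrod algebra acts on the $(0,*)$ part in the only possible way: $\Sq^0$ is the identity and $\Sq^i$ annihilates everything of degree $0$ for $i \geq 1$, so on $\widetilde{\AB}^{0,0}$ the classes $1_0^+$ and $1_0^-$ are fixed by $\Sq^0$ and killed by higher squares. On the positive-component part the action is the one inherited from $\AB$. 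This unambiguously defines an action of the Steenrod algebra on $\widetilde{\AB}$ as a bigraded $\mathbb{F}_2$-vector space, and it visibly extends the given action on $\AB$ and commutes with $\iota$ (using $\iota(1_0^+) = 1_0^-$, $\iota(1_0^-) = 1_0^+$, and that $\iota$ preserves the action on $\AB$).

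For the almost-Hopf ring compatibility, I would invoke the Cartan formulas. Since $\cdot$ is the cup product component-by-component, the classical Cartan formula $\Sq^n(x \cdot y) = \sum_{i+j=n} \Sq^i(x) \cdot \Sq^j(y)$ holds on $\AB$ and extends to $\widetilde{\AB}$ because the extended products involving $1_0^\pm$ are either projections or identities, on which the Cartan formula is trivially checked. For $\odot$ and $\Delta$, the components of these operations are transfer and restriction maps associated to group inclusions (see the discussion before Definition \ref{def:Hopf semiring}); all such maps are induced by stable maps of spectra (transfers are stable maps, and restrictions are induced by maps of classifying spaces), hence commute with Steenrod operations, giving the Cartan-type compatibilities $\Sq^n(x \odot y) = \sum_{i+j=n} \Sq^i(x) \odot \Sq^j(y)$ and $\Delta \circ \Sq^n = (\Sq \otimes \Sq)^{(n)} \circ \Delta$. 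The extension of these identities across the new classes $1_0^\pm$ is routine from the defining rules in Definition \ref{def:extended Hopf ring}.

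Finally, uniqueness: any Steenrod action on $\widetilde{\AB}$ restricting to the given one on $\AB$ is determined on the positive-component part by that restriction, and on $\widetilde{\AB}^{0,0}$ the action is forced since every element there has topological degree $0$ (so $\Sq^i$ with $i > 0$ must vanish by the instability/degree axiom and $\Sq^0 = \id$). Hence the action is unique, and the requirement of preserving $\iota$ is automatically satisfied rather than an extra constraint. I do not expect any serious obstacle here; the only point requiring a moment's care is checking that the extended operations $\cdot, \odot, \Delta$ on the two-dimensional space $\widetilde{\AB}^{0,0}$ and its interaction with positive components are compatible with the Cartan formulas, but this is a finite and transparent verification directly from Definition \ref{def:extended Hopf ring}.
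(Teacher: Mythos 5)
Your argument is correct and follows essentially the same route as the paper: naturality of Steenrod squares under the group automorphism $c_s$ gives $\iota \circ \Sq^i = \Sq^i \circ \iota$, the Cartan compatibilities with $\cdot$, $\odot$, $\Delta$ come from the structural maps being induced by stable maps, and the extension to $\widetilde{\AB}^{0,0}$ is forced (note the cleanest reason higher squares vanish on $1_0^\pm$ is simply that $\widetilde{\AB}^{0,i}=0$ for $i>0$, rather than an instability axiom), which also gives uniqueness.
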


Therefore, to compute the Steenrod squares on on $ \widetilde{\AB} $ it is enough to evaluate them on $ \varphi(R) $ and $ \gamma_{k,l}^\pm $.
This calculation parallels that of the hyperoctahedral groups \cite[Section 8]{Guerra:21} and the alternating groups \cite[Section 9]{Sinha:17}.

First, we discuss the action of $ \Sq^i $ on elements of $ \varphi(R) $. The level quotients Hopf rings of $ A_B $ do not have a natural Steenrod algebra action, because the divisor ideal is not stable under the $ \Sq^i $s. However, for every $ x \in R $, $ \varphi(R) $ is obtained by lifting it to $ A_B $ and then applying $ \res $. Since $ \res $ preserves the Steenrod squares, $ \Sq^i(\varphi(x)) $ can be effectively computed by applying Theorem 8.2 of \cite{Guerra:21} and restricting.

Second, we determine $ \Sq^i(\gamma_{k,l}^\pm) $. Since $ \gamma_{k,l}^- =  \iota(\gamma_{k,l}^+) $ and $ \iota $ commutes with $ \Sq^i $, it is enough to compute $ \Sq^i(\gamma_{k,l}^+) $. In \cite{Sinha:17}, the corresponding classes in the cohomology of the alternating groups of $ \Sigma_n $ are computed algebraically by combining the coproduct and restriction to maximal elementary abelian subgroups. In $ \AB $, the invariant algebra $ \left[ H^*(V_n; \mathbb{F}_2) \right]^{N_{\mathcal{A}_{2^n}}(V_n)} $ embeds in $ \left[ H^*(\widehat{A}_{(2^n)}; \mathbb{F}_2) \right]^{N_{\Bgrouppos{2^n}}(\widehat{A}_{(2^n)})} $ as a subalgebra over the Steenrod algebra. The restriction of $ \gamma_{k,2^{n-k}}^+ $ to the cohomology of $ \widehat{A}_{(2^n)} $ lie in $ \left[ H^*(V_n; \mathbb{F}_2) \right]^{N_{\mathcal{A}_{2^n}}(V_n)} $ by proposition \ref{prop:generators elementary subgroups} and the coproduct of $ \gamma_{k,l}^+ $ has the same form both in $ \AB $ and in $ \bigoplus_n H^*(\mathcal{A}_n; \mathbb{F}_2) $ by Proposition \ref{prop:coproduct gamma+-} and relations 10 and 11 of \cite[Theorem 8.1]{Sinha:17}.
Consequently, the combinatorics is the same in both contexts and the calculations performed in the cohomology of the alternating groups hold verbatim in $ \widetilde{\AB} $.
\begin{proposition}
The identities involving $ \Sq^i(\gamma_{k,l}^+) $ stated in Theorems 9.3 and 9.5 of \cite{Sinha:17} are also valid in $ \widetilde{\AB} $.
\end{proposition}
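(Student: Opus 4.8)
The claim to prove is the final Proposition: \emph{the identities involving $\Sq^i(\gamma_{k,l}^+)$ stated in Theorems 9.3 and 9.5 of \cite{Sinha:17} are also valid in $\widetilde{\AB}$.}

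\medskip

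The plan is to reduce the computation of $\Sq^i(\gamma_{k,l}^+)$ in $\widetilde{\AB}$ to exactly the same algebraic manipulation that Giusti--Sinha perform in $\bigoplus_n H^*(\mathcal{A}_n;\mathbb F_2)$, and then invoke their theorems verbatim. First I would set up the two inputs that make such a transfer possible. The first input is the coproduct formula: by Proposition~\ref{prop:coproduct gamma+-}, $\Delta(\gamma_{k,l}^+)=\sum_{i=0}^l\big(\gamma_{k,i}^+\otimes\gamma_{k,l-i}^+ + \gamma_{k,i}^-\otimes\gamma_{k,l-i}^-\big)$, which has the \emph{same shape} (with charges) as relations~10 and~11 of \cite[Theorem 8.1]{Sinha:17} for the alternating groups. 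The second input is the detection side: by Proposition~\ref{prop:generators elementary subgroups}(2)--(3), the restriction of $\gamma_{k,2^{n-k}}^+$ to $H^*(\widehat{A}_{(2^n)};\mathbb F_2)$ lands in the subalgebra $\big[H^*(V_n;\mathbb F_2)\big]^{N_{\mathcal A_{2^n}}(V_n)}$, which embeds into $\big[H^*(\widehat{A}_{(2^n)};\mathbb F_2)\big]^{N_{\Bgrouppos{2^n}}(\widehat{A}_{(2^n)})}$ compatibly with the Steenrod action (restriction maps commute with $\Sq^i$).

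\medskip

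The key steps, in order, would be: (i) observe that $\iota$ commutes with every $\Sq^i$ (true because $\iota$ is induced by conjugation, hence by a map of spaces, and Steenrod operations are natural), so $\Sq^i(\gamma_{k,l}^-)=\iota(\Sq^i(\gamma_{k,l}^+))$ and it suffices to treat the $+$ classes; (ii) invoke the Cartan formula for $\Sq^i$ with respect to $\Delta$ (valid since $\Delta$ is induced by a stable map, as noted at the start of Section~7) to get $\Delta(\Sq^i\gamma_{k,l}^+)=\sum_{a+b=i}\Sq^a\otimes\Sq^b\,\Delta(\gamma_{k,l}^+)$, and plug in the coproduct formula above; (iii) note that by Theorem~\ref{thm:detection} an element of $\widetilde{\AB}$ is determined by its restrictions to the subgroups $\widehat{A}_\pi$, so $\Sq^i(\gamma_{k,l}^+)$ is pinned down by its image in $\big[H^*(V_n;\mathbb F_2)\big]^{N_{\mathcal A_{2^n}}(V_n)}$ together with its coproduct; (iv) since both the coproduct data and the elementary-abelian restriction data are formally identical to those governing the classes $\gamma_{k,l}^+$ in $\bigoplus_n H^*(\mathcal A_n;\mathbb F_2)$, the very same inductive argument on $l$ (using the comultiplication to reduce to the indecomposable case and the detection by $V_n$ to anchor the base case $l=2^{n-k}$) that Giusti--Sinha run to prove their Theorems~9.3 and~9.5 applies unchanged, producing the same closed-form expressions; (v) conclude that those identities hold in $\widetilde{\AB}$, and extend them to the $-$ classes by applying $\iota$.

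\medskip

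The main obstacle is step~(iv): one must verify carefully that \emph{nothing} in Giusti--Sinha's derivation uses features special to $\bigoplus_n H^*(\mathcal A_n;\mathbb F_2)$ that fail for $\widetilde{\AB}$. Concretely, I would need to check that their argument only ever uses (a) the Cartan formula for $\Delta$ and $\cdot$, (b) the explicit coproduct of $\gamma_{k,l}^+$, (c) injectivity/compatibility of restriction to the maximal elementary abelian subgroup detecting these classes, and (d) the action of $\Sq^i$ on the Dickson-type invariants $d_{2^k-2^i}$ and on $f_{2^k}$ inside the invariant algebra --- all of which are now available: (a) from Section~7, (b) from Proposition~\ref{prop:coproduct gamma+-}, (c) from Theorem~\ref{thm:detection} and Proposition~\ref{prop:generators elementary subgroups}, and (d) from the fact that $\big[H^*(V_n;\mathbb F_2)\big]^{N_{\mathcal A_{2^n}}(V_n)}$ sits inside our invariant algebra as a sub-$\mathcal A$-algebra. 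A minor subtlety to address is the charges: the coproduct mixes $+\otimes+$ and $-\otimes-$ terms, so when one projects onto a single tensor factor to read off $\Sq^i(\gamma_{k,i}^+)$ recursively, one should note that the $-$ half is simply $\iota$ of the $+$ half, so the recursion closes on the $+$ classes alone exactly as in the unsigned case. Once these checks are in place, the proof is a one-line appeal: ``the combinatorics is identical, so the calculations of \cite[Theorems 9.3, 9.5]{Sinha:17} hold verbatim.''
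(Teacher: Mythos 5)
Your proposal is correct and follows essentially the same route as the paper: reduce to the $+$ classes via $\iota$ commuting with $\Sq^i$, then transport Giusti--Sinha's computation using the identical coproduct formula (Proposition \ref{prop:coproduct gamma+-}), the fact that the restrictions of $\gamma_{k,2^{n-k}}^+$ land in $\left[H^*(V_n;\mathbb{F}_2)\right]^{N_{\mathcal{A}_{2^n}}(V_n)}$ viewed as a sub-algebra over the Steenrod algebra of the invariants for $\widehat{A}_{(2^n)}$, and detection. Your write-up merely spells out more explicitly (Cartan formula, the recursion on $l$, the role of Theorem \ref{thm:detection}) what the paper compresses into its ``the combinatorics is the same in both contexts'' argument.
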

\clearpage
	
\bibliographystyle{plain}
\bibliography{bibliografia}
	
\end{document}